\DeclareFontFamily{OT1}{rsfs}{}
\DeclareFontShape{OT1}{rsfs}{n}{it}{<-> rsfs10}{}
\DeclareMathAlphabet{\curly}{OT1}{rsfs}{n}{it}
\DeclareFontFamily{U}{rsfs}{%
\skewchar\font127}
\DeclareFontShape{U}{rsfs}{m}{n}{%
<-6>rsfs5<6-8.5>rsfs7<8.5->rsfs10}{}
\DeclareSymbolFont{rsfs}{U}{rsfs}{m}{n}
\DeclareRobustCommand*\rsfs{%
\@fontswitch\relax\mathrsfs}
\newcommand\beq[1]{\begin{equation}\label{#1}}
\newcommand\eeq{\end{equation}}
\newcommand\beqa{\begin{eqnarray*}}
\newcommand\eeqa{\end{eqnarray*}}
\theoremstyle{plain}
\newtheorem{thm}{Theorem}[section]
\newtheorem{prop}[thm]{Proposition}
\newtheorem{lem}[thm]{Lemma}
\newtheorem{lemma}[thm]{Lemma}
\newtheorem{defi}[thm]{Definition}
\newtheorem{defn}[thm]{Definition}
\newtheorem{cor}[thm]{Corollary}
\newtheorem{prop-defi}[thm]{Proposition-Definition}
\newtheorem{thm-defi}[thm]{Theorem-Definition}
\newtheorem{lem-defi}[thm]{Lemma-Definition}
\newtheorem{conj}[thm]{Conjecture}
\theoremstyle{remark}
\newtheorem{rmk}[thm]{Remark}
\newtheorem{example}[thm]{Example}
\newcommand{\aA}{\mathcal{A}}
\newcommand{\bB}{\mathcal{B}}
\newcommand{\cC}{\mathcal{C}}
\newcommand{\dD}{\mathcal{D}}
\newcommand{\eE}{\mathcal{E}}
\newcommand{\hH}{\mathcal{H}}
\newcommand{\iI}{\mathcal{I}}
\newcommand{\mM}{\mathcal{M}}
\newcommand{\oO}{\mathcal{O}}
\newcommand{\pP}{\mathcal{P}}
\newcommand{\sS}{\mathcal{S}}
\newcommand{\wW}{\mathcal{W}}
\newcommand{\yY}{\mathcal{Y}}
\newcommand{\Supp}{\mathop{\rm Supp}\nolimits}
\newcommand{\dR}{\mathbf{R}}
\newcommand{\NS}{\mathop{\rm NS}\nolimits}
\newcommand{\Pic}{\mathop{\rm Pic}\nolimits}
\newcommand{\Amp}{\mathop{\rm Amp}\nolimits}
\newcommand{\id}{\textrm{id}}
\newcommand{\ch}{\mathop{\rm ch}\nolimits}
\newcommand{\rk}{\mathop{\rm rk}\nolimits}
\newcommand{\Ext}{\mathop{\rm Ext}\nolimits}
\newcommand{\Spec}{\mathop{\rm Spec}\nolimits}
\newcommand{\Coh}{\mathop{\rm Coh}\nolimits}
\newcommand{\cneq}{\mathrel{\raise.095ex\hbox{:}\mkern-4.2mu=}}
\newcommand{\eqcn}{\mathrel{=\mkern-4.5mu\raise.095ex\hbox{:}}}
\newcommand{\Aut}{\mathop{\rm Aut}\nolimits}
\newcommand{\codim}{\mathop{\rm codim}\nolimits}
\newcommand{\SL}{\mathop{\rm SL}\nolimits}
\newcommand{\Stab}{\mathop{\rm Stab}\nolimits}
\newcommand{\DT}{\mathop{\rm DT}\nolimits}
\newcommand{\End}{\mathop{\rm End}\nolimits}
\newcommand{\Imm}{\mathop{\rm Im}\nolimits}
\newcommand{\imm}{\mathop{\rm im}\nolimits}
\newcommand{\Ker}{\mathop{\rm Ker}\nolimits}
\newcommand{\Ree}{\mathop{\rm Re}\nolimits}
\newcommand{\GL}{\mathop{\rm GL}\nolimits}
\newcommand{\cl}{\mathop{\rm cl}\nolimits}
\newcommand{\BA}{{\mathbb{A}}}
\newcommand{\BC}{{\mathbb{C}}}
\newcommand{\BG}{{\mathbb{G}}}
\newcommand{\BH}{{\mathbb{H}}}
\newcommand{\BL}{{\mathbb{L}}}
\newcommand{\BQ}{{\mathbb{Q}}}
\newcommand{\BR}{{\mathbb{R}}}
\newcommand{\BT}{{\mathbb{T}}}
\newcommand{\BZ}{{\mathbb{Z}}}
\newcommand{\CA}{{\mathcal A}}
\newcommand{\CC}{{\mathcal C}}
\newcommand{\CI}{{\mathcal I}}
\newcommand{\CL}{{\mathcal L}}
\newcommand{\CM}{{\mathcal M}}
\newcommand{\CN}{{\mathcal N}}
\newcommand{\CO}{{\mathcal O}}
\newcommand{\CP}{{\mathcal P}}
\newcommand{\CS}{{\mathcal S}}
\newcommand{\CW}{{\mathcal W}}
\newcommand{\CX}{{\mathcal X}}
\newcommand{\CY}{{\mathcal Y}}
\newcommand{\aaa}{\mathsf{a}}
\newcommand{\e}{\mathbf{e}}
\newcommand{\Forg}{\mathrm{Forg}}
\newcommand{\Constr}{\mathrm{Constr}}
\newcommand{\St}{\mathrm{St}}
\newcommand{\A}{{\mathbf{A}}}
\newcommand{\DTb}{\mathop{\mathbf{DT}}\nolimits}
\newcommand{\pt}{\mathbf{p}}
\begin{document}

\title[DT invariants of abelian threefolds]{Donaldson--Thomas invariants of abelian threefolds and Bridgeland stability conditions}
\date{\today}

\author{Georg Oberdieck}
\address{Mathematisches Institut, Universit\"at Bonn}
\email{georgo@math.uni-bonn.de}

\author{Dulip Piyaratne}
\address{Department of Mathematics, University of Arizona }
\email{piyaratne@math.arizona.edu}

\author{Yukinobu Toda}
\address{Kavli Institute for the Physics and Mathematics of the Universe,
University of Tokyo (WPI)}
\email{yukinobu.toda@ipmu.jp}

\begin{abstract}
We study the reduced Donaldson--Thomas theory of abelian threefolds using Bridgeland stability conditions.
The main result is the invariance of the reduced 
Donaldson--Thomas invariants under
all derived autoequivalences, up to explicitly given wall-crossing terms.
We also present a numerical criterion for the absence of walls in terms of a discriminant function.
For principally polarized abelian threefolds of Picard rank one, the wall-crossing contributions are discussed in detail.
The discussion yield evidence for a conjectural formula for curve counting 
invariants by Bryan, Pandharipande, Yin, and the first author.

For the proof we strengthen several known results on Bridgeland stability conditions of abelian threefolds.
We show that certain previously constructed stability conditions satisfy the full support property.
In particular, the stability manifold is non-empty. 
We also prove the existence of a Gieseker chamber and determine all wall-crossing contributions.
A definition of reduced generalized Donaldson--Thomas invariants for arbitrary Calabi--Yau threefolds with abelian actions is given. 

\end{abstract}
\maketitle
\baselineskip=14pt

\setcounter{tocdepth}{1} 
\tableofcontents
\section{Introduction}

\subsection{Overview} \label{subsection_intro_overview}
Let $X$ be a smooth projective Calabi--Yau threefold with an ample divisor $H$,
and let $\Gamma$ be the image of the Chern character map
\begin{align*}
\ch \colon K(X) \twoheadrightarrow \Gamma \subset H^{2\ast}(X, \mathbb{Q}). 
\end{align*}
For every $v \in \Gamma$ consider the Donaldson--Thomas invariant
\[ \DT_H(v) \in \BQ. \]
If the moduli space $M_H(v)$ of $H$-Gieseker  semistable sheaves of Chern character $v$
consists of stable sheaves,
then $\DT_H(v)$ is defined by 
\begin{equation}
\DT_H(v) \cneq \int_{M_H(v)} \nu \ \mathrm{d}e \cneq \sum_{k \in \BZ} k \cdot e\left( \nu^{-1}(k) \right), \label{def_DT}
\end{equation}
where $\nu : M_H(v) \to \BZ$ is the Behrend function \cite{Beh} and $e( - )$ is the topological Euler characteristic.
In general, $\DT_H(v)$ is defined via the motivic Hall algebra \cite{JS}. 
The invariants $\DT_H(v)$ enumerate (with weights) Gieseker semistable sheaves on the threefold.

An interesting question is the following:
\emph{Given a derived autoequivalence $g \in \Aut D^b(X)$,
how are the Donaldson--Thomas invariants $\DT_H(v)$ and $\DT_H(g_{\ast} v)$ related?}
For the dualizing functor and curve counting Donaldson--Thomas invariants
such a relation was established
in \cite{T08, BrH, T16}
and proved the rationality and functional equation part of the GW/DT correspondence conjecture \cite{MNOP}.
Another instance is \cite{OS2} where an autoequivalence on elliptically fibered Calabi--Yau threefolds
yielded modular properties of generating series of Donaldson--Thomas invariants.

In this paper we answer the above question in full generality
for the reduced\footnote{For an abelian threefold $A$ with dual $\widehat{A} = \Pic^0(A)$,
the group $A \times \widehat{A}$ acts on the moduli spaces $M_H(v)$ and forces the
Donaldson--Thomas invariants \eqref{def_DT} to vanish. The theory is only interesting after reduction, see Section~\ref{Subsection_Reduced_DT_intro}.
}
Donaldson--Thomas invariants of abelian threefolds.
The results are strong constraints on these invariants, and
may be leveraged later for their explicit computation.
Our approach is based on Bridgeland stability conditions \cite{Brs1} and wall-crossing techniques.
In particular, this paper is the first instance that
Bridgeland stability conditions of a compact Calabi--Yau threefold
have been applied to 
Donaldson--Thomas theory in this context
(earlier work either
used weak/limit stability conditions, 
e.g. \cite{T08, BrH, T16, OS2} mentioned 
above, or 
considered Bridgeland stability conditions for 
local surfaces, e.g. \cite{T12, MT} for local K3 surfaces).

Abelian threefolds are `simple' enough among all Calabi--Yau threefolds
such that the technical difficulties regarding Bridgeland stability conditions can be overcome.
Yet they are also `complicated' enough for interesting phenomena to appear.
We hope this intermediate case provides 
insights into the application of Bridgeland stability conditions
to the Donaldson--Thomas theory of compact Calabi--Yau threefolds in general.

\subsection{Reduced Donaldson--Thomas invariants} \label{Subsection_Reduced_DT_intro}
Let $A$ be a non-singular abelian threefold over $\mathbb{C}$. With $H$ and $\Gamma$ as before, let
$M_{H}(v)$ be the moduli space of $H$-Gieseker
semistable sheaves on $A$ of Chern character $v \in \Gamma$.
The product $A \times \widehat{A}$ acts on $M_H(v)$ by
\begin{align*}
(a, L) \cdot E=T_a^{\ast} E \otimes L
\end{align*}
where $T_a \colon A \to A$ is the translation $x \mapsto x+a$.

We define \emph{reduced} Donaldson--Thomas invariants
$\DT_H(v) \in \BQ$ which count $A \times \widehat{A}$-orbits of Gieseker semistable sheaves as follows.\footnote{
We have chosen here the same notation for the reduced invariants as for the (standard) Donaldson--Thomas invariants defined in \eqref{def_DT}.
However, from now on all our invariants are reduced, so this choice should not create confusion.}

If the $A \times \widehat{A}$ action has finite stabilizers and $M_H(v)$ consists of $H$-Gieseker stable sheaves,
following Gulbrandsen \cite{Gul} we define reduced Donaldson--Thomas invariants by integrating over the stack quotient:
\begin{align*}
\DT_H(v) \cneq \int_{[\overline{M}_H(v) / (A \times \widehat{A})]} \nu \ \mathrm{d}e
\end{align*}
where $\nu \colon [\overline{M}_H(v) / (A \times \widehat{A})] \to \BZ$ is the Behrend function of the stack
and the topological Euler characteristic is taken in the orbifold sense.
For arbitrary $v \in \Gamma$ the reduced invariant $\DT_H(v)$
is defined via the $A \times \widehat{A}$-equivariant motivic Hall algebra, see Section~\ref{section:reducedDT}.

\subsection{Autoequivalences}
A sheaf $E \in \Coh(A)$ is called \emph{semihomogeneous} if its stabilizer group under the $A \times \widehat{A}$ action
\begin{align}\label{def:Phi(E)}
\Xi(E)=\{(a, L) \in A \times \widehat{A} \, : \,
T_a^{\ast}E \otimes L \cong E\}
\end{align}
is of dimension $3$.
Consider the subset of semihomogeneous classes
\begin{align}\label{def:C0}
\cC \cneq
\{ \pm \ch(E) : E \mbox{ is a semihomogeneous sheaf }\} \subset \Gamma. 
\end{align}
Let also $\chi : \Gamma \times \Gamma \to \BZ$ be the Euler pairing on $\Gamma$.

We prove the following invariance property in Section~\ref{subsec:independence}.

\begin{thm}\label{intro:first}
Suppose $v \in \Gamma$ can not be written 
as $\gamma_1+\gamma_2$
for any $\gamma_i \in \cC$
with $\chi(\gamma_1, \gamma_2) \neq 0$.
Then $\DT_H(v)$ is independent of $H$ and 
\begin{align*}
\DT_{H}(g_{\ast}v)=\DT_H(v). 
\end{align*}
for every autoequivalence $g \in \Aut(D^b(A))$.
\end{thm}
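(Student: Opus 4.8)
The plan is to reduce the statement to a wall-crossing analysis in the space of Bridgeland stability conditions on $D^b(A)$, using the fact (to be established in the body of the paper) that this space is non-empty and that Donaldson--Thomas type invariants defined via the equivariant motivic Hall algebra transform in a controlled way under changes of stability condition. First I would recall that any autoequivalence $g \in \Aut(D^b(A))$ acts on the space of stability conditions, so that if $\sigma$ is a Bridgeland stability condition with the support property then so is $g \cdot \sigma$, and moreover the reduced DT invariant counting $g\cdot\sigma$-semistable objects of class $g_\ast v$ equals the reduced invariant counting $\sigma$-semistable objects of class $v$ — this is a formal consequence of $g$ being an equivalence of categories intertwining the two hearts. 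Thus the content is entirely to compare $\DT_H(v)$, defined via Gieseker stability, with the Bridgeland-stable count at $\sigma$, and to show this comparison is insensitive to which chamber $\sigma$ lies in, under the arithmetic hypothesis on $v$.

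The key steps, in order, are: (1) invoke the existence of a Gieseker chamber (proved later in the paper) — a region in the stability manifold where Bridgeland semistability of objects of class $v$ coincides with $H$-Gieseker semistability of sheaves — so that $\DT_H(v)$ equals the Bridgeland invariant $\DT_\sigma(v)$ for $\sigma$ in that chamber; (2) connect the Gieseker chamber for $H$ and the (image under $g^{-1}$ of the) Gieseker chamber for $H$ by a path in the stability manifold, and decompose this path into finitely many wall crossings; (3) at each wall, apply the wall-crossing formula for the equivariant motivic Hall algebra invariants: the jump in $\DT$ is a sum of terms indexed by decompositions $v = \gamma_1 + \gamma_2$ into classes of semistable factors, weighted by the Euler pairing $\chi(\gamma_1,\gamma_2)$ (the Joyce--Song / Kontsevich--Soibelman type formula, in its reduced, $A\times\widehat A$-equivariant incarnation); (4) observe that the only objects that can become strictly semistable on a wall and destabilize a class in $\cC$-range are built from semihomogeneous sheaves — more precisely, that a destabilizing sub/quotient pair contributing a nonzero term forces $\gamma_1,\gamma_2 \in \cC$ — so that under the hypothesis that $v$ admits no such decomposition with $\chi(\gamma_1,\gamma_2)\neq 0$, every wall-crossing term vanishes and $\DT$ is constant along the path. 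Combining (1)–(4) gives both the $H$-independence and the autoequivalence-invariance.

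The main obstacle I expect is step (4): controlling which classes can appear as Jordan--Hölder factors of a Bridgeland-semistable object of class $v$ on a wall, and showing that the potentially-contributing ones all lie in $\cC$. This requires understanding the structure of $\sigma$-stable objects of small `size' on abelian threefolds — one wants that a stable factor $\gamma_i$ with $\chi(\gamma_1,\gamma_2)\neq 0$ must have a moduli space with a transitive-enough $A\times\widehat A$-action, i.e. be represented (up to shift and twist) by a semihomogeneous sheaf. This is where the special geometry of abelian threefolds (Fourier--Mukai theory, the classification of simple semihomogeneous sheaves after Mukai, and the $A\times\widehat A$-equivariance that forces vanishing outside $\cC$) enters essentially; a secondary technical point is verifying that the path connecting the two Gieseker chambers can be chosen to cross only finitely many walls and to avoid higher-codimension strata, which should follow from local-finiteness of the wall-and-chamber structure together with the support property.
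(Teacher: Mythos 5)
Your proposal follows essentially the same route as the paper's proof: existence of a Gieseker chamber, the formal identity $\DTb_{\sigma}(v)=\DTb_{g_{\ast}\sigma}(g_{\ast}v)$ combined with the fact that autoequivalences preserve the distinguished component $\Stab^{\circ}(A)$, and vanishing of the wall-crossing contributions because a class whose reduced count is supported on a three-dimensional stabilizer must lie in $\cC$ (the paper argues locally, showing $\DTb_{\sigma}(v)$ is constant on a neighborhood of each $\sigma$, rather than tracking an explicit path). The one point you elide is that the Hall-algebra wall-crossing identity a priori contains terms with three or more factors; in the paper these are killed not by the hypothesis on $v$ but by the ring structure of $\BQ[\A]$ (a product of three $\epsilon_B$'s with each $B$ of codimension at least $3$ in the six-dimensional $\A$ vanishes), which is the same equivariance mechanism you correctly invoke for the quadratic terms.
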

\vspace{5pt}

If $v \in \Gamma$ does not satisfy the assumption of Theorem~\ref{intro:first},
then $\DT_H(v)$ and $\DT_{H}(g_{\ast}v)$ are related by a wall-crossing formula.
The wall-crossing formula depends only on the derived equivalence $g$ and the possible ways in which $v$ can be written as a sum of two semihomogeneous classes.
The wall-crossing contributions are determined in Lemma~\ref{cor:C}.
In particular, the precise wall-crossing formula can be worked out explicitly in any concrete case.
An example of non-trivial wall-crossing is discussed in Theorem~\ref{intro:main}.

%

The assumption of Theorem~\ref{intro:first} is often cumbersome to check in practice.
We state a numerical criterion in its place. Consider the discriminant
\[ \Delta : H^{2\ast}(A,\BQ) \to \BQ, \]
that is the unique homogeneous degree $4$ polynomial function
which is invariant under the spin group and
is normalized by $\Delta(1 + \pt) = -1$.
Here $\pt \in H^6(A,\BZ)$ is the class of a point.
We refer to Appendix~\ref{Appendix_Spin_representations} for details and an explicit formula in case $A = E_1 \times E_2 \times E_3$.
We have the following. 

\begin{prop} 
\label{intro:prop_Delta}
Let $v \in \Gamma$. If $\Delta(v) \geq 0$, then $v$ satisfies the assumption of Theorem~\ref{intro:first}.
\end{prop}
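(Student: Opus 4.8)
The plan is to prove the contrapositive: if $v$ violates the assumption of Theorem~\ref{intro:first}, then $\Delta(v) < 0$. So suppose $v = \gamma_1 + \gamma_2$ with $\gamma_i \in \cC$ and $\chi(\gamma_1, \gamma_2) \neq 0$. The first step is to recall (or establish) the structure of semihomogeneous classes: by the theory of semihomogeneous sheaves (Mukai, Orlov), a semihomogeneous sheaf $E$ on an abelian threefold $A$ has a Chern character whose "reduced" form is, up to the action of the automorphism group of the derived category on the lattice, that of a line bundle twist — equivalently, $\ch(E)/\rk$ (when the rank is nonzero) lies on the orbit of $e^{\beta}$ for some $\beta \in \NS(A)_\BQ$, suitably interpreted, and one has the key numerical fact that $\Delta(\gamma) = 0$ for every $\gamma \in \cC$. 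This should be either cited from the body of the paper or deduced from the defining property that the discriminant is the spin-invariant degree-$4$ form normalized by $\Delta(1+\pt)=-1$, together with the fact that semihomogeneous classes are exactly (a connected component of) the null locus of $\Delta$ — this is the content one extracts from Appendix~\ref{Appendix_Spin_representations}.

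\textbf{The core computation.} Given $\Delta(\gamma_1) = \Delta(\gamma_2) = 0$, I would expand $\Delta(v) = \Delta(\gamma_1 + \gamma_2)$. Since $\Delta$ is a homogeneous degree-$4$ polynomial, polarization gives
\[
\Delta(\gamma_1+\gamma_2) = \Delta(\gamma_1) + \Delta(\gamma_2) + \text{(cross terms)},
\]
and the cross terms are governed by the associated symmetric multilinear form $B(\cdot,\cdot,\cdot,\cdot)$ obtained by full polarization of $\Delta$. Using $\Delta(\gamma_i)=0$, what survives is a combination of $B(\gamma_1,\gamma_1,\gamma_1,\gamma_2)$, $B(\gamma_1,\gamma_1,\gamma_2,\gamma_2)$, and $B(\gamma_1,\gamma_2,\gamma_2,\gamma_2)$. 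The crucial identity to establish is that this entire expression equals a negative multiple of $\chi(\gamma_1,\gamma_2)^2$ — for instance $\Delta(\gamma_1+\gamma_2) = -c\,\chi(\gamma_1,\gamma_2)^2$ for some positive constant $c$ (likely $c=1$ with the normalization $\Delta(1+\pt)=-1$, since $\chi$ restricted to the point and structure sheaf classes is $\pm 1$). This is the natural analogue of the well-known fact for K3 surfaces that $v^2 = \langle v,v\rangle$ behaves additively up to the Mukai pairing; here the Euler pairing $\chi$ plays the role of the Mukai pairing, and $\Delta$ plays the role of $-v^2/2$. To verify the identity I would reduce to the split case $A = E_1 \times E_2 \times E_3$ using the explicit formula for $\Delta$ from the Appendix, write $\gamma_1, \gamma_2$ in coordinates subject to the constraints $\Delta(\gamma_i)=0$, and check the identity directly; invariance under the spin group (which acts transitively enough on pairs of null vectors) then propagates it to the general case. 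Once the identity $\Delta(v) = -c\,\chi(\gamma_1,\gamma_2)^2$ with $c>0$ is in hand, $\chi(\gamma_1,\gamma_2)\neq 0$ forces $\Delta(v)<0$, completing the contrapositive.

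\textbf{Main obstacle.} The main obstacle is the identification of the cross-term polynomial of $\Delta$ on the null cone with (a multiple of) the square of the Euler pairing. This requires two ingredients that must be pinned down carefully: first, that semihomogeneous classes really do exhaust a spin-orbit of the null cone of $\Delta$ (so that one may assume $\gamma_1$, and then $\gamma_2$, are in convenient normal form), and second, a clean relationship between the bilinear form underlying $\Delta$ and the Euler pairing $\chi$ — the Euler pairing is $\chi(\gamma_1,\gamma_2) = -\int_A \gamma_1^\vee \cdot \gamma_2 \cdot \td(A)$ by Riemann--Roch, and since $\td(A)=1$ for an abelian threefold this is simply $-\int_A \gamma_1^\vee \cdot \gamma_2$, a skew-ish pairing determined by the cup product. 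Matching this against the quartic $\Delta$ is essentially a representation-theoretic statement about $\mathrm{Spin}$ acting on the half-spinor representation $H^{2\ast}(A,\BQ)$, and is most transparently verified in the split coordinates of the Appendix. I expect the bookkeeping in those coordinates — keeping track of signs and the normalization constant — to be the only genuinely delicate part; the conceptual structure (contrapositive + polarization + reduce to split case) is routine.
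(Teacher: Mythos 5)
Your proposal is correct and follows essentially the same route as the paper: the paper reduces both $\gamma_i$ to the form $r_i e^{\omega_i}$ by a single derived autoequivalence (which preserves $\cC$, $\Delta$ and $\chi$), notes $\Delta(e^{\omega})=0$, and then invokes the identity $\Delta(r_1e^{\omega_1}+r_2e^{\omega_2})=-\chi(\gamma_1,\gamma_2)^2$ (so your constant is indeed $c=1$), which is Theorem~\ref{Thm_Discriminant}(c) and is verified there by direct computation in coordinates, exactly as you propose. The only cosmetic difference is that the paper phrases the statement as the contrapositive split into the two cases $\Delta(v)>0$ and $\Delta(v)=0$ (Lemma~\ref{Lemma_Discrri}) rather than as a single inequality.
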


Proposition~\ref{intro:prop_Delta} is in perfect agreement
with physical arguments by Sen on the behaviour of the partition function of $1/8$ BPS dyones under change of stability:
wall-crossing contributions can appear only for classes with negative discriminant, see \cite[Section~4]{Sen}.

\subsection{Principally polarized abelian threefolds of Picard rank one}
Let $(A, H)$ be a principally polarized abelian threefold with $\rho(A)=1$. 
By Mukai~\cite{Mu1} the group
$\SL_2(\mathbb{Z})$ acts on $D^b(A)$ (modulo shifts) by 
\begin{align}\label{SL2}
T=
\left( \begin{array}{cc}
1 & 1 \\
0 & 1 
\end{array}  \right)
\mapsto \, ( - ) \otimes \oO_X(H),
\quad 
S=
\left( \begin{array}{cc}
0 & -1 \\
1 & 0 
\end{array}  \right)
\mapsto \Phi_{\pP},
\end{align}
where $\Phi_{\pP}$ is the Fourier-Mukai transform with kernel the normalized Poincar\'e line bundle on $A \times A$. 
Moreover,
any autoequivalence acts
by an element in $\SL_2(\BZ)$ (moduli shifts, translation and twisting by degree $0$ line bundles).

The image of the Chern character map is
\begin{align}\label{Gamma}
\Gamma=\mathbb{Z} \oplus \mathbb{Z}[H] \oplus \mathbb{Z}[H^2/2] \oplus 
\mathbb{Z}[H^3/6]. 
\end{align}
Since the only semihomogeneous sheaves on $A$
are vector bundles\footnote{If $E$ is a semihomogeneous vector bundle, then 
$\ch(E) = r(E) \exp(c_1(E)/r(E))$
where $r(E)$ is the rank of $E$, see \cite{Mu4}.} or have $0$-dimensional support
the 
subset of semihomogeneous classes 
is
\begin{align}\notag
\cC =\{r(p^3, p^2q, pq^2, q^3) : 
(p, q, r) \in \mathbb{Z}^3, r\neq 0, \mathrm{gcd}(p, q)=1\}.  
\end{align}
For any
$v=r(p^3, p^2q, pq^2, q^3) \in \cC$ define its slope by
\begin{align}\label{intro:theta}
\Theta(v) =\frac{q}{p} \in \mathbb{Q} \cup \{\infty\}
\end{align}
with the convention $\Theta(v)=\infty$ if $p=0$.
If $\gamma_1, \gamma_2 \in \Gamma$, 
then $\chi(\gamma_1, \gamma_2) \neq 0$ if and only if 
$\Theta(\gamma_1) \neq \Theta(\gamma_2)$. 
We have the following result. 


\begin{thm}\label{intro:main}
Suppose $v=\gamma_1+\gamma_2$ for some $\gamma_i \in \cC$
with $\Theta(\gamma_1)<\Theta(\gamma_2)$,
and let
\[ g = \begin{pmatrix} a&b \\ c&d \end{pmatrix} \in \mathrm{SL}_2(\BZ). \]
\begin{enumerate}
\item[(i)]
If $-\frac{d}{c} \notin [\Theta(\gamma_1), \Theta(\gamma_2))$
or $c=0$ then 
\[ \DT_H(g_{\ast}v)=\DT_H(v). \]
\item[(ii)]
If $-\frac{d}{c} \in [\Theta(\gamma_1), \Theta(\gamma_2))$
then
\[
\DT_H(v)-\DT_H(g_{\ast}v)
=
(-1)^{r_1 r_2 \alpha}
r_1 r_2 \alpha^9
\bigg( \sum_{\substack{ k_1 \ge 1 \\ k_1|r_1}} \frac{1}{k_1^2}
\bigg) \cdot
\bigg( \sum_{\substack{k_2 \ge 1 \\ k_2|r_2}} \frac{1}{k_2^2}
\bigg)
\]
where $\gamma_i=r_i(p_i^3, p_i^2 q_i, p_i q_i^2, q_i^3)$
and $\alpha = p_1 q_2-p_2 q_1$.
\end{enumerate}
\end{thm}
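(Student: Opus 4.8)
The plan is to deduce Theorem~\ref{intro:main} from Theorem~\ref{intro:first} and the explicit wall-crossing contributions of Lemma~\ref{cor:C}, using the $\SL_2(\BZ)$-action on stability conditions. First I would set up the correspondence between the $\SL_2(\BZ)$-action \eqref{SL2} on $D^b(A)$ and the action on the space of Bridgeland stability conditions constructed (and shown to satisfy the support property) earlier in the paper: each $g \in \SL_2(\BZ)$ carries a Gieseker-type chamber to another chamber, and the difference $\DT_H(v) - \DT_H(g_\ast v)$ equals the sum of wall-crossing jumps collected when moving along a path in $\Stab(A)$ from the Gieseker chamber to its $g$-translate. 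Because any class in $\cC$ has the special form $r(p^3,p^2q,pq^2,q^3) = r\exp(\text{something})$-type Chern character of a semihomogeneous bundle, the only walls that $v = \gamma_1 + \gamma_2$ can cross are those where $\gamma_1$ and $\gamma_2$ become simultaneously semistable of the same phase; by the stated fact that $\chi(\gamma_1,\gamma_2)\neq 0$ iff $\Theta(\gamma_1)\neq\Theta(\gamma_2)$, these walls are governed entirely by the slopes $\Theta(\gamma_i) = q_i/p_i$.

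Next I would translate the geometric condition ``the path from the Gieseker chamber to the $g$-Gieseker chamber crosses the wall between $\gamma_1$ and $\gamma_2$'' into the arithmetic condition on $g = \begin{pmatrix} a&b\\c&d\end{pmatrix}$. The key point is that $g$ acts on slopes by the Möbius transformation $\theta \mapsto \frac{a\theta + b}{c\theta + d}$ (matching the action on $\Gamma$ which sends the slope-$\theta$ ray to the slope-$(a\theta+b)/(c\theta+d)$ ray), and the Gieseker chamber corresponds to the ``large volume'' region near $\theta = \infty$. The element $g$ moves $\infty$ to $a/c$, and the wall at slope $\theta_0$ is crossed precisely when $\theta_0$ lies ``between'' $\infty$ and $a/c$ in the relevant sense — equivalently, when $-d/c$ (the pole of $g^{-1}$, i.e. the slope sent to $\infty$ by $g$) lies in the half-open interval $[\Theta(\gamma_1), \Theta(\gamma_2))$. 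When $c = 0$, $g$ is (up to sign) a power of $T$, i.e. a line-bundle twist, which preserves the Gieseker chamber, so no wall is crossed; this gives case (i). The half-openness of the interval is the usual artifact of which side of the wall the Gieseker chamber sits on, and pinning down $[\cdot,\cdot)$ versus $(\cdot,\cdot]$ will require a careful but routine check of orientations.

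For case (ii), once we know exactly one wall (the $\gamma_1,\gamma_2$ wall) is crossed, the jump is read off from Lemma~\ref{cor:C}. On that wall the only semistable objects contributing are (twists of) semihomogeneous bundles with Chern characters proportional to $\gamma_1$ and $\gamma_2$; the wall-crossing term is a product of a sign, a power of the ``primitivity'' quantity $\alpha = p_1 q_2 - p_2 q_1 = |\chi|^{1/?}$-type pairing, and divisor sums over $r_1$ and $r_2$ coming from the Euler characteristics (Behrend-weighted, in the orbifold sense) of the relevant moduli of semihomogeneous sheaves — these moduli are themselves abelian varieties, and the factors $\sum_{k_i | r_i} k_i^{-2}$ arise from summing over the sub-lattices indexing non-primitive semihomogeneous bundles of total rank $r_i$. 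The exponent $9 = 3 + 3 + 3$ and the sign $(-1)^{r_1 r_2 \alpha}$ should both drop out of the general formula in Lemma~\ref{cor:C} once $\chi(\gamma_1,\gamma_2)$ is computed in terms of $\alpha$: a direct computation using $\ch(\gamma_i) = r_i(p_i^3,p_i^2q_i,p_iq_i^2,q_i^3)$, the Euler pairing on the abelian threefold, and $\td(A) = 1$ should give $\chi(\gamma_1,\gamma_2) = \pm r_1 r_2 \alpha^3$ or similar, feeding into the universal wall-crossing expression.

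\textbf{Main obstacle.} The hard part will be case (ii): matching the abstract Hall-algebra wall-crossing formula of Lemma~\ref{cor:C} to the closed form on the right-hand side. This requires (a) identifying \emph{all} objects semistable on the wall with Chern character $v$ — showing there are no ``extra'' Jordan--Hölder constituents beyond multiples of $\gamma_1$ and $\gamma_2$, which uses that $\alpha$-primitivity forces $\gcd$ conditions — and (b) correctly computing the reduced (orbifold, $A\times\widehat A$-equivariant) Behrend-weighted Euler characteristics of the moduli stacks of semihomogeneous sheaves, where the divisor sums and the precise power of $\alpha$ emerge. Getting the combinatorics of the divisor sums and the sign exactly right, rather than up to an undetermined constant, is where the real work lies; the slope bookkeeping in cases (i)–(ii) is comparatively mechanical.
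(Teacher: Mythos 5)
Your overall strategy is the one the paper uses: transport the problem to the stability manifold via the $\SL_2(\BZ)$-action (which acts on the slice $\BH\hookrightarrow\Stab^{\circ}(A)$ by M\"obius transformations, so that the Gieseker chamber at $\beta\ll 0$ is sent to a region limiting to $-d/c$ on the real axis), observe that by the uniqueness of the decomposition $v=\gamma_1+\gamma_2$ there is exactly one potential wall $\wW_v$, determine when the path crosses it, and read off the jump from the Hall-algebra identity together with Lemma~\ref{cor:C}. Case (i) and the interval bookkeeping are indeed routine once one has the explicit equations of $\BH\cap\wW_v$ (circles through $\Theta(\gamma_1),\Theta(\gamma_2)$ on the real axis), and you correctly note that $c=0$ is a line-bundle twist. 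Note, though, that the uniqueness of the decomposition (Lemma~\ref{lem:v12}) is a small but necessary separate lemma, not just a Jordan--H\"older statement on the wall.

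There is, however, a concrete gap in your case (ii): you locate only $\alpha^3$ of the $\alpha^9$. The factor $\chi(\gamma_1,\gamma_2)=r_1r_2(p_1q_2-p_2q_1)^3$ from the Poisson bracket does contribute $\alpha^3$, as you guess, but the remaining $\alpha^6$ does \emph{not} come from the Euler characteristics of the moduli of semihomogeneous sheaves (those contribute exactly the divisor sums $\sum_{k_i\mid r_i}k_i^{-2}$ and nothing more, by Lemma~\ref{cor:C}). It comes from the ring structure of $\BQ[\A]$: each $\DTb_{\sigma}(\gamma_i)$ is supported on $\epsilon_{B_i}$ for a three-dimensional stabilizer subtorus $B_i\subset\A=A\times\widehat A$, and extracting the coefficient of $\epsilon_{(0,0)}$ from the product requires $\epsilon_{B_1}\cdot\epsilon_{B_2}=\lvert B_1\cap B_2\rvert\,\epsilon_{(0,0)}$ for transverse $B_1,B_2$. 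By Mukai's description of $\Xi(E)$ for semihomogeneous sheaves one has $\lvert B_1\cap B_2\rvert=\chi(\overline\gamma_1,\overline\gamma_2)^2=(p_1q_2-p_2q_1)^6$, whence $9=3+6$ rather than your $3+3+3$. Without this ingredient your framework, followed literally, would output $\alpha^3$ in place of $\alpha^9$, so you need to add the computation of the intersection of the two stabilizer subtori to make case (ii) close.
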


\subsection{Curve counting} As before let $(A, H)$ be a  
principally polarized abelian threefold of Picard
rank $\rho(A)=1$. 
For any non-zero $(\beta, n) \in \mathbb{Z}^2$ define
\begin{align*}
\DT_{\beta, n}=\DT_H(1, 0, -\beta, -n). 
\end{align*}
The invariant $\DT_{\beta, n}$ enumerates
algebraic curves $C \subset A$ with $[C]=\beta H^2/2$ and $\chi(\oO_C)=n$ up to translation.

A conjecture for $\DT_{\beta, n}$ was proposed in \cite[Section~7.6]{BOPY} as follows.
Define the theta functions
\[
\theta_2(q) = \sum_{n \in \BZ} q^{(n+\frac{1}{2})^2}, \ \
\theta_3(q) = \sum_{n \in \BZ} q^{n^2}.
\]
Let $\aaa(n) \in \BZ$ be defined by the Fourier expansion
\begin{multline*}
\sum_{n} \aaa(n) q^n
=
\frac{-16}{\theta_2(q)^4 \theta_3(q)}
=
- q^{-1} + 2 - 8q^{3} + 12q^{4} - 39q^{7} + 56q^{8} + \ldots \ .
\end{multline*}
Let also 
$\mathsf{n}(\beta, k)= \sum_{\delta} \delta^2$
where $\delta$ runs over all positive divisors
of $k, \beta, \beta^2/k$ and $\beta^3/k^2$ if these numbers are integers,
and let $\mathsf{n}(\beta,k) = 0$ otherwise.

If $\beta < 0$, or $\beta = 0$ and $n < 0$ the invariant
$\DT_{\beta, n}$ vanishes since the moduli space is empty.
In all other cases we have the following. 

\begin{conj}[\cite{BOPY}] \label{intro:conj} \label{conj:DT}
Assume $\beta > 0$, or $\beta =0$ and $n > 0$. Then 
\begin{equation} \label{def:C}
\DT_{\beta, n}
=(-1)^{n}\sum_{\substack{k \ge 1 \\ k|n}}
\frac{1}{k} \mathsf{n}(\beta, k)\aaa \left(\frac{4\beta^3-n^2}{k^2}  \right)
\end{equation}
\end{conj}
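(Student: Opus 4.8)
A complete proof is beyond the results established so far; what they provide is a wall-crossing bootstrap, so the plan is to (i) use the $\SL_2(\BZ)$-action to reduce \eqref{def:C} to a short list of base cases, (ii) check that the conjectural right-hand side obeys the same transformation law, and (iii) establish the base cases by independent means. As a preliminary step, since $\rho(A)=1$ and $H$ is a principal polarization, Gulbrandsen's deformation invariance shows $\DT_{\beta,n}=\DT_H(1,0,-\beta,-n)$ depends only on the pair $(\beta,n)$, so one may choose a generic such $(A,H)$.

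\textbf{Reduction.} For $v=(1,0,-\beta,-n)$ one first decides, using the classification $\cC=\{r(p^3,p^2q,pq^2,q^3):(p,q,r)\in\BZ^3,\ r\neq0,\ \gcd(p,q)=1\}$, whether $v=\gamma_1+\gamma_2$ for some $\gamma_i\in\cC$ with $\chi(\gamma_1,\gamma_2)\neq0$; the vanishing of the first two Chern character components pins down the admissible discrete data $(r_i,p_i,q_i)$, and one records the slopes $\Theta(\gamma_i)=q_i/p_i$ and the integer $\alpha=p_1q_2-p_2q_1$. If $v$ admits no such decomposition, Theorem~\ref{intro:first} gives $\DT_H(g_{\ast}v)=\DT_H(v)$ for every $g\in\SL_2(\BZ)$; if it does, Theorem~\ref{intro:main} gives the explicit jump. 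In either case one then transports $v$ by a suitable element of $\SL_2(\BZ)$ to a class whose invariant is accessible --- for instance one whose moduli space of semistable objects is empty (so $\DT_H=0$), a semihomogeneous class, or a sum of two such --- and sums the wall-crossing terms of Theorem~\ref{intro:main} along the chosen path, expressing $\DT_{\beta,n}$ as an explicit combinatorial quantity.

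\textbf{Matching the formula.} In parallel, regard the right-hand side of \eqref{def:C} as a function $F(\beta,n)$ and show it obeys exactly the law of Theorems~\ref{intro:first} and~\ref{intro:main}: $F(g_{\ast}v)=F(v)$ whenever $c=0$ or $-d/c\notin[\Theta(\gamma_1),\Theta(\gamma_2))$, and $F(v)-F(g_{\ast}v)=(-1)^{r_1r_2\alpha}r_1r_2\alpha^9\big(\sum_{k_1\mid r_1}k_1^{-2}\big)\big(\sum_{k_2\mid r_2}k_2^{-2}\big)$ otherwise. This is a modular-forms computation: $-16/(\theta_2^4\theta_3)$ is a meromorphic modular form of weight $-5/2$ on $\Gamma_0(4)$ whose Fourier expansion at the cusp $\infty$ has a single polar term, with coefficient $\aaa(-1)=-1$, and the jump of $F$ across a wall is governed by this polar coefficient together with the multiplicativity of $\mathsf{n}(\beta,k)$ and the arithmetic identity expressing $4\beta^3-n^2$, for a decomposable $v$, in terms of $\alpha$ and $(r_i,p_i,q_i)$. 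Once this is in place, the conjectural formula is compatible with the full $\SL_2(\BZ)$-action, and it suffices to verify \eqref{def:C} for representatives where $\DT_H$ is computed directly.

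\textbf{Base cases and the main obstacle.} The base cases that survive the reduction are the degree-zero invariants $\DT_{0,n}=\DT_H(1,0,0,-n)$ --- the reduced invariants of the Hilbert scheme of points on $A$ in the sense of Gulbrandsen, accessible through the equivariant motivic Hall algebra of Section~\ref{section:reducedDT} --- together with the lowest-degree honest curve counts (such as the genus-two case). This last step is the real obstacle: the methods here supply a complete and internally consistent wall-crossing structure for $\DT_{\beta,n}$, hence strong evidence, but a full proof must anchor the recursion with at least one nontrivial value computed by genuinely different techniques, and such computations are hard for compact Calabi--Yau threefolds. A secondary difficulty is the modular-forms identity in the matching step: extracting the $\aaa(-1)$-contribution of $-16/(\theta_2^4\theta_3)$ and reassembling it, with the factors $\mathsf{n}(\beta,k)$ and the divisibility pattern of $4\beta^3-n^2$, into the closed form $(-1)^{r_1r_2\alpha}r_1r_2\alpha^9(\cdots)(\cdots)$ of Theorem~\ref{intro:main}(ii) is elementary but delicate, and it is here that the weight $-5/2$ and level $4$ enter decisively.
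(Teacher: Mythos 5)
The statement you were asked about is labelled a \emph{Conjecture} in the paper, and the paper does not prove it: it only establishes evidence, namely Corollary~\ref{intro:corr_evidence} (invariance of both sides under the autoequivalences that preserve rank-one classes when $4\beta^3-n^2\ge 0$), computer checks that the right-hand side of \eqref{def:C} reproduces the wall-crossing jumps of Theorem~\ref{intro:main} when the discriminant is negative, and the anchor $\DT_{0,n}=C_{0,n}$ imported from Shen's computation of the Euler characteristics of generalized Kummer schemes. Your proposal correctly recognizes this and your three-step outline (reduce by $\SL_2(\BZ)$, match the transformation law of the conjectural formula, verify base cases) is essentially the evidence-gathering strategy of Section~\ref{Subsection_ppav_curve_counting}, honestly flagged as falling short of a proof. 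To that extent your assessment is accurate.

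There is, however, a concrete failure in your Reduction step beyond the difficulty you identify with the base cases. You propose to ``transport $v$ by a suitable element of $\SL_2(\BZ)$ to a class whose invariant is accessible,'' but the $\SL_2(\BZ)$-action does not act with anything close to a single orbit on classes of fixed discriminant, and in particular most pairs $(\beta,n)$ cannot be moved to an accessible class at all. Lemma~\ref{lem:gcd} makes this precise: an autoequivalence carrying $(1,0,-\beta,-n)$ to another rank-one class $(1,0,-\beta',-n')$ requires an integer solution of the Thue equation $d^3-3\beta c^2 d-nc^3=1$, which for generic $(\beta,n)$ has only the trivial solution $(c,d)=(0,1)$ (giving $(\beta',n')=(\beta,n)$ and no information). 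Leaving the rank-one locus does not help either, since the targets you name — empty moduli, semihomogeneous classes, or sums of two such — all have discriminant $\le 0$ by Theorem~\ref{Thm_Discriminant}, whereas the interesting curve-counting classes have $\Delta=4\beta^3-n^2$ of either sign and the discriminant is an $\SL_2(\BZ)$-invariant. So the bootstrap does not terminate: the recursion determines $\DT_{\beta,n}$ only on the (sparse) orbits of computable values, e.g. $\DT_{7,37}=\DT_{1,1}$ and the family $\DT_{cdn,\,n+2c^3n^2}$ of Corollary 5.17. This, together with the fact that the compatibility of $F(\beta,n)$ with the wall-crossing formula of Theorem~\ref{intro:main}(ii) is only verified numerically in the paper (and your Conjecture-\ref{quest}-type hypothesis on the location of $-d/c$ relative to $[\Theta(\gamma_1),\Theta(\gamma_2))$ is itself open), is why the statement remains a conjecture rather than a theorem.
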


We have the following corollary of Theorem~\ref{intro:main}.
\begin{cor} \label{intro:corr_evidence}
Let $(\beta, n) \in \mathbb{Z}^2$ be non-zero, and suppose $(c, d)$ is an integer solution of the equation $d^3-3\beta c^2d-nc^3=1$. Define
\begin{align*}
(\beta', n')=(d^2\beta+ncd+\beta^2 c^2, 6\beta^2 d^2 c +6c^2 d \beta n+n+2c^3 n^2-2c^3\beta^3). 
\end{align*}
If $4\beta^3-n^2 \ge 0$, then $\DT_{\beta, n} = \DT_{\beta', n'}$, and moreover $\DT_{\beta,n}$ satisfies Conjecture~\ref{conj:DT} if and only if $\DT_{\beta', n'}$ does.
\end{cor}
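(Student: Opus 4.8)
\textbf{Proof proposal for Corollary~\ref{intro:corr_evidence}.}
The plan is to exhibit the substitution $(\beta,n)\mapsto(\beta',n')$ as the action on $\Gamma$ of a concrete element $g\in\mathrm{SL}_2(\mathbb{Z})$, viewed as an autoequivalence of $D^b(A)$ via \eqref{SL2}, and then apply Proposition~\ref{intro:prop_Delta} and Theorem~\ref{intro:first}. First I would record the $\mathrm{SL}_2(\mathbb{Z})$-representation on $\Gamma$. Using \eqref{Gamma}, identify $\Gamma\otimes\mathbb{Q}$ with the space of binary cubic forms by sending $v=(x_0,x_1,x_2,x_3)$ to $P_v(u,w)=x_3u^3+3x_2u^2w+3x_1uw^2+x_0w^3$; under this identification a semihomogeneous class $r(p^3,p^2q,pq^2,q^3)\in\cC$ corresponds to $r(qu+pw)^3$, so that $\Theta$ becomes the slope of the triple root. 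A direct check, using $\ch(E\otimes\oO_A(H))=\ch(E)\,e^H$ for $T$ and the Chern character of $\Phi_{\pP}$ for $S$, shows that $T,S$ act on $\Gamma\otimes\mathbb{Q}$ through the third symmetric power of the standard representation of $\mathrm{SL}_2(\mathbb{Z})$, i.e.\ by linear substitution of the variables $(u,w)$; in particular $\mathrm{SL}_2(\mathbb{Z})$ preserves the lattice $\Gamma$.

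Given an integer solution $(c,d)$ of $d^3-3\beta c^2d-nc^3=1$, note that $\gcd(c,d)=1$, so $(c,d)$ completes to $g_0=\begin{pmatrix}a_0&b_0\\c&d\end{pmatrix}\in\mathrm{SL}_2(\mathbb{Z})$. A short computation with the representation from the previous paragraph shows that for any $g=\begin{pmatrix}a&b\\c&d\end{pmatrix}$ the rank of $g_\ast(1,0,-\beta,-n)$ equals $d^3-3\beta c^2d-nc^3$, which by hypothesis is $1$; moreover, replacing $g_0$ by $g=T^tg_0$ for a suitable unique $t\in\mathbb{Z}$ arranges in addition that the $[H]$-component of $g_\ast v$ vanishes, since that component is an affine-linear function of $t$ whose leading coefficient is exactly $d^3-3\beta c^2d-nc^3=1$. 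Hence $g_\ast(1,0,-\beta,-n)=(1,0,-\beta',-n')$ for some $\beta',n'\in\mathbb{Z}$, and expanding $P_{g_\ast v}$ in the remaining entries of $g$ — eliminating $a,b$ using $ad-bc=1$ together with the defining equation — reproduces precisely the formulas for $(\beta',n')$ stated in the corollary. By definition $\DT_H(g_\ast v)=\DT_{\beta',n'}$.

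Next I would compute the discriminant. Restricted to $\Gamma\otimes\mathbb{Q}$ the function $\Delta$ is, up to a scalar, the unique degree-four $\mathrm{SL}_2(\mathbb{Z})$-invariant of binary cubics, hence a multiple of the classical discriminant; the normalization $\Delta(1+\pt)=\Delta(u^3+w^3)=-1$ fixes the scalar and yields $\Delta(1,0,-\beta,-n)=4\beta^3-n^2$ (this also matches the explicit formula of Appendix~\ref{Appendix_Spin_representations}). Therefore the hypothesis $4\beta^3-n^2\ge 0$ says exactly that $\Delta(v)\ge 0$, so by Proposition~\ref{intro:prop_Delta} the class $v=(1,0,-\beta,-n)$ satisfies the assumption of Theorem~\ref{intro:first}. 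Since $g$ acts on $D^b(A)$ by an autoequivalence, Theorem~\ref{intro:first} gives
\[
\DT_{\beta,n}=\DT_H(v)=\DT_H(g_\ast v)=\DT_{\beta',n'}.
\]

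For the final assertion, write $F(\beta,n)$ for the right-hand side of \eqref{def:C}. Since $\DT_{\beta,n}=\DT_{\beta',n'}$ by the above, the statement ``$\DT_{\beta,n}$ satisfies Conjecture~\ref{conj:DT} if and only if $\DT_{\beta',n'}$ does'' is equivalent to $F(\beta,n)=F(\beta',n')$. Here $(-1)^{n'}=(-1)^n$ is immediate from the formula for $n'$, and $4\beta'^3-n'^2=\Delta(g_\ast v)=\Delta(v)=4\beta^3-n^2$ by the invariance of $\Delta$; it remains to match the divisor sums $\sum_{k\mid n}\frac1k\mathsf{n}(\beta,k)\aaa(\cdots)$ and $\sum_{k\mid n'}\frac1k\mathsf{n}(\beta',k)\aaa(\cdots)$. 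This last identity is precisely the $\mathrm{SL}_2(\mathbb{Z})$-invariance of the conjectural formula, reflecting the modularity of the generating series underlying Conjecture~\ref{conj:DT}; it is established in \cite{BOPY}, and can also be verified directly by rearranging the divisor sums. I expect this last point — checking that the arithmetic function $F(\beta,n)$ is genuinely invariant under the non-obvious substitution $(\beta,n)\mapsto(\beta',n')$ — to be the main obstacle; the identification in the first two paragraphs is routine linear algebra in $\Sym^3$, and once it is in place the rest of the corollary is an immediate consequence of Theorem~\ref{intro:first} and Proposition~\ref{intro:prop_Delta}.
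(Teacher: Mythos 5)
Your proposal is correct and follows essentially the same route as the paper: Lemma~\ref{lem:gcd} carries out exactly the computation you describe (identifying the substitution $(\beta,n)\mapsto(\beta',n')$ with the $\Sym^3$-action of a matrix $g\in\mathrm{SL}_2(\BZ)$ with bottom row $(c,d)$, solving for $a,b$ via $ad-bc=1$ and the constraint that the $[H]$-component vanishes), after which the paper likewise invokes $\Delta(1,0,-\beta,-n)=4\beta^3-n^2$, Proposition~\ref{intro:prop_Delta} and Theorem~\ref{intro:first}, and dispatches $C_{\beta,n}=C_{\beta',n'}$ as "an elementary check" just as you defer it. Your normalization trick (adjusting by $T^t$ and noting the leading coefficient in $t$ is the cubic $d^3-3\beta c^2d-nc^3=1$) is a clean packaging of the existence direction that the paper leaves implicit, but it is the same underlying argument.
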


In Corollary \ref{intro:corr_evidence} the pairs $(\beta,n)$ and $(\beta',n')$ are related by a derived autoequivalence.
The discriminant specializes to $\Delta = 4 \beta^3 - n^2$. 

Corollary~\ref{intro:corr_evidence} yields evidence for Conjecture~\ref{conj:DT}.
In particular, calculations for primitive curve classes (which are easier) yield informations for imprimitive curve classes.
For example,
for $(\beta,n) = (1,1)$ and $(c,d) = (1,2)$ we obtain the non-trivial relation
\[ \DT_{7, 37} = \DT_{1,1} = 8 \]
where the last equality follows by a direct computation. 

If $\Delta$ is negative, then $\DT_{\beta, n}$ and $\DT_{\beta', n'}$ differ by the wall-crossing contributions of Theorem~\ref{intro:main}.
We have checked in many cases (using a computer program) that the right hand side of Conjecture~\ref{conj:DT} satisfies the same wall-crossing behaviour.
This yields non-trivial evidence for Conjecture~\ref{conj:DT} also in the critical range where the discriminant is negative.
We refer to Section~\ref{Subsection_ppav_curve_counting} for further discussions and a proof of Corollary~\ref{intro:corr_evidence}.

The constraints obtained from Theorem~\ref{intro:first} are strongest for abelian threefolds with higher Picard number,
since these have a large group of derived autoequivalences.
The conjecture in \cite[Section~7.6]{BOPY} applies to curve counting invariants of arbitrary abelian threefolds.
It would be interesting to show the compatibility of the \cite{BOPY} conjecture with Theorem~\ref{intro:first} in general.
Another interesting direction is to use Theorem~\ref{intro:first} to extend the \cite{BOPY} conjecture to arbitrary primitive vectors $v \in \Gamma$.

\subsection{Idea of the proof of Theorem~\ref{intro:first}}
Reduced Donaldson--Thomas invariants are defined by making the motivic Hall algebra and the integration map
equivariant with respect to the action of $\A := A \times \hat{A}$.\footnote{See also \cite{OS} for equivariant Hall algebras and a definition in a simpler case.}
The equivariant integration map (defined in Section~\ref{Subsection_Equivariant_integration_map}) takes values in the ring
\[ \BQ[\A] = \bigoplus_{\substack{B \subset \A \text{ connected} \\ \text{abelian subvarieties}}} \BQ \epsilon_{B}, \]
where the ring structure is defined in terms of the intersection of the subvarieties $B$.
For example, if $Z$ is a variety with $\A$-action and $Z_B \subset Z$ is the stratum of points whose stabilizers contain $B$ with finite index,
then its equivariant integral is the polynomial
\[ \mathbf{e}(Z) = \sum_{\substack{B \subset \A}} e([Z_B / (\A/B)] ) \epsilon_B. \]
Applying the integration map to moduli spaces of semistable sheaves (or certain linear combinations thereof)
yields the Donaldson--Thomas polynomial $\DTb_H(v) \in \BQ[\A]$.
Its coefficient of $\epsilon_0$ is the reduced invariant $\DT_H(v)$.
Similarly 
for every Bridgeland stability condition $\sigma \in \Stab(A)$
there is an invariant $\DTb_{\sigma}(v) \in \BQ[\A]$ counting $\sigma$-semistable objects of Chern character~$v$.
For every autoequivalence $g$ we have formally
\begin{equation} \DTb_{\sigma}(v) = \DTb_{g_{\ast} \sigma}( g_{\ast} v). \label{invariance}\end{equation}


In this paper we prove the following steps:
\begin{enumerate}
\item[(i)] Stability conditions on $A$ constructed by Maciocia--Piyaratne \cite{MaPi1, MaPi2} and Bayer--Macr\'i--Stellari \cite{BMS}
satisfy the full support property.
Hence they define a family in the stability manifold $\Stab(A)$.
In particular $\Stab(A) \neq \varnothing$.
The connected component $\Stab^{\circ}(A) \subset \Stab(A)$ which contains this family is called the main component.
\item[(ii)] $\Stab^{\circ}(A)$ is preserved by all autoequivalences.
\item[(iii)] (Gieseker chamber) For every $H$ and $v$, there exist a $\sigma \in \Stab^{\circ}(A)$ such that $\DTb_{\sigma}(v) = \DTb_H(v)$.
\item[(iv)] If $v$ can not be written as a sum of two semihomogeneous classes, then
all wall-crossing contributions vanish. In particular, $\DT_{\sigma}(v)$ is independent of $\sigma \in \Stab^{\circ}(A)$.
\end{enumerate}
We conclude
\[ 
\pushQED{\qed} 
\DTb_{H}(v) \overset{(iii)}{=} \DTb_{\sigma}(v) \overset{\eqref{invariance}}{=} \DTb_{g_{\ast} \sigma}(g_{\ast} v) \overset{(ii) + (iv)+(iii)}{=} \DTb_{H}(g_{\ast}v) \qedhere
\popQED
\]

\subsection{Plan of the paper}
In Section~\ref{section:reducedDT} we define the integration map for equivariant motivic hall algebras
and reduced Donaldson--Thomas invariants.
In Section~\ref{section:Bridgeland_stab_conditions} we prove the full support property
for certain Bridgeland stability conditions on abelian threefolds
and show the existence of a Gieseker chamber.
In Section~\ref{section:Wallcrossing_on_abelian_threefolds} we
define reduced Donaldson--Thomas invariants for Bridgeland semistable objects,
and discuss their wall-crossing behaviour. This leads to a proof of Theorem~\ref{intro:first}.
In Section~\ref{section:ppav} we specialize to principally polarized abelian threefolds and prove Theorem~\ref{intro:main}.
In Appendix~\ref{Appendix_Spin_representations} we discuss the discriminant function and spin representations.

\subsection{Conventions}
We always work over $\BC$ and all schemes are assumed to be locally of finite type. 
Given an algebraic group $G$ we let $G^{\circ}$ denote the connected component of $G$ which contains the origin. 
For a derived auto-equivence $g \in \Aut D^b(X)$ we let $g_{\ast}$ denote its induced action on cohomology.

\subsection{Acknowledgments}
The paper was started when the first author was visiting the last two authors at
Kavli IPMU  in March 2017, and  
completed
when the second author was visiting Kavli IPMU in Summer 2018.
We thank the institute for support and hospitality. 
We would also like to thank Davesh Maulik and Junliang Shen for discussions on abelian threefolds, and
Catharina Stroppel for helpful advice on spin representations.

G.~O.~was supported by the National Science Foundation Grant DMS-1440140 while in residence at MSRI, Berkeley.
D.~P.~was supported by World Premier International Research Center Initiative (WPI initiative), MEXT, Japan. 
Y.~T.~ is supported by World Premier International Research Center
Initiative (WPI initiative), MEXT, Japan, and Grant-in Aid for Scientific
Research grant (No. 26287002) from MEXT, Japan.

\section{Reduced Donaldson--Thomas invariants}\label{section:reducedDT}
\subsection{Overview}
Let $X$ be a smooth projective Calabi--Yau threefold equipped with an action of an abelian variety $A$.
The product
\[ \A = A \times \Pic^0(X) \]
acts on the moduli spaces of Gieseker semistable sheaves on $X$ by translation by elements in $A$ and tensor product with elements in $\Pic^0(X)$.
The goal of Section~\ref{section:reducedDT} is to define reduced (generalized) Donaldson--Thomas
invariants of $X$ which count $\A$-orbits of Gieseker semistable sheaves.
For abelian threefolds our definition generalizes work of Gulbrandsen \cite{Gul} and \cite{OS}.
However the definition is not special to abelian threefolds. A list of examples to keep in mind is the following:
\begin{itemize}
\item $X = A$ is an abelian threefold and $\A = A \times \widehat{A}$.
\item $X = S \times E$ with $S$ a K3 surface, $E$ an elliptic curve, and $\A = E \times \widehat{E}$.
\item $X = (S \times E)/G$ where $S$ is a symplectic surface, $E$ is an elliptic curve, and
$G$ is a finite group acting on $S$ by symplectic automorphisms, on $E$ by translation by torsion points,
and such that the induced diagonal action on $S \times E$ is free.
The $E$-action on $S \times E$ descends to an action on the quotient, and we can take $\A = E \times \widehat{E/G}$.
\item $X$ is a Calabi--Yau threefold with $h^{1,0}(X)>0$, and $\A = \Pic^0(X)$.
\end{itemize}

In Section~\ref{Subsection_equivariant_Grothendieckgroup_of_var} and Section~\ref{subsection_eq_gr_stacks} we
discuss equivariant Grothendieck groups of varieties and stacks respectively.
This leads to the definition of the equivariant Hall algebra in Section~\ref{subsec:hall}.
In Section~\ref{Subsection_Equivariant_integration_map} we begin the construction of the equivariant integration map.

\subsection{Equivariant Grothendieck group of varieties} \label{Subsection_equivariant_Grothendieckgroup_of_var}
Let $A$ be an abelian variety. 
Following~\cite[Section~3]{OS}
the $A$-equivariant Grothendieck group of 
varieties $K_0^{A}(\mathrm{Var})$ is the free 
abelian group generated by the classes
\[ [X, a_X] \]
of a variety $X$ with an $A$-action 
$a_X \colon A \times X \to X$, modulo 
the equivariant 
scissor relations
\begin{align*}
[X, a_X]=[Z, a_X|_{Z}]+[U, a_X|_{U}]
\end{align*}
for every $A$-invariant 
closed subvariety $Z \subset X$ with $U=X \setminus Z$.
Taking products of varieties with the induced diagonal $A$-action
endows $K_0^{A}(\mathrm{Var})$ with the structure of a commutative ring with unit.

Consider the 
$\mathbb{Q}$-vector space
\begin{align*}
\mathbb{Q}[A]=\bigoplus_{B \subset A} \mathbb{Q} \epsilon_B
\end{align*}
where $B$ runs over all \emph{connected} abelian subvarieties of $A$.
We define a $\BQ$-linear ring structure
on $\mathbb{Q}[A]$ as follows.
If connected abelian subvarieties $B_1, B_2 \subset A$ intersect transversely, i.e. 
\begin{align*}
\mathrm{codim}(B_1 \cap B_2)= \mathrm{codim}B_1+\mathrm{codim}B_2,
\end{align*}
we set
\begin{align*}
\epsilon_{B_1} \cdot \epsilon_{B_2}=
\left| \frac{B_1 \cap B_2}{(B_1 \cap B_2)^{\circ}} \right|
\epsilon_{(B_1 \cap B_2)^{\circ}}.
\end{align*}
If $B_1, B_2$ are not transverse we set 
\[ \epsilon_{B_1} \cdot \epsilon_{B_2}=0. \]

\begin{lemma}
$(\mathbb{Q}[A], \cdot)$ is an
associative commutative algebra with unit $\epsilon_{A}$.
\end{lemma}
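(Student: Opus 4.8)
The plan is to check the three axioms—unit, commutativity, associativity—directly from the definition of the product on $\mathbb{Q}[A]$. Commutativity is immediate: transversality of $B_1$ and $B_2$ is a symmetric condition, the intersection $B_1 \cap B_2$ and its identity component $(B_1 \cap B_2)^{\circ}$ do not depend on the order, and the structure constant $\lvert (B_1\cap B_2)/(B_1\cap B_2)^{\circ}\rvert$ is likewise symmetric; in the non-transverse case both products are $0$. For the unit, note that $A$ itself is a connected abelian subvariety, and for any connected abelian subvariety $B \subset A$ we have $A \cap B = B$ with $\operatorname{codim}(A) = 0$, so $\operatorname{codim}(A \cap B) = \operatorname{codim}(A) + \operatorname{codim}(B)$ holds trivially, i.e.\ $A$ meets every $B$ transversely; since $(A\cap B)^{\circ} = B^{\circ} = B$ (as $B$ is connected) and the index $\lvert B/B\rvert = 1$, we get $\epsilon_A \cdot \epsilon_B = \epsilon_B$.

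The substantive point is associativity: one must show $(\epsilon_{B_1}\epsilon_{B_2})\epsilon_{B_3} = \epsilon_{B_1}(\epsilon_{B_2}\epsilon_{B_3})$ for all triples of connected abelian subvarieties. First I would record the key codimension inequality: for any two subvarieties, $\operatorname{codim}(B_1 \cap B_2) \le \operatorname{codim}(B_1) + \operatorname{codim}(B_2)$ always, with equality exactly in the transverse case. The natural strategy is to show both sides equal a common expression: a nonzero multiple of $\epsilon_{(B_1\cap B_2\cap B_3)^{\circ}}$ precisely when $\operatorname{codim}(B_1\cap B_2 \cap B_3) = \operatorname{codim}(B_1)+\operatorname{codim}(B_2)+\operatorname{codim}(B_3)$ (``total transversality''), and $0$ otherwise. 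The main obstacle is the bookkeeping needed to see that the left-associated product is nonzero iff this total transversality holds: a priori $(\epsilon_{B_1}\epsilon_{B_2})\epsilon_{B_3}$ could vanish because $B_1, B_2$ are non-transverse even though the triple intersection has the expected codimension. Here one uses that if $\operatorname{codim}(B_1\cap B_2\cap B_3)$ equals the sum of the three codimensions, then by the inequality applied twice, $\operatorname{codim}(B_1\cap B_2) = \operatorname{codim}(B_1) + \operatorname{codim}(B_2)$ forcibly (otherwise $\operatorname{codim}(B_1\cap B_2\cap B_3) \le \operatorname{codim}(B_1\cap B_2) + \operatorname{codim}(B_3)$ would be strict), and likewise $(B_1\cap B_2)^{\circ}$ meets $B_3$ transversely; conversely if the triple is not totally transverse, at least one of the two consecutive intersections fails to be transverse and the left-associated product is $0$.

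Finally, when total transversality holds, I would compute the multiplicative constant on each side and check they agree. Each side produces $\epsilon$ of the same subgroup, namely $(B_1\cap B_2\cap B_3)^{\circ}$, and the accumulated coefficient is a product of two component-group indices; using the short exact sequences relating $\pi_0$ of $B_i \cap B_j$, $(B_i\cap B_j)^{\circ}\cap B_k$, and $B_1\cap B_2\cap B_3$, one verifies both products equal $\lvert (B_1\cap B_2\cap B_3)/(B_1\cap B_2\cap B_3)^{\circ}\rvert$ times a correction that is symmetric in the three subvarieties—so the two bracketings agree. (Concretely, one can express everything in terms of the single group $\pi_0(B_1\cap B_2\cap B_3)$ and the indices $[\,(B_i\cap B_j)^\circ : (B_1\cap B_2\cap B_3)^\circ\,]$, which manifestly do not depend on the order of multiplication.) This is a routine but slightly delicate computation with finite abelian groups and codimensions; once it is done, the three axioms are in hand and the lemma follows.
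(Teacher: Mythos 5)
Your proof follows essentially the same route as the paper: the unit and commutativity checks are immediate, and associativity is established by showing that the left-associated triple product is nonzero exactly when $\codim(B_1\cap B_2\cap B_3)=\codim(B_1)+\codim(B_2)+\codim(B_3)$ (using subadditivity of codimension twice to see that total transversality is equivalent to both consecutive transversalities), in which case it equals the manifestly permutation-invariant quantity $\lvert (B_1\cap B_2\cap B_3)/(B_1\cap B_2\cap B_3)^{\circ}\rvert\,\epsilon_{(B_1\cap B_2\cap B_3)^{\circ}}$. The one point left implicit in your sketch (and asserted without proof in the paper) is the multiplicativity of component counts, $\lvert\pi_0(B_1\cap B_2)\rvert\cdot\lvert\pi_0((B_1\cap B_2)^{\circ}\cap B_3)\rvert=\lvert\pi_0(B_1\cap B_2\cap B_3)\rvert$: the relevant exact sequence only gives an inequality unless every component of $B_1\cap B_2$ meets $B_3$, which does hold here because transversality forces $(B_1\cap B_2)^{\circ}+B_3=A$.
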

\begin{proof}
The key step is to prove associativity: Let $B_1, B_2, B_3 \subset A$
be connected. Then
$( \epsilon_{B_1} \cdot \epsilon_{B_2} ) \cdot \epsilon_{B_3}$ is
non-zero if and only if
\[ \mathrm{codim}(B_1 \cap B_2 \cap B_3) =
\mathrm{\codim}(B_1) + \mathrm{codim}(B_2) + \mathrm{codim}(B_3)\]
in which case we get
\[ ( \epsilon_{B_1} \cdot \epsilon_{B_2} ) \cdot \epsilon_{B_3}
= \left| \frac{ B_1 \cap B_2 \cap B_3 }{ (B_1 \cap B_2 \cap B_3)^{\circ}} \right|
\epsilon_{(B_1 \cap B_2 \cap B_3)^{\circ}}.
\]
In particular, the right hand side is invariant under permutation. 
\end{proof}

Let $X$ be a variety with $A$-action $a_X$. For any abelian subvariety $B \subset A$ let
$X_B \subset X$
denote the (reduced) locally closed subscheme of points
whose stabilizer contain $B$ with finite index,
\begin{align*}
X_B = \{ x \in X : \Stab(x) \supset B, 
\, \left| \Stab(x)/B \right| < \infty\}. 
\end{align*}
The subscheme $X_B \subset X$ is $A$-invariant and 
the induced $A$-action on $X_B$ descends to an $A/B$-action
with finite stabilizers.
The quotient stack 
\begin{align*}
[X_B/(A/B)]
\end{align*}
is hence Deligne--Mumford and
its (topological) Euler characteristic
is well-defined as a rational number.

We define the $A$-reduced Euler characteristic of the class $[X,a_X]$ by
\begin{align*}
\e([X, a_X]) \cneq \sum_{B \subset A} e\big( [X_B/(A/B)] \big) \epsilon_B \, \in 
\mathbb{Q}[A]
\end{align*}
where the sum runs over all connected abelian subvarieties of $A$.

\begin{lem} The $\BQ$-linear map
\begin{align*}
\e \colon K_0^{A}(\mathrm{Var}) \to \mathbb{Q}[A],
\ [X, a_X] \mapsto \e([X, a_X])
\end{align*}
is a ring homomorphism.
\end{lem}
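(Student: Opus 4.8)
The plan is to verify that $\e$ respects both the additive scissor relations and the multiplicative (product) structure on $K_0^A(\mathrm{Var})$. Additivity is essentially a bookkeeping statement: for an $A$-invariant closed $Z \subset X$ with open complement $U$, each stratum decomposes as $X_B = Z_B \sqcup U_B$, since the stabilizer of a point depends only on the point, not on which subvariety we view it in. The topological Euler characteristic is additive over the stratification $[X_B/(A/B)] = [Z_B/(A/B)] \sqcup [U_B/(A/B)]$ of Deligne--Mumford stacks, so comparing coefficients of $\epsilon_B$ gives $\e([X,a_X]) = \e([Z,a_X|_Z]) + \e([U,a_X|_U])$. One should also check $\e$ is well-defined on isomorphism classes, which is immediate since strata and their quotient stacks are carried isomorphically by an equivariant isomorphism.

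The substantive point is multiplicativity: one must show
\[
\e\bigl([X,a_X] \cdot [Y,a_Y]\bigr) = \e([X,a_X]) \cdot \e([Y,a_Y]),
\]
where the left side uses the diagonal $A$-action on $X \times Y$ and the right side uses the product on $\BQ[A]$. The key geometric observation is a description of the strata of $X \times Y$: a point $(x,y)$ has stabilizer $\Stab(x) \cap \Stab(y)$, so $(X \times Y)_B$ is nonempty only when $B$ is (up to finite index) an intersection of a stabilizer of a point of $X$ with one of a point of $Y$. More precisely, I would stratify $X \times Y$ by the pairs $(B_1, B_2)$ of connected subvarieties with $x \in X_{B_1}$, $y \in Y_{B_2}$; on such a piece the stabilizer of $(x,y)$ is commensurable with $B_1 \cap B_2$, hence the piece contributes to the $\epsilon_{(B_1\cap B_2)^\circ}$ coefficient, and only when $B_1$ and $B_2$ intersect transversely does $(B_1 \cap B_2)^\circ$ have the expected codimension so that $(x,y)$ actually lies in $(X\times Y)_{(B_1 \cap B_2)^\circ}$ (rather than in a stratum for a strictly larger subvariety). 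This matches exactly the definition of the product on $\BQ[A]$: the vanishing $\epsilon_{B_1}\cdot\epsilon_{B_2} = 0$ in the non-transverse case corresponds to those pairs $(B_1,B_2)$ contributing to strata of $X\times Y$ indexed by subvarieties of too-large codimension, which get absorbed elsewhere.

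To finish, one computes the Euler characteristic of the relevant stacks. Fixing connected $B_1 \subset A$ transverse to $B_2 \subset A$ with $B = (B_1\cap B_2)^\circ$, the contribution to the $\epsilon_B$-coefficient of $\e([X]\cdot[Y])$ from the locus $\{(x,y) : x \in X_{B_1}, y \in Y_{B_2}\}$ is the orbifold Euler characteristic of $[(X_{B_1} \times Y_{B_2})/(A/B)]$. Using multiplicativity of the orbifold Euler characteristic for fibrations of Deligne--Mumford stacks together with the finite covering $(A/B) \to (A/B_1) \times (A/B_2)$ of degree $|(B_1\cap B_2)/B|$, this equals $|(B_1\cap B_2)/B| \cdot e([X_{B_1}/(A/B_1)]) \cdot e([Y_{B_2}/(A/B_2)])$, which is precisely the coefficient dictated by the formula $\epsilon_{B_1}\cdot\epsilon_{B_2} = |(B_1\cap B_2)/(B_1\cap B_2)^\circ|\,\epsilon_{(B_1\cap B_2)^\circ}$. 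Summing over all pairs $(B_1,B_2)$ with $(B_1\cap B_2)^\circ = B$ gives the claim. The main obstacle is the careful handling of the stratification in the non-transverse case: one must confirm that points $(x,y)$ with $x \in X_{B_1}$, $y \in Y_{B_2}$ but $B_1, B_2$ not transverse do not spuriously contribute to the $\epsilon_{(B_1\cap B_2)^\circ}$ term — they lie in $(X\times Y)_{B'}$ for some $B' \supsetneq (B_1\cap B_2)^\circ$ of the correct (larger) dimension — and that this exactly mirrors the $\epsilon_{B_1}\cdot\epsilon_{B_2}=0$ convention, so that no terms are lost or double-counted.
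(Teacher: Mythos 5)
Your additivity argument and your handling of the transverse case match the paper's proof (the paper likewise reduces by stratification to $X=X_{B_1}$, $Y=Y_{B_2}$, uses $\Stab(x,y)=\Stab(x)\cap\Stab(y)$, and computes the Euler characteristic via the isomorphism $A/(B_1\cap B_2)\cong A/B_1\times A/B_2$ together with the degree $\lvert (B_1\cap B_2)/(B_1\cap B_2)^{\circ}\rvert$ covering). The gap is in the non-transverse case. You claim that when $B_1$ and $B_2$ are not transverse the points of $X_{B_1}\times Y_{B_2}$ ``lie in $(X\times Y)_{B'}$ for some $B'\supsetneq (B_1\cap B_2)^{\circ}$,'' so that they do not contribute to the $\epsilon_{(B_1\cap B_2)^{\circ}}$ coefficient. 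This is false. For $x\in X_{B_1}$ and $y\in Y_{B_2}$ one has $\Stab(x)^{\circ}=B_1$ and $\Stab(y)^{\circ}=B_2$, hence $(\Stab(x)\cap\Stab(y))^{\circ}=(B_1\cap B_2)^{\circ}$ exactly, whether or not the intersection is transverse; the point $(x,y)$ always lies in the stratum $(X\times Y)_{(B_1\cap B_2)^{\circ}}$, and there is no larger $B'$ contained in its stabilizer with finite index. Non-transversality makes $(B_1\cap B_2)^{\circ}$ larger than the ``expected'' codimension, but it does not move the point to a different stratum. (Indeed your own earlier sentence, that the stabilizer is commensurable with $B_1\cap B_2$ so the piece contributes to $\epsilon_{(B_1\cap B_2)^{\circ}}$, contradicts the parenthetical.)

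Consequently your proof as written does not establish that the non-transverse pairs contribute $0$, which is exactly what is needed to match the convention $\epsilon_{B_1}\cdot\epsilon_{B_2}=0$. The correct mechanism, and the one the paper uses, is an Euler-characteristic vanishing rather than a re-stratification: the product $X_{B_1}\times Y_{B_2}$ carries the full $A\times A$-action, so the quotient $\bigl[(X_{B_1}\times Y_{B_2})/(A/(B_1\cap B_2)^{\circ})\bigr]$ retains a residual action of $\mathrm{Coker}\bigl(\alpha\colon (B_1\times B_2)/(B_1\cap B_2)\to A\bigr)$, which is a positive-dimensional abelian variety precisely when $B_1$ and $B_2$ fail to be transverse (transversality being equivalent to surjectivity of the addition map $B_1\times B_2\to A$). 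An action of a positive-dimensional abelian variety with finite stabilizers forces the topological Euler characteristic to vanish, so the coefficient $c$ of $\epsilon_{(B_1\cap B_2)^{\circ}}$ is $0$. You need to replace your re-stratification claim with this (or an equivalent) vanishing argument for the proof to close.
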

\begin{proof}
Since the $A$-reduced Euler characteristic respects the $A$-equivariant scissor relation, the map $\e$ is well-defined.
We need to show it is a ring homomorphism.
Let $X_1, X_2$ be varieties with $A$-actions. By a stratification argument
we may assume $X_i = (X_{i})_{B_i}$ for some connected abelian 
subvarieties $B_i \subset A$.
With respect to the diagonal $A$-action we have
\[ \mathrm{Stab}\big( (x_1, x_2) \big) = \mathrm{Stab}(x_1) \cap \mathrm{Stab}(x_2) \]
for all $(x_1, x_2) \in X_1 \times X_2$, and hence
\[ \e( [X_1 \times X_2, a_{X_1 \times X_2} ] ) = c \, \epsilon_{(B_1 \cap B_2)^{\circ}} \]
for some $c \in \BQ$. We need to show
\[
c =
\left| \frac{B_1 \cap B_2}{(B_1 \cap B_2)^{\circ}} \right|
e\big( [X_1/ (A/B_1)] \big) e\big( [X_2 / (A/B_2)] \big) 
\]
if $B_1$ and $B_2$ are transverse, and $c=0$ otherwise.

Consider the
commutative diagram
of rows of exact sequences of abelian groups,
\[
\begin{tikzcd}
0 \arrow{r} & B_1 \cap B_2 \ar{d}  \ar{r} & A \ar{r} \ar{d}{\Delta} & A / (B_1 \cap B_2) \ar{r} \ar{d} & 0 \\
0 \ar{r} & B_1 \times B_2 \ar{r} & A \times A \ar{r} & A/B_1 \times A/B_2 \ar{r} & 0.
\end{tikzcd}
\]
Since the left hand square is fibered the induced morphism
\[ \alpha : ( B_1 \times B_2 ) / (B_1 \cap B_2) \to A \]
is injective, and we obtain the exact sequence
\[
0 \to A / (B_1 \cap B_2) \to
A/B_1 \times A/B_2 \to \mathrm{Coker}(\alpha) \to 0.
\]
The subvarieties $B_1$ and $B_2$ are transverse
if and only if the addition map
\[ B_1 \times B_2 \to A, (b_1, b_2) \mapsto b_1 + b_2 \]
is surjective, hence if and only if $\mathrm{Coker}(\alpha) = 0$.
If $B_1$ and $B_2$ are not transverse the quotient
\[ \left[ (X_1 \times X_2) \Big/ (A / (B_1 \cap B_2)^{\circ}) \right] \]
hence carries an action by the positive-dimensional
abelian variety $\mathrm{Coker}(\alpha)$ and therefore its Euler characteristic is zero; this implies $c=0$.
If $B_1$ and $B_2$ are transverse,
we get $A / (B_1 \cap B_2) = A/B_1 \times A/B_2$ and so
\begin{align*}
c & = e\left( \left[( X_1 \times X_2 ) \Big/ (A / (B_1 \cap B_2)^{\circ})\right] \right) \\
& = \left| \frac{B_1 \cap B_2}{(B_1 \cap B_2)^{\circ}} \right|
e\left(\left[ ( X_1 \times X_2 ) \Big/ (A / (B_1 \cap B_2))\right] \right) \\
& = \left| \frac{B_1 \cap B_2}{(B_1 \cap B_2)^{\circ}} \right|
e([X_1 / (A/B_1)]) e([X_2/ (A/B_2)]). \qedhere
\end{align*}
\end{proof}

\subsection{Preliminaries on stacks} \label{subsec:pre lim stacks}
We will follow Bridgeland \cite{BrH} for conventions on stacks.
In particular, all stacks are assumed to be algebraic and locally of finite type with affine geometric stabilizers.
Geometric bijections and Zariski fibrations of stacks are defined in \cite[Definition~3.1]{BrH} and \cite[Definition~3.3]{BrH}.
We also refer to \cite{MR3237451} for the definition of good moduli spaces and their properties.

For us an action of an algebraic group $G$ on a stack $\CX$ is a morphism 
\[ \rho \colon G \times \CX \to \CX \]
such that for all $\BC$-valued points $g_1, g_2 \in G$ there exists an isomorphism
\[ \rho_{g_1} \rho_{g_2} \cong \rho_{g_1 g_2}. \]
Hence we work with group actions on a stack in a very weak sense,
see also \cite{MR2125542} for a stronger definition.
If the stack $\CX$ has a good moduli space $X$, then by universal property the group action on $\CX$ induces a group action on $X$.
If $X$ is a reduced scheme (or algebraic space),
where both here are always assumed to be locally of finite type over $\BC$,
then 
a group action on $X$ induces a group action on the scheme in the usual sense (i.e.\ where we require that
the two possible maps $G \times G \times X \to X$ are equal).

In this paper we are only interested in taking the topological Euler characteristic,
hence when working with the good moduli space of a stack
we can always pass to the underlying reduced scheme.
Unless not stated otherwise, this will be done implicitly throughout.

\begin{rmk}
This technical discussion is necessary since for an abelian variety $A$
the group $G = A \times \hat{A}$ does \emph{not} act on the stack $M$
of coherent sheaves on $A$ in the sense of Romagny \cite{MR2125542} (see \cite[Sec.\ 3.6]{BONotes} for an example), and not even in the weaker sense where we only require the two maps $G \times G \times M \to M$ to be isomorphic (for example restrict the action to the point corresponding to the structure sheaf $\CO_A$).
This issue can be resolved by working with the group stack
$\mathrm{Auteq}^0(A)$ which is a $\BG_m$-gerbe over $A \times \hat{A}$ and defined as the open substack of the moduli stack $Coh(A \times A)$ corresponding to deformations of the diagonal.
Then $\mathrm{Auteq}^0(A)$ acts on the stack of coherent sheaves $M$ and induces an $A \times \hat{A}$ action
on each good moduli space without passing to the reduced subscheme.
However, to avoid unnecessary technicalities, we will work with the weaker definition of stacks defined above.
\end{rmk}

\subsection{Equivariant Grothendieck group of stacks} \label{subsection_eq_gr_stacks}
Let $A$ be an abelian variety, and
let $\mathcal{S}$ be an algebraic stack
equipped with an $A$-action $a_{\CS}$.

\begin{defn} \label{defn_grothringofstacks}
The $A$-equivariant relative Grothendieck group of stacks $K^A_0(\mathrm{St}/\CS)$ is defined to be the $\BQ$-vector space generated by the classes
\begin{equation*}
[\CX \xrightarrow{f} \CS, a_{\CX}],
\end{equation*}
where $\CX$ is an algebraic stack of finite type,
$a_{\CX}$ is an $A$-action on $\CX$,
and $f$ is an $A$-equivariant morphism,
modulo the following relations:

\begin{enumerate}
\item[(a)] For every pair of stacks $\CX_1$ and $\CX_2$ with $A$-actions $a_1$ and $a_2$ respectively a relation
\[
[\CX_1 \sqcup \CX_2 \xrightarrow{f_1 \sqcup f_2} \CS, a_{1}\sqcup a_2] = [\CX_1 \xrightarrow{f_1} \CS, a_1]+ [\CX_2 \xrightarrow{f_2} \CS, a_2]
\]
where $f_i$ ($i=1,2$) are $A$-equivariant.

\item[(b)]
For every commutative diagram 
\[
\begin{tikzcd}
\CX_1 \arrow{rr}{g} \arrow[swap]{dr}{f_1}& &\CX_2 \arrow{dl}{f_2}\\
& \CS & 
\end{tikzcd}    
\]
with all morphisms $A$-equivariant
and $g$ a geometric bijection
a relation
\[
[\CX_1 \xrightarrow{f_1} \CS, a_1] = [\CX_2 \xrightarrow{f_2} \CS, a_2].
\]
\item[(c)]
Let $\CX_1, \CX_2, \CY$ be stacks equipped with $A$-actions $a_1, a_2, a_Y$. 
Then for every pair of $A$-equivariant Zariski fibrations
\[
h_1: \CX_1 \rightarrow \CY, \ \quad h_2: \CX_2 \rightarrow \CY
\]
with the same fibers and for every $A$-equivariant morphism $\CY \xrightarrow{g} \CS$, a relation
\[
\pushQED{\qed}
[\CX_1 \xrightarrow{g\circ h_1} \CS ,a_1]
= [\CX_2 \xrightarrow{g\circ h_2} \CS ,a_2].
\qedhere \popQED
\]
\end{enumerate}
\end{defn}

\begin{rmk}
If $A$ is the the trivial group
Definition~\ref{defn_grothringofstacks} specializes to the
relative Gro\-then\-dieck group of stacks
defined by Bridgeland \cite[Definition~3.10]{BrH}.
In this case we will usually omit $A$ from the notation, and will write $K_0(\mathrm{St}/\CS)$.
We will follow the same convention throughout the section: the trivial abelian variety is omitted from the notation.
\end{rmk}
\begin{rmk} \label{Rmk_gsgfsg}
For any connected abelian subvariety $B \subset A$ the restriction of $A$-actions to $B$-actions
induces a morphism
\[ K^A_0(\mathrm{St}/\CS) \to K^B_0(\mathrm{St}/\CS). \]
In particular, if $B$ is the trivial abelian variety,
\[ \mathrm{Forg}: K_0^A(\mathrm{St}/\CS) \to K_0(\mathrm{St}/\CS), \]
is the map that forgets the equivariant structure.
\end{rmk}

\subsection{Non-equivariant Hall algebras}
Let $X$ be a Calabi--Yau threefold, i.e. a non-singular projective threefold with $K_X = 0$.
Let $\CM$ be the stack of coherent sheaves on $X$.
By \cite[4.2]{BrH} the Hall algebra of $X$ is the group
\[ H(X) := K_0( \mathrm{St} / \CM ) \]
together with the associative product $\ast$ defined by extension of sheaves.

Consider the polynomial ring
\begin{align*}
\Lambda=K_0(\mathrm{Var})[\mathbb{L}^{-1}, (1 + \BL + \cdots + \BL^n)^{-1}, n\ge 1]
\end{align*}
where $\mathbb{L} = [\BA^1] \in K_0(\mathrm{Var})$ is the class of the affine line.
The subalgebra of \emph{regular classes} is the $\Lambda$-submodule 
\begin{align*}
H_{\rm{reg}}(X) \subset H(X)
\end{align*}
generated by all classes $[Z \to \CM]$ where $Z$ is a variety.
In particular, $H_{\rm{reg}}(X)$ is closed under $\ast$-product.
The quotient
\[ H_{\rm{sc}}(X) = H_{\rm{reg}}(X) / (\BL - 1) H_{\rm{reg}}(X) \]
is called the semi-classical limit and is commutative with respect to $\ast$. The Poisson bracket
defined by
\begin{equation} \{ f,g \} := \frac{ f \ast g - g \ast f }{ \BL - 1 }, \quad f,g \in H_{\text{sc}}(X) \label{Poisson_bracket}\end{equation}
makes
$H_{\text{sc}}(X)$
a Poisson algebra with respect to 
$(\ast, \{-, -\})$. 

\subsection{Equivariant Hall algebras}\label{subsec:hall}
Let $X$ be a Calabi--Yau threefold equipped with the action of an abelian variety $A$.
The group
\[ \mathbf{A} := A \times \Pic^0(X) \]
acts on the stack of coherent sheaves $\CM$ on $X$ by
\[
(a, \CL) \cdot E =T_a^{\ast}E \otimes \CL
\quad \text{for all } a \in A, \CL \in \Pic^0(X), E \in \Coh(X).
\]
The $\A$-equivariant motivic Hall algebra is the group
\begin{align*}
H^{\A}(X) \cneq K_0^{\A}(\mathrm{St}/\CM).
\end{align*}
The product $\ast$ lifts canonically to an associative product on $H^\A(X)$ via the diagonal action, see \cite[Section~4.6]{OS}.
The forgetful morphism of Remark~\ref{Rmk_gsgfsg},
\[
\Forg : H^{\A}(X) \to H(X),
\]
is a ring homomorphism with respect to this product.

Define the subalgebra of regular classes by
\begin{equation} \label{defn_reg}
H^{\A}_{\mathrm{reg}}(X) := \Forg^{-1}( H_{\mathrm{reg}}(X) ).
\end{equation}
The semi-classical limit is the quotient
\[ H^{\A}_{\text{sc}}(X) = H_{\mathrm{reg}}^{\A}(X) / (\BL-1) H_{\mathrm{reg}}^{\A}(X). \]
By an argument parallel to \cite[Proposition~2]{OS} the algebra $H^{\A}_{\text{sc}}(X)$ is
commutative and the bracket $\{- ,-\}$ defined in \eqref{Poisson_bracket} lifts to a Poisson bracket on $H^{\A}_{\text{sc}}(X)$.\footnote{The condition (c) in Definition~\ref{defn_grothringofstacks} is used crucially here.}
Therefore $H_{\text{sc}}^{\A}(X)$ is a Poisson 
algebra with respect to $(\ast, \{-, -\})$.

\subsection{Gieseker stability}\label{subsec:Gieseker}
Let $H$ be a fixed polarization on $X$. 
For a sheaf $E \in \Coh(X)$, its \textit{Hilbert polynomial} is 
\begin{align*}
\chi(E \otimes \oO_X(mH))=a_d m^d+a_{d-1}m^{d-1}+\cdots
\end{align*}
where $a_i \in \mathbb{Q}$, 
$d=\dim \Supp(E)$ and $a_d$ is a positive rational number. 
The \textit{reduced Hilbert polynomial} is defined by
\begin{align*}
\overline{\chi}_H(E)
\cneq \frac{\chi(E \otimes \oO_X(mH))}{a_d} \in \mathbb{Q}[m].
\end{align*}
Let $\Gamma$ be the image of the Chern character map
\begin{align*}
\Gamma := \Imm\left( \ch \colon K(X) \to H^{2\ast}(X, \mathbb{Q})\right).
\end{align*}
Since $\overline{\chi}_H(E)$ only depends on the Chern character of 
$E$, there is a map 
$\overline{\chi}_H \colon \Gamma \to \mathbb{Q}[m]$
such that $\overline{\chi}_H(E)=\overline{\chi}_H(\ch(E))$.

The reduced Hilbert polynomial is used in the definition of Gieseker stability as follows. 
\begin{defi}
An object $E \in \Coh(X)$ is $H$-Gieseker (semi)stable 
if it is pure and for any non-zero subsheaf 
$F \subsetneq E$, we have 
\[ \overline{\chi}_H(F)(m) <(\le) \  \overline{\chi}_H(E)(m) \]
for $m\gg 0$. 
\end{defi}
Let $\Gamma_{+} \subset \Gamma$ be the set
of Chern characters of coherent sheaves,
\begin{align*}
\Gamma_{+} \cneq \Imm(\ch|_{\Coh(X)} \colon \Coh(X) \to \Gamma). 
\end{align*}
For any $v \in \Gamma_{+}$ let
\begin{align}\label{moduli:semistable}
\mM_H(v) \subset \mM
\end{align}
be the open substack of finite type parametrizing $H$-Gieseker semistable sheaves with 
Chern charecter $v$. For any fixed $\overline{\chi} \in \mathbb{Q}[m]$
consider the union
\begin{align*}
\mM_H(\overline{\chi})
=\coprod_{\overline{\chi}_H(v)=\overline{\chi}} \mM_H(v).
\end{align*}
The Hall algebra of semistable sheaves with reduced Hilbert polynomial $\overline{\chi}$ is defined by
\begin{align}\label{Hall:semistable}
H(X, \overline{\chi}) \cneq K_0(\mathrm{St}/\mM_H(\overline{\chi})). 
\end{align}
Since the category of $H$-Gieseker semistable 
sheaves with fixed reduced Hilbert polynomial is 
closed under extension, the natural inclusion map 
\begin{equation}
H(X, \overline{\chi}) \hookrightarrow H(X) \label{gdgsdfsd}
\end{equation}
is a ring homomorphism.
As before the Hall algebra $H(X, \overline{\chi})$
has a subalgebra of regular classes (the $\Lambda$-module generated by all $[Z \to \CM_H(\overline{\chi})]$ where $Z$ is a variety)
and a semi-classical limit. We have the natural inclusions\footnote{The subalgebra of regular classes could also be defined as the preimage of $H_{\mathrm{reg}}(X)$ under the inclusion \eqref{gdgsdfsd}, and similarly for the semi-classical limit.}
\begin{align}\label{regular:sc}
H_{\mathrm{reg}}(X, \overline{\chi}) \subset 
H_{\mathrm{reg}}(X), \  \ \ 
H_{\mathrm{sc}}(X, \overline{\chi}) \subset 
H_{\mathrm{sc}}(X).
\end{align}

Since \eqref{moduli:semistable} is $\A$-equivariant,
there exists an $\A$-equivariant version of (\ref{Hall:semistable}),
\[ H^{\A}(X, \overline{\chi}) \subset H^{\A}(X). \]
Similarly one has $\A$-equivariant versions of (\ref{regular:sc}),
\begin{align*}
H_{\mathrm{reg}}^{\A}(X, \overline{\chi}) \subset 
H_{\mathrm{reg}}^{\A}(X), \ \ \
H_{\mathrm{sc}}^{\A}(X, \overline{\chi}) \subset 
H_{\mathrm{sc}}^{\A}(X).
\end{align*}

\subsection{Poisson torus}
By the Riemann-Roch theorem, the Euler paring
\begin{align*}
\chi(E, F) \cneq \sum_{i \in \mathbb{Z}}
(-1)^i \dim \Ext^i(E, F), \quad E, F \in D^b(X)
\end{align*}
descends to a unique bilinear form 
\[ \chi \colon \Gamma \times \Gamma \to \Gamma \]
which satisfies $\chi(E, F)=\chi(\ch(E), \ch(F))$.
Consider the group
\[ C^{\A}(X) := \bigoplus_{v \in \Gamma} \BQ[\A] \cdot c_v. \]
An associative product $\ast$ and a Poisson bracket on $C^\A(X)$ are defined by
\begin{align*}
c_{v_1} \ast c_{v_2} & := (-1)^{\chi(v_1,v_2)} c_{v_1+v_2} \\
\{ c_{v_1}, c_{v_2} \} & := (-1)^{\chi(v_1,v_2)} \chi(v_1,v_2) c_{v_1+v_2}.
\end{align*}
Then $C^{\A}(X)$ is a Poisson algebra 
with respect to the above $(\ast, \{-, -\})$.

\subsection{Equivariant integration map: Overview} \label{Subsection_Equivariant_integration_map}
Recall from \cite{BrH} the integration map
\[ 
\CI : H_{\text{sc}}(X) \to C(X). 
\]
The map $\CI$ is a Poisson algebra homomorphism with respect to 
$(\ast, \{ - , - \})$
such that for every $Z \to \CM(v)$ with $Z$ a variety we have
\[ \CI([Z \stackrel{f}{\to} \CM(v)]) = e( Z, f^{\ast} \nu ) c_{v} = \left( \int_{Z} f^{\ast} \nu \ \mathrm{d}e \right) c_{v}. \]
Here $\nu : \CM \to \BZ$ is the Behrend function~\cite{Beh}
and $\mM(v) \subset \mM$ is the substack of sheaves with Chern character $v$.

For each $\overline{\chi} \in \mathbb{Q}[m]$, let $H_{\mathrm{sc}}(X, \overline{\chi})$, 
$H_{\mathrm{sc}}^{\A}(X, \overline{\chi})$
be the semi-classical limits of Hall algebras of semistable sheaves 
with reduced Hilbert polynomial $\overline{\chi}$ as defined in Section~\ref{subsec:Gieseker}.
The integration map $\iI$ restricts to the 
Poisson algebra homomorphism 
\begin{align*}
\iI \colon H_{\mathrm{sc}}(X, \overline{\chi}) \to C(X).
\end{align*}
The goal of the next section is to define an equivariant integration map
\[ \CI^\A : H^{\A}_{\text{sc}}(X, \overline{\chi}) \to C^{\A}(X) \]
which is a Poisson algebra homomorphism with respect to 
$(\ast, \{-, - \})$ such that
\begin{equation} \CI^\A([Z \xrightarrow{f} \CM_H(v), a]) =
\left(\sum_{B \subset \A}
(-1)^{\dim \A/B}  \epsilon_B
\int_{[Z_B/(\A/B)]}
f^{\ast} \nu \ \mathrm{d}e \right) c_v,
\label{defegdf}
\end{equation}
for every $\A$-equivariant map $Z \xrightarrow{f} \CM_H(v)$, where $Z$ is a variety.

If an equivariant regular class $\alpha$ can be written ($\A$-equivariantly) as a $\Lambda$-linear combination of classes $[Z_i \to \CM,a_i]$ with $Z_i$ varieties, then we may define $\CI^\A(\alpha)$ directly using \eqref{defegdf}.
However, by our definition of regular classes this only holds after forgetting the equivariant structure. Hence we need to proceed with more caution. We take the following four steps:
\begin{enumerate}
\item[1.] Integrate regular elements non-equivariantly over the fibers of the map $p : \CM
_H({\overline{\chi}}) \to M_H(\overline{\chi})$, 
where $M_H(\overline{\chi})$ is the \emph{good moduli space} of $\CM_H({\overline{\chi}})$.
\item[2.] Show the constructible function obtained from (1.) is $\A$-equivariant. 
\item[3.] Integrate the constructible function of (1.) 
$\A$-equivariantly over $M_H(\overline{\chi})$ to get an element of $C^{\A}(X)$. 
\item[4.] Check the integration maps of (1.) and (3.) preserve the Poisson structures. 
It follows that $\CI^\A$ is a Poisson algebra homomorphism.
\end{enumerate}

\subsection{Equivariant integration map: Construction} \label{Subsection_Equiv_int_map}
\noindent 

\textbf{Step 1.} 
Let 
\begin{align*}
p \colon \mM_H(v) \to M_H(v)
\end{align*}
be the good moduli space of $\mM_H(v)$.
which
parametrizes $S$-equivalence classes of 
$H$-Gieseker semistable sheaves with Chern character $v$. 
The existence of $M_H(v)$ as a projective scheme 
is well-known from 
the GIT construction of $\mM_H(v)$, 
see~\cite[Example~8.7]{MR3237451}.
We set
\begin{align}\label{map:goodmoduli}
p \colon \mM_{H}(\overline{\chi}) \to M_H(\overline{\chi})=\coprod_{\overline{\chi}_H(v)=\overline{\chi}}
M_H(v).
\end{align}
Until the end of this section, we fix $\overline{\chi}$
and only consider classes $v \in \Gamma$
satisfying $\overline{\chi}_H(v)=\overline{\chi}$.

Let $\Constr(M_H(\overline{\chi}))$ be the space of $\BQ$-valued constructible\footnote{A function $f : \CX \to \BQ$ is constructible,
if $f(\CX)$ is finite and for every $c \in f(\CX)$ the preimage $f^{-1}(c)$ is the union of a finite collection of finite type stacks.
In particular, $f : M \to \BQ$ constructible implies that $f|_{M_v}$ is non-zero only for finitely many $v \in \Gamma$,
where $M_v \subset M$ is the component of sheaves with Chern character $v$.} functions on $M_H(\overline{\chi})$.
Consider the map
\[ p_{\ast} : H_{\mathrm{reg}}(X, \overline{\chi}) \to \Constr(M_H(\overline{\chi})) \]
defined by integration over fibers as follows: If
\[ \alpha = \sum_i a_i [ Z_i \stackrel{f}{\to} \CM_H(v) ] \in H_{\mathrm{reg}}(X, \overline{\chi}) \]
for varieties $Z_i$ and $a_i \in \mathbb{Q}$,
then for every $x \in M_H(\overline{\chi})$ we let
\[ p_{\ast}(\alpha)(x) := \text{Coeff}_{c_v}\Big( \CI( \iota_{x \ast} \iota_x^{\ast} \alpha) \Big) = \sum_i a_i \int_{Z_i|_{\CM_x}} f^{\ast} \nu \, \mathrm{d}e, \]
where $\text{Coeff}_{c_v}(- )$ denotes the coefficient of $c_v$,
the map $\iota_x : \CM_x \to \CM_H({\overline{\chi}})$ is the inclusion of the fiber of 
the map (\ref{map:goodmoduli}) over $x \in M_H(\overline{\chi})$,
and we used the induced maps\footnote{Since $\iota_x$ is representable,
the composition $\iota_{x \ast} \iota_x^{\ast}$ preserves the subalgebra of regular classes.}
\[ \iota_x^{\ast} : H_{\mathrm{reg}}(X, \overline{\chi}) \to K_0(\St/\CM_x), \quad \iota_{x \ast} : 
K_0(\St/\CM_x) \to H(X, \overline{\chi}). \]

\vspace{7pt}
\noindent \textbf{Step 2.} 
The $\A$-action on the stack $\CM_H(\overline{\chi})$ 
descends to an $\A$-action on its good moduli space $M_H(\overline{\chi})$.\footnote{
As discussed in Section~\ref{subsec:pre lim stacks} we consider
the action here on the underlying reduced scheme.
}
Consider the subgroup of $\A$-invariant functions
\[ \Constr^\A(M_H(\overline{\chi})) \subset \Constr(M_H(\overline{\chi})). \]

\begin{lemma} The image of the composition 
\begin{align*}
H^\A_{\mathrm{reg}}(X, \overline{\chi}) \stackrel{\Forg}{\to} H_{\mathrm{reg}}(X, \overline{\chi}) \xrightarrow{p_{\ast}} \Constr(M_H(\overline{\chi}))
\end{align*}
lies in $\Constr^\A(M_H(\overline{\chi}))$. Hence we have the commatative diagram
\[
\begin{tikzcd}
H^\A_{\mathrm{reg}}(X, \overline{\chi}) \ar{d}{p^\A_\ast}\ar{r}{\Forg} &  H_{\mathrm{reg}}(X, \overline{\chi})
\ar{d}{p_{\ast}} \\
\Constr^\A(M_H(\overline{\chi})) \ar[hookrightarrow]{r} & \Constr(M_H(\overline{\chi})).
\end{tikzcd}
\]
with $p^\A_\ast = p_{\ast} \circ \Forg$.
\end{lemma}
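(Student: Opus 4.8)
The plan is to prove the lemma by reducing to a pointwise statement and exploiting that the forgetful map $\Forg$ and the fiberwise integration $p_\ast$ are compatible with pulling back along the $\A$-action. Concretely, for $a \in \A$ let $\rho_a \colon M_H(\overline{\chi}) \to M_H(\overline{\chi})$ denote the automorphism induced by the action of $a$ on the good moduli space, and let $\widetilde{\rho}_a \colon \CM_H(\overline{\chi}) \to \CM_H(\overline{\chi})$ be the corresponding automorphism of the stack, so that $p \circ \widetilde{\rho}_a = \rho_a \circ p$. I want to show: for every $\alpha \in H^\A_{\mathrm{reg}}(X,\overline{\chi})$ and every $\BC$-valued point $a \in \A$ one has $\rho_a^\ast\bigl(p_\ast(\Forg\,\alpha)\bigr) = p_\ast(\Forg\,\alpha)$ as constructible functions on $M_H(\overline{\chi})$. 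Since $\A$ is connected, any constructible function invariant under all closed points of $\A$ is $\A$-invariant in the required sense, so this suffices.

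First I would unwind the definitions. Pick a representative $\alpha = \sum_i a_i [Z_i \xrightarrow{f_i} \CM_H(v_i)]$ of $\Forg\,\alpha$ in $H_{\mathrm{reg}}(X,\overline{\chi})$ with the $Z_i$ varieties; by definition of the regular part this is always possible after forgetting the equivariant structure. For a closed point $x \in M_H(\overline{\chi})$, the fiber $\CM_x = p^{-1}(x)$ is a finite-type stack, and $p_\ast(\Forg\,\alpha)(x) = \mathrm{Coeff}_{c_{v}}\bigl(\CI(\iota_{x\ast}\iota_x^\ast(\Forg\,\alpha))\bigr)$ with $v$ the Chern character of the $S$-equivalence class parametrized by $x$. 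The key geometric input is that $\widetilde{\rho}_a$ restricts to an isomorphism of stacks $\CM_x \xrightarrow{\ \sim\ } \CM_{\rho_a(x)}$ (because $p \circ \widetilde{\rho}_a = \rho_a \circ p$), and that, since $\alpha$ is $\A$-\emph{equivariant} as a class in $H^\A_{\mathrm{reg}}$, its pullback $\widetilde{\rho}_a^\ast(\Forg\,\alpha)$ equals $\Forg\,\alpha$ in $K_0(\St/\CM_H(\overline{\chi}))$; note $\widetilde{\rho}_a^\ast$ of the class $[Z_i \to \CM]$ is $[Z_i \to \CM \xrightarrow{\widetilde{\rho}_a} \CM]$ because $\widetilde{\rho}_a$ is an isomorphism. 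Combining these, the pullback $\iota_{\rho_a(x)}^\ast(\Forg\,\alpha)$ is carried isomorphically (over the identification $\CM_x \cong \CM_{\rho_a(x)}$) to $\iota_x^\ast(\Forg\,\alpha)$.

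Next I would invoke invariance of the integration-map ingredients under this isomorphism. The Behrend function $\nu \colon \CM \to \BZ$ is intrinsic, so it is preserved by the automorphism $\widetilde{\rho}_a$ (the action $T_a^\ast(-)\otimes\CL$ is an automorphism of the moduli stack, and $\nu$ is defined purely in terms of the stack structure); consequently $\widetilde{\rho}_a^\ast\nu = \nu$. The topological Euler characteristic with respect to a constructible weight is invariant under isomorphism of finite-type stacks. Putting these together: $\CI(\iota_{\rho_a(x)\ast}\iota_{\rho_a(x)}^\ast(\Forg\,\alpha)) = \CI(\iota_{x\ast}\iota_x^\ast(\Forg\,\alpha))$, hence $p_\ast(\Forg\,\alpha)(\rho_a(x)) = p_\ast(\Forg\,\alpha)(x)$, which is exactly $\rho_a^\ast$-invariance. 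Since $p_\ast(\Forg\,\alpha)$ is already known to be a constructible function (this is what the non-equivariant Step 1 provides), and it is invariant under $\rho_a$ for all closed $a \in \A$ with $\A$ connected and irreducible, it lies in $\Constr^\A(M_H(\overline{\chi}))$. The commutative square in the statement then follows tautologically by setting $p^\A_\ast := p_\ast \circ \Forg$, with the left vertical arrow landing in the subgroup of invariant functions by what we just proved.

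The main obstacle I anticipate is the bookkeeping around the weak notion of group action on stacks adopted in Section~\ref{subsec:pre lim stacks}: one only has isomorphisms $\rho_{a_1}\rho_{a_2} \cong \rho_{a_1a_2}$ rather than equalities, so some care is needed to ensure that ``$\widetilde{\rho}_a^\ast(\Forg\,\alpha) = \Forg\,\alpha$'' really is a consequence of $\alpha$ being equivariant — this is where condition (b) (geometric bijections) and the fact that we ultimately only compare Euler characteristics do the work, so that passing to the reduced good moduli space and to $\BC$-points is harmless. A secondary point to get right is that the identification $\CM_x \cong \CM_{\rho_a(x)}$ respects Chern characters, i.e. $\rho_a$ sends the component indexed by $v$ to the component indexed by the same $v$ (the $\A$-action fixes $\Gamma$, since translations and twists by degree-zero line bundles act trivially on $H^{2\ast}(X,\BQ)$), so that the coefficient extraction $\mathrm{Coeff}_{c_v}$ is compatible with the comparison; this is straightforward but should be stated.
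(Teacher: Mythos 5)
Your proof is correct and follows essentially the same route as the paper's: reduce to the pointwise statement $\phi(a\cdot x)=\phi(x)$, use that equivariance of the source yields an isomorphism between the fibers over $x$ and $a\cdot x$, and invoke invariance of the Behrend function under the $\A$-action. The only (minor) divergence is in the final step: the paper works directly with the equivariant stack $\CX$ and its fiber stacks $\CX_x\cong\CX_{a\cdot x}$, and therefore needs a separate lemma (proved via virtual Poincar\'e polynomials) that $\CI$ depends only on the isomorphism class of the stack, the Behrend value and $v$, whereas you transport the non-equivariant variety representatives as classes in $K_0(\St/\CM_x)$, which achieves the same comparison.
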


\begin{proof}
Consider a regular equivariant class
\[ [ \CX \xrightarrow{f} \CM_H(v), a] \in H^{\A}_{\mathrm{reg}}(X, \overline{\chi}) \]
where $\CX$ is a stack, and let
\[ \phi = p_{\ast} \Forg( [ \CX \xrightarrow{f} \CM_{H}(v), a] ). \]
We need to show $\phi(a \cdot x) = \phi(x)$ for every $x \in M_H(\overline{\chi}) $ and $a \in A$.

Since the Behrend function is invariant under the $\A$ action,
by stratifying $\CX$ we may assume $f^{\ast} \nu$ is constant on $\CX$.
We let $\CX_x$ denote the fiber of $p \circ f : \CX \to M_H(\overline{\chi})$ over the
point $x \in M_H(\overline{\chi})$.
We need to compare the value of the integration map $\CI$ applied to
\[
[ \CX_x \to \CM_H(v) ],\,  [ \CX_{a \cdot x} \to \CM_H(v) ]
\in H_{\mathrm{reg}}(X, \overline{\chi}).
\]
Since $\CX$ carries an $\A$-action and $p \circ f$ is equivariant, translation by $a \in A$ yields an isomorphism of stacks
\[ t_a : \CX_x \xrightarrow{\cong} \CX_{a \cdot x}. \]
The claim now follows directly from the following Lemma.
\end{proof}

\begin{lemma} \label{lemma_independence} Let $[ \CY \xrightarrow{f} \CM_H(v) ] \in H_{\mathrm{reg}}(X, \overline{\chi})$ such that $f^{\ast} \nu$ is equal to a constant $k \in \BZ$.
Then the integral
\[ \CI( [ \CY \xrightarrow{f} \CM_H(v) ] ) \]
only depends on $k$, the class $v$ and the isomorphism class of the stack $\CY$.
\end{lemma}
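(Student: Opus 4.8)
The plan is to reduce the statement to the standard fact that the integration map $\CI$ on the non-equivariant semi-classical Hall algebra factors through the map to the Grothendieck ring of varieties twisted by the Behrend function, and then to exploit that the Behrend function contribution has been trivialized by hypothesis. Concretely, I would first recall that by definition $\CI([\CY \xrightarrow{f} \CM_H(v)]) = e(\CY, f^\ast \nu)\, c_v$, where $e(\CY, f^\ast\nu) = \int_{\CY} f^\ast \nu\, \mathrm{d}e$ is the weighted Euler characteristic of the stack $\CY$ with weight $f^\ast \nu$. The point is that $e(-,-)$ is defined on the whole (weighted) Grothendieck group of stacks, not just on varieties, via the usual motivic properties: it is additive under stratification, multiplicative under Zariski-locally trivial fibrations, and for a stack of the form $[\mathrm{pt}/G]$ with $G$ a connected algebraic group one has $e([\mathrm{pt}/G]) = 1/e(G)$ when $G$ is special, and in general it is still a well-defined rational number by the properties of orbifold Euler characteristic for Deligne--Mumford stacks combined with the stratification by stabilizer type. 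So the first step is simply to observe that $\CI([\CY \xrightarrow{f}\CM_H(v)])$ depends only on the weighted Euler characteristic $e(\CY, f^\ast\nu)$ and on $v$.

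Next, since by hypothesis $f^\ast \nu \equiv k$ is constant, the weighted Euler characteristic factors as $e(\CY, f^\ast \nu) = k \cdot e(\CY)$, where $e(\CY)$ is the (unweighted, stacky) Euler characteristic of $\CY$. Thus
\[
\CI\big([\CY \xrightarrow{f}\CM_H(v)]\big) = k \cdot e(\CY) \cdot c_v,
\]
which manifestly depends only on $k$, on $v$, and on $e(\CY)$ — and $e(\CY)$ depends only on the isomorphism class of the stack $\CY$. That is exactly the assertion. The one subtlety to address carefully is well-definedness: the class $[\CY \xrightarrow{f}\CM_H(v)]$ lives in $H_{\mathrm{reg}}(X,\overline\chi) \subset K_0(\mathrm{St}/\CM)$, and a single stack $\CY$ may a priori represent the zero class or be identified via the relations (geometric bijections, Zariski fibrations) with other stacks; but the weighted Euler characteristic is constructed precisely so as to respect all these relations (this is the content of Bridgeland's construction of $\CI$ in \cite{BrH}), so no inconsistency arises — different presentations of the same element give the same value, and here we are reading off the value from one particular presentation.

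The main obstacle — really the only thing requiring care — is making sure that the Euler characteristic of a (not necessarily DM, possibly non-special-stabilizer) algebraic stack of finite type is genuinely well-defined as a rational number and has the stratification/fibration properties I am invoking. This is handled by the regularity hypothesis: regular classes are, after applying $\mathrm{Forg}$ and working modulo $(\BL-1)$, $\Lambda$-linear combinations of classes of varieties, and the integration map is defined on $H_{\mathrm{sc}}$ by descending the map $[Z\to\CM]\mapsto e(Z, f^\ast\nu)c_v$ along these relations; the value $e(\CY)$ for a regular stack class is then simply the image of $[\CY]$ under $K_0(\mathrm{St}) \to K_0(\mathrm{Var})[\BL^{-1},\dots] \xrightarrow{\BL\mapsto 1} \BQ$, which is additive, multiplicative over Zariski fibrations, and an isomorphism invariant by construction. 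So the proof is essentially: unwind the definition of $\CI$, pull the constant $k$ out of the weighted Euler characteristic, and note that what remains is a function of $e(\CY)$ and $v$ alone. I would write this out in a few lines, citing \cite[Section~5]{BrH} for the relevant properties of $\CI$ and the weighted Euler characteristic.
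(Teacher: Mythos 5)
Your overall strategy is the paper's: unwind the definition of $\CI$, pull the constant Behrend weight $k$ out, and observe that what remains is an invariant of the stack $\CY$ alone. You also correctly identify the crux, namely making sense of ``the Euler characteristic of the stack $\CY$'' when $\CY$ has positive-dimensional affine stabilizers. But the device you propose for this does not literally work: you define $e(\CY)$ as the image of $[\CY]$ under $K_0(\mathrm{St})\to K_0(\mathrm{Var})[\BL^{-1},(\BL^n-1)^{-1}]\xrightarrow{\BL\mapsto 1}\BQ$, and the second arrow does not exist, because the middle ring has $\BL^n-1$ (hence $\BL-1$) inverted while $\BL\mapsto 1$ sends these elements to $0$. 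Appealing to regularity does not repair this as stated: regularity of $[\CY\to\CM_H(v)]$ says the class admits \emph{some} representative in the $\Lambda$-span of variety classes (where evaluation at $\BL=1$ is defined), but it does not by itself attach a canonical such representative to the stack $\CY$, nor rule out that different representatives evaluate differently at $\BL=1$ (elements killed by a power of $\BL-1$ need not vanish at $\BL=1$). Since the isomorphism-invariance of your $e(\CY)$ is the entire content of the lemma, this is the step that needs an actual argument. Your side remarks about $e([\mathrm{pt}/G])=1/e(G)$ and orbifold Euler characteristics of Deligne--Mumford stacks do not help here, as the relevant stabilizers are groups like $\GL_n$ with $e(\GL_n)=0$ and the stacks are not Deligne--Mumford.

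The paper supplies exactly the missing ingredient via the virtual Poincar\'e polynomial. Stratify $\CY=\bigsqcup_i[Y_i/\GL_{n_i}]$ and set $P(\CY)(u)=\sum_i P(Y_i)(u)/P(\GL_{n_i})(u)\in\BQ(u)$; by Joyce's theorem this is independent of the stratification, hence an invariant of the isomorphism class of $\CY$. Because $P$ sends $\BL^n-1$ to the nonzero rational function $u^{2n}-1$, it genuinely extends to the localized Grothendieck ring, and regularity of the class is used only to guarantee that $\lim_{u\to-1}P(\CY)(u)$ exists. One then has $\CI([\CY\xrightarrow{f}\CM_H(v)])=k\,\lim_{u\to-1}P(\CY)(u)\cdot c_v$, which visibly depends only on $k$, $v$ and $\CY$. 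If you replace your ``evaluation at $\BL=1$'' with this limit of the Poincar\'e polynomial (citing Joyce for stratification-independence), your argument becomes the paper's proof.
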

\begin{proof}[Proof of Lemma \ref{lemma_independence}]
For a variety $Y$, let $P(Y)(u)$ be its virtual 
Poincar\'e polynomial. 
The stack $\yY$ admits a stratification whose strata 
is of the form $[Y_i/\GL_{n_i}(\mathbb{C})]$.
Then 
\begin{align*}
P(\CY)(u)
=\sum_i \frac{P(Y_i)(u)}{P(\GL_{n_i}(\mathbb{C}))(u)}
\in \mathbb{Q}(u)
\end{align*}
is independent of a stratification (see~\cite[Theorem~4.10]{Joy5}), and 
we have 
\[
\CI( [ \CY \xrightarrow{f} \CM_H(v) ] ) = k \lim_{u \to -1} P(\CY)(u) c_{v}
\]
where the limit on the right hand side exist since $\CY \to \CM_H(v)$ is regular.
The right hand side only depends on $\CY$ and $k$ and $v$,
and not on $f$.
\end{proof}

\vspace{7pt}
\noindent \textbf{Step 3.}
Let $\phi : M_H(v) \to \BQ$ be a constructible $\A$-invariant function. Then there exists a stratification
\[ M_H(v) = \coprod_i Z_i \]
into $\A$-invariant subspaces $Z_i$ such that
\begin{itemize}
\item $Z_i$ is a variety,
\item the restriction $\phi|_{Z_i}$ is constant of value $a_i \in \BQ$,
\item there exists a connected subgroup $B_i \subset \A$ such that $(Z_i)_{B_i} = Z_i$.
\end{itemize}
Such a stratification can be constructed along the lines of \cite[2.4]{BrH} and \cite[3.3]{OS}.
We define an integration map
\[ J :  \Constr^\A(M_H(\overline{\chi})) \to C^\A(X) \]
by sending the constructible function $\phi$ to
\begin{align}\label{def:J} J(\phi)
= \left( \sum_i (-1)^{\dim(\A/B_i)} a_i e([Z_i/(A/B_i)] \epsilon_{B_i}) \right) c_{v}.
\end{align}
Since any two such stratifications have a common refinement, the map $J$ is well-defined.

\vspace{7pt}
\noindent \textbf{Step 4.} 
The direct sum map $\oplus \colon \CM_H(\overline{\chi}) \times \CM_H(\overline{\chi}) \to 
\CM_H(\overline{\chi})$
descends to a map
\[ \oplus \colon M_H(\overline{\chi}) \times M_H(\overline{\chi}) \to M_H(\overline{\chi}). \]
Define an associative product and a Poisson bracket on $\Constr(M_H(\overline{\chi}))$ by
\begin{align*}
f \ast g & := \sum_{v_1, v_2} (-1)^{\chi(v_1, v_2)} \oplus_{\ast}( f_{v_1} \times g_{v_2} ), \\
\{ f, g \} & := \sum_{v_1, v_2} \chi(v_1, v_2) (-1)^{\chi(v_1, v_2)} \oplus_{\ast}( f_{v_1} \times g_{v_2} ),
\end{align*}
for all $f,g \in \Constr(M_H(\overline{\chi}))$, where $f_v = f|_{M_H(v)}$ and similar for $g$, and we let
\[ (f_{v_1} \times g_{v_2})(x_1, x_2) = f_{v_1}(x_1) g_{v_2}(x_2) \]
for all $x_1 \in M_{H}(v_1), x_2 \in M_{H}(v_2)$.
By a direct check $\mathrm{Constr}(M_H(\overline{\chi}))$
is a Poisson algebra with respect to $(\ast, \{-, -\})$.

Since taking direct sums is $\A$-equivariant, the operations $\ast$ and $\{ -,  - \}$ preserve the space of $\A$-invariant functions
and define a Poisson algebra structure on $\Constr^\A(M_H(\overline{\chi}))$. 

\begin{lemma} \label{Lemma_Poisson1} The map of integration along fibers
\[ p_{\ast} : H_{\mathrm{sc}}(X, \overline{\chi}) \to \Constr(M_H(\overline{\chi}))
\]
is a Poisson algebra homomorphism 
with respect to $(\ast, \{ - , - \})$.
The same holds for the equivariant map 
\[ p^\A_{\ast} : H^\A_{\mathrm{sc}}(X, \overline{\chi}) \to \Constr^\A(M_H(\overline{\chi})). \]
\end{lemma}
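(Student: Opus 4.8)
The plan is to deduce the equivariant statement from the non-equivariant one together with the commutative square of the previous Lemma. First I would recall the precise structure already in place: by the previous Lemma the square
\[
\begin{tikzcd}
H^\A_{\mathrm{reg}}(X, \overline{\chi}) \ar{d}{p^\A_\ast}\ar{r}{\Forg} &  H_{\mathrm{reg}}(X, \overline{\chi})
\ar{d}{p_{\ast}} \\
\Constr^\A(M_H(\overline{\chi})) \ar[hookrightarrow]{r} & \Constr(M_H(\overline{\chi}))
\end{tikzcd}
\]
commutes, with $p^\A_\ast = p_\ast \circ \Forg$, and $\Forg$ is a ring homomorphism. Since the multiplications $\ast$ on $H_{\mathrm{sc}}^\A(X,\overline{\chi})$ and on $\Constr^\A(M_H(\overline{\chi}))$ are by construction the restrictions of those on $H_{\mathrm{sc}}(X,\overline{\chi})$ and $\Constr(M_H(\overline{\chi}))$ along the (injective) maps $\Forg$ and the inclusion $\Constr^\A \hookrightarrow \Constr$, compatibility with $\ast$ for $p^\A_\ast$ is immediate from compatibility for the non-equivariant $p_\ast$: for $\alpha,\beta \in H^\A_{\mathrm{sc}}(X,\overline{\chi})$ we have $p^\A_\ast(\alpha\ast\beta) = p_\ast(\Forg(\alpha)\ast\Forg(\beta)) = p_\ast\Forg(\alpha)\ast p_\ast\Forg(\beta) = p^\A_\ast(\alpha)\ast p^\A_\ast(\beta)$, viewing both sides inside $\Constr(M_H(\overline{\chi}))$, and the outcome lies in $\Constr^\A$ by the previous Lemma. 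The same argument works verbatim for the Poisson bracket, since both brackets are defined by the same formula (with the sign $(-1)^{\chi(v_1,v_2)}$ and the factor $\chi(v_1,v_2)$) pushed forward along $\oplus_\ast$, and the bracket on the semi-classical limit is $\{f,g\} = (f\ast g - g\ast f)/(\BL-1)$, which is manifestly preserved by any ring homomorphism that respects the $\BL$-grading.

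So the only genuine content is the non-equivariant assertion that $p_\ast : H_{\mathrm{sc}}(X,\overline{\chi}) \to \Constr(M_H(\overline{\chi}))$ is a Poisson algebra homomorphism. I would prove this by relating $p_\ast$ to Bridgeland's integration map $\CI$. Concretely, for $\phi = p_\ast(\alpha)$ one has, for each closed point $x \in M_H(\overline{\chi})$, the identity $\phi(x) = \mathrm{Coeff}_{c_v}\big(\CI(\iota_{x\ast}\iota_x^\ast \alpha)\big)$ used in the definition of $p_\ast$ in Step 1. The key compatibility is that taking fibers commutes with the $\ast$-product in the appropriate sense: if $\alpha$ is supported over $v_1$ and $\beta$ over $v_2$, then the fiber of $\alpha\ast\beta$ over a point $x\in M_H(v_1+v_2)$ decomposes, via the extension-of-sheaves diagram, into contributions indexed by pairs $(x_1,x_2)$ with $x_1\oplus x_2$ $S$-equivalent to $x$ — this is precisely why the pushforward along $\oplus : M_H(\overline{\chi})\times M_H(\overline{\chi}) \to M_H(\overline{\chi})$ appears in the product on $\Constr$. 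Making this precise requires analyzing the stack $\CM^{(2)}$ of short exact sequences, its two projections to $\CM\times\CM$ (remembering sub and quotient) and to $\CM$ (remembering the middle term), and checking that fiberwise over $M_H(\overline{\chi})\times M_H(\overline{\chi})$ the relevant Euler-characteristic-weighted count factors as a product; the sign $(-1)^{\chi(v_1,v_2)}$ is exactly the one built into both $\CI$ and the product on $\Constr$, so it matches. The Poisson bracket statement then follows formally from the $\ast$-statement by the standard argument: $f\ast g - g\ast f$ is divisible by $\BL-1$ in $H_{\mathrm{reg}}$ (the two extension stacks $\CM^{(2)}$ and its "opposite" have the same fibers up to an affine-bundle of the expected dimension, by the Calabi--Yau/Serre-duality symmetry of $\mathrm{Ext}$'s), and dividing by $\BL-1$ and setting $\BL=1$ produces the factor $\chi(v_1,v_2)$.

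I expect the main obstacle to be the bookkeeping in the fiber-product/stratification argument over the good moduli space: one must check that restricting a regular class to the fiber $\CM_x$ and then applying $\CI$ is compatible with the $\ast$-product, i.e. that $\iota_{x\ast}\iota_x^\ast(\alpha\ast\beta)$ can be rewritten, after pushforward, in terms of $\iota_{x_1}^\ast\alpha$ and $\iota_{x_2}^\ast\beta$ summed over the fiber of $\oplus$. The subtle point is that $M_H(\overline{\chi})$ parametrizes $S$-equivalence classes, so a single point $x$ can have several Jordan--Hölder filtrations; one must verify that the constructible function $p_\ast(\alpha\ast\beta)$ genuinely equals $p_\ast(\alpha)\ast p_\ast(\beta)$ as functions on the good moduli space, not merely on a dense open locus of stable points. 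This is handled by a stratification of $M_H(\overline{\chi})$ according to the $S$-equivalence type (the partition of the Hilbert polynomial), on each stratum of which the relevant fiber of $\oplus$ is itself a nice locally closed subvariety, and then invoking that $\CI$ is already known (by Bridgeland) to be a Poisson homomorphism together with the projection formula for $p_\ast$. Once this stratified identification is in place, everything else is formal manipulation with the definitions.
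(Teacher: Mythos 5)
Your overall strategy coincides with the paper's: for the product you reduce, by restricting to fibers of $p$ over points of the good moduli space and enumerating the finitely many decompositions $x = x_1 \oplus x_2$ in the image of $\oplus$, to the case of classes supported over single points, where the identity follows from Bridgeland's theorem that $\CI$ is a Poisson algebra homomorphism; and you obtain the equivariant statement from $p^\A_\ast = p_\ast \circ \Forg$ together with the fact that $\Forg$ is a (Poisson) ring homomorphism and the previous lemma's commutative square. This is exactly how the paper argues, so the substance of your proof is fine.

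One step, however, is wrong as written: your claim that compatibility with the Poisson bracket ``follows formally'' from compatibility with $\ast$ because $\{f,g\} = (f \ast g - g \ast f)/(\BL-1)$. That formula only makes sense on the Hall-algebra side. The target $\Constr(M_H(\overline{\chi}))$ contains no $\BL$, and its product $\ast$ is commutative (since $\oplus$ is symmetric and $(-1)^{\chi(v_1,v_2)} = (-1)^{\chi(v_2,v_1)}$), so its bracket is \emph{not} recoverable as a commutator; it is defined directly by the formula with the extra factor $\chi(v_1,v_2)$. The remark about the two extension stacks and Serre duality explains why the bracket is well defined on $H_{\mathrm{sc}}$, not why $p_\ast$ preserves it. The repair is immediate and uses nothing beyond what you already set up: run the identical fiberwise reduction for the bracket, replacing the input ``$\CI$ preserves $\ast$'' by ``$\CI$ preserves $\{-,-\}$,'' which produces the matching factor $\chi(v_1,v_2)(-1)^{\chi(v_1,v_2)}$ on both sides. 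This is what the paper does.
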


\begin{proof}
We first consider the non-equivariant case.
We need to show that for all $\alpha_1, \alpha_2 \in H_{\mathrm{sc}}(X, \overline{\chi})$ we have
\begin{align} \label{toprove} 
p_{\ast}( \alpha_1 \ast \alpha_2 ) & = p_{\ast}(\alpha_1) \ast p_{\ast}(\alpha_2), \\
p_{\ast}( \{ \alpha_1 ,  \alpha_2 \} ) & = \{ p_{\ast}(\alpha_1), p_{\ast}(\alpha_2) \},
\end{align}

Assume first that each $\alpha_i$ is supported over a point $x_i \in M_{H}(v_i)$, 
so in particular
\[ p_{\ast}(\alpha_i) = a_i \delta_{x_i}, \quad i=1,2, \]
where $a_i \in \BQ$ and we let $\delta_x$ is the characteristic function at the point $x$.
Then $\alpha_1 \ast \alpha_2$ is supported over the point $x = x_1 \oplus x_2$ and hence
\begin{align*}
p_{\ast}( \alpha_1 \ast \alpha_2 ) 
& = \text{Coeff}_{c_{v}} \left( \CI( \alpha_1 \ast \alpha_2 ) \right) \delta_x \\
& = \text{Coeff}_{c_{v}} \left( \CI( \alpha_1 ) \ast \CI( \alpha_2 ) \right) \delta_x \\
& = \left( a_1 a_2 (-1)^{\chi(v_1, v_2)} \right) \delta_x \\ 
& = p_{\ast}(\alpha_1) \ast p_{\ast}(\alpha_2),
\end{align*}
where we have set $v=v_1+v_2$. Similarly,
\begin{align*}
p_{\ast}( \{ \alpha_1, \alpha_2 \} ) 
& = \text{Coeff}_{c_v} \left( \CI( \{ \alpha_1, \alpha_2 \} ) \right) \delta_x \\
& = \text{Coeff}_{c_v} \left( \{ \CI(\alpha_1), \CI(\alpha_2) \} \right) \delta_x \\
& = \left( a_1 a_2 (-1)^{\chi(v_1, v_2)} \chi(v_1, v_2) \right) \delta_x \\
& = \{ p_{\ast}(\alpha_1) , p_{\ast}(\alpha_2) \}.
\end{align*}

For the general case let $\alpha_i = [X_i \to \mM_{H}(v_i)]$ where $X_i$ is a variety.
Let $x \in M_H(v_1+v_2)$ be a fixed point, and consider all possible decompositions
\[ x = x_{1j} \oplus x_{2j}, \quad j=1,\ldots, \ell \]
with $x_{ij} \in M_{H}(v_i)$ for $i=1,2$. Then, to compute the value of $p_{\ast}(\alpha_1 \ast \alpha_2)$ at $x$ we may replace $X_i$ by 
\[ \bigsqcup_{j=1}^{\ell} X_i |_{\CM_{x_{ij}}} \]
By bilinearity of both sides of \eqref{toprove} we may further assume that $\ell=1$,
or equivalently, that there is only a unique decomposition $x = x_{1} \oplus x_2$. 
But then we are in the case considered before and the claim follows.
The argument for $\{ - , - \}$ is parallel.
This completes the non-equivariant case.

The equivariance case follows immediately:
we have $p_{\ast}^{\A} = p_{\ast} \circ \Forg$, and $\Forg$ and $p_{\ast}$ are both ring and Poisson algebra homomorphisms.
\end{proof}

\begin{lemma} \label{Lemma_Poisson2} The map
\[ J :  \Constr^\A(M_H(\overline{\chi})) \to C^\A(X) \]
is a Poisson algebra homomorphism. 
\end{lemma}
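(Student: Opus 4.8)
The plan is to reduce the statement about $J$ being a Poisson algebra homomorphism to the corresponding statement for the ordinary (non-equivariant) Euler characteristic integration, using the structure theory of abelian subvarieties already developed in the earlier lemmas. First I would note that $\Constr^{\A}(M_H(\overline{\chi}))$ is spanned (over $\BQ$) by indicator functions $\mathbb{1}_{Z}$ of $\A$-invariant locally closed subvarieties $Z$ with $Z = Z_{B}$ for some connected abelian subgroup $B \subset \A$, carrying a fixed Chern character $v$; by bilinearity of $\ast$, $\{-,-\}$ and of the claimed identities it suffices to check $J$ on such generators. For such a generator, $J(\mathbb{1}_{Z}) = (-1)^{\dim(\A/B)} e([Z/(\A/B)]) \, \epsilon_{B} \, c_{v}$.

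Next I would compute both sides of $J(f \ast g) = J(f) \ast J(g)$ and $J(\{f,g\}) = \{J(f), J(g)\}$ for two generators $f = \mathbb{1}_{Z_1}$ (Chern character $v_1$, subgroup $B_1$) and $g = \mathbb{1}_{Z_2}$ (Chern character $v_2$, subgroup $B_2$). On the right, $J(f) \ast J(g)$ produces the sign $(-1)^{\chi(v_1,v_2)}(-1)^{\dim(\A/B_1)+\dim(\A/B_2)}$, the product of Euler characteristics $e([Z_1/(\A/B_1)])\, e([Z_2/(\A/B_2)])$, and the product $\epsilon_{B_1} \cdot \epsilon_{B_2}$ in $\BQ[\A]$, which is $|(B_1 \cap B_2)/(B_1 \cap B_2)^{\circ}|\, \epsilon_{(B_1 \cap B_2)^{\circ}}$ when $B_1, B_2$ are transverse and $0$ otherwise. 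On the left, $\oplus_{\ast}(\mathbb{1}_{Z_1} \times \mathbb{1}_{Z_2})$ is a constructible function on $M_H(v_1+v_2)$ supported on the image of $Z_1 \times Z_2$ under the direct-sum map; stratifying by the stabilizer, a point $x_1 \oplus x_2$ has $\Stab(x_1 \oplus x_2) = \Stab(x_1) \cap \Stab(x_2)$ exactly as in the proof that $\e$ is a ring homomorphism, so the contributing stratum has connected stabilizer $(B_1 \cap B_2)^{\circ}$. The core computation — that $e$ of the quotient stratum equals $|(B_1\cap B_2)/(B_1\cap B_2)^{\circ}|\, e([Z_1/(\A/B_1)])\, e([Z_2/(\A/B_2)])$ in the transverse case and the stratum carries a free action of the positive-dimensional group $\operatorname{Coker}(\alpha)$ (hence Euler characteristic zero) otherwise — is exactly the diagram-chase on the exact sequences of abelian varieties carried out in the proof of the lemma that $\e: K_0^{\A}(\mathrm{Var}) \to \BQ[\A]$ is a ring homomorphism, so I would invoke that argument essentially verbatim. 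The sign bookkeeping matches because $\dim(\A/(B_1\cap B_2)^{\circ}) = \dim(\A/B_1) + \dim(\A/B_2)$ precisely in the transverse case. The Poisson-bracket identity is identical except for an extra factor of $\chi(v_1,v_2)$ on both sides, so it follows by the same computation.

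The last point to address is that $\oplus_{\ast}(\mathbb{1}_{Z_1}\times \mathbb{1}_{Z_2})$ need not itself be an indicator function of a single stratum of the required form: the fibers of the direct-sum map over a point $x$ correspond to the finitely many decompositions $x = x_1 \oplus x_2$. I would handle this exactly as in the proof of Lemma~\ref{Lemma_Poisson1}: reduce to a fixed target point $x$, replace $Z_i$ by the finite disjoint union of their intersections with the fibers over the relevant $x_{ij}$, and then by bilinearity assume there is a unique decomposition $x = x_1 \oplus x_2$, so that locally the situation is the product one treated above. An alternative, cleaner route is to observe that $J$ factors as $\e$ precomposed with a map on equivariant Grothendieck groups of varieties: a generator $\mathbb{1}_Z$ with $Z=Z_B$ corresponds to the class $(-1)^{\dim \A/B}[Z, a_Z]\, c_v \in K_0^{\A}(\mathrm{Var}) \otimes C^{\A}$, and then the ring/Poisson-homomorphism property of $J$ is a direct consequence of the already-proven ring-homomorphism property of $\e$ together with the definitions of $\ast$ and $\{-,-\}$ on $C^{\A}(X)$ (the Chern-character signs $(-1)^{\chi(v_1,v_2)}$ and the factor $\chi(v_1,v_2)$ being built into those definitions). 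I expect the main obstacle to be purely bookkeeping: matching the sign $(-1)^{\dim(\A/B)}$ conventions through the direct-sum map so that transversality of $B_1, B_2$ is what makes both the $\BQ[\A]$-product nonzero \emph{and} the dimension count of signs consistent — but since the underlying geometric input (the fiber-product diagram of abelian varieties and the vanishing of Euler characteristics under free torus actions) is already established, no genuinely new idea is needed.
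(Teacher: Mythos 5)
Your proposal is correct and follows essentially the same route as the paper: reduce by stratification to indicator functions of strata $Z=Z_B$ with fixed Chern character, push forward along the direct-sum map, and then invoke the already-established fact that the equivariant Euler characteristic $\e$ is a ring homomorphism (the fiber-product diagram of abelian subvarieties and the transversality dichotomy), with the sign bookkeeping handled by $\dim(\A/(B_1\cap B_2)^{\circ})=\dim(\A/B_1)+\dim(\A/B_2)$ in the transverse case. Your extra care about the fibers of the direct-sum map and your ``alternative cleaner route'' through $\e$ are both consistent with, and essentially identical to, what the paper does.
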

\begin{proof}
For every $i \in \{ 1, 2 \}$, let 
\[ X_i \subset M_{H}(v_i) \]
be an $\A$-invariant subspace such that $(X_i)_{B_i} = X_i$ for some connected subgroup $B_i \subset \A$.
We prove the claim for the $A$-invariant functions
\[ \delta_{X_i} \in \Constr^\A(M_H(\overline{\chi})). \]
The general case follows by a stratification argument.

By definition we have
\[
\delta_{X_1} \ast \delta_{X_2} = (-1)^{\chi(v_1, v_2)} \oplus_{\ast}( \delta_{X_1 \times X_2} )
\]
Applying $J$ yields
\begin{equation} \begin{aligned} \label{eq1}
J(\delta_{X_1} \ast \delta_{X_2})
& = (-1)^{\chi(v_1, v_2)+\dim(\A/B)} e\big( [X_1 \times X_2 / (\A/B)] \big) \epsilon_B c_{v_1+v_2} \\
& = (-1)^{\chi(v_1, v_2)+\dim(\A/B)} \e([X_1 \times X_2]) c_{v_1+v_2},
\end{aligned} \end{equation}
where $B = (B_1 \cap B_2)^{\circ}$ and $\e$ denotes the equivariant Euler characteristic.

On the other hand,
\begin{align*} J(\delta_{X_i}) & = (-1)^{\dim(\A/B_i)} e([X_i / (\A/B_i)] ) \epsilon_{B_i} c_{v_i} \\
& = (-1)^{\dim(\A/B_i)} \e(X_i ) c_{v_i}.
\end{align*}
By Section~\ref{Subsection_equivariant_Grothendieckgroup_of_var} we have
\[ \e([X_1 \times X_2]) = \e(X_1) \e(X_2). \]
Hence if $B_1$ and $B_2$ are not transverse, then \eqref{eq1} and 
$J(\delta_{X_1}) \ast J(\delta_{X_2})$ both vanish.
If $B_1$ and $B_2$ are transverse, then
\[ \dim(\A/B) = \codim(B) = \codim(B_1) + \codim(B_2) = \dim(\A/B_1) + \dim(\A/B_2) \]
which gives the desired equality:
\[
J(\delta_{X_1} \ast \delta_{X_2}) 
= J(\delta_{X_1}) \ast J(\delta_{X_2}).
\]
The check that $J$ preserves the Poisson bracket is parallel.
\end{proof}

\begin{defn} \label{defn_equiv} The equivariant integration map is defined by
\[ \CI^{\A} := J \circ p^{\A}_{\ast} : H^\A_{\mathrm{sc}}(X, \overline{\chi}) \to \Constr^\CA(M_H(\overline{\chi})) \to C^{\A}(X). \]
\end{defn}

We have the following result.
\begin{thm}\label{thm:IA} The equivariant integration map $\CI^\A$
is a Poisson algebra homomorphism. Moreover, for
every $\A$-equivariant map 
$f : Z \to \CM_H(v)$, where $Z$ is a variety, we have
\begin{equation*} \CI^\A([Z \xrightarrow{f} \CM_H(v), a]) =
\left(\sum_{B \subset \A}
(-1)^{\dim (\A/B)}  \epsilon_B
\int_{[Z_B/(\A/B)]}
f^{\ast} \nu \ \mathrm{d}e \right) c_v,
\end{equation*}
\end{thm}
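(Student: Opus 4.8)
The first assertion, that $\CI^{\A} = J\circ p^{\A}_{\ast}$ is a Poisson algebra homomorphism, is immediate: $p^{\A}_{\ast}$ is a Poisson algebra homomorphism by Lemma~\ref{Lemma_Poisson1}, $J$ is one by Lemma~\ref{Lemma_Poisson2}, and a composition of Poisson algebra homomorphisms is again one. The substance of the theorem is the explicit formula on geometric classes. Fix $[Z\xrightarrow{f}\CM_H(v),a]$ with $Z$ a variety and set $\psi = p\circ f\colon Z\to M_H(v)$, an $\A$-equivariant morphism of schemes; since $p^{\A}_{\ast} = p_{\ast}\circ\Forg$, the plan is to compute the constructible function $\phi\cneq p_{\ast}([Z\xrightarrow{f}\CM_H(v)])$ and then to apply $J$. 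Because $\phi(x) = \int_{Z\times_{\CM_H(v)}\CM_x} f^{\ast}\nu\,\mathrm{d}e$ is additive under disjoint unions in $Z$, $J$ is linear, and the right-hand side of the asserted formula is additive under $\A$-equivariant locally closed stratifications of $Z$, I would first stratify $Z$ by the value of $f^{\ast}\nu$ and then by stabilizer type, reducing to the case that $f^{\ast}\nu\equiv k$ is constant and $Z = Z_{B_0}$ for a single connected abelian subgroup $B_0\subseteq\A$. Then $B_0$ acts trivially on $Z$, the group $\A/B_0$ acts on $Z$, only $B=B_0$ contributes to the right-hand side (since $Z_B=\varnothing$ for connected $B\neq B_0$, the identity component of every stabilizer being exactly $B_0$), and the claim becomes $\CI^{\A}([Z\xrightarrow{f}\CM_H(v),a]) = (-1)^{\dim(\A/B_0)}\,k\,e([Z/(\A/B_0)])\,\epsilon_{B_0}\,c_v$.

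Next I would compute $\phi$ in this reduced situation. Unwinding the definition, $\phi(x) = k\cdot e(Z\times_{\CM_H(v)}\CM_x)$, and the projection $Z\times_{\CM_H(v)}\CM_x\to\psi^{-1}(x)$ onto the $Z$-factor has every fibre equivalent, as a groupoid, to a point — the fibre over $z$ is the groupoid of pairs $(\beta,\gamma)$ with $\beta\in\CM_x$ and $\gamma\colon f(z)\xrightarrow{\sim}\beta$, which is contractible — so $e(Z\times_{\CM_H(v)}\CM_x) = e(\psi^{-1}(x))$ and $\phi = k\cdot\psi_{!}\mathbf{1}_Z$, where $(\psi_{!}\mathbf{1}_Z)(x)=e(\psi^{-1}(x))$. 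I would further observe that if the identity component $C$ of $\Stab_{M_H(v)}(x)$ strictly contains $B_0$, then the positive-dimensional abelian variety $C/B_0$ acts on $\psi^{-1}(x)$ with finite stabilizers, so $e(\psi^{-1}(x))=0$; hence $\phi$ is supported on the stratum $(M_H(v))_{B_0}$.

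Finally, to apply $J$, I would choose an $\A$-equivariant stratification $(M_H(v))_{B_0}=\coprod_j W_j$ over which $\psi$ is a Zariski fibration and $\phi|_{W_j}$ is constant equal to $b_j$, so that the fibre of $\psi$ over $W_j$ has Euler characteristic $b_j/k$. By the definition of $J$,
\[ J(\phi) = \Big(\sum_j (-1)^{\dim(\A/B_0)}\,b_j\,e([W_j/(\A/B_0)])\Big)\,\epsilon_{B_0}\,c_v, \]
while stratifying $Z$ by the loci $\psi^{-1}(W_j)$ — the rest of $Z$ maps into $\{\phi=0\}$ and has fibrewise Euler characteristic $0$, contributing nothing — and using multiplicativity of the Euler characteristic in Zariski fibrations, in the form valid for the quotient stacks $[\,\cdot\,/(\A/B_0)]$, gives $e([Z/(\A/B_0)])=\sum_j (b_j/k)\,e([W_j/(\A/B_0)])$. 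Combining the two shows $J(\phi) = (-1)^{\dim(\A/B_0)}\,k\,e([Z/(\A/B_0)])\,\epsilon_{B_0}\,c_v$, as required.

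The step I expect to be the main obstacle is the computation of $\phi$: one must carefully control the discrepancy between the moduli stack $\CM_H(v)$ and its good moduli space $M_H(v)$ — over the stable locus $p$ is a $\BG_m$-gerbe, and over strictly semistable points the fibres $\CM_x$ are more involved stacks — and verify both that the Behrend-weighted Euler characteristic of the stacky fibre product $Z\times_{\CM_H(v)}\CM_x$ equals $k\cdot e(\psi^{-1}(x))$ and the vanishing statement that localizes $\phi$ to the stratum indexed by $B_0$; once these are in hand, the remaining bookkeeping with Zariski fibrations and equivariant Euler characteristics is routine.
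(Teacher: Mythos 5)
Your proof is correct and follows essentially the same route as the paper: reduce by equivariant stratification to the case where $f^{\ast}\nu$ is constant and $Z=Z_{B_0}$ for a single connected $B_0$, identify $p_{\ast}^{\A}$ of the class with $k$ times the fibrewise Euler characteristic of $\psi=p\circ f$, and then apply $J$ using compatibility of (equivariant, orbifold) Euler characteristics with pushforward. You are in fact somewhat more careful than the paper's two-line argument, notably in checking that the fibre product $Z\times_{\CM_H(v)}\CM_x$ is just $\psi^{-1}(x)$ and that $\phi$ is supported on the stratum $(M_H(v))_{B_0}$ (via the vanishing of Euler characteristics under positive-dimensional abelian actions with finite stabilizers), points the paper leaves implicit.
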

\begin{proof}
The first claim follows from Lemma~\ref{Lemma_Poisson1} and~\ref{Lemma_Poisson2}.
For the second we may assume $Z$ is a $\A$-invariant subvariety of $\CM_H(v)$,
that $Z_B = Z$ for some connected abelian subvariety $B \subset \A$
and that the Behrend function is constant of value $k$ on $Z$.
Let $p_Z : Z \to Z' \subset M_H(v)$ 
be the restriction of $p : \CM_H(v) \to M_H(v)$ to $Z$. Then
\begin{align*} 
\CI^{\A}([Z \to \CM_H(v), a]) 
& = k (-1)^{\dim(\A/B)} \epsilon_B c_{v} \int_{[Z'/(\A/B)]} p_{Z\ast}(1) \ \mathrm{d}e \\
& = k (-1)^{\dim(\A/B)} \epsilon_B c_{v} \int_{[Z/(\A/B)]} 1 \ \mathrm{d}e. \qedhere
\end{align*}
\end{proof}

\subsection{Definition of Donaldson-Thomas invariants} \label{Subsection_definition_of_reduced_DT}
As above let $A$ be an abelian variety acting on a Calabi--Yau threefold $X$, and set $\A = A \times \Pic^0(X)$.
The stack of semistable sheaves (\ref{moduli:semistable}) defines 
an element 
\begin{align*}
\delta_{H}(v) \cneq [\mM_H(v) \subset \CM_H(\overline{\chi})]\in 
H^{\A}(X, \overline{\chi}). 
\end{align*}
Applying a formal logarithm defines the element
\begin{equation} \label{epsilon_guy}
\epsilon_{H}(v) \cneq 
\sum_{\begin{subarray}{c}
l\ge 1, v_1+\cdots+v_l=v \\
\overline{\chi}_H(v_i)=\overline{\chi}
\end{subarray}}
\frac{(-1)^{l-1}}{l}
\delta_H(v_1) \ast \cdots \ast \delta_H(v_l). 
\end{equation}
The following is the equivariant analog of a result of Joyce.

\begin{prop}
$(\mathbb{L}-1)\epsilon_H(v) \in H_{\rm{reg}}^{\A}(X, \overline{\chi})$.
\end{prop}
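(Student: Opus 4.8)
The plan is to reduce the statement to the non-equivariant "no-pole" theorem of Joyce (see \cite{JS}; it is reproved in the Hall algebra language in \cite{BrH}), exploiting that the equivariant notion of regularity is, by definition~\eqref{defn_reg}, pulled back along the forgetful map $\Forg$. So no genuinely new geometric input is needed — the whole difficulty is imported.

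First I would record the formal properties of $\Forg$. The forgetful homomorphism $\Forg\colon H^{\A}(X,\overline{\chi})\to H(X,\overline{\chi})$ of Remark~\ref{Rmk_gsgfsg} is a ring homomorphism for $\ast$, and it commutes with multiplication by $\mathbb{L}-1$ (since $\mathbb{L}$ is represented by $\mathbb{A}^1$ with the trivial $\A$-action). Moreover it sends the equivariant class $\delta_H(v)=[\mM_H(v)\subset\CM_H(\overline{\chi}),a]$ to the non-equivariant class $\delta_H^{\circ}(v):=[\mM_H(v)\subset\CM_H(\overline{\chi})]$, because forgetting the $\A$-action does not alter the underlying morphism of stacks. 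Applying $\Forg$ term by term to the defining formula~\eqref{epsilon_guy} therefore gives $\Forg(\epsilon_H(v))=\epsilon_H^{\circ}(v)$, where $\epsilon_H^{\circ}(v)\in H(X,\overline{\chi})$ is assembled from the classes $\delta_H^{\circ}(v_i)$ by the same formula. Here one uses that the sum in \eqref{epsilon_guy} is finite: semistable sheaves with reduced Hilbert polynomial $\overline{\chi}$ and total Chern character $v$ form a bounded family, so only finitely many decompositions $v=v_1+\cdots+v_l$ have all $\mM_H(v_i)\neq\varnothing$.

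Next I would invoke Joyce's theorem non-equivariantly, namely $(\mathbb{L}-1)\epsilon_H^{\circ}(v)\in H_{\mathrm{reg}}(X,\overline{\chi})$. Combining with the previous paragraph,
\[ \Forg\bigl((\mathbb{L}-1)\epsilon_H(v)\bigr)=(\mathbb{L}-1)\,\Forg\bigl(\epsilon_H(v)\bigr)=(\mathbb{L}-1)\epsilon_H^{\circ}(v)\in H_{\mathrm{reg}}(X,\overline{\chi}). \]
Since $H^{\A}_{\mathrm{reg}}(X,\overline{\chi})=\Forg^{-1}\bigl(H_{\mathrm{reg}}(X,\overline{\chi})\bigr)$ by \eqref{defn_reg} and its analog with fixed reduced Hilbert polynomial, we conclude $(\mathbb{L}-1)\epsilon_H(v)\in H^{\A}_{\mathrm{reg}}(X,\overline{\chi})$, which is the claim.

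The "hard part" is thus entirely contained in the cited non-equivariant result, and the point of the equivariant statement is precisely that regularity is defined by pullback along $\Forg$, so one needs no equivariant stratification of moduli stacks and no analysis of stabilizers. The only subtlety is bookkeeping: one must check that $\Forg$ restricts compatibly to $H^{\A}(X,\overline{\chi})\to H(X,\overline{\chi})$ inside the ambient $\Forg\colon H^{\A}(X)\to H(X)$, so that the definition of $H^{\A}_{\mathrm{reg}}(X,\overline{\chi})$ via preimage is unambiguous; this is immediate because the inclusions $H^{\A}(X,\overline{\chi})\hookrightarrow H^{\A}(X)$ and $H(X,\overline{\chi})\hookrightarrow H(X)$ commute with $\Forg$.
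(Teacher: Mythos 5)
Your argument is exactly the paper's proof: apply $\Forg$, invoke Joyce's non-equivariant no-pole theorem, and conclude from the definition \eqref{defn_reg} of equivariant regularity as the preimage of $H_{\mathrm{reg}}$ under $\Forg$. The extra bookkeeping you supply (compatibility of $\Forg$ with $\ast$, with $\mathbb{L}-1$, and with the fixed-$\overline{\chi}$ subalgebras) is correct and harmless, but the paper compresses all of it into a one-line proof.
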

\begin{proof} By Joyce \cite[Theorem~8.7]{Joy3} the element is regular after forgetting the equivariant structure. Hence it is regular by definition \eqref{defn_reg}.
\end{proof}

Define the class
\begin{align*}
\overline{\epsilon}_H(v) \cneq 
[(\mathbb{L}-1)\epsilon_H(v)] \in H_{\rm{sc}}^{\A}(X, \overline{\chi}).
\end{align*}

\begin{defi} The $\A$-reduced Donaldson--Thomas invariant of $X$ in class $v \in \Gamma_{+}$
is the unique element $\DTb_H(v) \in \mathbb{Q}[\A]$ such that
\[ \CI^{\A}(\overline{\epsilon}_H(v)) = \DTb_H(v) \cdot c_v. \]
\end{defi}

\begin{rmk}
We expect $\DTb_H(v) \in \BQ[\A]$ to be invariant under deformations of $X$ under which $v \in H^{\ast}(X,\BQ)$ remains algebraic.
If $v$ is primitive the deformation invariance property can be proven by constructing a slice of the $\A$-action,
see \cite{Gul} for abelian threefolds and \cite{O1_red} for $\mathrm{K3} \times E$.
\end{rmk}

It is convenient to define Donaldson-Thomas invariants for every $v \in \Gamma$ by the following convention:
\begin{itemize}
\item If $v \in -\Gamma_{+}$ define
$\DTb_H(v) := \DTb_H(-v)$.
\item If $v \notin \pm \Gamma_{+}$ define $\DTb_H(v) := 0$.
\end{itemize}

For any $v \in \Gamma$ and connected abelian subvariety $B \subset \A$, 
we further define $\DT_{H}(v)_B \in \mathbb{Q}$ by the expansion
\begin{align*}
\DTb_H(v)=\sum_{B \subset \A} \DT_{H}(v)_B \cdot \epsilon_B. 
\end{align*}
Moreover we write $\DT_H(v) \cneq \DT_{H}(v)_{B=(0, 0)}$. 

\begin{rmk} Let $v \in \Gamma$. We expect that the stabilizer of an element $E \in \CM_H(v)$
only depends on its Chern character and not on its moduli.
In particular, for every $v \in \Gamma$ we expect to have $\DTb_H(v) = \DT_{H}(v)_B \epsilon_B$ for a $B$ determined by $v$.
Partial results in this direction were obtained by Gulbrandsen, see \cite[Proposition~3.5]{Gul}.
\end{rmk}

\section{Bridgeland stability conditions on abelian threefolds} \label{section:Bridgeland_stab_conditions}
\subsection{Review of stability conditions} \label{Subsection_Review_Stability_conds}
Let $X$ be a smooth projective variety,
and $D^b(X)$ its bounded derived category of coherent sheaves. 
Here we review Bridgeland stability conditions on $D^b(X)$. 
We fix a finitely generated free abelian group $\Lambda$,
and a group homomorphism 
$\cl \colon K(X) \to \Lambda$. 

\begin{defi}\emph{(\cite{Brs1})} \label{Definition_BSC}
A stability condition on $D^b(X)$ with respect to $(\Lambda, \cl)$
is a pair 
\begin{align}\notag
\sigma=(Z, \aA), \quad \aA \subset D^b(X)
\end{align}
where $Z \colon \Lambda \to \mathbb{C}$
is a group homomorphism and $\aA$ is the heart of a 
bounded t-structure
such that the following conditions hold: 
\begin{enumerate}
\renewcommand{\labelenumi}{(\roman{enumi})}
\item For any non-zero $E\in \aA$, we have 
\begin{align}\notag
Z(E) := Z(\cl(E))
\in \{ r e^{\pi i \phi} : r>0, \phi \in (0, 1] \}.
\end{align}
\item \emph{(Harder-Narasimhan property)}
For any $E\in \aA$, there is a filtration
\begin{align*}
0=E_0 \subset E_1 \subset \cdots \subset E_N
\end{align*}
in $\aA$ such that each subquotient $F_i=E_i/E_{i-1}$ is 
$Z$-semistable with 
$\arg Z(F_i)> \arg Z(F_{i+1})$. 
\item \emph{(Support property)}
There is a quadractic form $Q$ on 
$\Lambda$
such that $Q(\cl(E)) \ge 0$
for any $Z$-semistable object $E$
and $Q$ is negative definite on $\Ker(Z)$.
\end{enumerate}
\end{defi}
\vspace{4pt}

Here an object $E\in \aA$ is $Z$-\textit{(semi)stable} if we have
\begin{align*}
\arg Z(F) <(\le) \arg Z(E)
\end{align*}
in $(0, \pi]$ for any subobject $0\neq F \subsetneq E$.

For group 
homomorphisms $Z, Z' \colon \Lambda \to \mathbb{C}$, 
we write $Z \sim Z'$
if we have 
\begin{align*}
\Ree Z'=\lambda_1 \Ree Z+ \lambda_2 \Imm Z, \ 
\Imm Z'=\lambda_3 \Imm Z
\end{align*}
for some $\lambda_i \in \mathbb{R}$
with $\lambda_1, \lambda_3$ positive. 
Then $(Z, \aA)$ is a stability condition if and only 
if $(Z', \aA)$ is a stability condition, 
and $Z$-(semi)stable objects coincide with 
$Z'$-(semi)stable objects. 
In this case, we 
say that $(Z, \aA)$, $(Z', \aA)$
are equivalent and write
$(Z, \aA) \sim (Z', \aA)$.

Given a Bridgeland stability condition $\sigma = (Z, \aA)$
the category of $\sigma$-semistable objects
with phase $\phi \in \BR$ is defined
in case $\phi \in (0,1]$ by
\begin{align*}
\pP(\phi) \cneq 
\{ E \in \aA \, : \, E \mbox{ is } Z\mbox{-semistable with }
Z(E) \in \mathbb{R}_{>0} e^{\pi i \phi}\} \cup \{0\}. 
\end{align*}
and for general $\phi \in \BR$ by the condition
\[ \pP(\phi+1)=\pP(\phi)[1]. \]
The data of a stability condition $\sigma$
is equivalent to the data
\begin{align}\label{data}
(Z, \{\pP(\phi)\}_{\phi \in \mathbb{R}}), \ Z \colon \Lambda \to \mathbb{C}, \ \pP(\phi) \subset D^b(X)
\end{align}
satisfying some properties, see \cite[Section~5]{Brs1} for details.

Let $\Stab_{\Lambda}(X)$ be the set of stability conditions on $D^b(X)$ with respect to $(\Lambda, \cl)$. 
By~\cite{Brs1} there is a natural topology 
on $\Stab_{\Lambda}(X)$ such that the forgetful map
\begin{align*}
\Pi \colon 
\Stab_{\Lambda}(X) \to \Lambda_{\mathbb{C}}^{\vee},\ (Z, \aA) \mapsto Z
\end{align*}
is a local homeomorphism.

Let $\Gamma \subset H^{2\ast}(X, \mathbb{Q})$ be the image of the Chern character map.
We call the support property with respect to $(\Gamma, \ch)$ the \emph{full support property}.\footnote{
This will be used in the following way.
Suppose that $\sigma = (Z,\CA)$ is a stability condition with respect to $(\Lambda, \cl)$ and that $\cl : K(X) \to \Lambda$ factors through the Chern character map,
i.e. $\cl = \cl' \circ \ch$ for some $\cl' : \Gamma \to \Lambda$.
Then the pair $\sigma' = (Z \circ \cl', \CA)$ automatically satisfies conditions (i,ii) of Definition~\ref{Definition_BSC}, but not
necessarily the full support property (iii). Hence the stability condition $\sigma$ induces a stability condition with respect to $(\Gamma, \ch)$
if and only if $\sigma$ (or more precisely $\sigma'$) satisfies the full support property.}
The space of stability conditions with respect to $(\Gamma, \ch)$ is denoted by
\begin{align*}
\Stab(X) := \Stab_{\Gamma}(X). 
\end{align*}

Every (co-variant) autoequivalence $g \in \Aut(D^b(X))$
induces an action $g_{\ast} \in \Aut(\Gamma)$
which commutes with Chern character maps, see also Section \ref{Cohomological_FMT} for further details. 
Therefore $g$ also acts on $\Stab(X)$ by
\begin{equation} g_{\ast} (Z, \aA) := \big( g_{\ast}Z , g(\aA) \big), \label{Action_on_stab} \end{equation}
where $g_{\ast}Z(-) :=Z \circ g_{\ast}^{-1}(-)$.
The induced action on the manifold $\Stab(X)$ is a homeomorphism,
and the assignment $g \mapsto g_{\ast}$ defines a 
left $\Aut(D^b(X))$-action on $\Stab(X)$.

\subsection{Double tilting constructions}
Let $X$ be a smooth projective 3-fold, 
and let $B+i\omega \in \mathrm{NS}(X)_{\mathbb{C}}$
with $\omega$ ample. 
The $B$-twisted Chern character
of an object $E \in D^b(X)$ is
defined by
\begin{align*}
\ch^B(E) \cneq e^{-B} \ch(E) \in H^{2 \ast}(X, \mathbb{R}). 
\end{align*}
For any $E \in K(X)$ let
\begin{equation} \notag
\begin{aligned}
Z_{\omega, B}(E) &\cneq -\int_{X}e^{-i\omega} \ch^B(E) \\
&=\left(-\ch_3^B(E)+\frac{1}{2}\ch_1^B(E) \omega^2\right)+
i\left(\ch_2^B(E) \omega-\frac{1}{6}\ch_0^B(E) \omega^3  \right). 
\end{aligned}
\end{equation}
If $X$ is an abelian 3-fold, we have 
\begin{align}\label{Z_omegaB}
Z_{\omega, B}(E)=
-\chi(e^{B+i\omega}, \ch(E)).
\end{align}
The homomorphism
$Z_{\omega, B} \colon K(X) \to \mathbb{C}$
descends to a homomorphism
\[ Z_{\omega, B} \colon \Gamma \to \mathbb{C}. \]

In~\cite{BMT} a heart of a t-structure
$\aA_{\omega, B}$
was constructed as a candidate
for a Bridgeland stability condition
$(Z_{\omega, B}, \aA_{\omega, B})$. 
We review the construction. 
Consider the $B$-twisted $\omega$-slope function on $\Coh(X)$,
\begin{align*}
\mu_{\omega, B}(E)=
\frac{\mathrm{ch}_1^B(E) \cdot \omega^{2}}{\mathrm{rank}(E)}
\in \mathbb{R} \cup \{\infty\}.
\end{align*}
It defines the usual slope stability on $\Coh(X)$.
Define a torsion pair
$(\mathcal{T}_{\omega, B}, \mathcal{F}_{\omega, B})$
on $\Coh(X)$ by
\begin{align*}
&\mathcal{T}_{\omega, B}
=\langle E \in \mathrm{Coh}(X): 
E \mbox{ is } \mu_{\omega, B} \mbox{-semistable with }
\mu_{\omega, B}(E)> 0 \rangle, \\
&\mathcal{F}_{\omega, B}
=\langle E \in \mathrm{Coh}(X): 
E \mbox{ is } \mu_{\omega, B} \mbox{-semistable with }
\mu_{\omega, B}(E)\le 0 \rangle,
\end{align*}
where we let $\langle \ast \rangle$ denote the extension closure. 
Its tilt is the heart
\begin{align*}
\mathcal{B}_{\omega, B} =
\langle \mathcal{F}_{\omega, B}[1], \mathcal{T}_{\omega, B}
\rangle \subset D^b(X). 
\end{align*}
The slope function $\nu_{\omega, B}$ 
on $\bB_{\omega, B}$ is defined by
\begin{align*}
\nu_{\omega, B}(E)=\frac{\mathrm{Im}Z_{\omega, B}(E)}{\mathrm{ch}_1^B(E) \cdot 
\omega^2} \in \mathbb{R} \cup \{\infty\}. 
\end{align*}
It also defines the $\nu_{\omega, B}$-stability on 
$\bB_{\omega, B}$. 
Similarly to above, the torsion pair 
$(\mathcal{T}_{\omega, B}', \mathcal{F}_{\omega, B}')$
of $\mathcal{B}_{\omega, B}$ is defined by
\begin{align*}
&\mathcal{T}_{\omega, B}'
=\langle E \in \mathcal{B}_{\omega, B}: 
E \mbox{ is } \nu_{\omega, B}\mbox{-semistable with }
\nu_{\omega, B}(E)>0 \rangle, \\
&\mathcal{F}_{\omega, B}'
=\langle E \in \mathcal{B}_{\omega, B}: 
E \mbox{ is } \nu_{\omega, B}\mbox{-semistable with }
\nu_{\omega, B}(E)\le 0 \rangle.
\end{align*}
By tilting a second time we obtain the heart
\begin{align*}
\mathcal{A}_{\omega, B} = \langle \mathcal{F}_{\omega, B}'[1], 
\mathcal{T}_{\omega, B}' \rangle \subset D^b(X). 
\end{align*}
In~\cite{BMT} it was conjectured that the pairs
\[ \sigma_{\omega, B} := (Z_{\omega, B}, \aA_{\omega, B}) \]
are Bridgeland stability conditions. 

\subsection{Bogomolov-Gieseker inequalities}
In order to show that pairs $\sigma_{\omega, B}$
are stability conditions, and 
in particular satisfy the support property,
we need to investigate quadractic inequalities for semistable objects. 
First we recall quadractic inequalities for $\nu_{\omega, B}$-semistable
objects in $\bB_{\omega, B}$. 

Let $H$ be a fixed ample divisor on $X$
and consider the case $\omega=\alpha H$
for some $\alpha \in \mathbb{R}_{>0}$.
By~\cite{BMT}, there is a constant 
$C_H \ge 0$
such that for every effective divisor 
$D$ on $X$, we have
\begin{align*}
C_H(H^2 D)^2+(H^3)(HD^2) \ge 0. 
\end{align*}
If $X$ is an abelian 3-fold, we can take $C_H=0$.
Let us also take $B \in \mathrm{NS}(X)_{\mathbb{R}}$
and for any $E \in D^b(X)$ define
\begin{align*}
&\Delta(E) :=(\ch_1(E))^2-2\ch_0(E) \ch_2(E), \\
&\overline{\Delta}_{H, B}(E) :=
(H^2 \ch_1^B(E))^2 -2(H^3 \ch_0^B(E))(H\ch_2^B(E)). 
\end{align*}
By the Hodge index theorem
we have 
$\overline{\Delta}_{H, B}(E) \ge H^3 \cdot H\Delta(E)$
which is an equality when 
the Picard rank of $X$ is one. 

\begin{prop}\emph{(\cite{BMT})}\label{prop:Delta}
For any $\nu_{\omega, B}$-semistable 
object $E \in \bB_{\omega, B}$, 
where $\omega=\alpha H$
for an ample divisor $H$ and $\alpha \in \mathbb{R}_{>0}$,
we have 
the inequlaities
\begin{align*}
\overline{\Delta}_{H, B}(E) \ge 0, \qquad \text{and} \qquad 
H^3 \cdot H\Delta(E) +C_H(H^2 \ch_1^B(E))^2 \ge 0. 
\end{align*}
\end{prop}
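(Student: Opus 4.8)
The statement is the (tilt-)stability Bogomolov--Gieseker inequality of \cite{BMT}, and the plan is to run their argument. The structural point is that $\overline{\Delta}_{H,B}(E)$ and $H\Delta(E)$ depend only on $B$ and on $\ch(E)$, whereas the heart $\bB_{\omega,B}$ and the slope $\nu_{\omega,B}$ vary with $\omega=\alpha H$; so one fixes $B$, lets $\alpha$ range over $\mathbb{R}_{>0}$, and argues by induction. One first reduces to the case of a $\nu_{\omega,B}$-stable object $E$: passing to Harder--Narasimhan and then Jordan--H\"older factors in the abelian category $\bB_{\omega,B}$, it suffices to know that the inequalities are preserved under extensions of objects of equal $\nu_{\omega,B}$-slope, which will drop out of the inductive step below. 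One may also reduce to $B\in\mathrm{NS}(X)_{\BQ}$ (being in $\bB_{\omega,B}$ and $\nu_{\omega,B}$-semistable is an open condition in $B$, while $\overline{\Delta}_{H,B}$ varies continuously), so that $H^2\ch_1^B$ takes values in a discrete subgroup of $\mathbb{R}$ and $H^2\ch_1^B(E)$ can serve as the induction variable.

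\emph{Base case (large volume limit).} For $\alpha\gg 0$ and bounded $H^2\ch_1^B$, $\nu_{\omega,B}$-semistability is governed by $\mu_{\omega,B}$-slope semistability of coherent sheaves: up to shift such an $E$ is an iterated extension of $\mu_{\omega,B}$-semistable torsion-free sheaves and of sheaves supported in codimension $\ge 2$. For a $\mu_{\omega,B}$-semistable torsion-free sheaf the classical Bogomolov inequality gives $H\Delta\ge 0$, hence $H^3\cdot H\Delta\ge 0$, and by the Hodge-index estimate recalled before the statement also $\overline{\Delta}_{H,B}\ge 0$; for a sheaf supported in codimension $\ge 2$ both quantities are manifestly nonnegative. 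Closure under the relevant extensions is the convexity lemma of the next step. This settles both inequalities for $\alpha$ large.

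\emph{Inductive step (wall-crossing in $\alpha$).} Suppose $E$ is $\nu_{\alpha_0 H,B}$-semistable but, say, $\overline{\Delta}_{H,B}(E)<0$. By the base case this cannot persist as $\alpha\to\infty$, so there is a wall $\alpha_1\ge\alpha_0$ and a short exact sequence $0\to F\to E\to Q\to 0$ in $\bB_{\alpha_1 H,B}$ with $F,Q$ $\nu_{\alpha_1 H,B}$-semistable of the same, finite, slope; finiteness of that slope forces $0<H^2\ch_1^B(F),\,H^2\ch_1^B(Q)<H^2\ch_1^B(E)$, so by induction $\overline{\Delta}_{H,B}(F),\overline{\Delta}_{H,B}(Q)\ge 0$. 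Set $(a,b,c)=(H^2\ch_1^B,\,H^3\ch_0^B,\,H\ch_2^B)$. On the set of classes of a fixed finite $\nu_{\alpha_1 H,B}$-slope, $c$ is a linear function of $(a,b)$, so $\overline{\Delta}_{H,B}=a^2-2bc$ restricts there to a single binary quadratic form $q(a,b)$; since its two roots in $a/b$ have opposite signs, an elementary check shows that the locus $\{q\ge 0,\ a>0\}$ is a convex cone. As $(a,b)(E)=(a,b)(F)+(a,b)(Q)$ with both summands in this cone, $\overline{\Delta}_{H,B}(E)=q((a,b)(E))\ge 0$, a contradiction. The same convexity yields the preservation of the inequality under equal-slope extensions used in the reduction at the outset.

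\emph{Second inequality and the role of $C_H$; the main obstacle.} One runs the parallel induction with the quantity $H^3\cdot H\Delta(E)+C_H(H^2\ch_1^B(E))^2$; the only new feature is that $H\Delta$ sees the full divisor class $\ch_1^B$, not just $H^2\ch_1^B$, so the inductive step must absorb a Hodge-index cross term $H\cdot\ch_1^B(F)\cdot\ch_1^B(Q)$, and this is exactly what the effective-divisor inequality defining $C_H$ (recalled before the statement) provides; when $X$ is abelian, or of Picard rank one, $C_H=0$ and this inequality coincides with the first. The real obstacle is not the convexity lemma, which is elementary, but the two inputs it is fed: the local finiteness of walls for $\nu_{\omega,B}$-stability as $\alpha$ varies --- needed to make ``the wall $\alpha_1$'' and the strict drop of $H^2\ch_1^B$ along destabilizing factors legitimate --- and the large-volume-limit comparison of $\nu_{\omega,B}$-stability with slope stability of sheaves. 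Both are supplied by \cite{BMT}.
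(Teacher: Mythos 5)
The paper itself gives no proof of this proposition; it is imported verbatim from \cite{BMT}, so the only meaningful comparison is with the argument there. Your reconstruction is that argument: reduce to $\nu_{\omega,B}$-stable objects and rational $B$, induct on $H^2\ch_1^B(E)$, take the large-volume limit plus the classical Bogomolov inequality and Hodge index as the base case, and close the induction by the wall-crossing/convexity step (your check that $\{q\ge 0,\ a>0\}$ is a convex cone is the support-property lemma restricted to the fixed-slope plane, and is correct since the two rays bounding the relevant sector both have positive $a$-coordinate).

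Two details are off and worth fixing. First, the large-volume-limit classification of $\nu_{\omega,B}$-semistable objects also produces \emph{pure torsion sheaves supported on an effective divisor} $D$, not only torsion-free slope-semistable sheaves and sheaves in codimension $\ge 2$; for these $\Delta=\ch_1^2$, so $H\Delta=HD^2$ can be negative, and this base case --- not a cross term in the inductive step --- is exactly where the constant $C_H$ and its defining inequality $C_H(H^2D)^2+H^3\cdot HD^2\ge 0$ are consumed. (For the first inequality such sheaves are harmless, since $\overline{\Delta}_{H,B}=(H^2D)^2\ge 0$.) The cross term $H\cdot\ch_1^B(F)\cdot\ch_1^B(Q)$ you mention is instead absorbed by the Hodge index theorem alone: what one verifies is that $H^3\cdot H\Delta+C_H(H^2\ch_1^B)^2$ is negative semi-definite on the codimension-two subspace where $H^2\ch_1^B$ and the tilt-slope numerator vanish, after which the same convexity lemma applies even though this form sees all of $\ch_1^B$. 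Second, when $C_H=0$ the second inequality reads $H^3\cdot H\Delta(E)\ge 0$, which by the Hodge-index estimate $\overline{\Delta}_{H,B}\ge H^3\cdot H\Delta$ is \emph{stronger} than the first unless the Picard rank is one; the two coincide only in that case. Neither point changes the architecture, which matches \cite{BMT}.
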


For any $E \in D^b(X)$ define
\begin{align*}
\overline{\nabla}_{H, B}(E)=
12(H^2 \ch_1^B(E))^2-18 (H^3 \ch_0^B(E))(H\ch_2^B(E)).
\end{align*}
The following conjecture is proposed in~\cite{BMT, BMS}:
\begin{conj}\emph{(\cite{BMT, BMS}, \cite[Theorem~1.4]{PiYT})}\label{conj:BG}
For any $\nu_{\omega, B}$-semistable object $E \in \bB_{\omega, B}$,
where $\omega=\alpha H$ for an ample divisor $H$ and $\alpha \in \mathbb{R}_{>0}$,
we have
\begin{align*}
\alpha^2\overline{\Delta}_{H, B}(E)+\overline{\nabla}_{H, B}(E)
\ge 0. 
\end{align*}
\end{conj}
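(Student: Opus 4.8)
The plan is to prove the inequality for abelian threefolds, following the Fourier--Mukai strategy of Maciocia--Piyaratne \cite{MaPi1, MaPi2} and Bayer--Macr\'i--Stellari \cite{BMS} and bootstrapping from the weak inequality $\overline{\Delta}_{H,B}(E)\ge 0$ of Proposition~\ref{prop:Delta}. First I would normalize the parameters: tensoring by a line bundle $L$ identifies $\sigma_{\omega,B}$ with $\sigma_{\omega,B+c_1(L)}$ and preserves $\nu_{\omega,B}$-semistable objects, so $B$ may be taken in a bounded region modulo $\mathrm{NS}(A)$; pulling back along a multiplication-by-$n$ isogeny $[n]\colon A\to A$ rescales $(\omega,B)$ and $\ch^B(E)$ homogeneously and is compatible with tilt-stability, which together with local finiteness of walls and the boundedness supplied by Proposition~\ref{prop:Delta} reduces the general real case to rational, hence conveniently normalized, parameters with $B$ proportional to $H$. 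By the standard reduction of \cite{BMT} it then suffices to treat objects $E$ with $\nu_{\omega,B}(E)=0$.

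The key input is that the Fourier--Mukai transform $\Phi=\Phi_{\pP}\colon D^b(A)\xrightarrow{\sim}D^b(\widehat A)$ with normalized Poincar\'e kernel, shifted into the appropriate cohomological degree, sends the tilted heart $\bB_{\omega,B}$ to a tilted heart $\bB_{\widehat\omega,\widehat B}$ on $\widehat A$ --- the parameters transforming by the natural $\mathrm{SL}_2(\mathbb{R})$-action on the complexified ample ray sending $B+i\omega$ essentially to $-(B+i\omega)^{-1}$, so that $\widehat\omega$ is again proportional to a polarization --- and sends $\nu_{\omega,B}$-semistable objects to $\nu_{\widehat\omega,\widehat B}$-semistable objects. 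Proving this has two ingredients: (a) a WIT-type vanishing, namely $\Phi^i(E)=0$ for $i$ outside a single degree whenever $E$ is $\nu_{\omega,B}$-semistable with $\nu_{\omega,B}(E)=0$, obtained by degenerating to a large-volume limit (where the relevant objects reduce to shifts of slope-semistable sheaves) and invoking the known classification of Fourier--Mukai transforms of $\mu$-semistable sheaves on abelian varieties; and (b) carrying the two torsion pairs defining $\bB_{\omega,B}$ through $\Phi$.

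Applying Proposition~\ref{prop:Delta} to $\widehat E:=\Phi(E)[k]\in\bB_{\widehat\omega,\widehat B}$ then gives $\overline{\Delta}_{\widehat H,\widehat B}(\widehat E)\ge 0$. Since the cohomological Fourier--Mukai transform interchanges, up to sign, the roles of $(\ch_0,\ch_1)$ and $(\ch_3,\ch_2)$, substituting $\ch^{\widehat B}(\widehat E)$ into $\overline{\Delta}_{\widehat H,\widehat B}$ and re-expressing everything in terms of $\ch^B(E)$ and $\alpha$ yields, up to a positive scalar, precisely $\alpha^2\overline{\Delta}_{H,B}(E)+\overline{\nabla}_{H,B}(E)$, which is the claimed inequality. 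If a single application of $\Phi$ does not produce the exact linear combination, one combines it with the weak inequality for $E$ itself and optimizes over the $\mathrm{SL}_2$-orbit of $(\omega,B)$, or passes to the $n\to\infty$ limit of the $[n]^{\ast}$-rescaled statement.

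The heart of the matter is ingredient (a): showing that the Fourier--Mukai transform of a tilt-semistable object is concentrated in a single cohomological degree and remains tilt-semistable. This is exactly where abelian threefolds are tractable while general Calabi--Yau threefolds are not, but it still requires the delicate large-volume-limit analysis and a careful treatment of the merely semistable (as opposed to stable) case, which is the technical core of \cite{MaPi1, MaPi2}.
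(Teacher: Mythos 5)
First, note that the paper does not actually prove this statement: it is recorded as a conjecture of \cite{BMT, BMS}, and the only case used later (abelian threefolds) is imported wholesale by citation to \cite{MaPi1, MaPi2, BMS, PiYT}. So your sketch has to stand on its own, measured against those references. Your central idea is the right one and is indeed the published strategy: the cohomological Fourier--Mukai transform acts by $v_i \mapsto (-1)^i \,\mathrm{const}\cdot v_{3-i}$ (Lemma~\ref{prop:antidiagonal-rep-cohom-FMT}), hence interchanges $\overline{\Delta}_{H,B}$ and $\overline{\nabla}_{H,B}$ up to positive scalars, so the weak inequality $\overline{\Delta}\ge 0$ of Proposition~\ref{prop:Delta} applied on the dual side supplies the $\overline{\nabla}$-half of the strong inequality.

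There are, however, two genuine gaps. (i) Your normalization step fails as stated: tensoring by a line bundle moves $B$ only by an integral class, and pulling back by $[n]$ rescales $(\omega,B)$ without changing the direction of $B$; neither operation makes $B$ proportional to $H$, let alone places $(\omega,B)$ on the specific ray $\omega=\sqrt{3}\,\lvert t\rvert H/2$, $B=tH/2$ where the Fourier--Mukai compatibility of the tilted hearts is actually available (Lemma~\ref{prop:ab-equiv-abelian-3}); off this ray the transform does not respect the hearts, so one cannot simply ``optimize over the $\mathrm{SL}_2$-orbit''. The correct tool is tensoring by a simple semihomogeneous bundle $V$ with $c_1(V)/\rk(V)$ an arbitrary prescribed rational class (Lemma~\ref{prop:semihomo-numerical}(ii) together with Proposition~\ref{prop:stab-under-tensoring}), which is exactly how the paper normalizes $B$ in the proof of Theorem~\ref{thm:fullsupport}. (ii) The load-bearing step is your ingredient (a)+(b): that $\Phi_{\pP}$ sends a $\nu_{\omega,B}$-semistable object to a complex concentrated in a single degree whose shift is again tilt-semistable. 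The result quoted in the paper (Lemma~\ref{prop:ab-equiv-abelian-3}) concerns the double-tilted heart $\aA$, not tilt-semistability in $\bB_{\omega,B}$, and upgrading it to what you need --- including the strictly semistable case and the objects with $\nu_{\omega,B}=0$ --- is the actual content of \cite{MaPi1, MaPi2}, not a routine large-volume degeneration. As written, your text is an accurate roadmap of the known proof rather than a proof: the hardest assertion is named but not established.
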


For fixed $(H, B)$, let $\Lambda_{H, B} \subset \mathbb{R}^4$
be the free abelian group of rank $4$ 
given by the image of the map
\begin{align*}
\cl \colon K(X) \to \mathbb{R}^4, \ 
E \mapsto (H^3 \ch_0^B(E), H^2 \ch_1^B(E), 
H \ch_2^B(E), \ch_3^B(E)).
\end{align*}
The following result  is proven in~\cite[Theorem~8.6]{BMS}
when $B$ is proportional to $H$, and 
the general case follows by a parallel argument.

\begin{prop}\label{prop:stabcond}\emph{(\cite[Theorem~8.6]{BMS})}
If Conjecture~\ref{conj:BG} holds for $X$
and some $\alpha \in \mathbb{R}_{>0}$, 
then we have 
\begin{align}\label{stab:ZA}
(Z_{\alpha H, B}^{a, b}, \aA_{\alpha H, B})
\in \Stab_{\Lambda_{H, B}}(X),
\end{align}
where $Z_{\alpha H, B}^{a, b}$ is defined by
\begin{align}\label{Zab}
Z_{\alpha H, B}^{a, b}=
\left( -\ch_3^B+bH \ch_2^B+aH^2 \ch_1^B \right)+
i \left(\alpha H \ch_2^B 
-\frac{\alpha^3}{6}H^3 \ch_0^B  \right) 
\end{align}
with $a, b \in \mathbb{R}$
satisfying 
\begin{align}\label{ineq:ab}
a>\frac{\alpha^2}{18}+\frac{\sqrt{3}}{6}\lvert b \rvert \alpha.
\end{align}
Moreover, there is an interval 
$I_{\alpha}^{a, b} \subset (\alpha^2, 18a)$
such that for all $K \in I_{\alpha}^{a, b}$, 
the quadratic form 
defined by
\begin{align*}
Q_K(-)=K \overline{\Delta}_{H, B}(-)
+\overline{\nabla}_{H, B}(-)
\end{align*}
establishes the support property for the 
stability condition (\ref{stab:ZA}). 
\end{prop}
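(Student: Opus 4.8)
The plan is to follow the proof of \cite[Theorem~8.6]{BMS}, which establishes the statement when $B$ is a real multiple of $H$, and to observe that every estimate in it is phrased in terms of the twisted invariants $\ch^B_i$, $\overline{\Delta}_{H,B}$ and $\overline{\nabla}_{H,B}$, so it carries over verbatim to an arbitrary $B \in \mathrm{NS}(X)_{\mathbb{R}}$. Concretely, for the pair $(Z_{\alpha H, B}^{a,b}, \aA_{\alpha H, B})$ I would verify the three axioms of Definition~\ref{Definition_BSC} in turn: positivity of the central charge on the heart, the support property with $Q_K$, and the Harder--Narasimhan property.

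For positivity, the point is that by \eqref{Zab} the imaginary part of $Z_{\alpha H, B}^{a,b}$ is exactly $\mathrm{Im}\, Z_{\alpha H, B}$, which is non-negative on $\aA_{\alpha H, B}$ by the two tilts used to construct this heart in \cite{BMT}. One is thus reduced to the boundary: if $E \in \aA_{\alpha H, B}$ has $\mathrm{Im}\, Z_{\alpha H, B}(E) = 0$, then $E$ is an iterated extension of $\nu_{\alpha H, B}$-semistable objects of $\bB_{\alpha H, B}$ of tilt-slope $\infty$ and of shifts $F[1]$ of $\nu_{\alpha H, B}$-semistable objects $F$ of tilt-slope $0$. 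For the first type the inequality $\overline{\Delta}_{H,B} \ge 0$ of Proposition~\ref{prop:Delta} already forces $\mathrm{Re}\, Z_{\alpha H, B}^{a,b} < 0$ (the class reduces to a point-like one); for the second type one combines $\overline{\Delta}_{H,B}(F) \ge 0$ with $\alpha^2 \overline{\Delta}_{H,B}(F) + \overline{\nabla}_{H,B}(F) \ge 0$ from Conjecture~\ref{conj:BG} to bound $\ch_3^B(F)$ from above, and the numerical condition \eqref{ineq:ab} on $(a,b)$ is exactly what is needed to deduce $\mathrm{Re}\, Z_{\alpha H, B}^{a,b}(F[1]) < 0$. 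Hence $Z_{\alpha H, B}^{a,b}$ is a stability function on $\aA_{\alpha H, B}$, and nothing here uses that $B$ is proportional to $H$.

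The support property is the step I expect to be the main obstacle, and I would treat it in two parts. First, $Q_K$ is negative definite on $\Ker Z_{\alpha H, B}^{a,b} \otimes_{\mathbb{Z}} \mathbb{R}$ for $K$ in a sub-interval of $(\alpha^2, 18a)$: the kernel is two-dimensional, parametrised by $(H^3\ch_0^B, H^2\ch_1^B)$, on which $Q_K$ becomes an explicit binary quadratic form, and a direct discriminant computation shows negative-definiteness holds precisely for $K$ in a non-empty open sub-interval of $(\alpha^2,18a)$, the non-emptiness being equivalent to \eqref{ineq:ab}. Second, $Q_K(E) \ge 0$ for every $Z_{\alpha H, B}^{a,b}$-semistable $E$ and every such $K$: writing $Q_K = (\alpha^2\overline{\Delta}_{H,B} + \overline{\nabla}_{H,B}) + (K-\alpha^2)\overline{\Delta}_{H,B}$ with both summands non-negative on $\nu_{\alpha H, B}$-semistable objects (by Conjecture~\ref{conj:BG} and Proposition~\ref{prop:Delta} respectively), the bound holds on tilt-semistable objects, and for a general $Z_{\alpha H, B}^{a,b}$-semistable $E$ one decomposes $E$ along its Harder--Narasimhan filtration with respect to tilt stability $\nu_{\alpha H, B}$ and uses that the locus $\{Q_K \ge 0\}$ is well adapted to such decompositions (the convexity argument of \cite{BMS}). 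Setting $I_\alpha^{a,b}$ equal to the intersection of the two ranges of $K$ produces the required interval. Every quantity above, including the Hodge-index bound $\overline{\Delta}_{H,B} \ge H^3 \cdot H\Delta$, is defined for an arbitrary twist, so this part is also insensitive to the choice of $B$.

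Finally, the Harder--Narasimhan property follows as in \cite{BMS}: the heart $\aA_{\alpha H, B}$ is noetherian and $\mathrm{Im}\, Z_{\alpha H, B}^{a,b}$ factors through the rank-four lattice $\Lambda_{H,B}$, hence has discrete image in $\mathbb{R}$, so Bridgeland's criterion \cite{Brs1} applies. The only genuinely hard part is the support property --- proving $Q_K(E) \ge 0$ for all semistable $E$ simultaneously with negative-definiteness of $Q_K$ on $\Ker Z_{\alpha H, B}^{a,b}$ is precisely where Conjecture~\ref{conj:BG} is indispensable --- while the passage from $B$ proportional to $H$ to an arbitrary $B$ is pure bookkeeping.
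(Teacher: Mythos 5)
Your proposal is correct and takes essentially the same route as the paper: the paper offers no independent argument, but simply cites \cite[Theorem~8.6]{BMS} and remarks that the case of a general twist $B$ ``follows by a parallel argument,'' which is exactly the observation you make and then flesh out by checking that every estimate in the BMS proof is phrased in the twisted invariants $\ch^B_i$, $\overline{\Delta}_{H,B}$, $\overline{\nabla}_{H,B}$. Your identification of the support property (non-negativity of $Q_K$ on semistable objects together with negative definiteness on $\Ker Z^{a,b}_{\alpha H,B}$) as the only step where Conjecture~\ref{conj:BG} is genuinely needed matches the structure of the cited proof.
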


Conjecture~\ref{conj:BG} is known to hold for abelian threefolds $A$ by~\cite{MaPi1, MaPi2, BMS, PiYT}.
Hence by Proposition~\ref{prop:stabcond} the pairs
\[ \sigma_{\omega, B}=(Z_{\alpha H, B}^{a=\alpha^2/2, b=0}, \aA_{\alpha H, B}), \ \omega = \alpha H \]
define Bridgeland stability conditions on $A$ with respect to $(\Lambda_{H, B}, \cl)$
and define points in $\Stab_{\Lambda_{H,B}}(A)$.
In the following subsections we show that the pairs (\ref{stab:ZA})
are stability conditions also with respect to $(\Gamma, \ch)$.
In particular, they form a family in $\Stab(A)$.

\subsection{Projection maps in cohomologies}
Let $X$ be an $n$-dimensional smooth projective variety, 
and $H \in \NS_{\mathbb{Q}}(X)$ an ample class. 
Let 
\begin{align*}
H^{2\ast}_{\mathrm{alg}}(X, \mathbb{Q})
\subset H^{2\ast}(X, \mathbb{Q})
\end{align*}
be the subspace spanned by algebraic classes. 
We fix some notation on the projection maps on 
$H^{2\ast}_{\mathrm{alg}}(X, \mathbb{Q})$.
For any $i$, we define 
$$
p_{H, i} : H^{2i}_{\mathrm{alg}}(X,\mathbb{Q}) \to H^{2i}_{\mathrm{alg}}(X,\mathbb{Q}), \ \ 
\gamma_i \mapsto \frac{\gamma_i \cdot H^{n-i}}{H^{n}} H^i.
$$
This gives us the map 
$$
p_{H} : H^{2*}_{\mathrm{alg}}(X,\mathbb{Q}) \to H^{2*}_{\mathrm{alg}}(X,\mathbb{Q}), \ \ 
(\gamma_0, \ldots, \gamma_n) \mapsto \left( p_{H, 0}(\gamma_0), \ldots , p_{H, n}(\gamma_n)\right).
$$ 
Also we define 
$$
p_{H, i}^{\perp} : H^{2i}_{\mathrm{alg}}(X,\mathbb{Q}) \to H^{2i}_{\mathrm{alg}}(X,\mathbb{Q}), \ \ 
\gamma_i \mapsto  \gamma_i - p_{H, i}(\gamma_i),
$$
and 
$$
p_{H}^{\perp} : H^{2*}_{\mathrm{alg}}(X,\mathbb{Q}) \to H^{2*}_{\mathrm{alg}}(X,\mathbb{Q}), \ \ 
(\gamma_0, \ldots, \gamma_n) \mapsto \left( p_{H, 0}^{\perp}(\gamma_0), \ldots , p_{H, n}^{\perp}(\gamma_n)\right).
$$ 
We define
\begin{align*}
& H^{2i}_{\mathrm{alg}}(X, \mathbb{Q})^{H, \parallel} = \imm \left( p_{H, i} \right), \quad
H^{2i}_{\mathrm{alg}}(X, \mathbb{Q})^{H, \perp} = \imm \left( p_{H, i}^{\perp}  \right),
\\
& H^{2*}_{\mathrm{alg}}(X, \mathbb{Q})^{H, \parallel} = \imm \left( p_{H} \right), \quad
H^{2*}_{\mathrm{alg}}(X, \mathbb{Q})^{H, \perp} = \imm \left( p_{H}^{\perp} \right). 
\end{align*}
So we have 
\begin{align*}
H^{2i}_{\mathrm{alg}}(X, \mathbb{Q}) = H^{2i}_{\mathrm{alg}}(X, \mathbb{Q})^{H, \parallel} \oplus H^{2i}_{\mathrm{alg}}(X, \mathbb{Q})^{H, \perp}, \\
H^{2*}_{\mathrm{alg}}(X, \mathbb{Q}) = H^{2*}_{\mathrm{alg}}(X, \mathbb{Q})^{H, \parallel} \oplus H^{2*}_{\mathrm{alg}}(X, \mathbb{Q})^{H, \perp}.
\end{align*}
By abuse of notation we will write $p_{H} $ for $p_{H, i} $, and $p_{H}^{\perp} $ for $p_{H, i}^{\perp}$.
We have 
\begin{equation}
\label{eqn:map-decom}
\id = p_H + p_H^{\perp}.
\end{equation}

We write
\begin{align*}
\ch_i^{H, \parallel}(E)  = p_H\left( \ch_i(E) \right), \
\ch_i^{H, \perp}(E) = p_H^{\perp} \left( \ch_i(E) \right).
\end{align*}
Then we have  $H^{n-i} \cdot \ch_i^{H, \parallel}(E) = H^{n-i} \cdot \ch_i(E) $, and $H^{n-i} 
\cdot \ch_i^{H, \perp}(E) =0$.
From the Hodge Index Theorem, we have
\begin{equation}
H^{n-2} \cdot \left( \ch_1^{H, \perp}(E)\right)^2 \le 0.
\end{equation}

\begin{rmk}
Let $\Lambda_H^{\parallel}$
be the image of 
the composition 
\begin{align*}
K(X) \stackrel{\ch}{\to} H^{2\ast}_{\mathrm{alg}}(X, \mathbb{Q})
\stackrel{p_H^{\parallel}}{\to}
H_{\mathrm{alg}}^{2\ast}(X, \mathbb{Q})^{H, \parallel}.
\end{align*}
If $B$ is proportional to $H$, then 
the support properties 
for $(\Lambda_{H, B}, \cl)$
and $(\Lambda_H^{\parallel}, p_H^{\parallel}\circ \ch)$
are equivalent. 
So in Proposition~\ref{prop:stabcond}, we obtain 
stability conditions in 
$\Stab_{\Lambda_H^{\parallel}}(X)$. 
\end{rmk}
We define 
$\Lambda_H^{\sharp}$, 
$\Lambda_H^{\flat}$
to be the images of maps
\begin{align}\notag
&\cl^{\sharp} \colon 
K(X) \to H^{2\ast}_{\mathrm{alg}}(X, \mathbb{Q}), \ 
E \mapsto (\ch_0(E), \ch_1(E), \ch_2^{H, \parallel}(E), \ch_3(E)), \\
\label{cl:flat} & \cl^{\flat} \colon 
K(X) \to H^{2\ast}_{\mathrm{alg}}(X, \mathbb{Q}), \ 
E \mapsto (\ch_0(E), \ch_1^{H, \parallel}(E), \ch_2(E), \ch_3(E))
\end{align}
respectively. 
In the next lemma, we 
observe that 
stability conditions in Proposition~\ref{prop:stabcond} 
satisfy the support property with respect to the 
$(\Lambda_H^{\sharp}, \cl^{\sharp})$.
\begin{lem}\label{lem:sharp}
In the situation of Proposition~\ref{prop:stabcond}, 
suppose that $B$ is proportional to $H$ and 
$C_H=0$.
Then 
the stability conditions (\ref{stab:ZA}) satisfy the 
support property with respect to $(\Lambda_H^{\sharp}, \cl^{\sharp})$.
For an interval $I_{\alpha}^{a, b} \subset (\alpha^2, 18a)$
and $K \in I_{\alpha}^{a, b}$, 
the quadractic form is 
\begin{align*}
Q_K^{\sharp}=
K \overline{\Delta}_{H, B}(-)+\overline{\nabla}_{H, B}(-)
+(K-\alpha^2)H^3 \cdot H(\ch_1^{H, \perp}(-))^2. 
\end{align*}
\end{lem}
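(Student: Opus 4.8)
Conditions (i) and (ii) of Definition~\ref{Definition_BSC} for \eqref{stab:ZA} with respect to $(\Lambda_H^\sharp,\cl^\sharp)$ hold automatically: the classifying map $\cl$ of Proposition~\ref{prop:stabcond} factors as $\cl=\pi\circ\cl^\sharp$ for a surjective linear map $\pi\colon\Lambda_H^\sharp\to\Lambda_{H,B}$ (the one that forgets $\ch_1^{H,\perp}$), so \eqref{stab:ZA}, being a stability condition with respect to $(\Lambda_{H,B},\cl)$, already satisfies (i) and (ii) with respect to the finer data; only the support property with the asserted form $Q_K^\sharp$ remains. The plan rests on one algebraic identity: using the Hodge-index decomposition $\ch_1^B(E)=\ch_1^{B,\parallel}+\ch_1^{H,\perp}(E)$ together with $H^2\cdot\ch_1^{H,\perp}(E)=0$, the twist-invariance of $\Delta$, and the hypothesis that $B$ is proportional to $H$ (so that the perpendicular part is unchanged by the twist), one obtains
\[
H^3\cdot H\Delta(E)=\overline{\Delta}_{H,B}(E)+H^3\cdot H\big(\ch_1^{H,\perp}(E)\big)^2,
\]
and hence
\[
Q_K^\sharp=Q_K+(K-\alpha^2)\,H^3\cdot H\big(\ch_1^{H,\perp}(-)\big)^2=\big(\alpha^2\,\overline{\Delta}_{H,B}+\overline{\nabla}_{H,B}\big)+(K-\alpha^2)\,H^3\cdot H\Delta,
\]
where $Q_K=K\,\overline{\Delta}_{H,B}+\overline{\nabla}_{H,B}$ is the form of Proposition~\ref{prop:stabcond}.

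First I would use the second expression to see that $Q_K^\sharp(E)\ge 0$ for every $\nu_{\alpha H,B}$-semistable $E\in\bB_{\alpha H,B}$: the bracket is $\ge 0$ by Conjecture~\ref{conj:BG} (a theorem for abelian threefolds), the term $H^3\cdot H\Delta(E)$ is $\ge 0$ by Proposition~\ref{prop:Delta} and the hypothesis $C_H=0$, and $K-\alpha^2>0$ since $K\in I_\alpha^{a,b}\subset(\alpha^2,18a)$. To pass from $\nu_{\alpha H,B}$-semistable objects of $\bB_{\alpha H,B}$ to $Z^{a,b}_{\alpha H,B}$-semistable objects of $\aA_{\alpha H,B}$ I would run the proof of Proposition~\ref{prop:stabcond} (that is, \cite[Theorem~8.6]{BMS}) with $Q_K$ replaced by $Q_K^\sharp$ everywhere. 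That proof feeds in only the two inequalities $\overline{\Delta}_{H,B}\ge 0$ and $\alpha^2\,\overline{\Delta}_{H,B}+\overline{\nabla}_{H,B}\ge 0$ for $\nu$-semistable objects of $\bB_{\alpha H,B}$, and $Q_K^\sharp$ is built from the same combination with the non-negative quantity $\overline{\Delta}_{H,B}$ replaced by the non-negative (here one uses $C_H=0$) quantity $H^3\cdot H\Delta$; moreover, on the coordinates $\big(\ch_0,\,H^2\ch_1,\,H\ch_2,\,\ch_3\big)$ the form $Q_K^\sharp$ coincides with $Q_K$, the only new contribution involving $\ch_1^{H,\perp}$ alone. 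Verifying that each step of that argument is insensitive to this modification is the main --- though essentially bookkeeping --- obstacle.

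Finally I would check that $Q_K^\sharp$ is negative definite on $\ker Z^{a,b}_{\alpha H,B}\subset\Lambda_H^\sharp\otimes\BR$. The central charge $Z^{a,b}_{\alpha H,B}$ does not involve $\ch_1^{H,\perp}$, hence factors through $\pi$, whose kernel is $H^2_{\mathrm{alg}}(X,\BQ)^{H,\perp}\otimes\BR$; therefore $\ker Z^{a,b}_{\alpha H,B}$ in $\Lambda_H^\sharp\otimes\BR$ is the $\pi$-preimage of the corresponding kernel in $\Lambda_{H,B}\otimes\BR$, and splits as a subspace mapping isomorphically onto the latter together with $H^2_{\mathrm{alg}}(X,\BQ)^{H,\perp}\otimes\BR$. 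On the first summand $Q_K^\sharp$ restricts to $Q_K$, which is negative definite there by Proposition~\ref{prop:stabcond}; the $\ch_1^{H,\perp}$-directions lie in the radical of $\pi^\ast Q_K$, so on the second summand $Q_K^\sharp$ restricts to $(K-\alpha^2)\,H^3\cdot H(\ch_1^{H,\perp}(-))^2$, negative definite by the Hodge Index Theorem because $K>\alpha^2$; and there is no coupling between the two summands, again because $\pi^\ast Q_K$ vanishes on $\ch_1^{H,\perp}$-directions. Thus $Q_K^\sharp$ is negative definite on the whole kernel, which together with the previous step establishes the support property with respect to $(\Lambda_H^\sharp,\cl^\sharp)$.
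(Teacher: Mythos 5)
Your proof is correct and follows essentially the same route as the paper's: the key identity $H^3\cdot H\Delta=\overline{\Delta}_{H,B}+H^3\cdot H(\ch_1^{H,\perp})^2$ (valid since $B$ is proportional to $H$), the rewriting $Q_K^\sharp=\alpha^2\overline{\Delta}_{H,B}+\overline{\nabla}_{H,B}+(K-\alpha^2)H^3\cdot H\Delta$, and the substitution of the non-negative quantity $H^3\cdot H\Delta$ (from Proposition~\ref{prop:Delta} with $C_H=0$) for $\overline{\Delta}_{H,B}$ in the argument of \cite[Lemma~8.8]{BMS}. The only difference is that you spell out the negative-definiteness check on $\ker Z$ via the Hodge index theorem, which the paper leaves implicit in the citation.
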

\begin{proof}
The proof of~\cite[Lemma~8.8]{BMS}
is applied, by replacing the inequality 
$\overline{\Delta}_{H, B}(-)\ge 0$
for $\nu_{\omega, B}$-semistable objects 
with the inequality
(see Proposition~\ref{prop:Delta})
\begin{align*}
H^3 \cdot H\Delta(-)=\overline{\Delta}_{H, B}(-)
+H^3 \cdot H(\ch_1^{H, \perp}(-))^2 \ge 0. 
\end{align*}
Then the quadractic form 
$\alpha^2 \overline{\Delta}_{H, B}+\overline{\nabla}_{H, B}+
(K-\alpha^2)H^3 \cdot H\Delta$ gives the 
desired support property. 
\end{proof}

\subsection{Fourier-Mukai transforms and abelian 3-folds}
\label{Cohomological_FMT}
Our strategy for the proof of the full support property 
is to use Fourier-Mukai transforms. 
Let us quickly recall some of the important notions in Fourier-Mukai theory.
Further details can be found in \cite{Huybook}.

Let $X,Y$ be smooth projective varieties and let $p_i$, $i=1,2$ be the  projection
maps from $X \times Y$ to $X$ and $Y$, respectively.
The {\it Fourier-Mukai functor} $\Phi_{\eE}^{X \to Y}:
D^b(X) \to D^b(Y)$ with kernel $\eE \in D^b(X \times Y)$
is defined by
$$
\Phi_{\eE}^{X \to Y}(-) = 
\dR p_{2*} (\eE \stackrel{\textbf{L}}{\otimes} p_1^*(-)).
$$
When $\Phi_{\eE}^{X \to Y}$ is an equivalence of the derived categories,
usually it is called a {\it Fourier-Mukai transform}.
Any Fourier-Mukai  functor $\Phi_{\eE}^{X \to Y}: D^b(X) \to D^b(Y)$ induces a linear map
\begin{align*}
\Phi^{\mathrm{H}}_{\eE}  \colon  H^{2*}_{\mathrm{alg}}(X, \mathbb{Q})  \to
H^{2*}_{\mathrm{alg}}(Y, \mathbb{Q}).
\end{align*}
Here $H^{2\ast}_{\mathrm{alg}}(X, \mathbb{Q}) \subset H^{2\ast}(X, \mathbb{Q})$
is the subspace sppaned by algebraic classes. 
The above linear map 
is a linear isomorphism when $\Phi_{\eE}^{X \to Y}$ is a  Fourier-Mukai  transform.
The induced transform fits into the following commutative diagram, due to the Grothendieck-Riemann-Roch theorem.
\[
\begin{tikzcd}
D^b(X) \ar{r}{\Phi_{\eE}^{X \to Y}} \ar{d}{v_X(-)} &  D^b(Y) 
\ar{d}{v_Y(-)} \\
H^{2\ast}_{\mathrm{alg}}(X, \mathbb{Q}) \ar{r}{\Phi_{\eE}^{\mathrm{H}}} &
H^{2\ast}_{\mathrm{alg}}(Y, \mathbb{Q}).
\end{tikzcd}
\]
Here $v_X(-) = \ch(-) \sqrt{\text{td}_X}$ is the Mukai vector map. 
Note that for an abelian variety $X$, $\text{td}_X =1$.
Hence the Mukai vector $v(E)$ of $E \in D^b(X)$ is the same as its Chern character $\ch(E)$.

Let $X=A$ be an abelian variety, and $\widehat{A} =\Pic^0(A)$
its dual abelian variety. 
The \textit{Poincar\'e line bundle} $\pP$ on the product $A \times \widehat{A}$ is the
uniquely determined line bundle satisfying
(i) $\pP_{A \times \{\widehat{x}\}} \in \Pic(A)$ is represented by $\widehat{x} \in \widehat{A}$, and
(ii) $\pP_{\{e\} \times \widehat{A} } \cong \oO_{\widehat{A}}$.
In \cite{Mu1}, Mukai proved that the Fourier-Mukai functors 
\begin{align*}
\Phi^{A \to \widehat{A}}_{\pP}: D^b(A) \to D^b(\widehat{A}), \ 
\Phi_{\pP^\vee}^{\widehat{A} \to A} : D^b(\widehat{A}) \to D^b(A)  
\end{align*}
are equivalences of derived categories, 
i.e. Fourier-Mukai transforms. 
Moreover, he proved that   
$$
\Phi_{\pP^\vee}^{\widehat{A} \to A}  \circ   \Phi^{A \to \widehat{A}}_{\pP} \cong \id [-n], \  \
\Phi^{A \to \widehat{A}}_{\pP} \circ \Phi_{\pP^\vee}^{\widehat{A} \to A} \cong \id[-n],
$$
where $n$ is the dimension of $A$ and $\widehat{A}$.

Let $L$ be an ample line bundle on $A$. Its image under $\Phi_{\pP}^{A \to \widehat{A}}$ is
a semihomogeneous vector bundle\footnote{See Section \ref{semihomogeneous} for more details on semihomogeneous bundles.}
$\widehat{L}$
of rank $\chi(L) = c_1(L)^n/n!$,
$$
\Phi_{\pP}^{A \to \widehat{A}} (L) \cong \widehat{L}.
$$
Moreover, $-c_1(\widehat{L})$ is an ample divisor class on $\widehat{A}$. See~\cite{BL-polarization}  for further details. We have the following:

\begin{lem}[{\cite{BL-polarization}}]
\label{classicalcohomoFMT}
Let $H \in \NS_{\mathbb{Q}}(A)$ be an ample class on $A$.
Under the induced cohomological transform $\Phi_{\pP}^{\mathrm{H}}: H^{2*}_{\mathrm{alg}}(A, \mathbb{Q}) \to H^{2*}_{\mathrm{alg}}(\widehat{A},\mathbb{Q})$ of  $\Phi_{\pP}^{A \to \widehat{A}}$ we have
\begin{equation*}
\Phi_{\pP}^{\mathrm{H}}(e^{H}) = ({H^n}/{n!}) \, e^{-\widehat{H}}
\end{equation*}
for some ample class $\widehat{H} \in \NS_{\mathbb{Q}}(\widehat{A})$, satisfying 
\begin{equation*}
({H^n}/{n!}) ({\widehat{H}^n}/{n!}) =1.
\end{equation*}
Moreover, for each $0 \le i \le n$,  the induced cohomological transform gives rise to
an isomorphism
$\Phi_\pP^{\mathrm{H}} : H^{2i}_{\mathrm{alg}}(A, \mathbb{Q}) \to H^{2(n-i)}_{\mathrm{alg}}(\widehat{A},\mathbb{Q})$, satisfying
\begin{equation*}
\Phi_\pP^{\mathrm{H}}\left( \frac{H^i}{i!} \right) = \frac{(-1)^{n-i} H^n}{n! (n-i)!} \, 
\widehat{H}^{n-i}. 
\end{equation*}
\end{lem}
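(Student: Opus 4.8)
The plan is to deduce the lemma from the structural fact recalled just above the statement: for an ample line bundle $L$ with $c_1(L) = H$, the image $\widehat{L} := \Phi_{\pP}^{A \to \widehat{A}}(L)$ is a semihomogeneous vector bundle of rank $\chi(L) = H^n/n!$ with $-c_1(\widehat{L})$ ample, together with the Grothendieck--Riemann--Roch compatibility $\Phi_{\pP}^{\mathrm{H}} \circ \ch = \ch \circ \Phi_{\pP}^{A \to \widehat{A}}$ (valid because $\td_A = \td_{\widehat{A}} = 1$). I would first establish the grading statement in the second assertion. Since $\pP$ is a line bundle, $\ch(\pP) = e^{c_1(\pP)}$, and the two normalization conditions defining $\pP$ force $c_1(\pP)$ to have pure K\"unneth type $(1,1)$, i.e.\ $c_1(\pP) \in H^1(A) \otimes H^1(\widehat{A}) \subset H^2(A \times \widehat{A})$. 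Hence $\tfrac{1}{k!}c_1(\pP)^k \in H^k(A) \otimes H^k(\widehat{A})$, and for $\alpha \in H^{2i}_{\mathrm{alg}}(A,\BQ)$ the only term of $e^{c_1(\pP)} \cdot p_1^{\ast}\alpha$ that survives the pushforward $p_{2\ast}$ (integration along the $A$-fibres) is the one of $A$-degree $2n$, i.e.\ $k = 2(n-i)$; so $\Phi_{\pP}^{\mathrm{H}}$ sends $H^{2i}_{\mathrm{alg}}(A,\BQ)$ into $H^{2(n-i)}_{\mathrm{alg}}(\widehat{A},\BQ)$, algebraicity being preserved because $c_1(\pP)$ is algebraic. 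As $\Phi_{\pP}^{\mathrm{H}}$ is already known to be a linear isomorphism and $i \mapsto n-i$ is a bijection of $\{0,\dots,n\}$, each graded piece is then automatically an isomorphism.

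For the first assertion, GRR gives $\Phi_{\pP}^{\mathrm{H}}(e^{H}) = \ch(\widehat{L})$, and semihomogeneity of $\widehat{L}$ gives $\ch(\widehat{L}) = \rk(\widehat{L})\exp\bigl(c_1(\widehat{L})/\rk(\widehat{L})\bigr)$ with $\rk(\widehat{L}) = \chi(L) = H^n/n!$. Setting $\widehat{H} := -c_1(\widehat{L})/(H^n/n!)$, a positive rational multiple of the ample class $-c_1(\widehat{L})$ and hence $\BQ$-ample, yields $\Phi_{\pP}^{\mathrm{H}}(e^{H}) = (H^n/n!)\,e^{-\widehat{H}}$. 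It remains to prove the normalization $(H^n/n!)(\widehat{H}^n/n!) = 1$, and this is the step I expect to carry the real content. By the grading statement $\Phi_{\pP}^{\mathrm{H}}(1) \in H^{2n}_{\mathrm{alg}}(\widehat{A},\BQ)$ and it equals the $H^{2n}(\widehat{A})$-component of $(H^n/n!)e^{-\widehat{H}}$, namely $(-1)^n (H^n/n!)(\widehat{H}^n/n!)\,[\mathrm{pt}_{\widehat{A}}]$. On the other hand $\Phi_{\pP}^{\mathrm{H}}(1) = p_{2\ast}(e^{c_1(\pP)})$, whose top component is $p_{2\ast}\bigl(\tfrac{1}{(2n)!}c_1(\pP)^{2n}\bigr)$; the classical fact that $c_1(\pP)$, being the canonical element of $H^1(A) \otimes H^1(\widehat{A})$, satisfies $\tfrac{1}{(2n)!}c_1(\pP)^{2n} = \pm\,[\mathrm{pt}_A]\otimes[\mathrm{pt}_{\widehat{A}}]$ gives $\Phi_{\pP}^{\mathrm{H}}(1) = \pm[\mathrm{pt}_{\widehat{A}}]$. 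Since $H^n/n!$ and $\widehat{H}^n/n!$ are both positive, comparing absolute values forces $(H^n/n!)(\widehat{H}^n/n!) = 1$. An alternative route to this normalization is to use Mukai's inversion $\Phi_{\pP^\vee}^{\widehat{A} \to A} \circ \Phi_{\pP}^{A \to \widehat{A}} \cong \id[-n]$, which on cohomology reads $\Phi_{\pP^\vee}^{\mathrm{H}} \circ \Phi_{\pP}^{\mathrm{H}} = (-1)^n \id$, combined with the symmetric version of the first assertion applied on $\widehat{A}$; either way the only genuinely arithmetic input is the value of the constant in GRR for the Poincar\'e bundle.

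Finally, the third assertion follows by a scaling argument that needs only the first two. Apply the first assertion to the ample $\BQ$-class $tH$ for $t \in \BQ_{>0}$: there is an ample $K(t)$ with $\Phi_{\pP}^{\mathrm{H}}(e^{tH}) = (t^n H^n/n!)\,e^{-K(t)}$. By linearity the left side is $\sum_{i=0}^{n} t^i\,\Phi_{\pP}^{\mathrm{H}}(H^i/i!)$, and by the second assertion its $H^{2(n-i)}(\widehat{A})$-component is precisely $t^i\,\Phi_{\pP}^{\mathrm{H}}(H^i/i!)$; the $H^{2(n-i)}(\widehat{A})$-component of the right side is $t^n (H^n/n!)\,(-1)^{n-i}\,K(t)^{n-i}/(n-i)!$. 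Matching these for $i = n-1$ forces $K(t) = \widehat{H}/t$ (using $K(1) = \widehat{H}$), and matching for general $i$ then gives $\Phi_{\pP}^{\mathrm{H}}(H^i/i!) = \dfrac{(-1)^{n-i} H^n}{n!\,(n-i)!}\,\widehat{H}^{n-i}$, which is the claim. So beyond the one normalization computation the proof is a formal combination of the K\"unneth grading, GRR, and the semihomogeneity of $\widehat{L}$.
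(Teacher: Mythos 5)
The paper does not prove this lemma at all: it is quoted from Birkenhake--Lange \cite{BL-polarization}, so there is no internal proof to compare against. Your derivation from the facts the paper does recall (the K\"unneth type of $c_1(\pP)$, Grothendieck--Riemann--Roch with trivial Todd classes, and the semihomogeneity of $\widehat{L}=\Phi_{\pP}^{A\to\widehat{A}}(L)$ with $\ch(\widehat{L})=\rk(\widehat{L})\,e^{c_1(\widehat{L})/\rk(\widehat{L})}$) is sound. In particular the grading argument via $c_1(\pP)\in H^1(A)\otimes H^1(\widehat{A})$ is exactly right, the identification $\widehat{H}=-c_1(\widehat{L})/\chi(L)$ gives the first displayed formula, and the normalization $(H^n/n!)(\widehat{H}^n/n!)=1$ via $\Phi_{\pP}^{\mathrm{H}}(1)=p_{2\ast}\bigl(c_1(\pP)^{2n}/(2n)!\bigr)=\pm[\mathrm{pt}_{\widehat{A}}]$ together with positivity is the standard computation (the sign is $(-1)^n$, consistent with $\chi(\pP)=(-1)^n$).

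Two small points. First, the scaling argument for the third assertion is unnecessary: once you know $\Phi_{\pP}^{\mathrm{H}}(e^{H})=(H^n/n!)\,e^{-\widehat{H}}$ and that $\Phi_{\pP}^{\mathrm{H}}$ sends $H^{2i}_{\mathrm{alg}}(A,\BQ)$ to $H^{2(n-i)}_{\mathrm{alg}}(\widehat{A},\BQ)$, comparing the degree-$2(n-i)$ components of the two sides at $t=1$ already yields $\Phi_{\pP}^{\mathrm{H}}(H^i/i!)=(-1)^{n-i}\tfrac{H^n}{n!\,(n-i)!}\widehat{H}^{n-i}$ with no further input. Second, as written your scaling step applies the first assertion to $tH$ for arbitrary $t\in\BQ_{>0}$, but your proof of that assertion uses an honest line bundle and therefore only covers integral ample classes; if you do want the scaling route, restrict $t$ to positive integers (replacing $L$ by $L^{\otimes t}$). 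The case of a general $H\in\NS_{\BQ}(A)$ then follows from the integral case by writing $H=H'/m$ and setting $\widehat{H}=m\widehat{H'}$, a homogeneity check you should state rather than leave implicit.
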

Let $H$, $\widehat{H}$ be as in Lemma~\ref{classicalcohomoFMT}. 
We write 
\begin{align}\label{notation:v}
v_i(-) = i! H^{n-i} \cdot \ch_{i}(-), \qquad
\widehat{v}_i(-)=i! \widehat{H}^{n-i} \cdot \ch_i(-).
\end{align}
For $\gamma=(\gamma_0, \ldots, \gamma_n) \in H^{2\ast}(A, \mathbb{Q})$, 
we also write $v_i(\gamma)=i! H^{n-i}\gamma_i$ and similarly for 
$\widehat{v}_i(-)$. 
The following is a particular case of \cite[Theorem~3.6]{PiyFMT}.
\begin{lem}
\label{prop:antidiagonal-rep-cohom-FMT}
We have the following equality for the induced cohomological transform $\Phi_{\pP}^{\mathrm{H}}: H^{2*}_{\mathrm{alg}}(A, \mathbb{Q}) \to H^{2*}_{\mathrm{alg}}(\widehat{A},\mathbb{Q})$:
$$
\widehat{v}_{i} \left(\Phi_{\pP}^{\mathrm{H}}(\gamma) \right) = 
\frac{(-1)^i n!}{H^n} \,   v_{n-i}(\gamma).
$$
\end{lem}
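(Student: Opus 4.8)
The plan is to deduce the identity from two inputs. The first is Lemma~\ref{classicalcohomoFMT}, which computes $\Phi_{\pP}^{\mathrm{H}}$ on powers of $H$ and records that $\Phi_{\pP}^{\mathrm{H}}$ restricts to an isomorphism $H^{2j}_{\mathrm{alg}}(A,\BQ)\xrightarrow{\ \sim\ }H^{2(n-j)}_{\mathrm{alg}}(\widehat{A},\BQ)$ for every $j$. The second is the standard compatibility of a cohomological Fourier--Mukai functor with the Mukai pairing; since $\td_A=1$ for an abelian variety this takes the concrete form
\[
\Phi_{\pP}^{\mathrm{H}}(\alpha)\cdot\Phi_{\pP}^{\mathrm{H}}(\beta)=(-1)^n\,\alpha\cdot\beta,\qquad \alpha\in H^{2a}_{\mathrm{alg}}(A,\BQ),\ \beta\in H^{2(n-a)}_{\mathrm{alg}}(A,\BQ),
\]
for the cup-product pairing. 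This can be cited from \cite{Huybook} (and is essentially the content of \cite{PiyFMT}); for abelian varieties it can also be seen directly from the projection formula together with the description of the self-convolution of the Poincar\'e kernel, which is the cohomological shadow of Mukai's inversion $\Phi_{\pP^{\vee}}^{\widehat{A}\to A}\circ\Phi_{\pP}^{A\to\widehat{A}}\cong\id[-n]$ recalled above.

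\emph{Step 1 (reduction to a homogeneous class).} Both sides of the asserted identity depend only on the component $\gamma_{n-i}\in H^{2(n-i)}_{\mathrm{alg}}(A,\BQ)$ of $\gamma$. Indeed $v_{n-i}(\gamma)=(n-i)!\,H^i\cdot\gamma_{n-i}$ by definition, while by the degree reversal in Lemma~\ref{classicalcohomoFMT} the degree-$2i$ part of $\Phi_{\pP}^{\mathrm{H}}(\gamma)$ equals $\Phi_{\pP}^{\mathrm{H}}(\gamma_{n-i})$, so $\widehat{v}_i(\Phi_{\pP}^{\mathrm{H}}(\gamma))=i!\,\widehat{H}^{n-i}\cdot\Phi_{\pP}^{\mathrm{H}}(\gamma_{n-i})$. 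Hence it suffices to prove
\[
\widehat{H}^{n-i}\cdot\Phi_{\pP}^{\mathrm{H}}(\gamma_{n-i})=\frac{(-1)^i\,n!\,(n-i)!}{i!\,H^n}\,H^i\cdot\gamma_{n-i}.
\]

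\emph{Step 2 (computation).} Lemma~\ref{classicalcohomoFMT} in degree $i$ gives $\Phi_{\pP}^{\mathrm{H}}(H^i/i!)=\tfrac{(-1)^{n-i}H^n}{n!\,(n-i)!}\,\widehat{H}^{n-i}$, equivalently $\widehat{H}^{n-i}=\tfrac{(-1)^{n-i}n!\,(n-i)!}{i!\,H^n}\,\Phi_{\pP}^{\mathrm{H}}(H^i)$. Substituting this and applying the isometry relation with $\alpha=H^i$ and $\beta=\gamma_{n-i}$ yields
\[
\widehat{H}^{n-i}\cdot\Phi_{\pP}^{\mathrm{H}}(\gamma_{n-i})=\frac{(-1)^{n-i}n!\,(n-i)!}{i!\,H^n}\,(-1)^n\,H^i\cdot\gamma_{n-i}=\frac{(-1)^{i}n!\,(n-i)!}{i!\,H^n}\,H^i\cdot\gamma_{n-i},
\]
using $(-1)^{n-i}(-1)^n=(-1)^i$, which is exactly the required identity. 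Consistency of the scalar with the normalization $(H^n/n!)(\widehat{H}^n/n!)=1$ of Lemma~\ref{classicalcohomoFMT} can be double-checked by testing $\gamma_{n-i}=H^{n-i}$ directly.

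\emph{Main obstacle.} Everything after the isometry relation is bookkeeping with signs and factorials, so the only nonformal point is that relation itself. If one prefers to avoid invoking the Mukai pairing one may instead split $\gamma_{n-i}$ into its $H$-parallel part (a multiple of $H^{n-i}$, where the identity is immediate from Lemma~\ref{classicalcohomoFMT}) and its $H$-perpendicular part $\delta$ with $H^i\cdot\delta=0$; but then one still has to show $\widehat{H}^{n-i}\cdot\Phi_{\pP}^{\mathrm{H}}(\delta)=0$, which is precisely the isometry relation for $\alpha=H^i$, $\beta=\delta$. Thus establishing that compatibility (for which the cleanest route is the kernel-convolution computation, or simply citing \cite{PiyFMT, Huybook}) is unavoidably the crux.
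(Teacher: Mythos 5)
Your proof is correct. Note that the paper itself does not prove this lemma at all: it is stated as ``a particular case of \cite[Theorem~3.6]{PiyFMT}'' and the reader is sent to that reference. So you have supplied a self-contained argument where the paper supplies only a citation. Your two inputs are exactly the right ones: Lemma~\ref{classicalcohomoFMT} pins down $\Phi_{\pP}^{\mathrm{H}}$ on the $H$-parallel part and gives the degree reversal $H^{2j}_{\mathrm{alg}}(A)\to H^{2(n-j)}_{\mathrm{alg}}(\widehat{A})$ used in your Step~1, and the isometry $\Phi_{\pP}^{\mathrm{H}}(\alpha)\cdot\Phi_{\pP}^{\mathrm{H}}(\beta)=(-1)^n\,\alpha\cdot\beta$ for complementary degrees is the correct specialization of the standard fact that the cohomological transform of a Fourier--Mukai \emph{equivalence} preserves the Mukai pairing (on an abelian variety $\td=1$ and $\alpha^\vee=(-1)^a\alpha$ for $\alpha\in H^{2a}$, whence the sign $(-1)^{a-(n-a)}=(-1)^n$); this is in \cite{Huybook} and needs no new work. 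The sign and factorial bookkeeping in Step~2 checks out, and your normalization test against $(H^n/n!)(\widehat{H}^n/n!)=1$ confirms the constant. What your route buys is independence from the external reference at the cost of invoking the Mukai-pairing isometry; what the paper's citation buys is nothing beyond brevity. Your closing remark is also accurate: the parallel/perpendicular splitting does not let you avoid the isometry, since the vanishing $\widehat{H}^{n-i}\cdot\Phi_{\pP}^{\mathrm{H}}(\delta)=0$ for $H^i\cdot\delta=0$ is an instance of it.
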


We also have the following corollary: 
\begin{cor}\label{cor:cohomological}
The induced cohomological transform $\Phi_{\pP}^{\mathrm{H}}: H^{2*}_{\mathrm{alg}}(A, \mathbb{Q}) \to H^{2*}_{\mathrm{alg}}(\widehat{A},\mathbb{Q})$ of  $\Phi_{\pP}^{A \to \widehat{A}}$ fits into the following diagrams:
\[
\begin{tikzcd}
H^{2i}_{\mathrm{alg}}(A, \mathbb{Q})  \ar{d}{p_{H, i}} \ar{r}{\Phi_{\pP}^{\mathrm{H}}}  &   H^{2(n-i)}_{\mathrm{alg}}(\widehat{A}, \mathbb{Q}) \ar{d}{p_{\widehat{H}, n-i}} \\
H^{2i}_{\mathrm{alg}}(A, \mathbb{Q}) \ar{r}{\Phi^{\mathrm{H}}_{\pP}}  &   H^{2(n-i)}_{\mathrm{alg}}(\widehat{A}, \mathbb{Q}),
\end{tikzcd}
\quad 
\begin{tikzcd}
H^{2i}_{\mathrm{alg}}(A, \mathbb{Q})  \ar{d}{p_{H, i}^{\perp}} \ar{r}{\Phi_{\pP}^{\mathrm{H}}}  &   H^{2(n-i)}_{\mathrm{alg}}(\widehat{A}, \mathbb{Q}) \ar{d}{p_{\widehat{H}, n-i}^{\perp}} \\
H^{2i}_{\mathrm{alg}}(A, \mathbb{Q}) \ar{r}{\Phi^{\mathrm{H}}_{\pP}}  &   H^{2(n-i)}_{\mathrm{alg}}(\widehat{A}, \mathbb{Q}).
\end{tikzcd}
\]
\end{cor}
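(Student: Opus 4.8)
The plan is to deduce Corollary~\ref{cor:cohomological} directly from Lemma~\ref{classicalcohomoFMT} and Lemma~\ref{prop:antidiagonal-rep-cohom-FMT}. The key observation is that it suffices to show that the cohomological transform $\Phi_{\pP}^{\mathrm{H}}$ respects, degree by degree, the two orthogonal decompositions
\[ H^{2i}_{\mathrm{alg}}(A, \mathbb{Q}) = H^{2i}_{\mathrm{alg}}(A, \mathbb{Q})^{H, \parallel} \oplus H^{2i}_{\mathrm{alg}}(A, \mathbb{Q})^{H, \perp} \]
and the analogous one on $\widehat{A}$ with respect to $\widehat{H}$; more precisely, that
\[ \Phi_{\pP}^{\mathrm{H}}\big( H^{2i}_{\mathrm{alg}}(A, \mathbb{Q})^{H, \parallel} \big) \subseteq H^{2(n-i)}_{\mathrm{alg}}(\widehat{A}, \mathbb{Q})^{\widehat{H}, \parallel}, \qquad \Phi_{\pP}^{\mathrm{H}}\big( H^{2i}_{\mathrm{alg}}(A, \mathbb{Q})^{H, \perp} \big) \subseteq H^{2(n-i)}_{\mathrm{alg}}(\widehat{A}, \mathbb{Q})^{\widehat{H}, \perp}. \]
Indeed, once these inclusions are known, decomposing $\gamma_i = p_{H,i}(\gamma_i) + p_{H,i}^{\perp}(\gamma_i)$ and applying $\Phi_{\pP}^{\mathrm{H}}$, the two summands land in the $\parallel$ and $\perp$ parts of $H^{2(n-i)}_{\mathrm{alg}}(\widehat{A}, \mathbb{Q})$ respectively; since $\Phi_{\pP}^{\mathrm{H}}$ is degree-reversing by Lemma~\ref{classicalcohomoFMT}, applying $p_{\widehat{H}, n-i}$ (resp.\ $p_{\widehat{H}, n-i}^{\perp}$) isolates $\Phi_{\pP}^{\mathrm{H}}(p_{H,i}(\gamma_i))$ (resp.\ $\Phi_{\pP}^{\mathrm{H}}(p_{H,i}^{\perp}(\gamma_i))$), which is precisely commutativity of the first (resp.\ second) diagram. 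Alternatively, using $\id = p_H + p_H^{\perp}$ from \eqref{eqn:map-decom} on both sides together with linearity of $\Phi_{\pP}^{\mathrm{H}}$, one square implies the other, so it is enough to establish either one.

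First I would treat the $\parallel$-part. By definition $H^{2i}_{\mathrm{alg}}(A,\mathbb{Q})^{H,\parallel}$ is the line spanned by $H^i$, and Lemma~\ref{classicalcohomoFMT} gives $\Phi_{\pP}^{\mathrm{H}}(H^i/i!) = \tfrac{(-1)^{n-i}H^n}{n!(n-i)!}\,\widehat{H}^{n-i}$, which lies in the line spanned by $\widehat{H}^{n-i}$, i.e.\ in $H^{2(n-i)}_{\mathrm{alg}}(\widehat{A},\mathbb{Q})^{\widehat{H},\parallel}$. This gives the first inclusion. Next I would treat the $\perp$-part using Lemma~\ref{prop:antidiagonal-rep-cohom-FMT}. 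A class $\gamma = \gamma_i \in H^{2i}_{\mathrm{alg}}(A,\mathbb{Q})$ is perpendicular precisely when $H^{n-i}\cdot\gamma_i = 0$, i.e.\ $v_i(\gamma) = 0$. Applying Lemma~\ref{prop:antidiagonal-rep-cohom-FMT} with index $n-i$ gives $\widehat{v}_{n-i}\big(\Phi_{\pP}^{\mathrm{H}}(\gamma)\big) = \tfrac{(-1)^{n-i}n!}{H^n}\,v_i(\gamma) = 0$; since $\Phi_{\pP}^{\mathrm{H}}(\gamma)$ is concentrated in degree $2(n-i)$, this says $\widehat{H}^{i}\cdot \Phi_{\pP}^{\mathrm{H}}(\gamma) = 0$, that is, $\Phi_{\pP}^{\mathrm{H}}(\gamma) \in H^{2(n-i)}_{\mathrm{alg}}(\widehat{A},\mathbb{Q})^{\widehat{H},\perp}$. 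This is the second inclusion, and the corollary follows.

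I expect no genuine obstacle: the real content — the description of $\Phi_{\pP}^{\mathrm{H}}$ on the powers of $H$ and the transformation rule $\widehat{v}_i(\Phi_{\pP}^{\mathrm{H}}(\gamma)) = \tfrac{(-1)^i n!}{H^n} v_{n-i}(\gamma)$ for the pairings — has already been packaged into Lemmas~\ref{classicalcohomoFMT} and~\ref{prop:antidiagonal-rep-cohom-FMT}, so the corollary is essentially bookkeeping of degrees. The only point requiring mild care is correctly matching the projection index $i$ on $A$ with $n-i$ on $\widehat{A}$, and correspondingly matching $v_i$ with $\widehat{v}_{n-i}$, when invoking the two lemmas.
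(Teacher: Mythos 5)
Your proof is correct and follows essentially the same route as the paper, which likewise deduces the corollary from Lemma~\ref{classicalcohomoFMT} together with the decomposition $\id = p_H + p_H^{\perp}$; your use of Lemma~\ref{prop:antidiagonal-rep-cohom-FMT} to verify that the perpendicular summand is sent to the perpendicular summand is a clean way to make the paper's one-line assertion precise. The index bookkeeping ($i \leftrightarrow n-i$, $v_i \leftrightarrow \widehat{v}_{n-i}$) is handled correctly.
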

\begin{proof}
The first diagram is a direct consequence of Lemma \ref{classicalcohomoFMT}. The second diagram follows from the relation
\eqref{eqn:map-decom}.
\end{proof}

In the case of $n=3$, 
the Fourier-Mukai transform $\Phi_{\pP}^{A \to \widehat{A}}$ with the Poincar\'e bundle as kernel
preserves double tilt hearts as follows:  
\begin{lem}\emph{(\cite[Theorem~5.3]{PiyFMT})}
\label{prop:ab-equiv-abelian-3}
Suppose that $A$ is an abelian 3-fold. 
Then for any $t \in \mathbb{R}_{>0}$, we have 
$$
\Phi_{\pP}^{A \to \widehat{A}} [1] \left( \aA_{\sqrt{3}tH/2, tH/2}\right) 
= \aA_{\sqrt{3}\widehat{H}/{2t}, -\widehat{H}/2t}
$$ 
where $\widehat{H} \in \NS_{\mathbb{Q}}(\widehat{A})$ is the induced ample
class as in Lemma~\ref{classicalcohomoFMT}.
\end{lem}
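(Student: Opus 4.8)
\emph{Proof strategy (following \cite{PiyFMT}).} Write $\Phi = \Phi_{\pP}^{A \to \widehat{A}}$, so that $\Phi_{\pP^{\vee}}^{\widehat{A} \to A} \circ \Phi \cong \id[-3]$ and $\Phi[1]$ is an equivalence $D^b(A) \to D^b(\widehat{A})$; it therefore carries the bounded t-structure with heart $\aA_{\sqrt{3}tH/2,\,tH/2}$ to a bounded t-structure on $D^b(\widehat{A})$. Since an inclusion between hearts of bounded t-structures on the same triangulated category is automatically an equality, it suffices to prove the single inclusion
\[ \Phi[1]\big( \aA_{\sqrt{3}tH/2,\, tH/2} \big) \subseteq \aA_{\sqrt{3}\widehat{H}/2t,\, -\widehat{H}/2t}. \]
Abbreviating $\omega = \sqrt{3}tH/2$, $B = tH/2$, $\widehat{\omega} = \sqrt{3}\widehat{H}/2t$, $\widehat{B} = -\widehat{H}/2t$ and writing $\aA_{\omega,B} = \langle \fF_{\omega,B}'[1], \tT_{\omega,B}'\rangle$, it is then enough to control the images under $\Phi[1]$ of the two pieces $\tT_{\omega,B}'$ and $\fF_{\omega,B}'[1]$.

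The first ingredient is the compatibility of central charges. Here $B + i\omega = tHe^{\pi i/3}$, and the degreewise formula of Lemma~\ref{classicalcohomoFMT} gives $\Phi_{\pP}^{\mathrm{H}}(e^{B+i\omega}) = -\frac{t^3H^3}{6}\, e^{\widehat{B}+i\widehat{\omega}}$. Combining this with $Z_{\omega,B}(-) = -\chi(e^{B+i\omega}, \ch(-))$ from~\eqref{Z_omegaB} and with $\chi_{\widehat{A}}(\Phi_{\pP}^{\mathrm{H}}\alpha, \Phi_{\pP}^{\mathrm{H}}\beta) = \chi_A(\alpha,\beta)$ (valid since $\Phi$ is an equivalence), I would deduce
\[ Z_{\widehat{\omega},\widehat{B}}\circ \Phi_{\pP}^{\mathrm{H}} = -\frac{6}{t^3H^3}\, Z_{\omega,B}, \qquad\text{hence}\qquad Z_{\widehat{\omega},\widehat{B}}\big(\Phi[1](E)\big) = \frac{6}{t^3H^3}\, Z_{\omega,B}(E) \]
for every $E$. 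In particular $\Phi[1]$ intertwines the slope functions $\nu_{\omega,B}$ and $\nu_{\widehat{\omega},\widehat{B}}$ up to a fixed positive scalar, and sends the positivity constraint defining $\aA_{\omega,B}$ to the one defining $\aA_{\widehat{\omega},\widehat{B}}$.

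The main — and hardest — step is an amplitude estimate. I would first show that $\Phi[1]$ maps the single tilt $\bB_{\omega,B}$ into the two-step window $\langle \bB_{\widehat{\omega},\widehat{B}},\, \bB_{\widehat{\omega},\widehat{B}}[1]\rangle$ on $D^b(\widehat{A})$, and then, after tilting one more time, that it maps $\aA_{\omega,B}$ into $\langle \aA_{\widehat{\omega},\widehat{B}}[-1],\, \aA_{\widehat{\omega},\widehat{B}},\, \aA_{\widehat{\omega},\widehat{B}}[1]\rangle$. The input is an analysis of the ordinary cohomology sheaves $\mathcal{H}^j(\Phi E)$ for $E$ a $\mu_{\omega,B}$-slope-semistable sheaf on the abelian threefold $A$: on abelian threefolds these are nonzero in at most two consecutive degrees, and the $B$-twisted slopes of the nonzero ones are pinned down by the classical Bogomolov--Gieseker inequality $H^3 \cdot H\Delta(-) \ge 0$ of Proposition~\ref{prop:Delta}, together with the grading-reversal formula of Lemma~\ref{prop:antidiagonal-rep-cohom-FMT}; torsion sheaves are dealt with separately, and patching over $\bB_{\omega,B} = \langle \fF_{\omega,B}[1], \tT_{\omega,B}\rangle$ yields the first window. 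Running the same argument for $\nu_{\omega,B}$-semistable objects of $\bB_{\omega,B}$, this time using the inequality $\overline{\Delta}_{H,B}(-) \ge 0$ of Proposition~\ref{prop:Delta}, yields the second window.

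Finally I would collapse the three-step window using the central-charge identity above: an object $M \in \Phi[1](\aA_{\omega,B})$ lies in $\langle \aA_{\widehat{\omega},\widehat{B}}[-1], \aA_{\widehat{\omega},\widehat{B}}, \aA_{\widehat{\omega},\widehat{B}}[1]\rangle$ and has $Z_{\widehat{\omega},\widehat{B}}(M)$ in the region $\{ r e^{\pi i \phi} : r > 0,\ \phi \in (0,1] \}$; since the corresponding $\aA_{\widehat{\omega},\widehat{B}}$-cohomology pieces of $M$ inherit this positivity, a phase comparison forces the $[-1]$- and $[1]$-pieces to vanish, so $M \in \aA_{\widehat{\omega},\widehat{B}}$. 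This gives the inclusion, hence the lemma. The technical crux is the amplitude-plus-slope analysis of $\Phi$ in the previous paragraph — that the Poincar\'e Fourier--Mukai transform keeps cohomology within two degrees and the twisted slopes in the predicted ranges — which is exactly where the Bogomolov--Gieseker type inequalities for abelian threefolds of \cite{MaPi1, MaPi2, BMS, PiYT} (i.e.\ Conjecture~\ref{conj:BG} and Proposition~\ref{prop:Delta}) enter essentially, and which makes up the bulk of \cite{PiyFMT}.
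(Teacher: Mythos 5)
The paper does not prove this lemma; it is quoted verbatim from \cite[Theorem~5.3]{PiyFMT}, so there is no internal proof to compare against. Judged as a reconstruction of the cited proof, your outline gets the architecture right: the reduction to a single inclusion (an inclusion of hearts of bounded t-structures is an equality), the central-charge compatibility (your computation $\Phi^{\mathrm{H}}_{\pP}(e^{B+i\omega}) = -\tfrac{t^3H^3}{6}e^{\widehat{B}+i\widehat{\omega}}$, using $B+i\omega = tHe^{\pi i/3}$ and the degreewise formula of Lemma~\ref{classicalcohomoFMT}, is correct, as is the resulting positive rescaling $Z_{\widehat{\omega},\widehat{B}}(\Phi[1]E) = \tfrac{6}{t^3H^3}Z_{\omega,B}(E)$), and the fact that the real content is an amplitude-plus-slope analysis of $\Phi$ on slope- and tilt-semistable objects driven by the Bogomolov--Gieseker inequalities of Proposition~\ref{prop:Delta}. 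This is indeed how \cite{MaPi1, MaPi2, PiyFMT} proceed.

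The step that does not hold as stated is the final collapse of the three-term window by ``phase comparison.'' If $M$ has $\aA_{\widehat{\omega},\widehat{B}}$-cohomologies $H^{-1}, H^{0}, H^{1}$, then $Z_{\widehat{\omega},\widehat{B}}(M) = -Z(H^{-1}) + Z(H^{0}) - Z(H^{1})$, and since $Z(H^{\pm 1})$ may lie on the negative real axis (phase exactly $1$), the contributions $-Z(H^{\pm1})$ can be positive reals; positivity of the total central charge therefore does not force $H^{\pm1}=0$ — cancellation is possible. In \cite{PiyFMT} the concentration in degree $0$ is not obtained by a soft central-charge argument applied to a single object: it is proved directly, piece by piece over the decomposition $\aA_{\omega,B} = \langle \fF'_{\omega,B}[1], \tT'_{\omega,B}\rangle$ (and further by slope ranges and torsion/free parts), with the extremal cohomologies excluded via vanishing statements and the equality cases of the Bogomolov--Gieseker-type inequalities. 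So your sketch correctly locates the technical crux but substitutes an invalid shortcut for the last reduction; to be a proof it would need the degree-by-degree vanishing carried out as in the reference.
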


\subsection{(Semi)homogeneous sheaves}
\label{semihomogeneous}
We recall 
(semi)homogeneous sheaves on abelian varieties, and study
the effect of tensoring them to the stability. 
The arguments here will be also used in the proof of full support property. 

A vector bundle $E$ on an abelian variety $A$ is called \textit{homogeneous} if we have
$T_x^*E \cong E$ for all $x \in A$. 

\begin{prop}[\cite{Mu3}]
\label{prop:homogeneous-bundle-filtration}
A vector bundle $E$ on $A$ is homogeneous if and only if $E$
can be filtered by line bundles from $\Pic^0(A)$.
\end{prop}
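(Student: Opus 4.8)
The statement is due to Mukai \cite{Mu3}; the plan is to reprove it through the Fourier--Mukai transform $\Phi_{\pP}=\Phi_{\pP}^{A\to\widehat{A}}$, exploiting two standard features of the Poincar\'e kernel (\cite{Mu1}): $\Phi_{\pP}$ carries each line bundle $P\in\Pic^0(A)$ to a skyscraper sheaf placed in cohomological degree $n=\dim A$, and it intertwines translation by $x\in A$ with tensoring by the corresponding line bundle $\pP_x\in\Pic^0(\widehat{A})$. Under this dictionary, ``$E$ admits a filtration by line bundles in $\Pic^0(A)$'' becomes ``$\Phi_{\pP}(E)\cong G[-n]$ for a finite-length sheaf $G$ on $\widehat{A}$'', and ``$E$ is homogeneous'' becomes ``$\Phi_{\pP}(E)\otimes L\cong\Phi_{\pP}(E)$ for every $L\in\Pic^0(\widehat{A})$''; so the proposition amounts to showing that, for an object of $D^b(\widehat{A})$, these two properties agree.

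The implication ($\Leftarrow$) is then immediate: a finite-length sheaf is, in a neighbourhood of each of its points, unaffected by tensoring with a line bundle of trivial first Chern class, so $G\otimes L\cong G$ for all $L\in\Pic^0(\widehat{A})$, and conjugating back by $\Phi_{\pP}$ gives $T_x^{\ast}E\cong E$ for all $x$. For ($\Rightarrow$) I would pass to cohomology sheaves, reducing the claim to the key statement: \emph{a coherent sheaf $\hH$ on an abelian variety $Y$ with $\hH\otimes L\cong\hH$ for all $L\in\Pic^0(Y)$ has $\dim\Supp\hH=0$.} To prove this I would apply the Poincar\'e transform of $Y$ once more: the hypothesis makes $\Phi_{\pP_Y}(\hH)$ invariant under all translations of $\widehat{Y}$, so each of its cohomology sheaves is a translation-invariant coherent sheaf, hence locally free (its torsion subsheaf and its non-free locus are translation-invariant proper closed subsets, hence empty) --- that is, a homogeneous bundle on $\widehat{Y}$. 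Homogeneous bundles have vanishing Chern character in positive degrees: their Atiyah class vanishes (the tangent bundle of $\widehat{Y}$ is trivial and the family $\{T_x^{\ast}W\}_x$ is iso-constant), so they carry a holomorphic connection and Atiyah's theorem applies; hence $\ch(\Phi_{\pP_Y}(\hH))$ lies in $H^0$, and the anti-diagonal Grothendieck--Riemann--Roch identity for the Poincar\'e transform (Lemma~\ref{prop:antidiagonal-rep-cohom-FMT}) forces $\ch_i(\hH)=0$ for all $i<\dim Y$, which says exactly that $\Supp\hH$ is finite. Granting the key statement, $\Phi_{\pP}(E)$ has finite-length cohomology sheaves, so it lies in the thick subcategory of $D^b(\widehat{A})$ generated by skyscrapers; applying $\Phi_{\pP}^{-1}$ puts $E$ in the thick subcategory generated by the line bundles in $\Pic^0(A)$, and since the sheaves admitting a filtration by such line bundles form a Serre subcategory (identified, via $\Phi_{\pP}$, with the finite-length sheaves) while $E$ is a sheaf concentrated in degree $0$, $E$ itself is filtered by line bundles in $\Pic^0(A)$.

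The delicate point --- and the one I expect to cost real work --- is the vanishing of the Atiyah class of a homogeneous bundle, because $T_x^{\ast}W\cong W$ at all closed points $x$ is a priori weaker than infinitesimal triviality of the family $\{T_x^{\ast}W\}$ (iso-constant need not mean iso-trivial). I would handle this by noting that $\Aut(W)$ is a special algebraic group --- an extension of a product of general linear groups by a unipotent group --- so that the $\Aut(W)$-torsor $x\mapsto\Isom(W,T_x^{\ast}W)$ over $A$ is Zariski-locally trivial; triviality over first-order neighbourhoods then yields the vanishing of the Atiyah class. Alternatively, the facts needed about homogeneous bundles can simply be quoted from Mukai's structure theory of (semi)homogeneous bundles, and the rest of the argument is routine manipulation of the Fourier--Mukai dictionary.
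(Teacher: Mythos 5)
The paper does not prove this statement at all --- it is quoted from Mukai \cite{Mu3} --- so there is no internal proof to compare against; I can only assess your argument on its own terms. Your Fourier--Mukai strategy is sound and is in fact close in spirit to Mukai's own later treatment of homogeneous bundles in \cite{Mu1}: the dictionary between filtrations by $\Pic^0(A)$ and finite-length sheaves under $\Phi_{\pP}$ is correct, the $(\Leftarrow)$ direction is fine, and the reduction of $(\Rightarrow)$ to the key statement (a sheaf $\hH$ with $\hH\otimes L\cong\hH$ for all $L\in\Pic^0$ has finite support) is the right move. Two remarks on the details. First, your use of Lemma~\ref{prop:antidiagonal-rep-cohom-FMT} literally only yields $H^{n-i}\cdot\ch_i(\hH)=0$ for $i<n$, not $\ch_i(\hH)=0$; this still suffices, because if $\dim\Supp\hH=d>0$ the component $\ch_{n-d}(\hH)$ is a nonzero effective cycle class, whose intersection with $H^{d}$ is strictly positive. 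Second, the final passage from ``$\Phi_{\pP}(E)$ has finite-length cohomology'' back to a filtration of $E$ is best phrased via the hypercohomology spectral sequence: since $\Phi_{\pP}^{-1}$ sends finite-length sheaves in degree $n$ to sheaves in degree $0$, concentration of $E$ in degree $0$ forces $\Phi_{\pP}(E)$ to be concentrated in degree $n$, and then the composition series transports.

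The one genuine gap is the one you flag, and here you should be careful. Your primary fix (the $\Aut(W)$-torsor $x\mapsto\Isom(W,T_x^{\ast}W)$ is Zariski-locally trivial because the unit group of a finite-dimensional algebra is special) does work, but it is heavier than necessary: since $\hom(W,T_x^{\ast}W)=\hom(W,W)$ is constant in $x$, cohomology and base change makes the relative Hom-sheaf locally free, so the identity $W\to T_0^{\ast}W$ extends to a homomorphism $p_1^{\ast}W\to m^{\ast}W$ over a Zariski neighbourhood of $0$, which is an isomorphism on an open set; this gives local iso-triviality directly, hence vanishing of the Kodaira--Spencer map and of the Atiyah class. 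By contrast, your proposed \emph{alternative} --- ``simply quote Mukai's structure theory of (semi)homogeneous bundles'' for the fact that $\ch(W)\in H^0$ --- is circular: that structure theory (Proposition~\ref{prop:homogeneous-bundle-filtration} on $\widehat{Y}$, and Lemma~\ref{prop:semihomo-numerical}(i) which is deduced from it) is precisely what is being proved. So the Atiyah-class route is not optional decoration; it is what keeps the argument non-circular, and with the base-change simplification it closes the gap.
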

For a coherent sheaf $E$ on $A$, 
we define
\begin{align}\label{defi:Phi(E)}
\Xi(E) :=\{(x, L) \in A \times \widehat{A} : T_x^{\ast}E \otimes L \cong E\}.   
\end{align}
By~\cite[Proposition~4.5]{Mu4}, we have 
$\dim \Xi(E) \le n$, where $n$ is the dimension of $A$.
A coherent sheaf $E$ on $A$
is \textit{semihomogeneous} if
$\dim \Xi(E)=n$.
If $E$ is a vector bundle, this is 
equivalent to that 
for every $x \in A$ there exists a flat line bundle $\pP_{A \times \{\widehat{x}\}}$ on $A$
such that $T_x^*E \cong E \otimes  \pP_{A \times \{\widehat{x}\}}$.
Also a coherent sheaf $E$ is called \textit{simple} if we have $\End_{A}(E) \cong \mathbb{C}$. 

\begin{lem}[{\cite[Theorem~5.8]{Mu3}}]
\label{prop:Mukai-semihomognoeus-properties}
Let $E$ be a simple vector bundle on an $n$-dimensional abelian variety $A$. Then the following conditions are equivalent:
\begin{enumerate}
\renewcommand{\labelenumi}{(\roman{enumi})}
\item $\dim H^1(A, \eE nd(E))=n$,
\item $E$ is semihomogeneous,
\item $\eE nd(E)$ is a homogeneous vector bundle.
\end{enumerate}
\end{lem}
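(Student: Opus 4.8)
The plan is to prove the cycle $(\mathrm{i})\Rightarrow(\mathrm{ii})$, $(\mathrm{ii})\Rightarrow(\mathrm{i})\text{ and }(\mathrm{iii})$, $(\mathrm{iii})\Rightarrow(\mathrm{ii})$. Throughout I write $F=\mathcal{E}nd(E)=E\otimes E^{\vee}$, so that $H^i(A,F)=\Ext^i_A(E,E)$, and I record first the two standard inputs: since $K_A=\oO_A$, Serre duality gives $\Ext^i(E,E)\cong\Ext^{n-i}(E,E)^{\vee}$, and simplicity of $E$ then yields $h^0(F)=h^n(F)=1$. For $(\mathrm{i})\Rightarrow(\mathrm{ii})$ I would argue by deformation theory: the group $\A=A\times\widehat A$ acts on the moduli space of simple sheaves by $(x,L)\cdot E=T_x^{\ast}E\otimes L$, with stabilizer of $[E]$ equal to $\Xi(E)$ from \eqref{defi:Phi(E)}, so differentiating the orbit map embeds $\mathrm{Lie}(\A)/\mathrm{Lie}(\Xi(E))$ into the tangent space $\Ext^1(E,E)$. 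Hence $h^1(F)\ge 2n-\dim\Xi(E)\ge n$, using Mukai's bound $\dim\Xi(E)\le n$; and if $h^1(F)=n$ then $\dim\Xi(E)=n$, i.e. $E$ is semihomogeneous.

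For $(\mathrm{ii})\Rightarrow(\mathrm{i})$ and $(\mathrm{ii})\Rightarrow(\mathrm{iii})$ the key input is the structure theory of semihomogeneous bundles: for a simple semihomogeneous $E$ there is an isogeny $f\colon A'\to A$ with $f^{\ast}E\cong\bigoplus_i M_i$, where the $M_i$ are line bundles that are pairwise algebraically equivalent. Then $f^{\ast}F=\bigoplus_{i,j}M_i\otimes M_j^{\vee}$ is a direct sum of line bundles in $\Pic^0(A')$; since $f$ is an isogeny, $F$ is a direct summand of $f_{\ast}f^{\ast}F$ (split off via $\tfrac{1}{\deg f}$ times the trace), and pushforward along an isogeny sends a direct sum of line bundles in $\Pic^0(A')$ to a direct sum of line bundles in $\Pic^0(A)$; hence $F\cong\bigoplus_k P_k$ with $P_k\in\Pic^0(A)$. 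This is homogeneous, giving $(\mathrm{iii})$. Moreover $H^{\bullet}(\oO_A)=\bigwedge^{\bullet}H^1(\oO_A)$ while $H^0(P_k)=0$ for $P_k\ne\oO_A$, so $h^0(F)=1$ forces $\oO_A$ to occur exactly once among the $P_k$, whence $h^1(F)=h^1(\oO_A)=n$, giving $(\mathrm{i})$.

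For $(\mathrm{iii})\Rightarrow(\mathrm{ii})$: if $F$ is homogeneous, Proposition~\ref{prop:homogeneous-bundle-filtration} makes $F$ an iterated extension of line bundles in $\Pic^0(A)$, so $\ch(F)=\rk(F)\cdot 1$ and $F$ is $\mu_H$-semistable of slope $0$ for any ample $H$. Reading $\ch(F)=\ch(E)\ch(E^{\vee})$ in degree $2$ gives $2r\,\ch_2(E)=\ch_1(E)^2$, i.e. $\Delta(E)=0$; and slope-$0$ semistability of $F$ together with the subsheaf $E^{\vee}\otimes S\subset F$ attached to any subsheaf $S\subset E$ forces $E$ to be $\mu_H$-semistable. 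A simple $\mu_H$-semistable bundle with vanishing discriminant is projectively flat, and projective unitary representations of the abelian group $\pi_1(A)$ are precisely what produce semihomogeneous bundles, so $E$ is semihomogeneous.

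The main obstacle I expect is exactly the two results of Mukai invoked above: the structure theorem that a simple semihomogeneous bundle becomes a sum of algebraically equivalent line bundles after pullback along a suitable isogeny, and the implication "simple $\mu_H$-semistable with $\Delta=0$ $\Rightarrow$ semihomogeneous''; in the final write-up these would be cited rather than reproved. Granting them, the remaining ingredients are soft — the deformation-theoretic dimension count of the first step, Serre duality, and the bookkeeping with $\Pic^0$-filtered bundles and their pushforwards along isogenies.
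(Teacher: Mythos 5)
The paper gives no proof of this lemma: it is quoted directly from Mukai [Mu3, Theorem~5.8]. So there is no argument of the authors to compare yours against, and I can only assess your sketch on its own terms.

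Your step (i)$\Rightarrow$(ii) is correct and is in fact Mukai's own argument: in characteristic zero the orbit of $[E]$ under $\A=A\times\widehat A$ in the moduli space of simple sheaves is isomorphic to $\A/\Xi(E)$, its tangent space injects into $\Ext^1(E,E)$, and $\dim\Ext^1(E,E)\ge 2n-\dim\Xi(E)\ge n$ with equality forcing $\dim\Xi(E)=n$. Two caveats about the rest. First, the ``structure theorem'' you invoke for (ii)$\Rightarrow$(i),(iii) --- that a simple semihomogeneous bundle becomes a direct sum of pairwise algebraically equivalent line bundles after pullback along an isogeny --- is essentially one of the equivalent conditions of Mukai's Theorem~5.8 itself (his $\pi_{*}L$ description), proved in the same circle of results; citing it makes this direction a reduction to the theorem you are proving rather than an independent argument, even though the bookkeeping you do with $f_{*}f^{*}$, the trace splitting and Krull--Schmidt is correct. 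Second, and more seriously, (iii)$\Rightarrow$(ii) has a genuine gap. From homogeneity of $\eE nd(E)$ you correctly get $\Delta(E)=0$ and $\mu_H$-semistability of $E$, but the Donaldson--Uhlenbeck--Yau/L\"ubke mechanism that converts ``$\Delta=0$'' into projective flatness requires $\mu_H$-\emph{stability} (or polystability), and ``simple and $\mu_H$-semistable'' implies neither. To close this you would need an additional step --- for instance a Jordan--H\"older plus Hodge-index analysis showing that a strictly semistable $E$ with $\Delta(E)\cdot H^{n-2}=0$ is an iterated extension of stable projectively flat bundles with proportional Chern characters, and then that simplicity forces $E$ itself to be semihomogeneous --- or else a purely algebraic treatment of (iii)$\Rightarrow$(i) using the decomposition of a homogeneous bundle into $P\otimes(\text{unipotent})$ blocks with $P\in\Pic^0(A)$. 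As written, the cycle of implications does not close.
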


\begin{lem}[{\cite{Mu3, OrA}}]
\label{prop:semihomo-numerical}
The following holds.
\begin{enumerate}
\renewcommand{\labelenumi}{(\roman{enumi})}
\item 
A rank $r$ simple semihomogeneous bundle $E$ has the Chern character 
\begin{equation*}
\label{semihomochern}
\ch(E) = r \cdot e^{c_1(E)/r}.
\end{equation*}
\item 
For any $D_{A} \in \mathrm{NS}_{\mathbb{Q}}(A)$, 
there  exists a simple semihomogeneous bundle $E$ on $A$ with 
$\ch(E) = r \cdot e^{D_{A}}$ for some $r \in \mathbb{Z}_{>0}$. 
\item 
Let  $E$ be a semihomogeneous bundle on $A$. Then $E$ is Gieseker semistable with respect to any ample line bundle $L$, and if $E$ is simple then it is slope stable with respect to $c_1(L)$.
\end{enumerate}
\end{lem}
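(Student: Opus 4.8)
These statements are classical, due to Mukai~\cite{Mu3} and Orlov~\cite{OrA}; I would prove them in the order (i), (ii), (iii), as follows.

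For (i), the starting point is Lemma~\ref{prop:Mukai-semihomognoeus-properties}: if $E$ is simple semihomogeneous of rank $r$, then $\eE nd(E)$ is a homogeneous bundle, hence filtered by line bundles in $\Pic^0(A)$ by Proposition~\ref{prop:homogeneous-bundle-filtration}, and therefore $\ch(E\otimes E^{\vee})=r^2$. Expanding $\ch(E)\ch(E)^{\vee}=r^2$ in cohomological degree $4$ already forces $\ch_2(E)=\ch_1(E)^2/(2r)$, in agreement with $r\,e^{\ch_1(E)/r}$; but the degree-$6$ part of the product vanishes identically, so this gives no information about $\ch_3(E)$. To pin down the top term I would appeal to Mukai's classification: there is an isogeny $\pi\colon A'\to A$ such that $\pi^{\ast}E$ is a direct sum of $r$ line bundles, all algebraically equivalent to a fixed line bundle of first Chern class $\eta$. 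Since line bundles in $\Pic^0(A')$ have trivial Chern character, $\pi^{\ast}\ch(E)=r\,e^{\eta}$, while comparing first Chern classes gives $r\eta=\pi^{\ast}\ch_1(E)$; hence $\pi^{\ast}\ch(E)=\pi^{\ast}\!\left(r\,e^{\ch_1(E)/r}\right)$, and the claim follows because $\pi^{\ast}$ is injective on $H^{2\ast}(-,\mathbb{Q})$.

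For (ii), the plan is to produce the bundle as a push-forward along an isogeny. Given $D_A\in\NS_{\mathbb{Q}}(A)$, choose an integer $m\ge 1$ and a line bundle $L$ on $A$ with $c_1(L)=[m]^{\ast}D_A$; this is possible after replacing $m$ by a suitable multiple, since $D_A$ is rational and $[m]^{\ast}$ scales $\NS$-classes by $m^{2}$ up to the action of $[-1]^{\ast}$. As $[m]\colon A\to A$ is \'etale, Grothendieck--Riemann--Roch gives
\[
\ch\big([m]_{\ast}L\big)=[m]_{\ast}e^{[m]^{\ast}D_A}=[m]_{\ast}[m]^{\ast}e^{D_A}=(\deg[m])\,e^{D_A},
\]
so $E:=[m]_{\ast}L$ has $\ch(E)=r\,e^{D_A}$ with $r=\deg[m]$. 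That $E$ is semihomogeneous follows from the identity $T_{mb}\circ[m]=[m]\circ T_{b}$ together with surjectivity of $[m]^{\ast}$ on $\Pic^0(A)$; that, after a suitable choice of $L$ (or after passing to a simple semihomogeneous direct summand, whose Chern character remains proportional to $e^{D_A}$), $E$ is moreover simple follows from the adjunction computation
\[
\End\big([m]_{\ast}L\big)=\bigoplus_{a\in A[m]}H^0\!\big(A,\,L\otimes T_a^{\ast}L^{-1}\big),
\]
which collapses to $\mathbb{C}$ once $A[m]$ meets $\ker\phi_L$ only in the origin. The remaining details are in \cite{Mu3, OrA}.

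For (iii), let $L$ be any ample line bundle and write $H=c_1(L)$. Suppose first that $E$ is simple semihomogeneous. If a nonzero subsheaf $F\subsetneq E$ had $\mu_H(F)>\mu_H(E)$, then tensoring $F\hookrightarrow E$ by the locally free $E^{\vee}$ would exhibit $F\otimes E^{\vee}\subset\eE nd(E)$ as a subsheaf of strictly positive slope inside the slope-$0$ semistable bundle $\eE nd(E)$ --- impossible; hence $E$ is $\mu_H$-semistable. For strictness: if $E$ had a saturated subsheaf $F$ with $0<\rk F<\rk E$ and $\mu_H(F)=\mu_H(E)$, then uniqueness of the maximal-slope subobject together with invariance of $E$ under $E'\mapsto T_x^{\ast}E'\otimes P_x^{-1}$ allows one to take $F$ semihomogeneous, so by (i) both $F$ and $E/F$ are semihomogeneous of type $\ch_1(E)/\rk(E)$; a standard computation then produces a nonzero map $E\twoheadrightarrow E/F\to F\hookrightarrow E$, contradicting $\End(E)=\mathbb{C}$. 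Thus $E$ is $\mu_H$-stable, hence $L$-Gieseker stable. Finally, for an arbitrary semihomogeneous bundle $E$, Mukai's structure theorem gives a filtration of $E$ whose subquotients are, up to twists by line bundles in $\Pic^0(A)$, all isomorphic to a fixed simple semihomogeneous bundle $S$ of type $\ch_1(E)/\rk(E)$; since such twists preserve Chern characters and $S$ is $\mu_H$-stable by the previous paragraph, all subquotients share the reduced Hilbert polynomial of $S$, and therefore $E$ is $L$-Gieseker semistable. The one genuinely substantial point above is the top Chern term in (i) --- homogeneity of $\eE nd(E)$ controls only $\ch_2(E)$, so the reduction to a line bundle on an isogenous abelian variety is essential --- and, relatedly, ensuring simplicity rather than mere semihomogeneity in (ii).
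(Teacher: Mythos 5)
The paper offers no proof of this lemma: it is quoted from \cite{Mu3, OrA} as a known result, so there is no internal argument to compare yours against, and your proposal has to be judged on its own. Your reconstruction of (i) and (ii) follows the standard route and is essentially sound. In (i) you correctly observe that homogeneity of $\eE nd(E)$ only pins down $\ch_2(E)$ (the degree-six part of $\ch(E)\cdot\ch(E)^{\vee}$ vanishes identically), and recovering the top term via an isogeny $\pi$ with $\pi^{\ast}E$ a sum of numerically equivalent line bundles, together with injectivity of $\pi^{\ast}$ on rational cohomology, is exactly Mukai's argument. The construction in (ii) via $[m]_{\ast}L$ and Grothendieck--Riemann--Roch is the standard one; note only that "passing to a simple semihomogeneous direct summand" silently uses two further facts: that $\End([m]_{\ast}L)$, being a (twisted) group algebra on $A[m]\cap\ker\phi_L$, is semisimple, so the pushforward really is a direct sum of simple bundles, and that every simple summand has slope class exactly $D_A$ --- this does not follow from $\sum_i r_i e^{\delta_i}=m^6 e^{D_A}$ alone and again requires Mukai's structure theory. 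The semistability half of (iii), via a positive-slope subsheaf of the slope-zero semistable bundle $\eE nd(E)$, is clean and complete.

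The genuinely thin step is slope \emph{stability} in (iii). First, the "maximal-slope subobject" of a $\mu_H$-semistable bundle $E$ is $E$ itself, so it cannot serve as the canonical proper subsheaf; the object you want is the socle (the maximal polystable subsheaf of slope $\mu_H(E)$), whose uniqueness makes it invariant under the three-dimensional group $\Xi(E)^{\circ}$ and hence semihomogeneous. Second, the "standard computation" producing a nonzero composite $E\twoheadrightarrow E/F\to F\hookrightarrow E$ needs $\Hom(E/F,F)\neq 0$, and this does not follow merely from $F$ and $E/F$ being semihomogeneous of type $c_1(E)/\rk(E)$: for two such sheaves one computes $\chi(E/F,F)=0$, so there is no cheap cohomological source of maps, and distinct simple semihomogeneous bundles of the same type generally admit no nonzero maps between them. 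Closing this requires Mukai's finer classification (matching a common Jordan--H\"older factor of $F$ and $E/F$), or, more directly, his observation that $\pi^{\ast}F$ must be a direct summand of the polystable bundle $\pi^{\ast}E\cong\bigoplus_i L_i$. As written, that portion is a plausible outline of \cite{Mu3} rather than a proof.
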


Below we assume that $A$ is an abelian 3-fold. 
Let $\omega, B \in \mathrm{NS}_{\mathbb{Q}}(A)$
such that $\omega$ is an ample class. 
\begin{prop}
\label{prop:stab-under-tensoring}
Let $V$ be a simple semihomogeneous bundle on $A$ and let
$$D = \frac{c_1(V)}{\rk(V)}.$$
Then we have the following:
\begin{enumerate}
\renewcommand{\labelenumi}{(\roman{enumi})}
\item $E \in \Coh(A)$ is $\mu_{\omega, B}$-semistable if and only if $E \otimes V$ is $\mu_{\omega, B+D}$ semistable.
\item $E \in \bB_{\omega, B}$ is $\nu_{\omega, B}$-semistable 
if and only if $E \otimes V  \in \bB_{\omega, B+D}$ is 
$\nu_{\omega, B+D}$-semistable.
\item  $E \in \aA_{\omega, B}$ is $\sigma_{\omega, B}$-semistable
if and only if $E \otimes V  \in \aA_{\omega, B+D}$ is $\sigma_{H, B+D}$-semistable.
\end{enumerate}
\end{prop}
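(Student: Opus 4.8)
The strategy is to reduce all three equivalences to a single scaling identity for the relevant central charges together with the homogeneity of $\eE nd(V)$. Write $r=\rk(V)$; by Lemma~\ref{prop:semihomo-numerical}(i) we have $\ch(V)=r\,e^{D}$ with $D=c_1(V)/r$, hence for every $E\in D^b(A)$
\[ \ch^{B+D}(E\otimes V)=e^{-(B+D)}\ch(E)\ch(V)=r\,e^{-B}\ch(E)=r\,\ch^{B}(E). \]
Since $\mu_{\omega,B}$, $\nu_{\omega,B}$, $Z_{\omega,B}$ and $Z^{a,b}_{\omega,B}$ are all assembled linearly from the components of $\ch^{B}$, this yields $Z_{\omega,B+D}(E\otimes V)=r\,Z_{\omega,B}(E)$ and $Z^{a,b}_{\omega,B+D}(E\otimes V)=r\,Z^{a,b}_{\omega,B}(E)$, so that tensoring by $V$ while simultaneously replacing $B$ by $B+D$ scales every central charge by the positive real number $r$ and therefore preserves all slopes and phases: $\mu_{\omega,B+D}(E\otimes V)=\mu_{\omega,B}(E)$, $\nu_{\omega,B+D}(E\otimes V)=\nu_{\omega,B}(E)$ and $\arg Z^{a,b}_{\omega,B+D}(E\otimes V)=\arg Z^{a,b}_{\omega,B}(E)$ (with the usual conventions when a denominator vanishes, which are respected since $-\otimes V$ sends torsion sheaves to torsion sheaves and objects with $\ch_1^{B}(E)\cdot\omega^2=0$ to such objects). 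Granting (i), this shows that the exact functor $-\otimes V$ carries the torsion pair $(\tT_{\omega,B},\fF_{\omega,B})$ into $(\tT_{\omega,B+D},\fF_{\omega,B+D})$, hence $\bB_{\omega,B}\otimes V\subseteq\bB_{\omega,B+D}$; granting (ii) it likewise carries $(\tT'_{\omega,B},\fF'_{\omega,B})$ into $(\tT'_{\omega,B+D},\fF'_{\omega,B+D})$, hence $\aA_{\omega,B}\otimes V\subseteq\aA_{\omega,B+D}$. As $V^{\vee}$ is again a simple semihomogeneous bundle with associated class $-D$, the analogous statements hold with $V$ replaced by $V^{\vee}$. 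I would prove (i), then (ii), then (iii) in this order, so that the heart-preservation needed at each level is already available.

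Each of (i)--(iii) then follows by the same two moves. The "only if'' direction is elementary and handled in the contrapositive: if $F$ destabilizes $E$ in the relevant category (namely $\Coh(A)$, $\bB_{\omega,B}$, or $\aA_{\omega,B}$), then exactness of $-\otimes V$ turns the destabilizing inclusion into $F\otimes V\hookrightarrow E\otimes V$ in the target category, and by the slope/phase identities above $F\otimes V$ still destabilizes $E\otimes V$; hence semistability of $E\otimes V$ forces semistability of $E$. The "if'' direction is the real point, and the obstacle is precisely that $-\otimes V$ is \emph{not} an equivalence of abelian categories when $r>1$, so a subobject of $E\otimes V$ need not be of the form $F\otimes V$. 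I would circumvent this by applying the just-proved "only if'' direction to $V^{\vee}$: it suffices to show that $(E\otimes V)\otimes V^{\vee}=E\otimes\eE nd(V)$ is semistable whenever $E$ is. Since $\eE nd(V)$ is homogeneous (Lemma~\ref{prop:Mukai-semihomognoeus-properties}(iii)), Proposition~\ref{prop:homogeneous-bundle-filtration} gives a filtration of $\eE nd(V)$ with successive quotients line bundles $L_i\in\Pic^0(A)$; tensoring by the locally free $\eE nd(V)$ and its locally free sub-sheaves is exact, so $E\otimes\eE nd(V)$ is an iterated extension, inside the appropriate heart, of the objects $E\otimes L_i$. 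Tensoring by $L_i\in\Pic^0(A)$ leaves the twisted Chern character unchanged ($\ch^{B}(E\otimes L_i)=\ch^{B}(E)$) and is an autoequivalence of $D^b(A)$ that preserves $\Coh(A)$, $\bB_{\omega,B}$, $\aA_{\omega,B}$ and all the associated slope functions; hence each $E\otimes L_i$ is semistable of exactly the same phase as $E$. Because semistable objects of a fixed phase are closed under extensions, $E\otimes\eE nd(V)$ is semistable, and the "only if'' direction applied to $V^{\vee}$ then delivers semistability of $E\otimes V$.

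I expect the main obstacle to be exactly the failure of $-\otimes V$ to be an equivalence of hearts, which forces the detour through $-\otimes\eE nd(V)$ and the homogeneous-bundle filtration; everything else is bookkeeping driven by $Z_{\omega,B+D}(E\otimes V)=r\,Z_{\omega,B}(E)$ — matching the generators of each (tilted) torsion pair across the shift $B\mapsto B+D$, and tracking the torsion and infinite-slope conventions. For part (i) one may alternatively note that $\mu_{\omega,B}$-semistability is independent of $B$ and invoke the standard fact that a tensor product of slope-semistable sheaves is slope-semistable (applied to $E$ with $V$, and to $E\otimes V$ with $V^{\vee}$), but the argument above treats (i), (ii) and (iii) uniformly and uses only the inputs already established in this section.
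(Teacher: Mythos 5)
Your proof is correct and follows essentially the same route as the paper's: the identity $\ch^{B+D}(E\otimes V)=\rk(V)\,\ch^{B}(E)$ to match slopes, heart preservation inherited from the previous part, and the key detour through $E\otimes\eE nd(V)$ using the filtration of the homogeneous bundle $\eE nd(V)$ by line bundles in $\Pic^0(A)$. The only cosmetic difference is that you spell out both directions of the equivalence explicitly (the paper phrases the nontrivial direction as a contradiction and leaves the converse implicit), which is if anything slightly more complete.
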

\begin{proof}
(i) \ This follows from the fact that slope semistability is preserved under tensoring by semistable vector bundles and 
from Lemma \ref{prop:semihomo-numerical} the simple semihomogeneous bundle  $V$ is 
slope stable. 

(ii) \ From part (i), we have $\bB_{\omega, B} \otimes V \subset \bB_{\omega, B+D}$; so 
$E \otimes V  \in \bB_{\omega, B+D}$. 
From Lemma \ref{prop:semihomo-numerical}, 
$$
\ch(V) = \rk(V) \cdot e^D,
$$
so $\ch^{B+ D}(E \otimes V) = \rk(V) \ch^B(E)$. Hence 
\begin{equation}
\label{eqn:tensor-tilt-slope-value}
\nu_{\omega, B +D} (E \otimes V) =  \nu_{\omega, B}(E).
\end{equation}

Suppose for a contradiction
$E \otimes V  \in \bB_{\omega, B+D}$ is not $\nu_{\omega, B+D}$ semistable; so the following destabilizing 
short exact sequence exists in $\bB_{\omega, B+D}$:
$$
0 \to P \to E \otimes V \to Q \to 0.
$$
By tensoring with the dual $V^{\vee}$ we get the following 
short exact sequence exists in $\bB_{\omega, B}$:
\begin{equation}
\label{ses:destab-tensor-B}
0 \to P \otimes V^{\vee} \to E \otimes \eE nd(V)    \to Q \otimes V^{\vee}  \to 0.
\end{equation}
From Lemma \ref{prop:Mukai-semihomognoeus-properties}, the bundle $\eE nd(V) = V \otimes V^{\vee}$ is a homogeneous bundle, and from Proposition \ref{prop:homogeneous-bundle-filtration} it can be filtered by line bundles $\{L_j\}$ from $\Pic^0(A)$. Therefore, 
$E \otimes \eE nd(V) \in \bB_{\omega, B}$ is filtered by $\nu_{\omega, B}$-semistable objects $\{ E \otimes L_j \}$ in  $\bB_{\omega, B}$; hence,
$E \otimes \eE nd(V) \in \bB_{\omega, B}$ is $\nu_{\omega, B}$-semistable.
However, according to \eqref{eqn:tensor-tilt-slope-value}, the short exact 
sequence \eqref{ses:destab-tensor-B} destabilizes $E \otimes \eE nd(V)$. This is the required contradiction. 

(iii) \ From part (ii), we have $\aA_{\omega, B} \otimes V \subset \aA_{\omega, B+D}$; so 
$E \otimes V  \in \aA_{\omega, B+D}$. Then the rest of the proof is  similar to part (ii).
\end{proof}

\subsection{Full support property via FM transforms on abelian 3-folds}
Let $A$ be an abelian 3-fold
and
$H \in \NS_{\mathbb{Q}}(A)$ be an ample class.
Let $v_i$ be the vectors as in (\ref{notation:v}), 
and consider the following form of central charge functions
\begin{align*}
W_{H ,t}^{p,q} = \left( - v_3  +   q v_2 + p v_0 \right)
+ i \left( v_2 - t v_1 \right)
\end{align*}
for $t, p, q \in \mathbb{R}$.

\begin{prop}\label{prop:ZW}
Let $t \ne 0$, and $a, b \in \mathbb{R}$. 
Then we have the following:
\begin{align*}
Z_{\sqrt{3}\lvert t \rvert H/2, t H/2}^{a,b} \thicksim W_{H ,t}^{p, q}
\end{align*}
for some $p,q \in \mathbb{R}$. 
Here 
$\alpha=\sqrt{3}|t|/2, a, b$ satisfy (\ref{ineq:ab}), that is 
$a >  (t^2/24) + (|t b|/4)$, if and only if $t,p,q$ satisfy
$t(t-q) < \frac{p}{t}<0$.
\end{prop}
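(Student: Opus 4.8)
The plan is to prove Proposition~\ref{prop:ZW} by a direct computation: expand both central charges in the common coordinates $v_0,v_1,v_2,v_3$, then read off the equivalence relation $\sim$ and the correspondence of inequalities.

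First I would expand the $B$-twisted Chern character for $B=tH/2$. Since $B$ is proportional to $H$, contracting $\ch^B=e^{-B}\ch$ with the appropriate power of $H$ produces clean expressions, e.g.
\[
H^2\ch_1^B = v_1-\tfrac{t}{2}v_0,\qquad H\ch_2^B = \tfrac12 v_2-\tfrac{t}{2}v_1+\tfrac{t^2}{8}v_0,
\]
\[
\ch_3^B = \tfrac16 v_3-\tfrac{t}{4}v_2+\tfrac{t^2}{8}v_1-\tfrac{t^3}{48}v_0,\qquad H^3\ch_0^B = v_0.
\]
Substituting these into the formula \eqref{Zab} for $Z^{a,b}_{\alpha H,B}$ with $\alpha=\sqrt3\lvert t\rvert/2$ (so $\alpha^2=3t^2/4$), the imaginary part becomes $\tfrac{\alpha}{2}v_2-\tfrac{\alpha t}{2}v_1+\bigl(\tfrac{\alpha t^2}{8}-\tfrac{\alpha^3}{6}\bigr)v_0$. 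The key observation is that $\alpha^3/6=\alpha t^2/8$, so the $v_0$-term cancels and $\Imm Z^{a,b}_{\alpha H,B}=\tfrac{\alpha}{2}(v_2-tv_1)=\tfrac{\alpha}{2}\Imm W^{p,q}_{H,t}$. Since $\alpha>0$ this already verifies the imaginary-part requirement in the definition of $\sim$ with $\lambda_3=\alpha/2$, independently of $p,q$.

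Next I would expand the real part of $Z^{a,b}_{\alpha H,B}$ in the $v_i$-basis and match it coefficientwise against $\lambda_1\Ree W^{p,q}_{H,t}+\lambda_2\Imm W^{p,q}_{H,t}=-\lambda_1 v_3+(\lambda_1 q+\lambda_2)v_2-\lambda_2 t v_1+\lambda_1 p v_0$. The $v_3$-coefficient forces $\lambda_1=1/6$, the $v_1$-coefficient gives $\lambda_2=t/8+b/2-a/t$, and the $v_2$- and $v_0$-coefficients then determine
\[
q=\tfrac{3t}{4}+\tfrac{6a}{t},\qquad p=\tfrac{t^3}{8}+\tfrac{3bt^2}{4}-3at.
\]
Since $\lambda_1=1/6>0$ and $\lambda_3=\alpha/2>0$ (with $\lambda_2\in\BR$ unconstrained), this exhibits $Z^{a,b}_{\sqrt3\lvert t\rvert H/2,tH/2}\sim W^{p,q}_{H,t}$ for these $p,q$. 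For fixed $t\neq 0$ the assignment $(a,b)\mapsto(p,q)$ is an invertible affine map, so the remaining ``if and only if'' is meaningful. Substituting the formulas for $p,q$ gives $p/t=t^2/8+3bt/4-3a$, hence $p/t<0\iff a>t^2/24+bt/4$, and $t(t-q)=t^2/4-6a$, hence $t(t-q)<p/t\iff a>t^2/24-bt/4$; the two together are equivalent to $a>t^2/24+\lvert bt\rvert/4$. On the other hand, \eqref{ineq:ab} with $\alpha=\sqrt3\lvert t\rvert/2$ reads $a>\alpha^2/18+\tfrac{\sqrt3}{6}\lvert b\rvert\alpha=t^2/24+\lvert bt\rvert/4$, which is the same condition.

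There is no serious obstacle here; the statement is essentially a change-of-coordinates bookkeeping exercise. The only subtleties worth flagging are that the equivalence $\sim$ requires positivity only of $\lambda_1$ and $\lambda_3$, so the sign (or sign-change) of $\lambda_2$ is irrelevant, and that the single inequality \eqref{ineq:ab} is the conjunction of two linear inequalities in $a$ which merge into one via an absolute value---this is exactly where the $\lvert b\rvert$ in \eqref{ineq:ab} and the $\lvert t\rvert$ hidden in $\alpha$ reconcile with the sign-free condition $t(t-q)<p/t<0$.
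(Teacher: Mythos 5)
Your proof is correct and follows essentially the same route as the paper: expand $\ch^{tH/2}$ in the $v_i$-coordinates, substitute into \eqref{Zab}, match coefficients to identify $p$, $q$ (and the irrelevant $\lambda_2$), and translate the inequality; your formulas for $p$ and $q$ agree with the paper's. (Your bookkeeping is in fact slightly cleaner — the paper's intermediate coefficient $r$ contains a typo, $\tfrac{3t}{8}$ where $\tfrac{3t}{4}$ is meant, which does not affect the statement since only the positivity of $\lambda_1,\lambda_3$ matters.)
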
 
\begin{proof}
From the definition of $v_i$ and 
$\ch^{tH/2}(-) = e^{-t H/2}\ch(-)$, we have 
\begin{align*}
&H^3 \ch_0^{t H/2} = v_0, \ 
H^2 \ch_1^{t H/2} =   v_1 - t v_0/2, \
2 H \ch_2^{t H/2} = v_2 - t v_1 + t^2 v_0/4, \\ 
&6\ch_3^{t H/2}= v_3 -3 t v_2/2 + 3t^2 v_1/4 -t^3v_0/8.
\end{align*}

Now, by direct substitution one can check that
\begin{align*}
&Z_{\sqrt{3}\lvert t \rvert H/2, t H/2}^{a,b} \\
& = \left( -\ch_3^{t H/2} + b H \ch_2^{t H/2} +   a H^2  \ch_1^{t H/2} \right)
+  i  \frac{t}{2} \left( H \ch_2^{t H/2} - \frac{t^2}{8} H^3 \ch_0^{t H/2}  \right) \\
& = \frac{1}{6} \left( -  v_3  +   q v_2 + p v_0 +r(v_2-tv_1) \right) 
+ i \frac{t}{4} \left( v_2 - t v_1 \right) \\
& \thicksim W_{H ,t}^{p,q},
\end{align*}
where 
\begin{align*}
q  = \frac{3t}{4}+ \frac{6a}{t}, \ 
p= -3at + \frac{3bt^2}{4} + \frac{t^3}{8}, \ 
r  = \frac{3t}{8} + 3b - \frac{6a}{t}.
\end{align*}
By straightforward computation one can check that  
$|t|, a, b$ satisfy  
$a >  (t^2/24) + (|t b|/4)$, if and only if $t,p,q$ satisfy
$t(t-q) < \frac{p}{t}<0$.
\end{proof} 

Consequently, we get the following particular case of  
Proposition~\ref{prop:stabcond} and Lemma~\ref{lem:sharp}
in an alternative form.
\begin{prop}
\label{thm:stab-family-new-para}
Let the numbers $t, p, q\in \mathbb{R}$ satisfy
\begin{align}\label{eqn:stab-family-condition}
t\neq 0, \ 
t(t-q)<\frac{p}{t}<0. 
\end{align}
Then the pair 
$$
\left( W_{H ,t}^{p,q}, \aA_{\sqrt{3}\lvert t\rvert H/2, tH/2}\right)
$$
defines a Bridgeland stability condition on $A$
with respect to $(\Lambda_H^{\sharp}, \cl^{\sharp})$. 
\end{prop}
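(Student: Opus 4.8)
The statement is a reformulation of Proposition~\ref{prop:stabcond} and Lemma~\ref{lem:sharp} under the linear change of variables of Proposition~\ref{prop:ZW}, so the plan is simply to assemble these three ingredients. I would set $\alpha = \sqrt 3\,\lvert t\rvert/2$, $\omega = \alpha H$ and $B = tH/2$; in particular $B$ is proportional to $H$. Since $A$ is an abelian threefold we have $C_H = 0$, and Conjecture~\ref{conj:BG} is known to hold for $A$ by \cite{MaPi1, MaPi2, BMS, PiYT}. Hence the hypotheses of Proposition~\ref{prop:stabcond} and of Lemma~\ref{lem:sharp} are satisfied for every $a, b \in \mathbb{R}$ obeying \eqref{ineq:ab}.

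Next I would invoke Proposition~\ref{prop:ZW}: the assumption $t \neq 0$, $t(t-q) < p/t < 0$ is precisely equivalent to the existence of $a, b \in \mathbb{R}$ with $a > \alpha^2/18 + \tfrac{\sqrt 3}{6}\lvert b\rvert\alpha$ (that is, \eqref{ineq:ab}) and $Z_{\alpha H, B}^{a,b} \sim W_{H,t}^{p,q}$; fix such a pair $(a,b)$. By Lemma~\ref{lem:sharp}, which applies because $B$ is proportional to $H$ and $C_H = 0$, the pair $(Z_{\alpha H, B}^{a,b}, \aA_{\alpha H, B})$ lies in $\Stab_{\Lambda_H^{\sharp}}(A)$, the support property being witnessed by the quadratic form $Q_K^{\sharp}$ for a suitable $K$. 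Here one checks directly that both $Z_{\alpha H, B}^{a,b}$ and $W_{H,t}^{p,q}$ descend to homomorphisms $\Lambda_H^{\sharp} \to \mathbb{C}$: each $v_i$ factors through $\cl^{\sharp}$, using $H\cdot\ch_2^{H,\perp}(-) = 0$ for the degree-two term $v_2 = 2H\ch_2(-)$.

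Finally, since $Z_{\alpha H, B}^{a,b} \sim W_{H,t}^{p,q}$ as functions on $\Lambda_H^{\sharp}$, the two pairs have the same heart $\aA_{\sqrt 3\lvert t\rvert H/2, tH/2}$, the same semistable objects, and the same kernel; as recorded after the definition of $\sim$ in Section~\ref{Subsection_Review_Stability_conds}, $(Z,\aA)$ is a Bridgeland stability condition if and only if $(Z',\aA)$ is, and one sees immediately that $Q_K^{\sharp}$ continues to establish the support property for $W_{H,t}^{p,q}$. Therefore $(W_{H,t}^{p,q}, \aA_{\sqrt 3\lvert t\rvert H/2, tH/2})$ is a Bridgeland stability condition with respect to $(\Lambda_H^{\sharp}, \cl^{\sharp})$. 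There is no genuine obstacle beyond this bookkeeping: all of the substance has already been carried out in Proposition~\ref{prop:ZW} (the coordinate change) and Lemma~\ref{lem:sharp} (the refinement from $\Lambda_{H,B}$ to $\Lambda_H^{\sharp}$), and the only points requiring a moment's care are that $W_{H,t}^{p,q}$ lands in $\Lambda_H^{\sharp}$ and that $\sim$-equivalence transports the support property, both of which are routine.
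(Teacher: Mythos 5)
Your proposal is correct and matches the paper's own route exactly: the paper states Proposition~\ref{thm:stab-family-new-para} as an immediate consequence of Proposition~\ref{prop:stabcond}, Lemma~\ref{lem:sharp}, and the change of coordinates in Proposition~\ref{prop:ZW}, which is precisely the assembly you carry out. The two points you flag as needing care (that $W_{H,t}^{p,q}$ factors through $\cl^{\sharp}$ and that $\sim$-equivalence transports the support property) are indeed the only bookkeeping involved, and you handle them correctly.
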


Let us write 
\begin{align*}
\Psi : = \Phi_{\pP}^{A \to \widehat{A}} [1] : D^b(A)  \to D^b(\widehat{A}), \
\widehat{\Psi} : = \Phi_{\pP^\vee}^{\widehat{A} \to A} [2] : D^b(\widehat{A})  \to D^b(A).
\end{align*} 
Then $\widehat{\Psi}$ is the quasi inverse of $\Psi$, and $\Psi$ is the  quasi inverse of 
$\widehat{\Psi}$. 
Recall  that $\Psi_{\ast}   W_{H, t}^{p,q} : K(\widehat{A}) \to \mathbb{C}$ is the function defined by 
\begin{align*}
\Psi_{\ast}   W_{H, t}^{p,q}(-) = W_{H, t}^{p,q}(\widehat{\Psi}(-)). 
\end{align*}
Let $\widehat{H}$ be the induced 
ample divisor on $\widehat{A}$ as in Lemma~\ref{classicalcohomoFMT}, 
and $\widehat{v}_i$ be the vectors as in (\ref{notation:v}). 
\begin{prop}\label{prop:W}
Let $t, p ,q \in \mathbb{R}$ such that $t>0$.
We have  
$$
\Psi_{\ast}   W_{H, t}^{p,q}  \thicksim  W_{\widehat{H}, t'}^{p',q'} 
$$
for some $t', p', q' \in \mathbb{R}$ defined by
\begin{align}
\label{eqn:parameter-under-FMT}
t'= -\frac{1}{t} < 0, \ p' = - \frac{1}{p}, \  q' = \frac{t q}{p}.
\end{align}
Moreover, if $\{t> 0,p,q\}$ satisfies (\ref{eqn:stab-family-condition}), then $\{ t'<0, p',q'\}$ also satisfies (\ref{eqn:stab-family-condition}).
\end{prop}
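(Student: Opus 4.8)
The plan is to compute the pullback central charge $\Psi_{\ast} W_{H,t}^{p,q} = W_{H,t}^{p,q}\circ\widehat{\Psi}$ on $H^{2\ast}_{\mathrm{alg}}(\widehat{A},\mathbb{Q})$ by means of the cohomological Fourier--Mukai transform, and then to recognize the answer as $\thicksim$-equivalent to $W_{\widehat{H},t'}^{p',q'}$ by writing down the three scalars $\lambda_1,\lambda_2,\lambda_3$ of the equivalence relation explicitly. Recall that $W_{H,t}^{p,q}$ is assembled from the linear functionals $v_i(-) = i!\,H^{3-i}\cdot\ch_i(-)$, and that $Z\thicksim Z'$ allows rescaling $\Imm Z$ by a positive constant, adding an arbitrary real multiple of $\Imm Z$ to $\Ree Z$, and rescaling $\Ree Z$ by a positive constant.

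First I would record the cohomological action of $\widehat{\Psi}$. Since $\widehat{\Psi}$ is quasi-inverse to $\Psi = \Phi_{\pP}^{A\to\widehat{A}}[1]$, for every $G\in D^b(A)$ one has $\Psi_{\ast}W_{H,t}^{p,q}(\Psi G) = W_{H,t}^{p,q}(G)$, so it is enough to express the $v_j(G)$ in terms of the $\widehat{v}_{3-j}(\Psi G)$. The shift $[1]$ multiplies Chern characters by $-1$, hence $\Psi$ acts on cohomology by $-\Phi_{\pP}^{\mathrm H}$; substituting this into Lemma~\ref{prop:antidiagonal-rep-cohom-FMT} (with $n=3$, $n!=6$) and reindexing gives $v_j(G) = (-1)^j\,\frac{H^3}{6}\,\widehat{v}_{3-j}(\Psi G)$ for $j=0,1,2,3$. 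The delicate point is the sign bookkeeping produced by the two shifts appearing in $\Psi$ and $\widehat{\Psi}$ together with the inversion of $\Phi_{\pP}^{\mathrm H}$; since the positivity of the constant $H^3/6$ is exactly what controls the signs of $\lambda_1,\lambda_3$ later, I would double-check the four identities on the test classes $\ch=1$ and $\ch=\mathbf{p}$ against Lemma~\ref{classicalcohomoFMT}.

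Plugging these four identities into the real and imaginary parts of $W_{H,t}^{p,q}$ (written in the $v_i$) and dividing by the positive constant $H^3/6$ yields
\[ \Psi_{\ast}W_{H,t}^{p,q}\ \thicksim\ \big(\widehat{v}_0 + q\,\widehat{v}_1 + p\,\widehat{v}_3\big) + i\big(\widehat{v}_1 + t\,\widehat{v}_2\big). \]
Matching this against $W_{\widehat{H},t'}^{p',q'} = (-\widehat{v}_3 + q'\widehat{v}_2 + p'\widehat{v}_0) + i(\widehat{v}_2 - t'\widehat{v}_1)$ is now linear algebra. Imposing $\Imm W_{\widehat{H},t'}^{p',q'} = \lambda_3\,\Imm\big(\Psi_{\ast}W_{H,t}^{p,q}\big)$ and comparing the coefficients of $\widehat{v}_2$ and $\widehat{v}_1$ forces $\lambda_3 = 1/t>0$ (here $t>0$ is used) and $t' = -1/t$. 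Imposing $\Ree W_{\widehat{H},t'}^{p',q'} = \lambda_1\,\Ree(\Psi_{\ast}W_{H,t}^{p,q}) + \lambda_2\,\Imm(\Psi_{\ast}W_{H,t}^{p,q})$ and comparing the coefficients of $\widehat{v}_3$, $\widehat{v}_0$, $\widehat{v}_1$, $\widehat{v}_2$ in turn gives $\lambda_1 = -1/p$, $p' = -1/p$, $\lambda_2 = q/p$ and $q' = tq/p$; the requirement $\lambda_1>0$ is the condition $p<0$, which is implicit already in the formula $p'=-1/p$ and which holds throughout the regime \eqref{eqn:stab-family-condition}. This is the asserted equivalence, with the parameters \eqref{eqn:parameter-under-FMT}. (The computation is parallel to that of Proposition~\ref{prop:ZW}.)

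For the last assertion I would argue purely algebraically. From $t'=-1/t$ and $t>0$ we get $t'<0$, and $p'/t' = t/p<0$ since $p<0$; moreover $t'(t'-q') = (p + t^2q)/(t^2p)$, and since $t^2p<0$ the inequality $t'(t'-q') < p'/t'$ is equivalent, after clearing denominators, to $p + t^2q > t^3$, i.e.\ to $q > t - p/t^2$. On the other hand, multiplying the hypothesis $t(t-q)<p/t$ by $t>0$ and dividing by $t^2>0$ gives exactly $q > t - p/t^2$. Hence $\{t'<0,p',q'\}$ satisfies \eqref{eqn:stab-family-condition} as well. The only genuinely error-prone step in the whole argument is the sign bookkeeping for the cohomological transform of $\widehat{\Psi}$; the rest is linear algebra and one short manipulation of rational functions.
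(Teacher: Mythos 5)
Your proof is correct and follows essentially the same route as the paper: apply the cohomological identity $v_j(\widehat{\Psi}(-)) = (-1)^j \tfrac{H^3}{6}\,\widehat{v}_{3-j}(-)$ from Lemmas~\ref{classicalcohomoFMT}--\ref{prop:antidiagonal-rep-cohom-FMT}, compare real and imaginary parts coefficient by coefficient to read off $t', p', q'$ and the positive scalars of the $\thicksim$-relation, and then verify \eqref{eqn:stab-family-condition} by the same elementary manipulation. You are in fact slightly more explicit than the paper in noting that $\lambda_1>0$ requires $p<0$ (guaranteed in the regime \eqref{eqn:stab-family-condition}) and in writing out the inequality check that the paper leaves as "direct computation."
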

\begin{proof}
From Lemma  \ref{classicalcohomoFMT}, we have
$$
v_{i}(\widehat{\Psi}(-)) = (-1)^{i} \frac{H^3}{6} \ \widehat{v}_{3-i}(-).
$$
Hence 
\begin{align*}
&\Imm \left( \Psi_{\ast}   W_{H, t}^{p,q} \right)  
= \frac{H^3}{6} \left(  \widehat{v}_1 + t  \widehat{v}_2  \right)
=\frac{H^3 t}{6} \Imm W_{\widehat{H}, t'}^{p', q'}, 
\\
&\Ree \left( \Psi_{\ast}   W_{H, t}^{p,q}  \right)  
= \frac{H^3}{6}\left( \widehat{v}_0 + q \widehat{v}_1 + p \widehat{v}_3 \right)
=\frac{H^3}{6}\left(-p \cdot \Ree W_{\widehat{H}, t'}^{p', q'}+tq
\cdot \Imm W_{\widehat{H}, t'}^{p', q'}\right).
\end{align*}
Therefore the first claim holds. 
By direct computation  one can check that if 
$\{t> 0,p, q,r\}$ satisfies \eqref{eqn:stab-family-condition}, then we have
\begin{equation*}
t'(t' - q') < \frac{p'}{t'}<0.
\end{equation*} 
That is \eqref{eqn:stab-family-condition} holds for $\{ t'<0, p',q'\}$.
\end{proof}

For $t \in \mathbb{R}_{>0}$, by Lemma~\ref{prop:ab-equiv-abelian-3}, 
Proposition~\ref{prop:W} and Proposition~\ref{prop:ZW}, 
we have the following:
\begin{lem}\label{prop:equiv-new-parameters-stab}
Let $t>0, p ,q \in \mathbb{R}$ satisfy \eqref{eqn:stab-family-condition}.
Then we have the following equivalence of Bridgeland stability conditions:
$$
\Psi_{\ast}  \left( W_{H, t}^{p,q},   \aA_{\sqrt{3}tH/2, tH/2} \right) \thicksim  
\left( W_{\widehat{H}, t'}^{p',q'} ,  \aA_{-\sqrt{3}t'\widehat{H}/2, t'H/2} \right)
$$
for $t'<0, p' ,q' \in \mathbb{R}$ defined as in \eqref{eqn:parameter-under-FMT} satisfying \eqref{eqn:stab-family-condition}.
\end{lem}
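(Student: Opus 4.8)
The plan is to treat the statement as a bookkeeping combination of the three preceding results, tracking separately the effect of $\Psi = \Phi_{\pP}^{A\to\widehat{A}}[1]$ on the central charge and on the heart, and then reassembling. First I recall from \eqref{Action_on_stab} that for any stability condition $(Z,\aA)$ one has $\Psi_{\ast}(Z,\aA) = (\Psi_{\ast}Z,\, \Psi(\aA))$. Under the hypothesis \eqref{eqn:stab-family-condition} for $\{t>0,p,q\}$, Proposition~\ref{thm:stab-family-new-para} guarantees that the source pair $(W_{H,t}^{p,q}, \aA_{\sqrt{3}tH/2,tH/2})$ is a genuine Bridgeland stability condition on $A$, so its image under the autoequivalence $\Psi$ is automatically a stability condition on $\widehat{A}$. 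Since $\Psi_{\ast}$ carries the $\thicksim$-equivalence relation to itself and preserves the classes of semistable objects, it will be enough to (a) compute $\Psi(\aA_{\sqrt{3}tH/2,tH/2})$ exactly, and (b) identify $\Psi_{\ast}W_{H,t}^{p,q}$ up to $\thicksim$.

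For (a), I apply Lemma~\ref{prop:ab-equiv-abelian-3} with the given $t>0$, which yields $\Psi(\aA_{\sqrt{3}tH/2,tH/2}) = \aA_{\sqrt{3}\widehat{H}/2t,\,-\widehat{H}/2t}$. Setting $t' = -1/t<0$ as in \eqref{eqn:parameter-under-FMT}, so that $|t'| = 1/t$, the parameters rewrite as $\sqrt{3}\widehat{H}/2t = \sqrt{3}|t'|\widehat{H}/2 = -\sqrt{3}t'\widehat{H}/2$ and $-\widehat{H}/2t = t'\widehat{H}/2$; hence $\Psi(\aA_{\sqrt{3}tH/2,tH/2}) = \aA_{-\sqrt{3}t'\widehat{H}/2,\,t'\widehat{H}/2}$, which is precisely the heart on the right-hand side of the claimed equivalence. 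For (b), Proposition~\ref{prop:W} gives $\Psi_{\ast}W_{H,t}^{p,q}\thicksim W_{\widehat{H},t'}^{p',q'}$ with $(t',p',q')$ as in \eqref{eqn:parameter-under-FMT}, and moreover that $\{t'<0,p',q'\}$ again satisfies \eqref{eqn:stab-family-condition}, so by Proposition~\ref{thm:stab-family-new-para} (which rests on Proposition~\ref{prop:ZW}) the target pair is itself of the admissible type. Combining (a) and (b),
\[
\Psi_{\ast}\left( W_{H,t}^{p,q},\, \aA_{\sqrt{3}tH/2,tH/2}\right)
= \left( \Psi_{\ast}W_{H,t}^{p,q},\, \Psi(\aA_{\sqrt{3}tH/2,tH/2})\right)
\thicksim \left( W_{\widehat{H},t'}^{p',q'},\, \aA_{-\sqrt{3}t'\widehat{H}/2,\,t'\widehat{H}/2}\right),
\]
using transitivity of $\thicksim$ with the heart held fixed.

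There is essentially no serious obstacle here; the statement is purely a synthesis. The only points demanding care are the sign bookkeeping — keeping straight that $t'=-1/t$ is negative while it is $|t'|$ that enters the (ample) parameter of the target heart, and checking that the heart $\aA_{\sqrt{3}\widehat{H}/2t,-\widehat{H}/2t}$ produced by Lemma~\ref{prop:ab-equiv-abelian-3} genuinely coincides with $\aA_{\sqrt{3}|t'|\widehat{H}/2,t'\widehat{H}/2}$ — and the observation that the $\thicksim$ in Proposition~\ref{prop:W} only rescales and shears the central charge and therefore is compatible with keeping this fixed heart, so that it composes with the equivalence relation built into the $\Aut(D^b)$-action on $\Stab$.
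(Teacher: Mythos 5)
Your proposal is correct and follows exactly the route the paper takes: the paper states this lemma as an immediate consequence of Lemma~\ref{prop:ab-equiv-abelian-3} (for the heart), Proposition~\ref{prop:W} (for the central charge up to $\thicksim$), and Proposition~\ref{prop:ZW}, which is precisely your synthesis, including the sign bookkeeping $t'=-1/t$ versus $|t'|$ in the target heart.
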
 

Consequently we prove the following:
\begin{lem}\label{lem:conseqnce}
\label{lem:full-support-property-parallel-case}
If $t>0, p ,q \in \mathbb{R}$ satisfy \eqref{eqn:stab-family-condition},
then the Bridgeland stability condition defined by the pair
\begin{align}\label{pair:W}
\left( W_{H, t}^{p,q},   \aA_{\sqrt{3}tH/2, tH/2} \right)
\end{align}
satisfies the full support property, i.e. 
it is an element 
of $\Stab(A)$.
\end{lem}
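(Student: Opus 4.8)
The strategy is to transfer the support property across the Fourier–Mukai equivalence $\Psi$. By Proposition~\ref{thm:stab-family-new-para}, the pair \eqref{pair:W} is a Bridgeland stability condition with respect to $(\Lambda_H^{\sharp}, \cl^{\sharp})$, and by Lemma~\ref{lem:sharp} the quadratic form $Q_K^{\sharp}$ establishes the support property at that level. What must be upgraded is the support property from $(\Lambda_H^{\sharp}, \cl^{\sharp})$ to the full lattice $(\Gamma, \ch)$. Concretely, a $W_{H,t}^{p,q}$-semistable object $E \in \aA_{\sqrt{3}tH/2, tH/2}$ has $\cl^{\sharp}$-class controlled by $Q_K^{\sharp} \ge 0$, and one needs a quadratic form $Q$ on all of $\Gamma$, negative definite on $\ker(W_{H,t}^{p,q})$, with $Q(\ch(E)) \ge 0$. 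The essential point is that $\cl^{\sharp}$ only forgets the $H$-perpendicular part of $\ch_2$, so the gap between $\Lambda_H^{\sharp}$ and $\Gamma$ is measured by $H \cdot (\ch_2^{H,\perp})^2$; the Hodge index theorem gives $H^3 \cdot H(\ch_1^{H,\perp})^2 \le 0$ but also, crucially, that the perpendicular components of $\ch_1, \ch_2, \ch_3$ are genuinely constrained once $\ch_0$ and the parallel components are fixed.

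First I would make precise how $W_{H,t}^{p,q}$, $Q_K^{\sharp}$, and $Q_K$ (the form of Proposition~\ref{prop:stabcond}) interact with the decomposition $\id = p_H + p_H^{\perp}$ of \eqref{eqn:map-decom}. Then I would argue as follows: given a $W_{H,t}^{p,q}$-semistable $E$ with $\Psi(E) = F$, apply Lemma~\ref{prop:equiv-new-parameters-stab} to conclude that $F$ is $W_{\widehat{H}, t'}^{p',q'}$-semistable in $\aA_{-\sqrt{3}t'\widehat{H}/2, t'\widehat{H}/2}$, hence satisfies the $\widehat{H}$-version of $Q_K^{\sharp} \ge 0$ (by Lemma~\ref{lem:sharp} applied on $\widehat{A}$). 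Now by Corollary~\ref{cor:cohomological} and Lemma~\ref{prop:antidiagonal-rep-cohom-FMT}, $\Phi_\pP^{\mathrm{H}}$ exchanges $p_{H,i}$ with $p_{\widehat{H}, n-i}$ and $p_{H,i}^{\perp}$ with $p_{\widehat{H}, n-i}^{\perp}$, and sends $\widehat{v}_i$ to (a multiple of) $v_{n-i}$. Therefore the inequality $Q_K^{\sharp}(\ch(E)) \ge 0$ on $A$ controls $H \cdot (\ch_1^{H,\perp}(E))^2$ and $H \cdot (\ch_2^{H,\perp}(E))^2$ up to a sign and scaling via the corresponding perpendicular term for $F$ on $\widehat{A}$. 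Combining the two inequalities — the one for $E$ on $A$ (controlling the $\ch_1^{H,\perp}$ part and giving $Q_K^{\sharp} \ge 0$) and the pulled-back one for $F$ from $\widehat{A}$ (controlling the $\ch_2^{H,\perp}$ part) — yields a quadratic form $Q$ on all of $\Gamma$ with $Q(\ch(E)) \ge 0$ for every semistable $E$. One then checks $Q$ is negative definite on $\ker(W_{H,t}^{p,q})$, which is a two-dimensional subspace of $\Gamma_{\mathbb{R}}$; this reduces to the negative-definiteness of $Q_K^{\sharp}$ on $\ker$ plus the Hodge index sign $H^3 \cdot H(\ch_i^{H,\perp})^2 \le 0$ on the perpendicular directions, which are transverse to the parallel ones.

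The main obstacle is the bookkeeping in the last step: one has two quadratic inequalities living naturally on $A$ and on $\widehat{A}$, written in different lattices ($\Lambda_H^{\sharp}$ versus $\Lambda_{\widehat{H}}^{\sharp}$), and one must add appropriate positive multiples of them — transported to a common lattice via $\Phi_\pP^{\mathrm{H}}$ — so that the result is simultaneously (a) nonnegative on all $W_{H,t}^{p,q}$-semistable objects and (b) negative definite on $\ker(W_{H,t}^{p,q})$. The signs coming from Lemma~\ref{prop:antidiagonal-rep-cohom-FMT} (the factor $(-1)^i$) and from $t' = -1/t < 0$ must be tracked carefully, and one needs that the perpendicular directions for $\ch_1$ and $\ch_2$ are "independent enough" that controlling each separately controls the whole perpendicular part — this is where the Hodge index theorem enters decisively. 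Once the form $Q$ is written down explicitly as a combination $\lambda_1 Q_K^{\sharp}(-) + \lambda_2 \big(\Phi_\pP^{\mathrm{H}}\big)^{\ast} Q_{K'}^{\sharp,\widehat{A}}(-)$ for suitable positive $\lambda_i$ and intervals, verifying (a) and (b) is a finite linear-algebra computation, and this completes the proof of Lemma~\ref{lem:conseqnce}.
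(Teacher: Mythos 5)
Your proposal follows essentially the same route as the paper's proof: one takes the quadratic form $Q_1=Q_K^{\sharp}$ establishing the support property for \eqref{pair:W} with respect to $(\Lambda_H^{\sharp},\cl^{\sharp})$, observes via Lemma~\ref{prop:equiv-new-parameters-stab} and Corollary~\ref{cor:cohomological} that the pullback $Q_2(\Psi(-))$ of the corresponding form on $\widehat{A}$ establishes the support property with respect to $(\Lambda_H^{\flat},\cl^{\flat})$, and sets $Q=Q_1+\lambda\, Q_2(\Psi(-))$ for $\lambda>0$. The only inaccuracy is your remark that $\ker(W_{H,t}^{p,q})$ is a two-dimensional subspace of $\Gamma\otimes\mathbb{R}$ --- it has codimension two, which is precisely why the two forms must be combined: each is negative definite only modulo the kernel of the respective projection $\cl^{\sharp}$ or $\cl^{\flat}$, and these kernels intersect trivially.
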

\begin{proof}
From Lemma~\ref{lem:sharp}, 
there exists a quadratic form, say
$Q_1$, which establishes the support property for 
the stability condition (\ref{pair:W})
with respect to 
$(\Lambda_H^{\sharp}, \cl^{\sharp})$. 
Choose $t'<0, p' ,q' \in \mathbb{R}$ as in Lemma \ref{prop:equiv-new-parameters-stab}.
Now from Lemma~\ref{lem:sharp}, 
there exists a quadratic form, say $Q_2$, which establishes the support property for 
the stability condition 
$\left(W_{\widehat{H}, t'}^{p',q'} ,  \aA_{-\sqrt{3}t'\widehat{H}/2, t'H/2}\right)$
with respect to 
$(\Lambda_{\widehat{H}^{\sharp}}, \cl^{\sharp})$. 
Hence, from Lemma \ref{prop:equiv-new-parameters-stab}
and Corollary~\ref{cor:cohomological}, the quadratic form
$Q_2(\Psi(-))$ establishes the support property for 
the stability condition (\ref{pair:W})
with respect to 
$(\Lambda_H^{\flat}, \cl^{\flat})$
defined in (\ref{cl:flat}). 
Therefore, 
the quadratic form 
\begin{equation}
Q(-) = Q_1(-) + \lambda Q_2 (\Psi(-)), \ \ \text{for any } \lambda \in \mathbb{R}_{>0}
\end{equation}
establishes the support property for 
the stability condition (\ref{pair:W})
with respect to 
$(\Gamma, \ch)$, that is the full support property. 
\end{proof}

\begin{thm}\label{thm:fullsupport}
Let $B \in \NS_{\mathbb{Q}}(A)$,
$\alpha=\sqrt{3}t/2$ for some 
$t \in \mathbb{Q}_{>0}$ and $a,b  \in \mathbb{R}$ satisfying
(\ref{ineq:ab}). 
Then 
the stability condition
$(Z_{\alpha H, B}^{a, b}, \aA_{\alpha H, B})$
in Proposition~\ref{prop:stabcond}
satisfies the full support property.
\end{thm}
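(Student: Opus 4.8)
By Proposition~\ref{prop:stabcond} the pair $(Z_{\alpha H, B}^{a,b}, \aA_{\alpha H, B})$ is already a Bridgeland stability condition with respect to $(\Lambda_{H,B}, \cl)$, and since $\cl$ factors through the Chern character, conditions (i) and (ii) of Definition~\ref{Definition_BSC} are automatic with respect to $(\Gamma, \ch)$; only the support property (iii), i.e. the full support property, needs to be established. The plan is to reduce to the case in which the twist is proportional to $H$ — where the statement has essentially already been proved in Lemma~\ref{lem:full-support-property-parallel-case} — and to carry out the reduction by tensoring with a simple semihomogeneous bundle.

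First I would dispose of the parallel case. Write $\alpha = \sqrt{3}t/2$ with $t \in \mathbb{Q}_{>0}$ and put $B_0 := tH/2$. By Proposition~\ref{prop:ZW} the central charge $Z_{\alpha H, B_0}^{a,b}$ is equivalent (in the sense of $\thicksim$) to $W_{H,t}^{p,q}$ for parameters $p,q$ determined by $a,b,t$, and the inequality (\ref{ineq:ab}) on $(a,b)$ translates precisely into the inequality (\ref{eqn:stab-family-condition}) on $(t,p,q)$; hence Lemma~\ref{lem:full-support-property-parallel-case} applies and shows that $(Z_{\alpha H, B_0}^{a,b}, \aA_{\alpha H, B_0})$ satisfies the full support property. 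Fix a quadratic form $Q_0$ on $H^{2\ast}_{\mathrm{alg}}(A,\mathbb{Q})$ realizing it.

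Now let $B \in \NS_{\mathbb{Q}}(A)$ be arbitrary and set $D := B - B_0 \in \NS_{\mathbb{Q}}(A)$. By Lemma~\ref{prop:semihomo-numerical}(ii) there is a simple semihomogeneous bundle $V$ with $\ch(V) = r\, e^{D}$ for some $r \in \mathbb{Z}_{>0}$; its dual $V^{\vee}$ is again simple and semihomogeneous, with $\ch(V^{\vee}) = r\, e^{-D}$ and $c_1(V^{\vee})/\rk(V^{\vee}) = -D$. Applying Proposition~\ref{prop:stab-under-tensoring}(iii) to $V^{\vee}$ with the twist $B$ (so that $B + (-D) = B_0$), the functor $E \mapsto E \otimes V^{\vee}$ restricts to a bijection between the $\sigma_{\alpha H, B}$-semistable objects of $\aA_{\alpha H, B}$ and the $\sigma_{\alpha H, B_0}$-semistable objects of $\aA_{\alpha H, B_0}$. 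Since $\ch^{B_0}(E \otimes V^{\vee}) = e^{-B_0}\ch(V^{\vee})\ch(E) = r\, e^{-B_0 - D}\ch(E) = r\, \ch^{B}(E)$, it follows from the formula (\ref{Zab}) that $Z_{\alpha H, B_0}^{a,b}(E \otimes V^{\vee}) = r\, Z_{\alpha H, B}^{a,b}(E)$; more precisely, the composition of $Z_{\alpha H, B_0}^{a,b}$ with multiplication by the unit $r\, e^{-D}$ of the cohomology ring equals $r\, Z_{\alpha H, B}^{a,b}$ on all of $\Gamma$. Hence multiplication by $r\, e^{-D}$ is a linear isomorphism sending $\ch(E)$ to $\ch(E\otimes V^{\vee})$ and carrying $\Ker Z_{\alpha H, B}^{a,b}$ isomorphically onto $\Ker Z_{\alpha H, B_0}^{a,b}$. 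Therefore $Q(-) := Q_0\big(r\, e^{-D}\cdot(-)\big)$ is non-negative on every $\sigma_{\alpha H, B}$-semistable class and negative definite on $\Ker Z_{\alpha H, B}^{a,b}$, which is exactly the full support property for $(Z_{\alpha H, B}^{a,b}, \aA_{\alpha H, B})$.

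The main subtlety is in the last step: one must tensor so as to land in the already-understood region $B_0 = tH/2$ (hence with $V^{\vee}$, not $V$), and one must avoid invoking an ``autoequivalences preserve the full support property'' principle — the bundle $V^{\vee}$ has rank $>1$ in general, so $-\otimes V^{\vee}$ is not invertible and the argument must be run directly at the level of semistable objects and quadratic forms as above. Everything else is a routine Chern-character computation, and the rationality of $t$ is used only so that $D \in \NS_{\mathbb{Q}}(A)$ and Lemma~\ref{prop:semihomo-numerical}(ii) is applicable.
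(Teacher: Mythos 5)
Your proof is correct and follows essentially the same route as the paper: establish the full support property in the parallel case $B=tH/2$ via Proposition~\ref{prop:ZW} and Lemma~\ref{lem:conseqnce}, then transport it to general $B$ by tensoring with a simple semihomogeneous bundle of slope $-B+tH/2$ (Proposition~\ref{prop:stab-under-tensoring}) and pulling back the quadratic form along multiplication by $e^{-B+tH/2}$. The only cosmetic difference is that the paper chooses $V$ with $c_1(V)/\rk(V)=-B+tH/2$ directly rather than taking the dual of a bundle of slope $B-tH/2$.
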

\begin{proof}
Let us fix a slope semistable semihomogeneous bundle $V$ on $A$ such that
\begin{equation}
\frac{c_1(V)}{\rk(V)}= -B+\frac{t}{2}H.
\end{equation}
From Lemma \ref{prop:semihomo-numerical}, $\ch(V) = \rk(V) \cdot e^{(-B+tH/2)}$.
Let $E$ be a 
$(Z_{\alpha H, B}^{a, b}, \aA_{\alpha H, B})$-semistable 
object. 
By Proposition \ref{prop:stab-under-tensoring}, 
$E \otimes V$
is a
($Z_{\sqrt{3}tH/2, tH/2}^{a, b}, \aA_{\sqrt{3}tH/2, tH/2})$-semistable object. 
Let $Q$ be the quadractic form on $\Gamma$
which establishes the full support property for (\ref{pair:W}),
which exists
by Theorem~\ref{lem:conseqnce}. 
Since $\ch(E \otimes V) = \rk(V)\cdot \ch^{B-tH/2}(E)$, 
the quadractic form $Q(e^{-B+tH/2}(-))$
establish the support property for
$(Z_{\alpha H, B}^{a, b}, \aA_{\alpha H, B})$. 
\end{proof}

Consequently, we arrive at the following, which is the main result of Section~\ref{section:Bridgeland_stab_conditions}.
It implies in particular the existence of stability conditions on $A$ with respect to $(\Gamma, \ch)$,
or equivalently that $\Stab(A) \neq \varnothing$.

\begin{thm} \label{thm_full_support}
There is a continuous family of Bridgeland stability conditions in $\Stab(A)$, parameterized by the set
$$
(\omega , B, a, b) \in \Amp_{\mathbb{R}}(A) \times  \NS_{\mathbb{R}}(A)
\times \mathbb{R}  \times \mathbb{R},   \ \ a >  \frac{1}{18}  + \frac{\sqrt{3}}{6}|b|
$$
via
$$
(\omega , B, a, b) \mapsto \left( Z_{\omega, B}^{a, b}, \aA_{\omega, B} \right).
$$
In particular, 
there is a continuous embedding
$\Amp_{\mathbb{C}}(A) \to \Stab(A)$
given by 
$B+i\omega \mapsto \sigma_{B, \omega}$. 
The action of $\Aut(D^b(A))$ on $\Stab(A)$
preserves the connected component $\Stab^{\circ}(A)$
which contains the image of the above map.
\end{thm}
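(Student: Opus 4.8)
The statement has three parts: (a) the displayed family lies in $\Stab(A)$; (b) the map $B+i\omega\mapsto\sigma_{\omega,B}$ is a topological embedding of $\Amp_{\BC}(A)$; (c) $\Stab^{\circ}(A)$ is preserved by every autoequivalence. For (a) the key reduction is the following: by Proposition~\ref{prop:stabcond} (using that $C_{H}=0$ on abelian threefolds, so that the constraint on $a,b$ is exactly the one in the statement) every pair $\left(Z_{\omega,B}^{a,b},\aA_{\omega,B}\right)$, written with $\omega=\alpha H$, is \emph{already} a Bridgeland stability condition with respect to the finite rank lattice $(\Lambda_{H,B},\cl)$. Since $\cl$ factors through $\ch$, conditions (i) and (ii) of Definition~\ref{Definition_BSC} with respect to $(\Gamma,\ch)$ hold automatically, and only the full support property (iii) remains to be checked. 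Moreover the resulting map from the parameter set $P$ to the space of pre-stability conditions (pairs satisfying (i) and (ii)) is continuous, because the tilted hearts $\aA_{\omega,B}$ vary continuously and $\Pi$ is a local homeomorphism.

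\textbf{Part (a): extending the support property.} Theorem~\ref{thm:fullsupport} establishes (iii) whenever $B\in\NS_{\BQ}(A)$ and $\alpha=\sqrt{3}\,t/2$ with $t\in\BQ_{>0}$; such parameters form a dense subset $S$ of $P$, since $\BQ_{>0}\cdot\NS_{\BQ}^{\mathrm{amp}}=\NS_{\BQ}^{\mathrm{amp}}$ and $\NS_{\BQ}$ is dense in $\NS_{\BR}$. Let $P'\subseteq P$ be the locus where (iii) holds. It is open: if $p_{0}\in P'$ with support quadratic form $Q_{0}$, then by the standard openness of the support property among nearby pre-stability conditions (as in the proof of Bridgeland's deformation theorem) together with the continuity of $p\mapsto Z_{\omega,B}^{a,b}$, every sufficiently close member of the family satisfies (iii) with the same $Q_{0}$. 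It is closed: at a limit point $p_{\infty}$ of a sequence in $P'$, the explicit quadratic forms of Theorem~\ref{thm:fullsupport} (built from Lemma~\ref{lem:sharp} and Lemma~\ref{lem:conseqnce}, then pulled back along $e^{-B+tH/2}$, with the internal choices made continuously) converge to a form $Q_{\infty}$; for any $\sigma_{p_{\infty}}$-semistable object $E$, its Harder--Narasimhan factors with respect to a nearby $\sigma_{p}$, $p\in P'$, have classes in $\Gamma$ summing to $\ch E$, of bounded mass, and lying in $\{Q(p)\ge 0\}$, hence range over a finite set; passing to a subsequence where these are constant and taking the limit, $\ch E$ is a sum of classes on which $Q_{\infty}\ge0$ lying in a common half-plane for $Z_{p_{\infty}}$, so $Q_{\infty}(\ch E)\ge0$ by convexity of the relevant positive cone of $Q_{\infty}$. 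Since $P$ is connected and $S\subseteq P'\neq\varnothing$, we conclude $P'=P$. This is the main obstacle, and it rests entirely on the explicit nature of the quadratic forms produced in the proof of Theorem~\ref{thm:fullsupport}.

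\textbf{Part (b): the embedding.} This is the subfamily with $a=\alpha^{2}/2$, $b=0$, for which $Z_{\alpha H,B}^{a,b}=Z_{\omega,B}$ and the pair equals $\sigma_{\omega,B}$ independently of the choice of $\alpha$ and $H$ with $\omega=\alpha H$; by part (a) it lies in $\Stab(A)$ and depends continuously on $B+i\omega$. The functional $Z_{\omega,B}$ determines $\omega$ and then $B$ (read off the coefficients of $\ch_{2}$, resp.\ $\ch_{1}$, in its imaginary part and use hard Lefschetz on algebraic classes), so the map is injective; combined with $\Pi$ being a local homeomorphism this makes it a topological embedding.

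\textbf{Part (c): invariance of $\Stab^{\circ}(A)$.} Each $g\in\Aut(D^{b}(A))$ acts on $\Stab(A)$ by the homeomorphism \eqref{Action_on_stab} and hence permutes connected components, so it suffices to treat a generating set. Translations by points of $A$ and twists by line bundles in $\Pic^{0}(A)$ act trivially on $\Gamma$ and preserve every heart $\aA_{\omega,B}$, hence fix each $\sigma_{\omega,B}$; the shift $[1]$ acts as an element of the $\BC$-action on $\Stab(A)$, which preserves the component $\Stab^{\circ}(A)$; the twist $(-)\otimes\oO_{A}(D)$, $D\in\NS(A)$, sends $\sigma_{\omega,B}$ to $\sigma_{\omega,B+D}$, again in the family; and for the Fourier--Mukai transform, Lemma~\ref{prop:ab-equiv-abelian-3} and Lemma~\ref{prop:equiv-new-parameters-stab} show that $\Psi_{\ast}$ carries the family on $A$ into the analogous family on $\widehat{A}$, so $\Psi_{\ast}\Stab^{\circ}(A)=\Stab^{\circ}(\widehat{A})$. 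By Orlov's description of $\Aut(D^{b}(A))$ these maps generate, so $\Stab^{\circ}(A)$ is preserved by all of $\Aut(D^{b}(A))$.
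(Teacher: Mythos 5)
Your Parts (a) and (b) follow essentially the same route as the paper, which for the first statement simply invokes the argument of \cite[Proposition~8.10]{BMS}: conditions (i),(ii) of Definition~\ref{Definition_BSC} come for free from Proposition~\ref{prop:stabcond} (with $C_H=0$), the full support property is supplied on the dense locus of rational parameters by Theorem~\ref{thm:fullsupport}, and one propagates it over the connected parameter space by an open--closed deformation argument with continuously varying quadratic forms. Your sketch of the closedness step is informal (uniform boundedness of the mass near the limit point is exactly what Bridgeland's deformation theorem provides), but the strategy is the intended one.

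Part (c) contains a genuine gap. You reduce to a ``generating set'' consisting of shifts, translations, $\Pic^0$-twists, line bundle twists and the Poincar\'e transform, and then assert that ``by Orlov's description of $\Aut(D^b(A))$ these maps generate.'' Orlov's theorem gives an exact sequence $1 \to \BZ\oplus(A\times\widehat{A}) \to \Aut(D^b(A)) \to U(A\times\widehat{A}) \to 1$; it does \emph{not} assert that $U(A\times\widehat{A})$ is generated by the images of these elementary equivalences. For a general abelian threefold this is an unproven elementary-generation statement for an arithmetic group, and moreover $\Phi_{\pP}^{A\to\widehat{A}}$ is not an autoequivalence of $D^b(A)$ unless $A\cong\widehat{A}$, so at best you are working in a groupoid of equivalences between different abelian threefolds, where the generation claim is equally unsubstantiated. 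The paper handles an \emph{arbitrary} $F\in\Aut(D^b(A))$ directly: by Orlov the kernel of any equivalence between abelian varieties is a sheaf up to shift, so $F'=\Phi_{\pP}^{A\to\widehat{A}}\circ(\otimes\,\oO_A(nH))\circ F$ has a vector bundle kernel for $n\gg0$; then \cite[Theorem~1.1]{PiyFMT}, which treats exactly Fourier--Mukai transforms with locally free kernels, shows $F'$ carries $\Stab^{\circ}(A)$ to $\Stab^{\circ}(\widehat{A})$, and since $\Phi_{\pP}^{A\to\widehat{A}}$ and $\otimes\,\oO_A(nH)$ are already known to respect the distinguished components, so does $F$. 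You should replace the generation argument by this composition trick (or else supply a proof of the generation statement, which is not available in the literature).
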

\begin{proof}
The first statement is similar to the proof of \cite[Proposition~8.10]{BMS}, 
using Theorem~\ref{thm:fullsupport}.
Below, we give a proof of the second statement. 
Let $F$ be a derived autoequivalence of $A$. If the Fourier-Mukai kernel of 
$F$ is a vector bundle (up to a shift) then the claim is a direct consequence of~\cite[Theorem~1.1]{PiyFMT}.
Suppose that the Fourier-Mukai 
kernel of $F$ is not a vector bundle up to a shift. 
By a theorem of Orlov~\cite[Proposition~9.53]{Huybook}, 
the kernel of an auto-equivlance between two abelian varieties is represented by a sheaf up to shift.
Therefore for
a derived equivalence defined by $F' = \Phi_{\pP}^{A \to \widehat{A}} \circ \otimes \oO_A(nH) \circ F$, 
where $H$ is ample and $n$ is sufficiently large, the Fourier-Mukai kernel of $F'$
is a vector bundle up to a shift. Again from \cite[Theorem~1.1]{PiyFMT}, $F'$
takes $\Stab^0(A)$ to $\Stab^0(\widehat{A})$. 
Since 
$\Phi_{\pP}^{A \to \widehat{A}}$ and $\otimes \oO_A(nH)$ preserve connected
components $\Stab^0(A)$, 
$\Stab^0(\widehat{A})$, the equivalence $F$
also preserves $\Stab^0(A)$.
\end{proof}

\subsection{Standard slice}
In what follows, we focus on 
some subspace of $\Stab(A)$
and find stability conditions on it 
where semistable objects 
coincide with Gieseker semistable sheaves. 

We fix an ample divisor $H$ and consider $B+i\omega$ written as 
\begin{align*}
\omega=\alpha H, \ 
B=\beta H, \ \alpha\in \mathbb{R}_{>0}, 
\beta \in \mathbb{R}.
\end{align*}
We write  
$\sigma_{\alpha H, \beta H}=
(Z_{\alpha H, \beta H}, \aA_{\alpha H, \beta H})$
as $\sigma_{\alpha, \beta}=(Z_{\alpha, \beta}, \aA_{\alpha, \beta})$
and so on. 
Recall that we considered the surjective map
\begin{align}\label{isom:gamma}
\Gamma_{\mathbb{Q}} \twoheadrightarrow \mathbb{Q}^4, \ 
\ch_i \mapsto v_i=i! H^{3-i}\ch_i.
\end{align}
For $\beta \in \mathbb{R}$ let $(v_0^{\beta}, v_1^{\beta}, v_2^{\beta}, v_3^{\beta}) \in \mathbb{R}^4$
be the vector corresponding to $v(\ch^{\beta H})$,
\begin{align*}
&v_0^{\beta}=v_0, \ v_1^{\beta}=v_1-\beta v_0, \ 
v_2^{\beta}=v_2-2\beta v_1+\beta^2 v_0, \\ 
&v_3^{\beta}=v_3-3\beta v_2+3\beta^2 v_1-\beta^3 v_0. 
\end{align*}

Consider the subspace
\[ \Stab_H(A) \subset \Stab(A) \]
of stability conditions $(Z, \aA)$ such that $Z$ factors through the map (\ref{isom:gamma}). Let
\[ \Stab_H^{\circ}(A) \subset \Stab_H(A) \]
denote the component which contains the elements $\sigma_{\alpha, \beta}$ (the component exists by Theorem~\ref{thm_full_support}).
The space $\Stab^{\circ}_H(A)$ is completely described in~\cite{BMS} as follows. 
Let $\mathfrak{B} \subset \mathbb{R}^4$ be the 
open subset given by
\begin{align*}
\mathfrak{B}=\left\{(\alpha, \beta, a, b) \in \mathbb{R}^4 : 
\alpha>0, a>\frac{\alpha^2}{18}+\frac{\sqrt{3}}{6}\lvert b \rvert \alpha
\right\}. 
\end{align*}
For $(\alpha, \beta, a, b) \in \mathfrak{B}$, 
the central charge
$Z_{\alpha, \beta}^{a, b}:=Z_{\alpha H, \beta H}^{a, b}$
in (\ref{Zab}) is written as 
\begin{align*}
Z_{\alpha, \beta}^{a, b}=\frac{1}{6}\left(-v_3^{\beta}+3bv_2^{\beta}+6a v_1^{\beta}
+i\alpha \left(3v_2^{\beta}-\alpha^2 v_0^{\beta}  \right) \right).
\end{align*}

\begin{thm}\emph{(\cite{BMS})}\label{thm:BMS}
We have the continous embedding
\begin{align}\label{emb:B}
\mathfrak{B} \to \Stab^{\circ}_H(A), \ (\alpha, \beta, a, b) \mapsto 
\sigma_{\alpha, \beta}^{a, b}:=(Z_{\alpha, \beta}^{a, b}, \aA_{\alpha, \beta})
\end{align}
whose image gives a slice of the 
$\widetilde{\GL}_2^{+}(\mathbb{R})$-action on $\Stab_H^{\circ}(A)$. 
\end{thm}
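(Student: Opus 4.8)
The plan is to derive the statement from Theorem~\ref{thm_full_support}, which already supplies the stability conditions together with the continuity of the family, plus a few formal properties of $\Stab(A)$. By Theorem~\ref{thm_full_support} the assignment $(\alpha,\beta,a,b)\mapsto\sigma_{\alpha,\beta}^{a,b}=(Z_{\alpha,\beta}^{a,b},\aA_{\alpha,\beta})$ is a continuous map into $\Stab(A)$; since $Z_{\alpha,\beta}^{a,b}$ factors through the surjection $v\colon\Gamma_{\mathbb{Q}}\twoheadrightarrow\mathbb{Q}^{4}$, $\ch_i\mapsto i!\,H^{3-i}\ch_i$, its image lies in $\Stab_H(A)$, and since $\mathfrak{B}$ is connected and contains the points $(\alpha,\beta,\alpha^{2}/2,0)$ at which $\sigma_{\alpha,\beta}^{\alpha^{2}/2,0}=\sigma_{\alpha,\beta}$, the image lies in the component $\Stab_H^{\circ}(A)$.

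Next I would prove the map is a topological embedding. For injectivity I would use that $\Pi$ is a local homeomorphism, so two members of the family coincide only if their central charges do, and then read off the parameters from the explicit form
\[
Z_{\alpha,\beta}^{a,b}=\tfrac{1}{6}\bigl(-v_3^{\beta}+3b\,v_2^{\beta}+6a\,v_1^{\beta}+i\alpha(3v_2^{\beta}-\alpha^{2}v_0^{\beta})\bigr).
\]
Its imaginary part is $\tfrac{\alpha}{2}v_2^{\beta}-\tfrac{\alpha^{3}}{6}v_0$, which as a functional on $\mathbb{Q}^{4}$ has $v_2$-coefficient $\alpha/2$ and $v_1$-coefficient $-\alpha\beta$; this recovers $\alpha$ and then $\beta$, after which the heart $\aA_{\alpha,\beta}$ is fixed, and $v_1^{\beta},v_2^{\beta}$ being linearly independent the real part recovers $a$ and $b$. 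A routine computation of the differential of $(\alpha,\beta,a,b)\mapsto Z_{\alpha,\beta}^{a,b}$ shows it is everywhere injective, so the central charge map $\mathfrak{B}\to\Hom(\mathbb{Q}^{4},\mathbb{C})$ is an injective immersion, hence locally a topological embedding; composing locally with the inverse of $\Pi$ and invoking global injectivity shows that $\mathfrak{B}\to\Stab_H^{\circ}(A)$ is a topological embedding.

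I would then identify the image with a slice, i.e.\ show that the orbit map $\Psi\colon\widetilde{\GL}_2^{+}(\mathbb{R})\times\mathfrak{B}\to\Stab_H^{\circ}(A)$, $(g,\alpha,\beta,a,b)\mapsto g\cdot\sigma_{\alpha,\beta}^{a,b}$, is a homeomorphism. The $\widetilde{\GL}_2^{+}(\mathbb{R})$-action on $\Stab(A)$ is free, and the central charge computation above shows that any $h\in\GL_2^{+}(\mathbb{R})$ with $h\cdot Z_{\alpha,\beta}^{a,b}=Z_{\alpha',\beta'}^{a',b'}$ must be the identity with $(\alpha,\beta,a,b)=(\alpha',\beta',a',b')$; hence the induced map $\GL_2^{+}(\mathbb{R})\times\mathfrak{B}\to\Hom(\mathbb{Q}^{4},\mathbb{C})$, $(h,p)\mapsto h\cdot Z_p$, is a continuous injection between manifolds of the same dimension $8$, so by invariance of domain it is an open embedding. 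Composing with the covering $\widetilde{\GL}_2^{+}(\mathbb{R})\to\GL_2^{+}(\mathbb{R})$ and using that $\Pi$ is a local homeomorphism shows $\Psi$ is a local homeomorphism; since $\Psi$ is also injective (the lift of the identity being pinned down by requiring the heart to be of the form $\aA_{\alpha',\beta'}$, together with freeness of the action), $\Psi$ is an open embedding, so $\im\Psi$ is open in $\Stab_H^{\circ}(A)$.

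The hard part will be surjectivity of $\Psi$: since $\Stab_H^{\circ}(A)$ is connected it suffices to show $\im\Psi$ is closed. Given $\sigma_n=\Psi(g_n,\alpha_n,\beta_n,a_n,b_n)\to\sigma$ in $\Stab_H^{\circ}(A)$, one acts with $g_n^{-1}$ to normalise into the slice and then argues that the parameters cannot escape: the central charges $Z(\sigma_n)$ are bounded, and the uniform support property of Theorem~\ref{thm:fullsupport} (the quadratic form establishing the full support property can be chosen locally uniformly in the parameters) both prevents $(\alpha_n,\beta_n,a_n,b_n)$ from approaching $\partial\mathfrak{B}$ — where $Z_{\alpha,\beta}^{a,b}$ would acquire a nonzero semistable class in its kernel, or the heart $\aA_{\alpha,\beta}$ would degenerate — and confines the $g_n$ to a compact subset of $\widetilde{\GL}_2^{+}(\mathbb{R})$; passing to a subsequence and taking limits then exhibits $\sigma\in\im\Psi$. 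This properness statement is the technical crux. It is carried out in \cite[Section~8]{BMS} for the lattice $(\Lambda_{H,B},\cl)$, and the only additional ingredient needed to run the argument over $(\Gamma,\ch)$ is the full support property of Theorem~\ref{thm_full_support}, which ensures that the very same stability conditions, hearts, and $\widetilde{\GL}_2^{+}(\mathbb{R})$-slice structure persist once the support property with respect to $(\Gamma,\ch)$ is imposed.
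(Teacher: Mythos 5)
Your proposal is essentially the paper's own approach: Theorem~\ref{thm:BMS} is given there without proof as a citation to \cite{BMS}, the only new ingredient being the full support property of Theorem~\ref{thm_full_support}, which transfers the \cite{BMS} description of the stability manifold for the rank-four lattice to the subspace $\Stab_H(A)\subset\Stab(A)$ exactly as you indicate. Your deferral of the properness/surjectivity step to \cite{BMS} is where the real content lies, and that is precisely what the authors also defer to.
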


The upper-half plane $\BH \subset \mathbb{C}$ is embedded into
$\mathfrak{B}$ by 
\begin{align*}
\beta+i\alpha \mapsto (\alpha, \beta, \alpha^2/2, b=0)
\end{align*}
and
its image under the embedding (\ref{emb:B}) is
$\sigma_{\alpha, \beta}=\sigma_{\alpha, \beta}^{a=\alpha^2/2, b=0}$. 

\subsection{Gieseker chamber}
We keep the notation from the previous subsection. 
Let 
$\overline{\Gamma}_{+} \subset \Gamma$ be the 
subset of $v \in \Gamma$
such that either 
\begin{align*}
v_0>0, \mbox{ or } v_1>0=v_0, \mbox{ or }
v_2>0=v_1=v_0, \mbox{ or }
v_3>0 =v_2=v_1=v_0. 
\end{align*}
The set $\overline{\Gamma}_{+}$ contains $\Gamma_{+}$, 
the set of Chern characters of coherent sheaves.

We first consider $\nu_{\alpha, \beta}$-semistable objects in $\bB_{\alpha, \beta}$.
For $v \in \overline{\Gamma}_{+}$, 
by the same arguments as in~\cite[Theorem~3.1]{Maci}, 
we can describe the 
wall and chamber structure for
$\nu_{\alpha, \beta}$-semistable objects on $\bB_{\alpha, \beta}$
with Chern character $v$
on the $(\alpha, \beta)$-plane:
\begin{align*}
\hH=\{\beta+i\alpha: \alpha \in \mathbb{R}_{>0}, \beta \in \mathbb{R}\}. 
\end{align*}
The  
walls are (after rescaling $\alpha$ by $\sqrt{3}\alpha$)
finite nested semi-circles: 
each wall is a semi-circle 
contained in 
$\beta<v_1/v_0$
(where $v_1/v_0=\infty$ for $v_0=0$), 
whose center lies on the $\beta$-axis,
and for any two walls one of them is contained in the interior of the other.

When $(\alpha, \beta)$ lies in the outer of every wall, 
the $\nu_{\alpha, \beta}$-semistable objects are 
described in terms of stability conditions on sheaves. 
For this purpose, we introduce the following notion, 
which lies between slope stability and Gieseker stability: 
\begin{defi}
For a smooth projective 3-fold $X$ and an ample divisor $H$ on it, 
a coherent sheaf $E \in \Coh(X)$ is called $\nu_{H}$-semistable 
if it is pure and for any subsheaf $0 \subsetneq F \subset E$, we have
\begin{align*}
\overline{\chi}_H^{\dag}(F)(m) \le \overline{\chi}_H^{\dag}(E)(m) 
\end{align*}
for $m\gg 0$. 
Here for a polynomial $p(m)$ in $m$
we let $p^{\dag}(m) = p(m) - p(0)$.
\end{defi}
In the case of $X=A$ we have the following.
\begin{lem}\label{lem:nuGie}
\begin{enumerate}
\renewcommand{\labelenumi}{(\roman{enumi})}
\item 
A torsion free sheaf
$E \in \Coh(A)$
is $\nu_H$-semistable if and only if for any subsheaf $F \subset E$, we have
\begin{align*}
\frac{v_1(F)}{v_0(F)} \le \frac{v_1(E)}{v_0(E)}, \ \mbox{ and } \
\frac{v_2(F)}{v_0(F)}\le \frac{v_2(E)}{v_0(E)} \ \mbox{ if } \
\frac{v_1(F)}{v_0(F)}=\frac{v_1(E)}{v_0(E)}. 
\end{align*}
In particular, it is slope semistable. 
\item 
A $\nu_H$-semistable torsion free sheaf $E \in \Coh(A)$ is 
Gieseker-semistable if and only if for any 
$\nu_H$-semistable subsheaf $F \subset E$
with the same $(v_1/v_0, v_2/v_0)$, we have 
\begin{align*}
\frac{v_3(F)}{v_0(F)} \le \frac{v_3(E)}{v_0(E)}. 
\end{align*} 
\item The same statements of (i), (ii) hold 
after replacing $v_i$ with $v_i^{\beta}$ for any $\beta \in \mathbb{R}$. 
\end{enumerate}
\end{lem}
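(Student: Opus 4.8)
\textbf{Proof strategy for Lemma~\ref{lem:nuGie}.}

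The plan is to unwind the definition of $\nu_H$-semistability in terms of the reduced Hilbert polynomial and express everything through the vectors $v_i$. Recall that for a torsion free sheaf $E$ on the abelian $3$-fold $A$ with polarization $H$, the Hilbert polynomial is
\[
\chi(E \otimes \oO_A(mH)) = \frac{v_0(E)}{6} m^3 + \frac{v_1(E)}{2}m^2 + \frac{v_2(E)}{2} m + v_3(E),
\]
since $\td_A = 1$ and $v_i(E) = i!\,H^{3-i}\ch_i(E)$. Hence $\overline{\chi}_H(E) = m^3/6 + \tfrac{v_1(E)}{2 v_0(E)}m^2 + \tfrac{v_2(E)}{2 v_0(E)}m + \tfrac{v_3(E)}{v_0(E)}$, and $\overline{\chi}_H^{\dag}(E)$ drops the constant term. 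For part (i), I would compare $\overline{\chi}_H^{\dag}(F)$ and $\overline{\chi}_H^{\dag}(E)$ coefficient-by-coefficient for $m \gg 0$: the leading ($m^2$) coefficient comparison gives the slope inequality $v_1(F)/v_0(F) \le v_1(E)/v_0(E)$, and in the case of equality the $m^1$ coefficient gives $v_2(F)/v_0(F) \le v_2(E)/v_0(E)$. The ``in particular slope semistable'' clause is immediate from the leading coefficient inequality. One direction is a direct substitution; for the converse one needs that these two inequalities already force $\overline{\chi}_H^{\dag}(F)(m) \le \overline{\chi}_H^{\dag}(E)(m)$ for $m \gg 0$, which is clear since a polynomial inequality for large $m$ is controlled by the first differing coefficient.

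For part (ii), the point is that $\nu_H$-semistability already controls the top two non-constant coefficients of $\overline{\chi}_H$, so Gieseker semistability of a $\nu_H$-semistable sheaf is an extra condition only on subsheaves $F$ that tie in both $v_1/v_0$ and $v_2/v_0$; for such $F$ the first place where $\overline{\chi}_H(F)$ and $\overline{\chi}_H(E)$ can differ is the constant term $v_3/v_0$, giving exactly the stated inequality. I would argue: if $F \subsetneq E$ is arbitrary nonzero, then by (i) either $v_1(F)/v_0(F) < v_1(E)/v_0(E)$, or the slopes agree and $v_2(F)/v_0(F) < v_2(E)/v_0(E)$, or both agree; in the first two cases $\overline{\chi}_H(F)(m) < \overline{\chi}_H(E)(m)$ for $m \gg 0$ automatically, so the only constraint is in the equality case, where one may pass to the maximal such $F$ (which is $\nu_H$-semistable, being an iterated extension in the appropriate sense, or by taking the saturation), reducing to the stated condition. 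Part (iii) follows formally: replacing $\ch_i$ by $\ch_i^{\beta H} = (e^{-\beta H}\ch(E))_i$ replaces $v_i$ by $v_i^{\beta}$, and $\nu_H$-semistability, Gieseker semistability and the numerical inequalities are all invariant under tensoring by the (fractional) line bundle $\oO_A(\beta H)$ — equivalently, the twist is just an affine reparametrization $m \mapsto m + \beta$ of the Hilbert polynomial, under which the conditions ``for $m \gg 0$'' are unchanged.

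The main obstacle I anticipate is the converse direction in part (ii): showing that the displayed inequality on $v_3/v_0$ over all $\nu_H$-semistable subsheaves $F$ with matching $(v_1/v_0, v_2/v_0)$ is \emph{sufficient} for Gieseker semistability. One must handle an arbitrary destabilizing subsheaf $F$ and reduce to the case where $F$ is $\nu_H$-semistable with the same $(v_1/v_0, v_2/v_0)$ as $E$ — the natural move is to take the first step of the $\nu_H$-Harder--Narasimhan filtration of $F$ (or the saturation of $F$ in $E$, which does not decrease any of the ratios), and to use that on an abelian threefold torsion free $\nu_H$-semistable sheaves are automatically slope semistable so the relevant filtrations are well-behaved. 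This is exactly parallel to the classical argument that $\mu$-semistability plus equality of $c_1/r$-data reduces Gieseker stability to a lower-order comparison, so I would cite or mimic the standard proof (e.g. as in \cite{Maci}) rather than reproduce it in full.
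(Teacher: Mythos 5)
Your proof is correct; the paper states Lemma~\ref{lem:nuGie} without any proof, treating it as exactly the direct unwinding of Hilbert-polynomial coefficients (via Riemann--Roch with $\td_A=1$) that you carry out, so your argument supplies the intended route, including the right reduction for the converse of (ii) (an arbitrary subsheaf with matching $(v_1/v_0,v_2/v_0)$ is forced to be $\nu_H$-semistable by comparing its maximal destabilizing subsheaf against the $\nu_H$-semistability of $E$) and the observation that the passage to $v_i^{\beta}$ is a unipotent change of coordinates preserving the lexicographic comparisons. The only blemishes are harmless normalization slips: the constant term of $\chi(E\otimes\oO_A(mH))$ is $v_3(E)/6=\ch_3(E)$, not $v_3(E)$, and $\overline{\chi}_H$ is obtained by dividing by the leading coefficient $v_0/6$ rather than by $v_0$ — neither affects the sign of any coefficient comparison.
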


\begin{lem}\label{lem:nuH}
For $v \in \overline{\Gamma}_{+}$, let $(\alpha, \beta) \in \hH$
lies in the outer of every wall with respect to the 
$\nu_{\alpha, \beta}$-stability with Chern character $v$. 
Then for $E \in D^b(A)$ with $\ch(E)=v$, it is 
a $\nu_{\alpha, \beta}$-semistable object in $\bB_{\alpha, \beta}$
if and only if it is $\nu_H$-semistable coherent sheaf. 
\end{lem}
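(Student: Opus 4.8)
The plan is to set up the comparison between the tilted-heart stability $\nu_{\alpha,\beta}$ on $\bB_{\alpha,\beta}$ and the sheaf-theoretic notion $\nu_H$, following the strategy of Maciocia and of Bridgeland--Maciocia for the analogous K3 and threefold statements. First I would observe that the outer chamber in the $(\alpha,\beta)$-plane is, up to the rescaling $\alpha \mapsto \sqrt 3\alpha$, the region lying outside all the (finitely many, nested) semicircular walls for the class $v$, together with the vertical ``wall'' $\beta = v_1/v_0$. Inside this chamber the key claim is that every $\nu_{\alpha,\beta}$-semistable object $E$ with $\ch(E)=v$ is a shift of a coherent sheaf, concentrated in degree $0$. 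For this I would use the standard long exact cohomology argument in the tilted heart: writing $\mathcal{H}^i(E)$ for the cohomology sheaves with respect to the standard t-structure, membership of $E$ in $\bB_{\alpha,\beta}$ forces $\mathcal{H}^{-1}(E) \in \mathcal{F}_{\alpha,\beta}$ and $\mathcal{H}^0(E) \in \mathcal{T}_{\alpha,\beta}$ and all other cohomology zero; then for $(\alpha,\beta)$ sufficiently deep in the outer chamber (i.e. $\alpha \gg 0$ relative to the bound coming from the walls) one shows $\mathcal{H}^{-1}(E)=0$, because otherwise $\mathcal{H}^{-1}(E)[1] \hookrightarrow E$ would be a destabilizing subobject — its $\nu_{\alpha,\beta}$-slope tends to $+\infty$ as $\alpha \to \infty$. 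Since there are no walls in the outer chamber, semistability at one point of the chamber is equivalent to semistability at all points of it, so this already gives $E \in \Coh(A)$, and purity follows from the torsion pair description.

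Next I would translate the numerical inequality defining $\nu_{\alpha,\beta}$-semistability of the sheaf $E$ into the inequalities on $(v_1^\beta/v_0^\beta, v_2^\beta/v_0^\beta)$ appearing in Lemma~\ref{lem:nuGie}. Given a saturated subsheaf $F \subset E$, one has a short exact sequence $0 \to F \to E \to Q \to 0$ in $\Coh(A)$; for $(\alpha,\beta)$ in the outer chamber both $F$ (or rather a suitable modification) and $Q$ lie in $\bB_{\alpha,\beta}$ appropriately, so $\nu_{\alpha,\beta}(F) \le \nu_{\alpha,\beta}(E)$. Expanding $\nu_{\alpha,\beta} = \Imm Z_{\alpha,\beta}/(H^2\ch_1^{\beta H})$ as a function of $\alpha$ and letting $\alpha$ vary over the (unbounded) outer chamber, the leading and subleading behaviour in $\alpha$ forces first $v_1^\beta(F)/v_0^\beta(F) \le v_1^\beta(E)/v_0^\beta(E)$ and then, in the case of equality, $v_2^\beta(F)/v_0^\beta(F) \le v_2^\beta(E)/v_0^\beta(E)$. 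By Lemma~\ref{lem:nuGie}(iii) these are exactly the conditions for $\nu_H$-semistability; conversely the same computation run backwards shows a $\nu_H$-semistable torsion-free sheaf with Chern character $v$ is $\nu_{\alpha,\beta}$-semistable in the outer chamber. I would handle the boundary/edge subtleties (subsheaves $F$ with $v_0(F)=v_0(E)$, or with $\Imm Z_{\alpha,\beta}(F) = 0$, i.e. the role of the vertical wall $\beta = v_1/v_0$) by the usual device of passing to the quotient $Q$ instead of $F$ when the denominator degenerates.

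The main obstacle I expect is \emph{not} the slope bookkeeping but rather pinning down precisely that the outer chamber for $\nu_{\alpha,\beta}$-stability is nonempty and unbounded upward in $\alpha$, and that the wall-and-chamber structure quoted from ``the same arguments as in \cite[Theorem~3.1]{Maci}'' genuinely applies on an abelian threefold with $\rho(A)$ possibly larger than one — in particular that there are only finitely many walls and that they are nested semicircles with centres on the real axis after the $\sqrt 3$-rescaling. Once the finiteness and geometry of walls is in hand, the equivalence ``$\nu_{\alpha,\beta}$-semistable in the outer chamber $\iff$ $\nu_H$-semistable sheaf'' is a direct, if slightly tedious, comparison of the defining inequalities, and the purity statement is automatic from $E \in \bB_{\alpha,\beta} \cap \Coh(A)$ lying in $\mathcal{T}_{\alpha,\beta}$.
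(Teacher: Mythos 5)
Your proposal is correct and follows essentially the same route as the paper, which itself only cites the surface‑case arguments of Lo--Qin and Maciocia: use the nested semicircular wall structure to reduce to a large‑volume point of the outer chamber, show $\nu_{\alpha,\beta}$‑semistable objects there are sheaves via the $\cH^{-1}(E)[1]$ destabilization, and match the $\alpha$‑expansion of $\nu_{\alpha,\beta}$ with the inequalities of Lemma~\ref{lem:nuGie}. The only nitpick is that purity is not automatic from $E\in\cT_{\omega,B}$ alone (a torsion subsheaf of dimension $\le 1$ must be excluded using $\nu_{\alpha,\beta}$‑semistability, since such a subobject has slope $+\infty$), but this is the standard step and does not affect the argument.
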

\begin{proof}
The proof is similar to the surface case, for example 
see~\cite[Theorem~1,2, Lemma~2.6]{MR3217639}. 
\end{proof}

For any $v \in \overline{\Gamma}_{+}$ with $(v_0, v_1) \neq (0, 0)$
the curve $\nu_{\alpha, \beta}(v)=0$, i.e.
\begin{align*}
v_0 \beta^2-\frac{v_0}{3}\alpha^2
-2v_1 \beta+v_2=0
\end{align*}
intersects each wall at the top of the semi-circle. 
We define
\begin{align*}
\sS_{v} \subset \hH
\end{align*}
to be the intersection 
of the outer of every wall and 
the region $\nu_{\alpha, \beta}(v)>0$. 
If $(v_0, v_1)=0$, then there is no wall with respect to the 
$\nu_{\alpha, \beta}$-stability, and $\nu_{\alpha, \beta}(v)=\infty$,
so we set $\sS_v=\hH$. 
In any case for fixed $\alpha>0$, we have
$(\alpha, \beta) \in \sS_v$ for $\beta \ll 0$. 

The following proposition proves the existence of a Gieseker chamber on $\Stab^{\circ}(A)$.
\begin{prop}\label{lem:DTchamber}
For any $(\alpha, \beta) \in \sS_v$, 
there exists $s(\alpha, \beta)>0$ such that 
for any $s > s(\alpha, \beta)$
the following holds: an object $E \in D^b(A)$ with 
$\ch(E)=v$ is a $Z_{\alpha, \beta}^{a=s, b=0}$-semistable 
object in $\aA_{\alpha, \beta}$ if and only if 
it is a $H$-Gieseker semistable sheaf. 
\end{prop}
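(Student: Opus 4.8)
The plan is to analyze the large-volume limit $s \to \infty$ of the central charge $Z^{a=s,b=0}_{\alpha,\beta}$ and show that on the half-line where $a$ is large the heart $\aA_{\alpha,\beta}$ together with this central charge detects exactly $\nu_H$-semistability in the top degree, which by Lemma~\ref{lem:nuH} and Lemma~\ref{lem:nuGie} is Gieseker semistability. Concretely, write
\[
Z^{a=s,b=0}_{\alpha,\beta}(E)=\frac{1}{6}\Bigl(-v_3^{\beta}(E)+6s\,v_1^{\beta}(E)+i\alpha\bigl(3v_2^{\beta}(E)-\alpha^2 v_0^{\beta}(E)\bigr)\Bigr).
\]
Since $(\alpha,\beta)\in\sS_v$ lies in the region $\nu_{\alpha,\beta}(v)>0$ and outside all $\nu_{\alpha,\beta}$-walls, Lemma~\ref{lem:nuH} identifies the $\nu_{\alpha,\beta}$-semistable objects of class $v$ in $\bB_{\alpha,\beta}$ with the $\nu_H$-semistable torsion-free sheaves of that Chern character; in particular such $E$ lies in $\bB_{\alpha,\beta}$ and has $\mathrm{Im}\,Z^{a=s,b=0}_{\alpha,\beta}(E)>0$. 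The phase of such an $E$ is governed, as $s\to\infty$, by the ratio of $\mathrm{Im}\,Z$ (independent of $s$) to $6s\,v_1^{\beta}+(\text{bounded})$, so the $\sigma^{a=s,b=0}_{\alpha,\beta}$-stability of $E$ inside $\aA_{\alpha,\beta}$ becomes, to leading order in $s$, $\nu_{\alpha,\beta}$-stability inside $\bB_{\alpha,\beta}$, and to next order a comparison of the $v_3^{\beta}$-terms among sub-/quotient objects with equal $(v_0,v_1,v_2)$-data — which is precisely the Gieseker refinement in Lemma~\ref{lem:nuGie}(ii).

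The key steps, in order, are: (1) Show that for $(\alpha,\beta)\in\sS_v$ and $s$ large, every $\sigma^{a=s,b=0}_{\alpha,\beta}$-semistable object $E$ of class $v$ actually lies in $\Coh(A)$ (not just in $\aA_{\alpha,\beta}$): because $E\in\bB_{\alpha,\beta}$ would follow from $\mathrm{Im}\,Z>0$ forcing $\nu_{\alpha,\beta}(E)\ge 0$ together with the wall-free hypothesis, and then $\aA_{\alpha,\beta}$-membership plus the sign of $v_1^{\beta}$ forces $E$ to sit in the torsion part $\oT'_{\alpha,\beta}$; a standard argument (cf. the surface case in \cite{MR3217639}, or the quiver-region arguments in \cite{Maci, BMS}) rules out the shifted pieces for $s$ sufficiently large. (2) Given $E\in\Coh(A)$ of class $v$, compare $H$-Gieseker stability with $\sigma^{a=s,b=0}_{\alpha,\beta}$-stability by testing against subsheaves $F\subset E$ (respectively sub- and quotient objects in $\aA_{\alpha,\beta}$): order destabilizers first by $v_1^\beta/v_0^\beta$, then $v_2^\beta/v_0^\beta$, then $v_3^\beta/v_0^\beta$, and observe that the inequality $\arg Z^{a=s,b=0}_{\alpha,\beta}(F)\le \arg Z^{a=s,b=0}_{\alpha,\beta}(E)$ for all large $s$ is equivalent, by the leading-order expansion above, to the lexicographic inequality of Lemma~\ref{lem:nuGie}, i.e. to $\nu_H$-semistability refined by the $v_3$-comparison, i.e. to Gieseker semistability. (3) Verify the finiteness needed to choose a single $s(\alpha,\beta)$ working uniformly: there are only finitely many possible Chern characters of destabilizing subobjects (bounded by the support property / discreteness of $\Lambda_H^\sharp$ together with the wall-free hypothesis bounding $v_0,v_1,v_2$), so the finitely many threshold values of $s$ beyond which each relevant inequality stabilizes have a maximum, which we take as $s(\alpha,\beta)$.

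The main obstacle I expect is step (1), the containment $E\in\Coh(A)$: one must exclude the a priori possibility that a $\sigma^{a=s,b=0}_{\alpha,\beta}$-semistable object of class $v$ is a genuine two-step complex in $\aA_{\alpha,\beta}$ with cohomology sheaves in degrees $-1$ and $0$. The resolution is the familiar large-volume estimate — for $s\gg 0$ any such complex would be destabilized in $\aA_{\alpha,\beta}$ by its cohomology truncation, because the real part of $Z^{a=s,b=0}_{\alpha,\beta}$ is dominated by the $6s\,v_1^\beta$ term and forces the $H^{-1}[1]$-part (which contributes negatively to $v_1^\beta$, being a shift of a torsion-free sheaf with $\mathrm{Im}\,Z\le 0$ in $\bB$) to have strictly larger phase, contradicting semistability. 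Making this estimate precise and uniform over $\sS_v$, using the description of $\sS_v$ as the complement of finitely many nested semicircles together with the region $\nu_{\alpha,\beta}(v)>0$, is where the real work lies; the rest is bookkeeping with the leading-order expansion of $\arg Z$ in $1/s$.
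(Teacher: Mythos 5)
Your proposal follows essentially the same route as the paper: the paper also takes the large-volume limit, reparametrizing $s=(1+t)\alpha^2/18t$ so that $s\to\infty$ corresponds to degenerating $Z_{\alpha,\beta}^{a=s,b=0}$ to the very weak stability condition $W_0$ at $t=0$, whose semistable objects of class $v$ are identified (using $\Imm W_0(v)>0$ on $\sS_v$) with the $\nu_{\alpha,\beta}$-semistable objects of $\bB_{\alpha,\beta}$, hence with $\nu_H$-semistable sheaves by Lemma~\ref{lem:nuH}, while the subleading slope $\xi$ for $0<t\ll 1$ supplies exactly the Gieseker refinement of Lemma~\ref{lem:nuGie}. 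The two points you single out as "the real work" --- ruling out two-step complexes and making the perturbation uniform --- are precisely what the paper outsources to \cite[Lemma~2.19]{PiYT} and \cite[Proposition~2.27]{PiYT} rather than redoing by hand.
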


\begin{proof}
For $t\ge 0$, consider the central charge
\begin{align*}
W_t=(1+t) \alpha^2 v_1^{\beta}-3t v_3^{\beta}
+i\alpha \left( 3v_2^{\beta}-\alpha^2 v_0^{\beta} \right). 
\end{align*}
For all $t>0$ we have
\begin{align*}
W_t \sim Z_{\alpha, \beta}^{a=s, b=0}, \ 
s=\frac{1+t}{18t}\alpha^2 >\frac{\alpha^2}{18}.
\end{align*}
Hence by Theorem~\ref{thm:BMS} the pair $(W_t, \aA_{\alpha, \beta})$ is a Bridgeland stability condition for any $t>0$.
These stability conditions 
degenerate to the very weak stability condition
$(W_0, \aA_{\alpha, \beta})$ at $t=0$, see \cite[Section~3.4]{PiYT}. 

Let $\dD_v \subset D^b(A)$ be the set of objects with 
Chern character $v$. 
By the definition of $\sS_v$, we have 
$\Imm W_0(E)>0$
for any $E \in \dD_v$. 
Therefore by~\cite[Lemma~2.19]{PiYT}, 
we have
\begin{align}\notag
&\{ E \in \dD_v : 
E \mbox{ is } W_0\mbox{-semistable in } \aA_{\alpha, \beta}\} \\
\label{set:1}
& =\{E \in \dD_v : E \mbox{ is } \nu_{\alpha, \beta}
\mbox{-semistable in } \bB_{\alpha, \beta}\}.
\end{align}
By Lemma~\ref{lem:nuH} and the definition of $\sS_v$,
(\ref{set:1}) coincides with 
\begin{align}\label{set:2}
\{E \in \dD_v : E \mbox{ is } \nu_H\mbox{-semistable in }\Coh(A)\}. 
\end{align}

On the other hand by~\cite[Proposition~2.27]{PiYT}, 
for $0<t\ll 1$ we have 
\begin{align*}
&\{ E \in \dD_v : 
E \mbox{ is } W_t\mbox{-semistable in } \aA_{\alpha, \beta}\} \\
&=\left\{E \in \dD_v : \begin{array}{c}
E \mbox{ is } \xi\mbox{-semistable among } W_0
\mbox{-semistable } \\
\mbox{ objects in }
\aA_{\alpha, \beta} \mbox{ with }
\arg W_0(-)=\arg W_0(v)
\end{array}
\right\},
\end{align*}
where $\xi$ is the slope function given by
\begin{align*}
\xi=\frac{3v_3^{\beta}-
\alpha^2 v_1^{\beta}}{3v_2^{\beta}-\alpha^2 v_0^{\beta}}. 
\end{align*}
By Lemma~\ref{lem:nuGie}
and (\ref{set:1}), (\ref{set:2}),  
for $v_0>0$ the last set of objects 
is the set of $H$-Gieseker semistable sheaves $E \in \Coh(A)$
with $\ch(E)=v$. 
Since $s=(1+t)\alpha^2/18t$ goes to $\infty$ for $t\to +0$, this implies the Lemma in case $v_0 > 0$.
The case $v_0=0$ is similar. 
\end{proof}

\section{Wallcrossing on abelian threefolds} \label{section:Wallcrossing_on_abelian_threefolds}
Let $A$ be an abelian threefold and let $\widehat{A} = \Pic^0(A)$ be its dual. We set
\[ \A = A \times \widehat{A}. \]
Let also $H \in \Pic(A)$ be a fixed ample class.

\subsection{Reduced DT invariants for Bridgeland semistable objects} \label{Section_reduced_DT_Bridgeland}
In Section~\ref{Subsection_definition_of_reduced_DT}, we defined 
$\A$-reduced Donaldson-Thomas invariants
\begin{align*}
\DTb_H(v) \in \mathbb{Q}[\A]
\end{align*}
counting $H$-Gieseker semistable sheaves on $A$.
Here we define reduced Donaldson--Thomas invariants counting Bridgeland semistable objects on $A$.
The construction is completely parallel to above and we will be brief.

Let $\sigma \in \Stab^{\circ}(A)$ be a Bridgeland stability condition which satisfies the 
full support property, 
and let $v \in \Gamma$.
We consider the moduli stack
\begin{align}\label{stack:B}
\mM_{\sigma}(v, \phi)
\end{align}
of $\sigma$-semistable objects $E \in D^b(A)$ with $\ch(E)=v$ and phase $\phi \in \BR$. 
By~\cite{PiYT}, the stack (\ref{stack:B}) is an algebraic stack of finite type.
Moreover by \cite{AHLH} we have that 
the stack (\ref{stack:B}) admits a good moduli space
\begin{align*}
p \colon  \mM_{\sigma}(v, \phi) \to M_{\sigma}(v, \phi)
\end{align*}
for a separated algebraic space $M_{\sigma}(v, \phi)$
of finite type. We set
\begin{align*}
p \colon 
\mM_{\sigma}(\phi):=\coprod_{v} \mM_{\sigma}(v, \phi)
\to M_{\sigma}(\phi) :=\coprod_{v} M_{\sigma}(v, \phi)
\end{align*}

By the argument in \cite[Proof of Theorem 5.6]{PiYT}
we may assume that
%
$\sigma$ is defined over $\mathbb{Q}$. 
Let $\phi \in \mathbb{R}$ be fixed, 
and let $\pP(\phi)$ be the category of $\sigma$-semistable 
objects with phase $\phi$. 
Then there exist a noetherian heart 
\begin{align*}
\aA =\pP((\psi-1, \psi])\subset D^b(X)
\end{align*}
for some $\psi \in \mathbb{R}$ with $\phi \in (\psi-1, \psi]$.
The heart $\aA$ 
is closed under the $\A$-action, 
since the $\A$ action leaves all the Chern characters invariant.
Then by~\cite[Corollary~4.21]{PiYT} the stack $\oO bj(\aA)$ of objects in $\aA$
is an algebraic stack locally of finite type 
with $\A$-action. As in Section~\ref{subsec:hall}
consider the $\A$-equivariant motivic Hall algebra with respect to the heart $\CA$,
\begin{align*}
H^{\A}(\aA)
=K_0^{\A}(\mathrm{St}/\oO bj(\aA)).
\end{align*}
Then similarly to Section~\ref{subsec:Gieseker}, we have the subalgebra
\begin{align*}
H^{\A}(\aA, \phi) := K_0^{\A}(\mathrm{St}/\mM_{\sigma}(\phi)) \subset
H^{\A}(\aA).
\end{align*}
We define $H_{\rm{reg}}^{\A}(\aA, \phi)$, $H_{\rm{sc}}^{\A}(\aA, \phi)$ and the integration map
\begin{align}\label{integrate:Bstab}
\CI^\A \colon H_{\rm{sc}}^{\A}(\aA, \phi) \stackrel{p_{\ast}^{\A}}{\to}
\mathrm{Constr}^{\A}(M_{\sigma}(\phi)) \stackrel{J}{\to} C^{\A}(X)
\end{align}
as in Section~\ref{Subsection_Equivariant_integration_map}.
The stack (\ref{stack:B}) defines the element
\begin{align*}
\delta_{\sigma}(v, \phi)\cneq [\mM_{\sigma}(v, \phi) \subset \mM_{\sigma}(\phi)]
\in H^{\A}(\aA, \phi).
\end{align*}
Using the result of Joyce, the logarithm
\begin{align}\label{def:log}
\epsilon_{\sigma}(v, \phi) \cneq
\sum_{l\ge 1, v_1+\cdots+v_l=v}
\frac{(-1)^{l-1}}{l}
\delta_{\sigma}(v_1, \phi) \ast \cdots \ast \delta_{\sigma}(v_l, \phi)
\end{align}
yields the regular element $(\mathbb{L}-1)\epsilon_{\sigma}(v, \phi)$ which in turn defines
\begin{align*}
\overline{\epsilon}_{\sigma}(v, \phi) \cneq [(\mathbb{L}-1)\epsilon_{\sigma}(v, \phi)]
\in H_{\rm{sc}}^{\A}(\aA, \phi). 
\end{align*}
We define the $\A$-reduced Donaldson--Thomas invariant
$\DTb_{\sigma}(v, \phi) \in \mathbb{Q}[\A]$ by
\begin{align*}
\CI^{\A}(\overline{\epsilon}_{\sigma}(v, \phi))=\DTb_{\sigma}(v, \phi) \cdot c_v. 
\end{align*}

Since $\DTb_{\sigma}(v, \phi)=\DTb_{\sigma}(v, \phi+1)$ the following convention makes sense.

\begin{defi}
For all $\sigma=(Z, \aA) \in \Stab(A)$ and $v \in \Gamma$ define 
\begin{align*}
\DTb_{\sigma}(v) \cneq \left\{ 
\begin{array}{ll}
\DTb_{\sigma}(v, \phi), & \mbox{ if }Z(v) \in \mathbb{R}_{>0} e^{\pi i \phi} \mbox{ for some }
\phi \in \mathbb{R} \\
0, & \mbox{ if }Z(v)=0. 
\end{array}
\right. 
\end{align*}
\end{defi}

For any connected abelian subvariety $B \subset \A$, we define $\DT_{\sigma}(v)_B \in \mathbb{Q}$ by 
\begin{align*}
\DTb_{\sigma}(v)=\sum_{B \subset \A}\DT_{\sigma}(v)_B \cdot \epsilon_B. 
\end{align*}
As before we usually write $\DT_{\sigma}(v) \cneq \DT_{\sigma}(v)_{B=(0, 0)}$. 

We have the following comparision result.
\begin{prop}\label{prop:DTH}
For any $v \in \Gamma$ and ample divisor $H$ on $A$, there exists a $\sigma \in \Stab^{\circ}(A)$ such that 
$\DTb_{\sigma}(v)=\DTb_{H}(v)$. 
\end{prop}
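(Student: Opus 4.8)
The plan is to combine the existence of the Gieseker chamber from Proposition~\ref{lem:DTchamber} with the fact that the reduced Donaldson--Thomas invariants are defined by applying one and the same equivariant integration map to one and the same motivic Hall algebra element (the regularized logarithm of the stack of semistable objects). First I would fix $v \in \Gamma$ and an ample $H$. If $v \notin \pm\overline{\Gamma}_+$ the statement is trivial since both sides vanish, so assume $v \in \overline{\Gamma}_+$ (after possibly replacing $v$ by $-v$, using $\DTb_H(-v) = \DTb_H(v)$ and the analogous convention for $\DTb_\sigma$). By Proposition~\ref{lem:DTchamber} there exist parameters $(\alpha,\beta) \in \sS_v$ and $s > s(\alpha,\beta)$ such that $\sigma := \sigma_{\alpha,\beta}^{a=s,b=0} \in \Stab^{\circ}(A)$ and an object $E \in D^b(A)$ with $\ch(E) = v$ lies in the heart $\aA_{\alpha,\beta}$ and is $\sigma$-semistable if and only if $E$ is an $H$-Gieseker semistable sheaf.

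The key step is then to check that this identification of semistable objects upgrades to an identification of the relevant moduli stacks, compatibly with the $\A$-action, and hence of the associated Hall algebra elements. Concretely: the heart $\aA := \aA_{\alpha,\beta}$ is noetherian and $\A$-invariant (the $\A$-action preserves Chern characters, hence the torsion pairs defining $\aA$), and one has $\mM_\sigma(v,\phi) \cong \mM_H(v)$ as stacks with $\A$-action for the appropriate phase $\phi$ with $Z_{\alpha,\beta}^{s,0}(v) \in \mathbb{R}_{>0}e^{\pi i \phi}$. Indeed both are open substacks of $\oO bj(\aA)$ cut out by the same numerical and semistability conditions on families, so they agree as substacks; the $\A$-equivariance is automatic. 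Consequently $\delta_\sigma(v,\phi) = \delta_H(v)$ in $H^\A(\aA,\phi) = H^\A(A,\overline\chi)$ (after identifying the ambient Hall algebras), and therefore the logarithms agree: $\epsilon_\sigma(v,\phi) = \epsilon_H(v)$, hence $\overline\epsilon_\sigma(v,\phi) = \overline\epsilon_H(v)$ in $H_{\mathrm{sc}}^\A$. Applying the equivariant integration map $\CI^\A$ — which is the same map in both constructions by Definition~\ref{defn_equiv} and its analog \eqref{integrate:Bstab} — yields $\DTb_\sigma(v) \cdot c_v = \CI^\A(\overline\epsilon_\sigma(v,\phi)) = \CI^\A(\overline\epsilon_H(v)) = \DTb_H(v) \cdot c_v$, so $\DTb_\sigma(v) = \DTb_H(v)$.

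I expect the main obstacle to be the bookkeeping around the heart and the phase: one must take the same $\psi \in \mathbb{R}$ and heart $\aA = \pP((\psi-1,\psi])$ that contains $v$ in the right phase, verify that for this heart the full subcategory of $\sigma$-semistable objects in phase $\phi$ coincides (as a stack, in families, not just on points) with $\mM_H(v)$, and check that the summands $v_1 + \cdots + v_l = v$ appearing in the two logarithms \eqref{epsilon_guy} and \eqref{def:log} match up — i.e.\ that the $v_i$ ranging over classes with $\overline\chi_H(v_i) = \overline\chi$ are exactly the $v_i$ with $Z_\sigma(v_i) \in \mathbb{R}_{>0}e^{\pi i \phi}$, using that in the Gieseker chamber the destabilizing subobjects of a sheaf of class $v$ inside $\aA$ are precisely its Gieseker-destabilizing subsheaves. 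Granting the results quoted above (finiteness of the stacks from \cite{PiYT}, existence of good moduli spaces from \cite{AHLH}, and Proposition~\ref{lem:DTchamber}), this reduces to a routine but somewhat delicate compatibility check. Once that is in place, the equality $\DTb_\sigma(v) = \DTb_H(v)$ follows formally, and in particular its $\epsilon_0$-coefficient gives $\DT_\sigma(v) = \DT_H(v)$.
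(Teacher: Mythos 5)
Your argument is exactly the paper's: Proposition~\ref{lem:DTchamber} identifies the $\sigma$-semistable objects of class $v$ with the $H$-Gieseker semistable sheaves for a suitable $\sigma$ in the Gieseker chamber, and together with the sign convention $\DTb_{\sigma}(v)=\DTb_{\sigma}(-v)$ this gives the equality of invariants. The compatibility checks you flag (matching hearts, phases, and the summands in the two logarithms) are precisely the routine verifications the paper leaves implicit, so your write-up is a correct, slightly more detailed version of the same proof.
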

\begin{proof} This follows from Proposition~\ref{lem:DTchamber} and since $\DTb_{\sigma}(v)=\DTb_{\sigma}(-v)$ by convention. \end{proof}

\subsection{Comparison under change of stability conditions}\label{subsec:compare}
The integration map $\CI^{\A}$ defined in Section~\ref{Section_reduced_DT_Bridgeland}
depended on a choice of stability condition.
We check the definition is well-behaved under
change of stability condition.

Consider a pair of stability conditions
\[ \sigma=(Z, \aA), \sigma'=(Z', \aA') \in \Stab^{\circ}(A). \]
Let $v \in \Gamma$ be fixed and let $\phi, \phi' \in \mathbb{R}$ be phases such that $Z(v) \in \mathbb{R}_{>0}e^{\pi i\phi}$ and
$Z'(v) \in \mathbb{R}_{>0}e^{\pi i\phi'}$.
We assume that there is an open embedding of stacks
\begin{equation}
\iota \colon \mM_{\sigma'}(v, \phi') \subset\mM_{\sigma}(v, \phi). \label{Incluopen}
\end{equation}

The inclusion $\iota$ induces the map
\begin{align*}
\iota_{\ast} \colon K_0^{\A}(\mathrm{St}/\mM_{\sigma'}(v, \phi'))
\to K_0^{\A}(\mathrm{St}/\mM_{\sigma}(v, \phi)).
\end{align*}
Recall also from Section~\ref{Section_reduced_DT_Bridgeland} the integration maps
\[ \CI^{\A} : K_{0, \mathrm{reg}}^{\A}(\mathrm{St}/\mM_{\sigma}(v, \phi)) \to \BQ[\A]c_v, \ \ \CI^{\prime \A} : 
K_{0, \mathrm{reg}}^{\A}(\mathrm{St}/\mM_{\sigma'}(v, \phi')) \to \BQ[\A]c_v. \]
obtained from the stability conditions $\sigma$ and $\sigma'$ respectively.
Here $\mathrm{reg}$ stands for regular elements. 

\begin{prop} \label{prop:compare} We have $\CI^{\A} = \CI^{\prime \A} \circ \iota_{\ast}$. 
In particular,
\begin{align*}
\iI^{\A}\left((\mathbb{L}-1)\iota_{\ast}\epsilon_{\sigma'}(v, \phi')\right)
=\DT_{\sigma'}(v, \phi') \cdot c_v.
\end{align*}
\end{prop}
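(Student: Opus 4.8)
\textbf{Proof plan for Proposition~\ref{prop:compare}.}
The essential content is that the integration map built from a Bridgeland stability condition in Section~\ref{Section_reduced_DT_Bridgeland} depends only on the underlying moduli stacks and the Behrend function, not on the auxiliary heart $\CA$ used to organize the Hall algebra. The plan is to unwind the definition $\CI^{\A} = J \circ p^{\A}_{\ast}$ on both sides and check compatibility at each of the two stages. First I would note that the open embedding $\iota$ of \eqref{Incluopen} is an open embedding of stacks carrying $\A$-actions, so it is automatically $\A$-equivariant (the $\A$-action leaves Chern characters fixed, hence preserves $\sigma$- and $\sigma'$-semistability of phase $\phi,\phi'$ with the given class $v$); consequently $\iota_{\ast}$ is well-defined on the equivariant relative Grothendieck groups and sends regular classes to regular classes because $\iota$ is representable.

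The key geometric input is that the good moduli space morphisms are compatible with $\iota$: since $M_{\sigma'}(v,\phi')$ and $M_{\sigma}(v,\phi)$ are good moduli spaces and $\iota$ is an open immersion of stacks, the universal property yields an $\A$-equivariant open immersion $\bar\iota \colon M_{\sigma'}(v,\phi') \hookrightarrow M_{\sigma}(v,\phi)$ fitting into a commutative square with $\iota$ and the two maps $p$, $p'$; moreover the square is Cartesian over the image of $\bar\iota$, i.e.\ $\mM_{\sigma'}(v,\phi') = \mM_{\sigma}(v,\phi) \times_{M_{\sigma}(v,\phi)} M_{\sigma'}(v,\phi')$. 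This is exactly what makes ``integration along the fibers of $p$'' agree with ``integration along the fibers of $p'$'' on classes pushed forward by $\iota_{\ast}$. Concretely: for a regular $\A$-equivariant class $\alpha$ on $\mM_{\sigma'}(v,\phi')$ and a point $x \in M_{\sigma'}(v,\phi')$, the fiber of $p'\circ f$ over $x$ equals the fiber of $p\circ(\iota\circ f)$ over $\bar\iota(x)$, and the Behrend function of $\mM_{\sigma}(v,\phi)$ restricts to that of $\mM_{\sigma'}(v,\phi')$ (Behrend functions are local and intrinsic to the stack). Hence $p^{\A}_{\ast}(\iota_{\ast}\alpha)$, viewed as a constructible function on $M_{\sigma}(v,\phi)$, is the extension by zero of $p'^{\A}_{\ast}(\alpha)$ along $\bar\iota$.

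It then remains to see that the map $J$ commutes with extension by zero along an $\A$-equivariant open immersion of good moduli spaces. This is immediate from the defining formula \eqref{def:J}: given an $\A$-invariant stratification of $M_{\sigma'}(v,\phi')$ into pieces $Z_i$ with $(Z_i)_{B_i} = Z_i$ and $\phi|_{Z_i} = a_i$, the images $\bar\iota(Z_i)$ (together with the complement, on which the extended function is zero and contributes nothing) form such a stratification of $M_{\sigma}(v,\phi)$ with the same groups $B_i$ and values $a_i$, and $e([Z_i/(\A/B_i)])$ is unchanged since $\bar\iota$ restricts to an isomorphism $Z_i \xrightarrow{\sim} \bar\iota(Z_i)$. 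Therefore $J(p^{\A}_{\ast}(\iota_{\ast}\alpha)) = J(p'^{\A}_{\ast}(\alpha))$, which is $\CI^{\A}(\iota_{\ast}\alpha) = \CI^{\prime\A}(\alpha)$. Applying this to $\alpha = (\mathbb{L}-1)\epsilon_{\sigma'}(v,\phi')$ and using the definition of $\DTb_{\sigma'}(v,\phi')$ gives the displayed formula. The main obstacle — really the only nontrivial point — is justifying the Cartesian compatibility of the two good moduli space morphisms under the open immersion $\iota$; once that is in hand, everything else is a direct unwinding of Step~1 and Step~3 of Section~\ref{Subsection_Equiv_int_map}.
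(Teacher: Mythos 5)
Your proof hinges on the claim that the open immersion of stacks $\iota \colon \mM_{\sigma'}(v,\phi') \hookrightarrow \mM_{\sigma}(v,\phi)$ induces an \emph{open immersion} $\bar\iota$ of good moduli spaces with the square Cartesian. This is the gap. The universal property of good moduli spaces only gives a morphism $\tau \colon M_{\sigma'}(v,\phi') \to M_{\sigma}(v,\phi)$ making the square commute; for $\tau$ to be an open immersion the open substack would have to be \emph{saturated} for $p$ (i.e.\ equal to $p^{-1}(p(\mM_{\sigma'}))$), and in the wall-crossing situations where this proposition is actually applied it is not: a $\sigma$-polystable object $E_1\oplus E_2$ lying in the closure of the orbit of a $\sigma'$-stable extension need not be $\sigma'$-semistable, so distinct points of $M_{\sigma'}$ can map to the same $\sigma$-S-equivalence class and the fibers of $p$ and $p'$ do not match up pointwise. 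Consequently $p^{\A}_{\ast}(\iota_{\ast}\alpha)$ is not the extension by zero of $p'^{\A}_{\ast}(\alpha)$; unwinding the definitions, its value at $y\in M_{\sigma}(v,\phi)$ is $\sum_{x\in\tau^{-1}(y)} p'^{\A}_{\ast}(\alpha)(x)$, i.e.\ the genuine constructible-function pushforward $\tau_{\ast}$ (defined by integrating Euler characteristics over the fibers of $\tau$). The paper works with exactly this $\tau_{\ast}$ rather than with extension by zero.

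This changes what remains to be proved in the second step. Compatibility of $J$ with extension by zero along an isomorphism onto an open subspace is indeed trivial, but compatibility of $J$ with $\tau_{\ast}$ is not: the formula for $J$ weights each stratum by its connected stabilizer $B$ and by $(-1)^{\dim(\A/B)}$ and takes the orbifold Euler characteristic of $[Z/(\A/B)]$, so one must show that $\tau$ preserves the connected component of the $\A$-stabilizer, i.e.\ that $\Stab(x)^{\circ}\to\Stab(\tau(x))^{\circ}$ is an isomorphism. The paper reduces this to the lemma that for $x\in M_{\sigma}(v,\phi)$ the connected group $B=\Stab(x)^{\circ}$ acts trivially on the geometric points of $p^{-1}(x)$, proved by an extension/rigidity argument (any map from the abelian variety $B$ to the affine quotient $\GL(\Ext^1(Q,P))/\Aut(Q)\times\Aut(P)$ is constant). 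Your proposal contains no substitute for this lemma, and without it the equality $J\circ\tau_{\ast}=J'$ does not follow. The part of your argument that does survive is the observation that the Behrend functions agree because $\iota$ is open, which the paper also uses for the commutativity of the upper square.
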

\begin{proof}
By the universal property of good moduli spaces, 
we have the commutative diagram
\[\label{dia:goodmoduli}
\begin{tikzcd}
\mM_{\sigma'}(v, \phi') \arrow{r}{\iota} \arrow{d}{p'} & \mM_{\sigma}(v, \phi) \arrow{d}{p}\\
M_{\sigma'}(v, \phi') \arrow{r}{\tau}  & M_{\sigma}(v, \phi),
\end{tikzcd}
\]
where the left arrow is the good moduli space for 
$\mM_{\sigma'}(v, \phi')$. 
Then it is enough to show that the following 
diagram is commutative
\[
\begin{tikzcd}
K_{0, \mathrm{reg}}^{\A}(\mathrm{St}/\mM_{\sigma'}(v, \phi')) \arrow{r}{\iota_{\ast}} \arrow{d}{p_{\ast}^{'\A}} & 
K_{0, \mathrm{reg}}^{\A}(\mathrm{St}/\mM_{\sigma}(v, \phi)) \arrow{d}{p_{\ast}^{\A}}\\
\mathrm{Constr}^{\A}(M_{\sigma'}(v, \phi')) \arrow{r}{\tau_{\ast}} \arrow{d}{J'} & 
\mathrm{Constr}^{\A}(M_{\sigma}(v, \phi)) \arrow{d}{J} \\
C^{\A}(X) \arrow{r}{\id} & C^{\A}(X).
\end{tikzcd}
\]
Here 
$J$, $J'$ are defined as in (\ref{def:J}), 
and $\tau_{\ast}$ is defined as follows:
for any $\A$-invariant subspace $Z \subset M_{\sigma'}(v, \phi')$
and $x \in M_{\sigma}(v, \phi)$ let
$\tau_{\ast}(1_Z)(x)=e(\tau^{-1}(x) \cap Z)$. 

The upper diagram is commutative since 
both $p_{\ast}^{\A} \circ \iota_{\ast}$
and $\tau_{\ast} \circ p_{\ast}^{'\A}$
compute the Behrend function weighted Euler numbers of fibers to the map to $M_{\sigma}(v, \phi)$,
and the Behrend weights agree since \eqref{Incluopen} is an open embedding.
To show that the lower diagram is commutative, 
by the definition of equivariant Euler number it 
is enough to show that the map $\tau$ preserves the 
connected component of the stabilizer 
groups of $\A$-actions, i.e. for any 
$x \in M_{\sigma'}(v, \phi')$, the induced map 
$\Stab(x)^{\circ} \to \Stab(\tau(x))^{\circ}$ is an isomorphism.
By the diagram on good moduli spaces and since the open immersion $\iota$
preserves the connected component of the stabilizer group,
it is enough to show the following lemma. 
\end{proof}
\begin{lem}
For any $x \in M_{\sigma}(v, \phi)$, 
the connected component of the stablizer
$B=\Stab(x)^{\circ}$ acts trivially on 
the geometric points of $p^{-1}(x) \subset \mM_{\sigma}(v, \phi)$.
\end{lem}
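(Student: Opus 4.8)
The plan is to analyze the action of the stabilizer group on the fiber of the good moduli space map. Let $x \in M_{\sigma}(v,\phi)$ be a point and let $B = \Stab(x)^{\circ}$ be the connected component of the stabilizer of the $\A = A \times \widehat{A}$-action at $x$. The key point is that $x$ corresponds to an $S$-equivalence class of $\sigma$-semistable objects, and the closed points of $p^{-1}(x)$ are precisely the polystable objects in that $S$-equivalence class, i.e.\ objects of the form $E = \bigoplus_i E_i^{\oplus m_i}$ where the $E_i$ are the stable factors (all with the same phase $\phi$). First I would recall that since $B$ is connected, an element $(a,L) \in B$ is connected to the origin through a path in $B$, so for each stable factor $E_i$ we have $T_a^{\ast} E_i \otimes L \cong E_i$ by rigidity of simple objects: the isomorphism class of a stable object is discrete in a family, so it cannot vary over a connected base. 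More precisely, $B$ acting on $x$ means $T_a^{\ast} E \otimes L$ is $S$-equivalent to $E$ for all $(a,L) \in B$; I would then argue that this forces $B \subseteq \Xi(E_i)$ for each $i$ (after possibly a translation action on the finite index set of factors, which is trivial since $B$ is connected).

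The main technical step is to promote the statement "$(a,L)$ fixes the $S$-equivalence class" to "$(a,L)$ fixes every geometric point of $p^{-1}(x)$". The closed points of $p^{-1}(x)$ are the polystable objects $\bigoplus_i E_i^{\oplus m_i}$; since the multiplicities $m_i$ are determined by the class and the finite set of stable constituents $\{E_i\}$ is permuted by $B$, and $B$ is connected acting on a finite set, each $E_i$ is individually fixed up to isomorphism, hence $T_a^{\ast}(\bigoplus E_i^{\oplus m_i}) \otimes L \cong \bigoplus E_i^{\oplus m_i}$. But there may be non-closed points in $p^{-1}(x)$ corresponding to non-split extensions; I would handle these by noting that every point of the fiber $p^{-1}(x)$ has the same closed point in its orbit closure (the polystable object), and the $B$-action on $p^{-1}(x)$, being continuous and fixing the unique closed point, together with the fact that $p^{-1}(x)$ is the stack of objects with a fixed filtration structure (a "gluing stack" in the sense of iterated extensions of the $E_i$), forces $B$ to act through an action on $\Ext^1$-groups. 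Here the action of $(a,L) \in B$ on $\Ext^1(E_j, E_i)$ is via $T_a^{\ast}(-) \otimes L$, and since $T_a^{\ast}E_i \otimes L \cong E_i$ canonically up to scalar and $B$ is connected, this action is trivial (or: any algebraic action of a connected group on the finite-dimensional $\Ext$-groups that fixes the "split" point, combined with the scalar indeterminacy being absorbed into the $\BG_m$-gerbe structure, acts trivially on the underlying reduced scheme of the fiber).

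The cleanest way to organize this: $p^{-1}(x)$ with its reduced structure is a quotient stack $[R/G]$ where $R$ parametrizes filtered objects with associated graded $\bigoplus E_i^{\oplus m_i}$ and $G = \Aut(\bigoplus E_i^{\oplus m_i})$; the $B$-action comes from a $B$-equivariant structure on $R$ and $G$. Since $B \subseteq \Xi(E_i)$ for each $i$, the functor $T_a^{\ast}(-) \otimes L$ restricted to the subcategory generated by the $E_i$ is naturally isomorphic to the identity (it preserves each $E_i$ and hence, being exact, acts on iterated extensions compatibly). Therefore the induced automorphism of $[R/G]$ is isomorphic to the identity automorphism, so $B$ acts trivially on geometric points. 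I expect the main obstacle to be the bookkeeping around the $\BG_m$-gerbe / scalar ambiguity: the isomorphism $T_a^{\ast}E_i \otimes L \cong E_i$ is only canonical up to scalar, so one must check this scalar ambiguity does not produce a non-trivial action on $p^{-1}(x)$ — but this is exactly why the statement is about \emph{geometric points} rather than a scheme-theoretic fixed-point statement, and the scalars act trivially on points. I would also want to invoke, from the paper's conventions in Section~\ref{subsec:pre lim stacks}, that we only care about the underlying reduced scheme and topological Euler characteristics, which lets us ignore the gerbe structure entirely. Finally, combining with the open immersion $\iota$ preserving connected components of stabilizers (already noted in the proof of Proposition~\ref{prop:compare}), this completes the verification that $\tau$ preserves $\Stab(x)^{\circ}$ and hence the commutativity of the lower diagram.
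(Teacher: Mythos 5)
Your reduction is the same as the paper's: the geometric points of $p^{-1}(x)$ are iterated extensions of finitely many $B$-fixed stable objects $E_i$ of phase $\phi$, so it suffices to show that each $g\in B$ fixes the isomorphism class of any extension $0\to P\to R\to Q\to 0$ of $B$-fixed semistable objects, i.e.\ that the induced action on $\Ext^1(Q,P)$ (well defined only up to the ambiguity coming from $\Aut(P)\times\Aut(Q)$) is trivial. The gap is in your justification of this last triviality. You assert it follows because ``$B$ is connected'' and because ``any algebraic action of a connected group on the finite-dimensional $\Ext$-groups that fixes the split point \dots acts trivially.'' That is false: $\BG_m$ is connected, fixes the origin of any vector space it scales, and acts nontrivially — and even nontrivially on extension \emph{classes} once the representation is not a single scalar character. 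Likewise, your alternative phrasing that $T_a^{\ast}(-)\otimes L$ is ``naturally isomorphic to the identity'' on the extension closure of the $E_i$ because it preserves each $E_i$ is circular: an exact autoequivalence can fix every stable constituent up to isomorphism and still permute non-isomorphic extensions; that it does not is exactly the content of the lemma.

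The missing ingredient is \emph{completeness} of $B$, not just connectedness. The paper's argument is that the assignment $g\mapsto (u\mapsto b_g\circ g(u)\circ a_g^{-1})$ gives a morphism
\[
B \longrightarrow \GL\bigl(\Ext^1(Q,P)\bigr)\big/\Aut(Q)\times\Aut(P),
\]
whose target is affine; since $B$ is a connected \emph{abelian variety}, hence proper, any such morphism is constant, and as it sends $0$ to the identity it is identically the identity. Without properness of $B$ the statement would simply fail, so no argument relying only on connectedness (or on the scalar ambiguity being absorbed into the gerbe) can close this step. Your preliminary observations — that $B$ fixes each stable factor $E_i$, and that one may work with reduced structures and ignore the $\BG_m$-gerbe when speaking of geometric points — are fine and consistent with the paper, but the heart of the proof is the ``proper source, affine target'' argument, which your proposal does not supply.
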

\begin{proof}
For a fixed 
$x \in M_{\sigma}(v, \phi)$, there is a finite number of 
$B$-fixed
$\sigma$-stable objects $E_1, \ldots, E_n$
with phase $\phi$
such that any point in $p^{-1}(x)$
corresponds to iterated extensions of $E_1, \ldots, E_n$.
By the induction argument, 
it is enough to prove the following: for any
$\sigma$-semistable objects $P$, $Q$ fixed by $B$ and with phase $\phi$, and for any extension 
\[ 0 \to P \to R \to Q \to 0 \]
we have $g(R) \cong R$ for any $g \in B$. 

The last claim is proved as follows. For $g \in B$, let 
\begin{align*}
a_g \colon g(P) \stackrel{\cong}{\to} P, \ b_g \colon g(Q) \stackrel{\cong}{\to} Q
\end{align*}
be isomorphisms. For $u \in \Ext^1(Q, P)$, we set
\begin{align*}g(u)' = b_g \circ g(u) \circ a_g^{-1} \in \Ext^1(Q, P),
\end{align*}
where $g(u) \in \Ext^1(g(Q), g(P))$ is the extension induced by
the $B$-action. The assignment $g \mapsto (u \mapsto g(u)')$
is well-defined up to choices of $a_g$, $b_g$, so defines a map
\begin{align*}
B \to \GL(\Ext^1(Q, P))/\Aut(Q) \times \Aut(P).
\end{align*}
The target is an affine variety and $B$ is an abelian variety, so the image must be an identity. 
This gives the proof of the above claim. 
\end{proof}

\subsection{Reduced DT invariants for semihomogeneous sheaves}
Recall the subset of semihomogeneous sheaves $\cC \subset \Gamma$ defined in \eqref{def:C0}. 
Since the stabilizer $B \subset \A$ of every non-zero coherent sheaf on $A$ is at most 3-dimensional \cite[Proposition~4.5]{Mu4},
and the sheaf is semihomogeneous if and only if $\dim(B) = 3$, we have the following.
\begin{lem}\label{lem:B} Let $v \in \Gamma$ and let $B \subset \A$ be a connected abelian subvariety.
\begin{enumerate}
\item[(a)] If $\dim B>3$, then $\DT_{H}(v)_B=0$.
\item[(b)] If $\dim B=3$ and $\DT_{H}(v)_B\neq 0$ then $v \in \cC$. 
\end{enumerate}
\end{lem}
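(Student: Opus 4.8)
\textbf{Proof plan for Lemma \ref{lem:B}.}

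The plan is to reduce everything to the structure of the equivariant integration map $\CI^{\A}$ worked out in Section~\ref{section:reducedDT}. Recall that $\DTb_H(v) = \sum_B \DT_H(v)_B \epsilon_B$ is obtained by applying $\CI^{\A}$ to the class $\overline{\epsilon}_H(v) \in H^{\A}_{\mathrm{sc}}(X,\overline{\chi})$, and that by the explicit formula in Theorem~\ref{thm:IA} (together with the construction of $J$ in Step~3), the coefficient of $\epsilon_B$ picks out, after a stratification of $M_H(\overline{\chi})$ into $\A$-invariant strata $Z_i$ with $(Z_i)_{B_i}=Z_i$, exactly those strata with $B_i = B$. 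Therefore $\DT_H(v)_B \neq 0$ forces the existence of an $\A$-invariant locally closed subscheme $Z \subset M_H(v)$, $Z \neq \varnothing$, on which every point has stabilizer containing $B$ with finite index; equivalently, there is a point of $M_H(v)$ — hence an $S$-equivalence class of $H$-Gieseker semistable sheaves $E$ with $\ch(E)=v$ — whose stabilizer $\Stab(E) \subset \A$ under the $\A$-action has identity component of dimension $\geq \dim B$. (More precisely one should argue on the stack side, using that the good moduli space point is represented by a polystable sheaf whose stabilizer contains $B$; but for the dimension count a single such sheaf suffices.)

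First I would make the above reduction precise: unwind Definitions~\ref{defn_equiv} and the formula \eqref{def:J} to see that $\DT_H(v)_B$ is a $\BQ$-linear combination of orbifold Euler characteristics $e([Z/(\A/B)])$ over strata $Z$ with stabilizer type exactly $B$, so $\DT_H(v)_B \neq 0$ implies at least one such $Z$ is nonempty. Pick a closed point $x \in Z$ and a polystable representative $E$ of the corresponding $S$-equivalence class; then $\dim \Stab(E) \geq \dim B$. Second, I would invoke \cite[Proposition~4.5]{Mu4}, which bounds $\dim \Xi(E) \leq 3$ for any nonzero coherent sheaf on the $3$-fold $A$, where $\Xi(E)$ is precisely the stabilizer of $E$ under the $\A = A \times \widehat{A}$-action. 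This immediately gives part (a): if $\dim B > 3$ no sheaf can have stabilizer containing $B$, so the stratum is empty and $\DT_H(v)_B = 0$.

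For part (b): if $\dim B = 3$ and $\DT_H(v)_B \neq 0$, the same argument produces a nonzero $H$-Gieseker semistable sheaf $E$ with $\ch(E) = v$ and $\dim \Xi(E) \geq 3$, hence $\dim \Xi(E) = 3$ by the Mukai bound, i.e. $E$ is semihomogeneous. By definition \eqref{def:C0} this gives $\ch(E) = v \in \cC$. The only genuinely delicate point is the passage from "$\DT_H(v)_B \neq 0$" to "there exists an actual sheaf with that stabilizer": one must be careful that the good moduli space point need not correspond to a $B$-fixed \emph{stable} sheaf, only to a polystable one, and that the stabilizer of a point of the stack (as used in the stratification) agrees with the stabilizer $\Xi(E)$ of a polystable representative. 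I would handle this by noting that the $\A$-action on the stack descends to the good moduli space, that stabilizers can only jump up under specialization, and that a polystable sheaf fixed (up to finite index) by $B$ is in particular a sheaf with $\dim \Xi(E) \geq 3$ — which is all that is needed. I expect this bookkeeping about stabilizers on stacks versus good moduli spaces to be the main obstacle, but it is routine given the setup of Section~\ref{subsec:pre lim stacks}; the geometric input is entirely \cite[Proposition~4.5]{Mu4}.
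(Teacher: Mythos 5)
Your proposal is correct and follows essentially the same route as the paper: the paper's entire proof is the one sentence preceding the lemma, namely that the stabilizer of a non-zero coherent sheaf on $A$ is at most $3$-dimensional by \cite[Proposition~4.5]{Mu4} and that equality characterizes semihomogeneous sheaves. Your unwinding of the integration map $J \circ p^{\A}_{\ast}$ to produce a polystable sheaf whose stabilizer $\Xi(E)$ contains $B$ up to finite index is exactly the bookkeeping the paper leaves implicit, and it is handled correctly (in particular, $S$-equivalence classes of polystable sheaves are isomorphism classes, so the good moduli space stabilizer agrees with $\Xi(E)$).
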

We have the following generalization of Lemma~\ref{lem:B}.
\begin{lem}\label{lem:B2}
Let $\sigma \in \Stab^{\circ}(A)$. Let $v \in \Gamma$ and let $B \subset \A$ be connected.
\begin{enumerate}
\item[(a)] If $\dim B > 3$, then $\DT_{\sigma}(v)_B=0$.
\item[(b)] If $\dim B = 3$ and $\DT_{\sigma}(v)_B\neq 0$, then $v \in \cC$. 
\end{enumerate}
\end{lem}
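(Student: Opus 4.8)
\textbf{Proof proposal for Lemma~\ref{lem:B2}.}

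The plan is to mimic the proof of Lemma~\ref{lem:B} but working with Bridgeland semistable objects in place of Gieseker semistable sheaves. The key input will be a bound on the dimension of the stabilizer group $\Xi(E) \subset \A$ of a $\sigma$-semistable object $E \in D^b(A)$, together with the characterization of when this dimension equals $3$. First I would recall that $\DTb_\sigma(v)$ is computed, via the integration map $\CI^\A = J \circ p^\A_\ast$, from the element $\overline\epsilon_\sigma(v,\phi)$, which is built out of the classes $[\mM_\sigma(v_i,\phi) \subset \mM_\sigma(\phi)]$. Tracing through the definition of $J$ in~\eqref{def:J}, the coefficient $\DT_\sigma(v)_B$ can only be non-zero if there is a geometric point of (a stratum of) $\mM_\sigma(\phi)$ with good moduli image $x$ whose stabilizer $\Stab(x)^\circ$ equals $B$; more precisely, after stratifying, one needs $\A$-invariant locally closed pieces $Z_i$ of the relevant moduli space with $(Z_i)_{B} = Z_i$ contributing non-trivially. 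So the statement reduces to: if $E \in D^b(A)$ is $\sigma$-semistable, then $\dim \Xi(E) \le 3$, and if $\dim \Xi(E) = 3$ then $\ch(E) \in \cC$.

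The main obstacle is establishing $\dim \Xi(E) \le 3$ for $\sigma$-semistable complexes rather than for sheaves, since the sheaf statement cited is~\cite[Proposition~4.5]{Mu4} and does not immediately apply to objects of $D^b(A)$. My approach would be to reduce to the sheaf case: since $\sigma$ is defined over $\BQ$ (as arranged in Section~\ref{Section_reduced_DT_Bridgeland}) and lies in $\Stab^\circ(A)$, any $\sigma$-semistable $E$ with $\ch(E) = v$ sits in a noetherian heart $\aA$ which, by Theorem~\ref{thm_full_support}, is obtained from $\Coh(A)$ by the action of an autoequivalence $g$ composed with the $\widetilde{\GL}_2^+(\BR)$-action (tilts). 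The action of $\A = A \times \hat A$ on $D^b(A)$ commutes (up to the central $\BG_m$) with all autoequivalences because translations and $\Pic^0$-twists act trivially on Chern characters and the group $\Auteq^0(A)$ mentioned in the remark in Section~\ref{subsec:pre lim stacks} is central. Hence $\Xi(E) = \Xi(g^{-1} E)$ for the autoequivalence $g$ taking $\aA$ to (a tilt of) $\Coh(A)$, and a tilt changes neither the underlying set of semistable objects' stabilizers nor $\ch$. Thus it suffices to bound $\dim \Xi(F)$ for $F$ a $\nu_H$- or slope-semistable sheaf, which follows from \cite[Proposition~4.5]{Mu4}, giving part~(a).

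For part~(b), suppose $\dim \Xi(E) = 3$. By the same reduction we may assume $E$ corresponds to a coherent sheaf $F$ with $\dim \Xi(F) = 3$, i.e. $F$ is semihomogeneous by the definition in Section~\ref{semihomogeneous}; then $\pm\ch(F) \in \cC$ by~\eqref{def:C0}, and since $\cC$ is invariant under $\ch \mapsto -\ch$ and under the cohomological action of autoequivalences (semihomogeneity being a derived-categorical notion up to the functors in play — concretely, $\Phi^{\mathrm H}_\pP$ sends semihomogeneous bundles to semihomogeneous bundles, and twists and shifts visibly preserve $\cC$), we conclude $v = \ch(E) \in \cC$. The one point requiring a little care is that a $\sigma$-semistable object with $3$-dimensional stabilizer, after applying $g^{-1}$, is genuinely a shift of a sheaf and not merely a two-term complex; this is handled by noting that its stabilizer being all of a $3$-dimensional abelian subvariety forces, via the filtration of $\eE nd$ by degree-$0$ line bundles as in Lemma~\ref{prop:Mukai-semihomognoeus-properties} and Proposition~\ref{prop:homogeneous-bundle-filtration}, that each cohomology sheaf is semihomogeneous, and a semistable object in any of the hearts $\aA_{\omega,B}$ with such cohomology is a shift of a single semihomogeneous sheaf. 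I expect writing out this last reduction cleanly to be the most delicate part, but it is essentially the abelian-variety structure theory already used in Section~\ref{semihomogeneous}.
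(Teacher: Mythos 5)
Your top-level reduction is the right one and matches the paper: the coefficient $\DT_\sigma(v)_B$ can only pick up contributions from strata whose stabilizers contain $B$ with finite index, so everything comes down to showing that $\dim\Xi(E)\le 3$ for every object $E\in D^b(A)$, and that $\dim\Xi(E)=3$ forces $\ch(E)\in\cC$. The paper isolates exactly this statement (for arbitrary $E\in D^b(A)$, no semistability needed) and derives Lemma~\ref{lem:B2} from it.

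The gap is in how you prove that statement. Your reduction to the sheaf case rests on the claim that the heart $\aA$ containing $E$ is ``obtained from $\Coh(A)$ by the action of an autoequivalence $g$ composed with the $\widetilde{\GL}_2^+(\BR)$-action (tilts).'' Theorem~\ref{thm_full_support} does not say this, and it is false in general: the hearts $\aA_{\omega,B}$ are \emph{double tilts} of $\Coh(A)$, not images of $\Coh(A)$ under autoequivalences, and a general point of $\Stab^\circ(A)$ need not even lie on the orbit of the explicit family. Even if it did, objects of a tilted heart are two-term complexes, so Mukai's bound $\dim\Xi(F)\le 3$ for sheaves $F$ still would not apply to $E$ directly; and your identity $\Xi(E)=\Xi(g^{-1}E)$ is also not right as stated, since autoequivalences act on $\A$ by the nontrivial automorphism $g_*$ of Theorem~\ref{thm:Orlov} (so only $\dim\Xi$ is preserved, not the subgroup). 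The correct argument is much more elementary and bypasses stability entirely: if $(a,L)$ satisfies $T_a^*E\otimes L\cong E$ then it stabilizes every cohomology sheaf $\hH^i(E)$, so $\Xi(E)\subset\bigcap_i\Xi(\hH^i(E))$, and Mukai's $\dim\Xi(F)\le 3$ applied to each $F=\hH^i(E)$ gives (a). For (b), $\dim\Xi(E)=3$ forces each non-zero $\hH^i(E)$ to be semihomogeneous with $\Xi^\circ(\hH^i(E))=\Xi^\circ(E)$; since by Mukai this connected subvariety determines $c_1/r$ (after a Fourier--Mukai twist reducing to the vector-bundle case), all the $\ch(\hH^i(E))$ are proportional to a single $e^{D}$, hence $\ch(E)=\sum_i(-1)^i\ch(\hH^i(E))\in\cC$. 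Your final worry about whether $E$ is a shift of a single sheaf is thus unnecessary: one never needs $E$ itself to be a sheaf, only its cohomology objects.
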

The above lemma follows immediately from the following: 
\begin{lem} \label{Lemma_24t6rgs}
For every $E \in D^b(A)$, 
let $\Xi(E) \subset \A$ be as in (\ref{def:Phi(E)}).
Then 
we have $\dim \Xi(E) \le 3$.
If $\dim \Xi(E)=3$, we have 
$\ch(E) \in \cC$. 
\end{lem}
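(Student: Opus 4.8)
The plan is to reduce the statement about arbitrary objects $E \in D^b(A)$ to the statement about coherent sheaves, where the needed facts are already available from Mukai's work. The key observation is that $\Xi(E)$ depends only on the image of $E$ in the derived category up to the action of $\A$, and that for any $E$ we can find a translate-and-twist-invariant description of $\Xi(E)$ in terms of cohomology sheaves. Concretely, for $(x,L) \in \A$ the condition $T_x^{\ast} E \otimes L \cong E$ implies $T_x^{\ast}(\hH^i(E)) \otimes L \cong \hH^i(E)$ for every $i$, since $T_x^{\ast}(-) \otimes L$ is an exact autoequivalence and hence commutes with taking cohomology sheaves. Therefore
\[
\Xi(E) \subset \bigcap_{i} \Xi(\hH^i(E)),
\]
and in particular $\dim \Xi(E) \le \min_i \dim \Xi(\hH^i(E)) \le 3$ by \cite[Proposition~4.5]{Mu4} (applied to any nonzero cohomology sheaf; if all cohomology sheaves vanish then $E = 0$ and there is nothing to prove). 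This already gives the first assertion.

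For the second assertion, suppose $\dim \Xi(E) = 3$. Then by the inclusion above, every nonzero cohomology sheaf $F = \hH^i(E)$ satisfies $\dim \Xi(F) = 3$, i.e.\ is semihomogeneous. Now I would use the structure theory of semihomogeneous sheaves: by Lemma~\ref{prop:semihomo-numerical}(iii) a semihomogeneous sheaf is Gieseker semistable with respect to any polarization, and its Jordan--H\"older factors are simple semihomogeneous sheaves; by part (i) of that lemma a simple semihomogeneous sheaf of positive rank $r$ has Chern character $r \cdot e^{c_1/r} \in \cC$. One must also handle the torsion case: a semihomogeneous sheaf supported in dimension $< 3$ has $\Xi$-stabilizer containing the connected component of its support's stabilizer, and a $3$-dimensional stabilizer forces the support to be all of $A$ when the sheaf has a torsion-free quotient — more carefully, a pure sheaf of dimension $d<3$ cannot have a $3$-dimensional group acting on it with the semihomogeneity property unless $d=0$, in which case $\ch(F)$ is a multiple of $\pt = (0,0,0,1)$ which also lies in $\cC$ (the $p=q=0$, or rather the $0$-dimensional support case explicitly allowed in \eqref{def:C0}). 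So in all cases $\ch(\hH^i(E)) \in \BZ_{>0}\cdot \cC \cup \{0\}$ for each $i$.

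The remaining point — and this is where I expect the main subtlety — is to pass from "each cohomology sheaf has Chern character a positive multiple of an element of $\cC$" to "$\ch(E) = \sum_i (-1)^i \ch(\hH^i(E)) \in \cC$". This is \emph{not} automatic since $\cC$ is not closed under sums or differences. The resolution should be that the condition $\dim\Xi(E)=3$ is very rigid: if $\dim \Xi(E) = 3$ then $\Xi(E)^{\circ}$ is a $3$-dimensional abelian subvariety $B \subset \A$, and the composition $B \hookrightarrow \A \to A$ (projection to the first factor) must be an isogeny onto $A$ — otherwise $B$ would be contained in $\{0\} \times \widehat A$ after translation, forcing $E$ to have $0$-dimensional support, whence $\ch(E) \in \BZ\cdot\pt \subset \cC \cup (-\cC)$ and we are done. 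When the projection is an isogeny, $B$ is the graph of a homomorphism $\varphi: A' \to \widehat A$ for some isogeny $A' \to A$, which by Mukai's classification (the same mechanism underlying Lemma~\ref{prop:semihomo-numerical}) forces $E$ to have Chern character of the form $r\cdot e^{D}$ for a single $D = c_1/r \in \NS_{\BQ}(A)$ up to the point class; concretely, all cohomology sheaves must share the \emph{same} normalized first Chern character $D$, so their alternating sum is again of the form $r' e^{D}$ with $r' \in \BZ$, hence lies in $\cC$ (including the degenerate cases $r'=0$ handled via the point class, using that $\cC$ is defined with a $\pm$ and the convention allows $0$-dimensional support). I would isolate this "common slope" argument as the heart of the proof, deducing it from Proposition~\ref{prop:stab-under-tensoring}: tensoring by the inverse of a simple semihomogeneous bundle with the appropriate $c_1$ reduces to the case where $\Xi(E)^{\circ}$ contains $A \times \{0\}$, i.e.\ $E$ is translation-invariant, and then each $\hH^i(E)$ is a homogeneous sheaf (filtered by $\Pic^0$), so $\ch(\hH^i(E)) = r_i \cdot (1,0,0,0) + (\text{point contributions})$ and the alternating sum is manifestly in $\cC \cup (-\cC) \cup \{0\}$.
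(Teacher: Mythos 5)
Your first half is exactly the paper's argument: the inclusion $\Xi(E)\subset\bigcap_i\Xi(\hH^i(E))$ together with \cite[Proposition~4.5]{Mu4} gives $\dim\Xi(E)\le 3$, and in the boundary case every nonzero $F_i=\hH^i(E)$ is semihomogeneous with $\Xi^{\circ}(F_i)=\Xi^{\circ}(E)$. You also correctly isolate the real issue: one must show that all the $\ch(F_i)$ are \emph{proportional} (share a common normalized slope), since $\cC$ is not closed under sums.

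The gap is in how you establish the common slope. Your dichotomy for $B=\Xi(E)^{\circ}$ --- either the projection $B\to A$ is an isogeny, or $B$ sits inside $\{0\}\times\widehat A$ and $E$ has $0$-dimensional support --- fails whenever $A$ is not simple. For $A=S\times E'$ with $S$ an abelian surface, the sheaf $\oO_{S\times\{0\}}$ is semihomogeneous with $2$-dimensional support and $\Xi^{\circ}=S\times\{0\}\times\{0\}\times\widehat{E'}$, whose projection to $A$ is neither an isogeny nor zero; this also refutes your auxiliary claim that a pure semihomogeneous sheaf of dimension $d<3$ must have $d=0$. In these intermediate cases the step ``tensor by $V^{\vee}$ and reduce to homogeneous bundles'' has no semihomogeneous vector bundle to grab onto, so your argument does not show the $\ch(F_i)$ are proportional there. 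The paper closes exactly this hole differently: when all $F_i$ are vector bundles it quotes Mukai's result that the subvariety $\Xi^{\circ}(F_i)\subset\A$ already \emph{determines} $c_1(F_i)/r(F_i)$ (\cite[Theorem~4.9~(3)]{Mu4}), so equality of the $\Xi^{\circ}(F_i)$ forces equality of the slopes; and when some $F_i$ is not a vector bundle it applies $\Phi_{\pP}^{A\to\widehat A}\circ\otimes\,\oO_A(mH)$ for $m\gg 0$ and uses Orlov's compatibility $g_{\ast}\Xi(E)=\Xi(g(E))$ (Theorem~\ref{thm:Orlov}) to reduce to the vector bundle case. Replacing your case analysis by this Fourier--Mukai reduction (or by a direct appeal to \cite[Theorem~4.9]{Mu4}) would repair the proof; the rest of your outline then goes through.
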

\begin{proof}
For every $E \in D^b(A)$ with $F_i=\hH^i(E)$, we have
\begin{align*}
\Xi(E) \subset \bigcap_{i\in \mathbb{Z}} \Xi(F_i)
\end{align*}
and $\dim \Xi(F_i) \le 3$ 
by~\cite[Proposition~4.5]{Mu4}. 
Suppose that $\dim \Xi(E)=3$. 
Then 
$\dim \Xi(E)=\dim \Xi(F_i)=3$ for any $i \in \mathbb{Z}$
such that $F_i \neq 0$. 
In particular each $F_i$ is a semihomogeneous sheaf. 
It is enough to show that 
$\ch(F_i)$ is proportional to $\ch(F_j)$ 
for each pair $(i, j)$. 

First suppose that each $F_i$ is a vector bundle. 
Then $\ch(F_i)$ is written as $r(F_i) e^{c_1(F_i)/r(F_i)}$. 
Let $\Xi^{\circ}(-) \subset \Xi(-)$ be the connected component 
which contains $(0, 0)$. 
Then 
we have $\Xi^{\circ}(E)=\Xi^{\circ}(F_i)$
for any $i \in \mathbb{Z}$ with $F_i \neq 0$. 
By~\cite[Theorem~4.9~(3)]{Mu4}, 
the subabelian variety $\Xi^{\circ}(F_i) \subset \A$ 
determines $c_1(F_i)/r(F_i)$.
Therefore for each $(i, j)$, 
we have $c_1(F_i)/r(F_i)=c_1(F_j)/r(F_j)$, and 
$\ch(F_i)$, $\ch(F_j)$ are proportional.

When $F_i$ is not a vector bundle, we can apply 
a Fourier-Mukai transform $\Phi_{\pP}^{A \to \widehat{A}} 
\circ \otimes \oO_A(mH)$
for $m\gg 0$ 
and use Theorem~\ref{thm:Orlov} below 
to reduce to the case that every $F_i$ is a vector bundle. 
\end{proof}

\begin{thm}\emph{(\cite{OrA})}\label{thm:Orlov}
There is a map
\begin{align}\label{Or:isom}
\Aut(D^b(A)) \to \Aut(A \times \widehat{A}), \ g \mapsto g_{\ast}
\end{align}
such that $g_{\ast}\Xi(E)=\Xi(g(E))$ for any $E \in D^b(A)$. 
\end{thm}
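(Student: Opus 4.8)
The plan is to reconstruct Orlov's map \eqref{Or:isom} \cite{OrA} and then read the compatibility with the subgroups $\Xi(E)$ off from it. Write $\A = A \times \widehat{A}$, and for $(a,L) \in \A$ let $\Theta_{(a,L)} \cneq T_a^{\ast}(-) \otimes L$ denote the corresponding translation--twist autoequivalence of $D^b(A)$; it is the Fourier--Mukai functor whose kernel is the structure sheaf of the graph of $T_a$, twisted by the pullback of $L$ from the first factor. First I would record that $(a,L) \mapsto \Theta_{(a,L)}$ is an injective group homomorphism $\Theta\colon \A \to \Aut(D^b(A))$ (if $\Theta_{(a,L)} \cong \id$, evaluating on $\oO_A$ forces $L \cong \oO_A$ and evaluating on skyscraper sheaves forces $a=0$), and that by the very definition \eqref{def:Phi(E)} one has $(a,L) \in \Xi(E)$ if and only if $\Theta_{(a,L)}(E) \cong E$. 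Hence the theorem reduces to the following claim: for every $g \in \Aut(D^b(A))$ there is an automorphism $g_{\ast}$ of the abelian variety $\A$ with
\[ g \circ \Theta_{(a,L)} \circ g^{-1} \cong \Theta_{g_{\ast}(a,L)} \qquad \text{for all } (a,L) \in \A. \]
Granting this, applying $g$ to an isomorphism $\Theta_{(a,L)}(E) \cong E$ and rewriting $g \circ \Theta_{(a,L)} \cong \Theta_{g_{\ast}(a,L)} \circ g$ yields $\Theta_{g_{\ast}(a,L)}(g(E)) \cong g(E)$, which says exactly $g_{\ast}\Xi(E) = \Xi(g(E))$; and $(gh)_{\ast} = g_{\ast} h_{\ast}$, $\id_{\ast} = \id$ are immediate because $\Theta$ is a homomorphism and conjugation by $g$ is a group automorphism of $\Aut(D^b(A))$.

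To build $g_{\ast}$ I would argue with Fourier--Mukai kernels. By \cite[Proposition~9.53]{Huybook} every autoequivalence of $D^b(A)$ is a Fourier--Mukai functor with kernel a sheaf up to shift, and composition of such functors is convolution of kernels; so the kernel of $g \circ \Theta_{(a,L)} \circ g^{-1}$ is obtained by convolving the fixed kernels of $g$ and $g^{-1}$ with the kernel of $\Theta_{(a,L)}$. As $(a,L)$ ranges over $\A$ this is an algebraic family of Fourier--Mukai kernels on $A \times A$, specializing to the structure sheaf of the diagonal at the origin $(0,\oO_A)$. The kernels of the functors $\Theta_{(b,M)}$, $(b,M) \in \A$ — the structure sheaves of the graphs of the translations $T_b$ twisted by line bundles from $\Pic^0$ — are precisely the simple sheaves on $A \times A$ deforming the structure sheaf of the diagonal. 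A rigidity and connectedness argument then shows that every member of our family is again of this form, with vanishing shift (the shift being locally constant in $(a,L)$ and zero at the origin). This assigns to each $(a,L)$ a point $g_{\ast}(a,L) \in \A$ depending algebraically on $(a,L)$, so $g_{\ast}\colon \A \to \A$ is a morphism of varieties; being a bijective group homomorphism with inverse $(g^{-1})_{\ast}$, it is an automorphism of the abelian variety $\A$. This is Orlov's homomorphism $\Aut(D^b(A)) \to U(A \times \widehat{A}) \subseteq \Aut(\A)$; alternatively one simply quotes it from \cite{OrA}.

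The main obstacle is precisely the structural input just invoked: that conjugating a translation--twist autoequivalence by an arbitrary $g$ again lands in the image of $\Theta$, with no leftover shift. This is the core of Orlov's classification of equivalences between abelian varieties; it genuinely requires the geometry of Fourier--Mukai kernels and cannot be obtained cohomologically, since each $\Theta_{(a,L)}$ acts as the identity on $H^{2\ast}(A,\BQ)$ (translations are homotopically trivial and $\ch(L)=1$ for $L \in \Pic^0(A)$), so the whole subgroup $\Theta(\A)$ is invisible to the induced action on cohomology. If one prefers a group-theoretic packaging, the same fact reads: $\BZ[1] \times \Theta(\A)$ is a normal subgroup of $\Aut(D^b(A))$ with discrete quotient, and $\Theta(\A)$ is its maximal divisible subgroup, hence characteristic in it and so stable under every inner automorphism — but this is again Orlov's theorem rather than a shortcut. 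Everything else (the reduction above, the algebraicity of $g_{\ast}$, and the final deduction $g_{\ast}\Xi(E) = \Xi(g(E))$) is then formal.
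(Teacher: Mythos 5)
The paper offers no proof of this statement at all --- it is quoted directly from Orlov \cite{OrA} --- and your reconstruction follows exactly Orlov's construction: define $g_{\ast}$ by the conjugation identity $g\circ\Theta_{(a,L)}\circ g^{-1}\cong \Theta_{g_{\ast}(a,L)}$ for the translation--twist autoequivalences $\Theta_{(a,L)}=T_a^{\ast}(-)\otimes L$, from which $g_{\ast}\Xi(E)=\Xi(g(E))$ is formal. You correctly isolate the one genuinely nontrivial input (that the conjugate of a translation--twist is again a translation--twist with no residual shift, via the kernel-deformation/rigidity argument) and attribute it to Orlov; since the paper itself only cites \cite{OrA} for this, your sketch is correct and at the appropriate level of detail.
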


\subsection{Independence of stability conditions}\label{subsec:independence}
We show the absence of walls in good cases.

\begin{thm}\label{prop:wall1}
Suppose that $v \in \Gamma$ is not written as
$\gamma_1+\gamma_2$ for some $\gamma_i \in \cC$ with 
$\chi(\gamma_1, \gamma_2) \neq 0$.
Then for any $\sigma, \sigma' \in \Stab^{\circ}(A)$ we have
\[
\DTb_{\sigma}(v) = \DTb_{\sigma'}(v)
\]
\end{thm}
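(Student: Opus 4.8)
The plan is to use the standard wall-crossing machinery for motivic Hall algebras together with the geometric input from Lemma~\ref{lem:B2} (equivalently Lemma~\ref{Lemma_24t6rgs}), which controls the possible stabilizers of the objects contributing to the invariants. Since $\Stab^{\circ}(A)$ is connected, it suffices to compare $\DTb_{\sigma}(v)$ and $\DTb_{\sigma'}(v)$ for $\sigma, \sigma'$ lying in two chambers separated by a single wall $\mathcal W$. First I would recall the wall-crossing formula: when crossing $\mathcal W$, the elements $\overline\epsilon_{\sigma}(v,\phi)$ and $\overline\epsilon_{\sigma'}(v,\phi')$ differ by a (finite) sum of Poisson brackets $\{\overline\epsilon_\sigma(\gamma_1),\overline\epsilon_\sigma(\gamma_2)\}$ and higher iterated brackets, taken over decompositions $v = \gamma_1 + \cdots + \gamma_\ell$ into classes $\gamma_i$ that become semistable of the same phase on the wall. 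Applying the Poisson homomorphism $\CI^{\A}$ (Theorem~\ref{thm:IA}) and using the definition of the bracket on $C^{\A}(X)$, each such term is proportional to $\chi(\gamma_i,\gamma_j)$ for various pairs; in particular, a term indexed by a decomposition $v=\gamma_1+\gamma_2$ contributes $0$ unless $\chi(\gamma_1,\gamma_2)\neq 0$.

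The key point is then to show that under the hypothesis on $v$, \emph{every} potentially nonzero wall-crossing term in fact vanishes. For a two-term decomposition $v=\gamma_1+\gamma_2$: either $\chi(\gamma_1,\gamma_2)=0$, in which case the bracket contributes nothing by the formula above; or $\chi(\gamma_1,\gamma_2)\neq 0$, in which case the term is weighted by the product $\DTb_\sigma(\gamma_1)\DTb_\sigma(\gamma_2)$ computed in a stability condition on the wall. Here is where Lemma~\ref{lem:B2} enters: for the $\epsilon_0$-coefficient (and more generally) to be affected, one needs the relevant invariants $\DTb(\gamma_i)$ to be nonzero; I would argue that if $\gamma_i \notin \cC$ then $\DT_\sigma(\gamma_i)_B$ vanishes for $\dim B \geq 3$, and combined with the fact that the $\A$-action has $3$-dimensional stabilizers only on semihomogeneous objects, one concludes the product of the two equivariant classes is supported in a way forcing the $\epsilon_0$-coefficient of the bracket to vanish unless both $\gamma_i \in \cC$. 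Since $v$ is assumed not to be a sum of two classes in $\cC$ with nonzero Euler pairing, such a term cannot occur. For decompositions into $\ell \geq 3$ terms, the associated iterated bracket is a sum of products of $\chi$-pairings over the terms; I would observe that any such term refines to or factors through a two-term decomposition $v = \gamma_i + (v-\gamma_i)$ with the required properties, so the same obstruction applies, or else the whole iterated bracket vanishes because some intermediate $\chi$-pairing vanishes.

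The main obstacle will be the bookkeeping for the $\ell \geq 3$ decompositions and making precise the claim that $\DT_\sigma(\gamma_i)_B \neq 0$ forces $\gamma_i \in \cC$ when $\dim B = 3$: this requires knowing that the $3$-dimensional stabilizer stratum on the moduli of $\sigma$-semistable objects is nonempty precisely for semihomogeneous classes, which is exactly the content of Lemma~\ref{Lemma_24t6rgs} applied to the objects being counted. One also has to handle the subtlety that on the wall $\mathcal W$ the classes $\gamma_i$ may a priori have $\dim \Xi = 3$ even if they are not literally of the form in \eqref{def:C0}, but Lemma~\ref{Lemma_24t6rgs} precisely rules this out: $\dim\Xi(E)=3 \Rightarrow \ch(E)\in\cC$. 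The remaining steps — verifying finiteness of the set of walls in any compact region (so that $\sigma$ and $\sigma'$ are connected by finitely many wall-crossings), and checking that the hypothesis on $v$ is preserved under passing from $v$ to the sub-decompositions appearing on each wall — are routine and follow the arguments in \cite{PiYT} and \cite{BrH}; I would cite those rather than reproduce them. I expect the proof to conclude by induction on the number of walls crossed, with the single-wall case handled as above.
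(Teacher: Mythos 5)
Your overall strategy coincides with the paper's: compare $\DTb_{\sigma}(v)$ and $\DTb_{\sigma'}(v)$ for nearby stability conditions via the Hall-algebra identity relating $\overline{\epsilon}_{\sigma}(v,\phi)$ to iterated Poisson brackets of the $\overline{\epsilon}_{\sigma'}(\gamma_i,\phi_i)$, apply the equivariant integration map $\CI^{\A}$, and use Lemma~\ref{lem:B2} to force $\gamma_i\in\cC$ whenever a quadratic term survives. (The paper phrases the reduction as local constancy on the connected component rather than as crossing finitely many walls, but that is cosmetic.) Your treatment of the two-term decompositions is essentially right, modulo making precise that nonvanishing of $\epsilon_{B_1}\cdot\epsilon_{B_2}$ forces both $B_i$ to be exactly $3$-dimensional and transverse, whence $\gamma_i\in\cC$ by Lemma~\ref{lem:B2}(b) and the term carries the factor $\chi(\gamma_1,\gamma_2)$.

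The genuine gap is in your disposal of the decompositions with $\ell\ge 3$ parts. You claim each such iterated bracket either ``refines to a two-term decomposition $v=\gamma_i+(v-\gamma_i)$ with the required properties'' or dies because some intermediate $\chi$-pairing vanishes. Neither alternative is forced: the complementary class $v-\gamma_i$ need not lie in $\cC$, so the hypothesis on $v$ (which only concerns decompositions into \emph{two} semihomogeneous classes) says nothing about it, and there is no reason for any intermediate Euler pairing to vanish. The mechanism that actually kills these terms is a codimension count in the ring $\BQ[\A]$: by Lemma~\ref{lem:B2}, each $\CI^{\A}(\overline{\epsilon}_{\sigma'}(\gamma_i,\phi_i))$ is supported on classes $\epsilon_{B}$ with $\codim_{\A}B\ge 3$; the product $\epsilon_{B_1}\cdots\epsilon_{B_\ell}$ vanishes unless the $B_i$ are mutually transverse, in which case the codimensions add, and $3\ell>6=\dim\A$ for $\ell\ge 3$. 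Hence every triple and higher iterated bracket is annihilated by $\CI^{\A}$ regardless of the $\chi$-pairings or of the hypothesis on $v$. Without this argument (or an equivalent one) your induction over walls does not close, since you cannot rule out contributions from three-or-more-term decompositions on a wall.
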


\begin{proof}
We prove that for any $\sigma=(Z, \aA) \in \Stab^{\circ}(A)$ there is an open 
neighborhood $\sigma \in U \subset \Stab^{\circ}(A)$
with $\DTb_{\sigma}(v)=\DTb_{\sigma'}(v)$
for any $\sigma' \in U$. 

Suppose that $Z(v)=0$. 
Then there is no $\sigma$-semistable object
$E$ with $\ch(E)=v$. 
By the wall and chamber structure on $\Stab^{\circ}(A)$, 
there is an open neighborhood $\sigma \in U \subset \Stab^{\circ}(A)$
such that for any $\sigma' \in U$
there is no $\sigma'$-semistable object $E$ with $\ch(E)=v$. 
It follows $\DTb_{\sigma}(v)=\DTb_{\sigma'}(v)=0$. 

Hence we may assume that $Z(v) \neq 0$.
Let $\phi \in \mathbb{R}$ such that 
$Z(v) \in \mathbb{R}_{>0} e^{\pi i \phi}$. 
For an open neighborhood $\sigma \in U \subset \Stab^{\circ}(A)$, 
we take $\sigma' =(Z', \aA') \in U$.
For $\psi \in \mathbb{R}$, let 
$\pP(\psi)$, $\pP'(\psi)$ be
the $\sigma$, $\sigma'$-semistable 
objects with phase $\psi$. 
By shrinking $U$ and applying a $\mathbb{C}$-action on $\Stab^{\circ}(A)$ if necessary, 
we can assume that 
\begin{align*}
\pP(\phi) \subset \pP'((\phi-\varepsilon, \phi+\varepsilon)) \subset \aA
\end{align*}
for some $0< \varepsilon \ll 1$, where the right hand side is the extension 
closure of objects in $\pP'(\psi)$
with $\psi \in (\phi-\varepsilon, \phi+\varepsilon)$. 

We then have the following identity in $H^{\A}(\aA, \phi)$,
\begin{align}\label{wc:delta}
\delta_{\sigma}(v, \phi)=\sum_{\begin{subarray}{c}
l\ge 1, \gamma_1+\cdots+\gamma_l=v \\
Z(\gamma_i) \in \mathbb{R}_{>0} e^{\pi i \phi}, \\ 
Z'(\gamma_i) \in \mathbb{R}_{>0} e^{\pi i \phi_i}, \\
\phi_1>\cdots>\phi_l,  \phi_i \in (\phi-\varepsilon, \phi+\varepsilon).
\end{subarray}
}
\delta_{\sigma'}(\gamma_1, \phi_1) \ast \cdots \ast \delta_{\sigma'}(\gamma_l, \phi_l). 
\end{align}
Here 
$\mM_{\sigma'}(\gamma_i, \phi_i)$
is an open substack of $\mM_{\sigma}(\gamma_i, \phi)$,
and\footnote{More precisely $\delta_{\sigma'}(\gamma_i, \phi_i)$
is the push-forward under 
the open embedding $\mM_{\sigma'}(\gamma_i, \phi_i) \subset \mM_{\sigma}(\gamma_i, \phi)$
as in Section~\ref{subsec:compare}, and 
we have omitted the notation of the push-forward.} 
\begin{align*}
\delta_{\sigma'}(\gamma_i, \phi_i)
=[\mM_{\sigma'}(\gamma_i, \phi_i) \subset \mM_{\sigma}(\gamma_i, \phi)]
\in H^{\A}_{\mathrm{sc}}(\aA, \phi).
\end{align*}
By substituting (\ref{def:log}) and multiplying $(\mathbb{L}-1)$, 
we obtain an identity in $H_{\rm{sc}}^{\A}(\aA, \phi)$ of the form
\begin{multline} \label{dgsdfgsf}
\overline{\epsilon}_{\sigma}(v, \phi) = 
\overline{\epsilon}_{\sigma'}(v, \phi')
+ \sum_{\gamma_1+\gamma_2=v}a_{\gamma_1, \gamma_2}\{ \overline{\epsilon}_{\sigma'}(\gamma_1, \phi_1), \overline{\epsilon}_{\sigma'}(\gamma_2, \phi_2)\} \\
+ \sum_{\gamma_1+\gamma_2+\gamma_3=v} a_{\gamma_1, \gamma_2, \gamma_3} \{ \{ \overline{\epsilon}_{\sigma'}(\gamma_1, \phi_1), \overline{\epsilon}_{\sigma'}(\gamma_2, \phi_2)\}, 
\overline{\epsilon}_{\sigma'}(\gamma_3, \phi_3)\}+\cdots 
\end{multline}
for some $a_{\gamma_1, \cdots, \gamma_l} \in \mathbb{Q}$. 

We apply the equivariant integration map $\CI^{\A}$ to \eqref{dgsdfgsf}.
If we write
\[ \CI^{\A}(\overline{\epsilon}_{\sigma'}(\gamma_i, \phi_i)) = \sum_{k} b_{k} \epsilon_{B_{k}} c_{\gamma_i} \]
for some $b_{k} \in \BQ$ and $B_{k} \subset \A$, then by Lemma~\ref{lem:B2} the $B_{k}$ are of codimension $\geq 3$.
By the definition of $\mathbb{Q}[\A]$ 
it follows that only linear or quadratic terms in the $\overline{\epsilon}_{\sigma'}(\gamma_i, \phi_i)$ contribute when applying $\CI^{\A}$ to \eqref{dgsdfgsf}.
Moreover using Proposition~\ref{prop:compare}, the contribution of the quadratic term is
\begin{align*}
&\sum_{\gamma_1+\gamma_2=v}\iI^{\A}(a_{\gamma_1, \gamma_2}\{  \overline{\epsilon}_{\sigma'}(\gamma_1, \phi_1), \overline{\epsilon}_{\sigma'}(\gamma_2, \phi_2) \}) \\
&=\sum_{\gamma_1+\gamma_2=v}a_{\gamma_1, \gamma_2}\sum_{\begin{subarray}{c}
B_i \subset \A, \\
i=1, 2
\end{subarray}}
(-1)^{\chi(\gamma_1, \gamma_2)}\chi(\gamma_1, \gamma_2)
\DT_{\sigma'}(\gamma_1)_{B_1} \DT_{\sigma'}(\gamma_2)_{B_2}
\left| B_1 \cap B_2\right|
\epsilon_{(0, 0)}. 
\end{align*}
where the sum is over connected abelian subvarieties $B_i \subset \A$ of dimension $3$ such that $B_1$ and $B_2$ are transversal. 
By Lemma~\ref{lem:B2} and its proof, the above is 
non-zero only if $v=\gamma_1+\gamma_2$ 
with
$\gamma_i \in \cC$ such that
$\chi(\gamma_1, \gamma_2) \neq 0$. 
\end{proof}

The proof of Theorem~\ref{prop:wall1} also shows the following: 
\begin{cor}\label{cor:B}
For every $v \in \Gamma$ and every positive-dimensional connected abelian subvariety 
$B \subset \A$ we have 
\[ \DT_{\sigma}(v)_{B}=\DT_{H}(v)_{B} \]
for all $\sigma \in \Stab^{\circ}(A)$ and ample divisors $H$.
\end{cor}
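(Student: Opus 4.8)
The plan is to deduce Corollary~\ref{cor:B} from the proof of Theorem~\ref{prop:wall1} rather than from its statement. Indeed, the statement of Theorem~\ref{prop:wall1} only compares $\DTb_\sigma(v)$ and $\DTb_{\sigma'}(v)$ for $\sigma,\sigma' \in \Stab^\circ(A)$ \emph{under the hypothesis} that $v$ is not a sum of two semihomogeneous classes with non-vanishing Euler pairing, whereas here we want information about the $\epsilon_B$-coefficients for positive-dimensional $B$ with no restriction on $v$. So I will revisit the wall-crossing identity \eqref{dgsdfgsf} obtained in that proof and extract the coefficient of $\epsilon_B$ for a \emph{fixed} positive-dimensional connected abelian subvariety $B \subset \A$.

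First I would recall that by Proposition~\ref{prop:DTH} it suffices to prove $\DT_\sigma(v)_B = \DT_{\sigma'}(v)_B$ for all $\sigma, \sigma' \in \Stab^\circ(A)$; the comparison with $\DTb_H(v)$ then follows from Proposition~\ref{prop:DTH}, which provides a $\sigma \in \Stab^\circ(A)$ with $\DTb_\sigma(v) = \DTb_H(v)$. Since $\Stab^\circ(A)$ is connected, it is enough to show that $\sigma \mapsto \DT_\sigma(v)_B$ is locally constant, so I work in the open neighbourhood $U$ and with the identity \eqref{dgsdfgsf}. Applying the equivariant integration map $\CI^\A$ to \eqref{dgsdfgsf} and using Proposition~\ref{prop:compare} exactly as in the proof of Theorem~\ref{prop:wall1}, the right-hand side becomes $\DTb_{\sigma'}(v) \cdot c_v$ plus correction terms coming from the iterated brackets $\{\cdots\{\overline{\epsilon}_{\sigma'}(\gamma_1,\phi_1),\dots\},\overline{\epsilon}_{\sigma'}(\gamma_l,\phi_l)\}$ with $l \geq 2$.

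The key point is a dimension count in the ring $\BQ[\A]$. Each $\CI^\A(\overline{\epsilon}_{\sigma'}(\gamma_i,\phi_i))$ is, by Lemma~\ref{lem:B2}, a $\BQ$-linear combination of $\epsilon_{B_i} c_{\gamma_i}$ with $\codim B_i \geq 3$ (equivalently $\dim \A/B_i \geq 3$), since $\A$ has dimension $6$ and only codimension-$\geq 3$ strata contribute. When we multiply $l \geq 2$ such classes in $\BQ[\A]$, the product $\epsilon_{B_1}\cdots\epsilon_{B_l}$ is supported on $(B_1\cap\cdots\cap B_l)^\circ$, which — when non-zero — has codimension $\geq 3l \geq 6$, hence equals $\epsilon_{(0,0)}$ precisely when $l=2$ and $B_1, B_2$ are transverse $3$-dimensional subvarieties, and is forced to vanish when $l \geq 3$. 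Therefore the coefficient of $\epsilon_B$ for a \emph{positive-dimensional} $B$ receives no contribution from any of the correction terms: the linear term contributes $\DT_{\sigma'}(v)_B$, and all higher bracket terms contribute only to the $\epsilon_{(0,0)}$-coefficient (or vanish). Comparing coefficients of $\epsilon_B c_v$ on both sides of the integrated identity \eqref{dgsdfgsf} gives $\DT_\sigma(v)_B = \DT_{\sigma'}(v)_B$.

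I do not expect a serious obstacle here; this is a routine bookkeeping consequence of the argument already written down for Theorem~\ref{prop:wall1}, and the only thing to be careful about is the edge case $\dim B = 6$, i.e.\ $B = \A$: by Lemma~\ref{lem:B2}(a) and its Bridgeland analogue, $\DT_\sigma(v)_\A = 0$ unless — in fact one checks $\DT_\sigma(v)_B=0$ whenever $\dim B > 3$ — so the statement is automatic there, and the substantive content is in the range $1 \leq \dim B \leq 3$, which is exactly what the codimension count above handles. The mild subtlety worth a sentence in the writeup is to confirm that shrinking $U$ and the $\mathbb{C}$-action used to arrange $\pP(\phi) \subset \pP'((\phi-\varepsilon,\phi+\varepsilon))$ do not affect the $\epsilon_B$-coefficients, which is clear since the $\mathbb{C}$-action only rotates phases and rescales $Z$ and leaves all the stacks $\mM_{\sigma'}(\gamma_i,\phi_i)$ — hence their equivariant invariants — unchanged.
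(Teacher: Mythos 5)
Your proposal is correct and follows essentially the same route as the paper, which derives Corollary~\ref{cor:B} directly from the proof of Theorem~\ref{prop:wall1}: applying $\CI^{\A}$ to \eqref{dgsdfgsf}, using Lemma~\ref{lem:B2} to see that each factor is supported on $\epsilon_{B_i}$ with $\codim B_i \ge 3$, and concluding that all bracket terms contribute only to $\epsilon_{(0,0)}$ (or vanish), so the $\epsilon_B$-coefficient for positive-dimensional $B$ is wall-crossing invariant; combining with Proposition~\ref{prop:DTH} gives the claim.
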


Combining Theorem~\ref{prop:wall1} and Proposition~\ref{prop:DTH} yields the following.
\begin{cor} \label{cor:DTH=DTsigma}Under the assumptions of Proposition~\ref{prop:wall1},
\begin{equation}\label{DT:hs}
\DTb_{\sigma}(v)=\DTb_H(v). 
\end{equation}
for all $\sigma \in \Stab^{\circ}(A)$ and ample divisors $H$.
In particular, $\DTb_{\sigma}(v)$, $\DTb_{H}(v)$ are independent of $\sigma$ and $H$. 
\end{cor}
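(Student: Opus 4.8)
The plan is to combine the three main ingredients established earlier: the comparison with the Gieseker invariant on the main component (Proposition~\ref{prop:DTH}), the wall-crossing invariance under the hypothesis on $v$ (Theorem~\ref{prop:wall1}), and the compatibility of the $\Aut(D^b(A))$-action with the integration map. First I would observe that since $\sigma$ ranges over the connected component $\Stab^{\circ}(A)$ and $H$ ranges over all ample divisors, it suffices to prove the single equality $\DTb_{\sigma}(v)=\DTb_H(v)$ for \emph{some} $\sigma$ and \emph{any} $H$; the independence statements then follow immediately from Theorem~\ref{prop:wall1} (independence of $\sigma$) together with the fact that the Gieseker chamber argument of Proposition~\ref{lem:DTchamber} produces such a $\sigma$ inside $\Stab^{\circ}(A)$ for every choice of $H$.

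Concretely, fix an ample divisor $H$. By Proposition~\ref{prop:DTH} there is a stability condition $\sigma_0 \in \Stab^{\circ}(A)$ with $\DTb_{\sigma_0}(v)=\DTb_H(v)$; this is the content of the Gieseker chamber (Proposition~\ref{lem:DTchamber}), where for $\beta \ll 0$ and $b=0$, $a=s$ large the $\sigma^{a,b}_{\alpha,\beta}$-semistable objects of class $v$ are exactly the $H$-Gieseker semistable sheaves, so the moduli stacks, Behrend functions, logarithms $\epsilon$, and hence the images under $\CI^{\A}$ all agree. Next, by Theorem~\ref{prop:wall1}, under the hypothesis that $v$ is not of the form $\gamma_1+\gamma_2$ with $\gamma_i\in\cC$ and $\chi(\gamma_1,\gamma_2)\neq 0$, the polynomial $\DTb_{\sigma}(v)\in\BQ[\A]$ does not depend on $\sigma\in\Stab^{\circ}(A)$. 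Therefore $\DTb_{\sigma}(v)=\DTb_{\sigma_0}(v)=\DTb_H(v)$ for every $\sigma\in\Stab^{\circ}(A)$, which is \eqref{DT:hs}.

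Finally, the independence of $H$ is a formal consequence: for two ample divisors $H_1,H_2$ pick corresponding Gieseker-chamber stability conditions $\sigma_1,\sigma_2\in\Stab^{\circ}(A)$; then $\DTb_{H_1}(v)=\DTb_{\sigma_1}(v)=\DTb_{\sigma_2}(v)=\DTb_{H_2}(v)$ where the middle equality is again Theorem~\ref{prop:wall1}. The additional output of Corollary~\ref{cor:B}, namely $\DT_\sigma(v)_B = \DT_H(v)_B$ for positive-dimensional $B$, can be read off directly since it was established during the proof of Theorem~\ref{prop:wall1} that the higher-codimension coefficients are unchanged under wall-crossing (the correction terms contribute only to $\epsilon_{(0,0)}$). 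There is essentially no obstacle here beyond bookkeeping: all the work has been done in Proposition~\ref{prop:DTH} and Theorem~\ref{prop:wall1}, and the only point requiring a line of care is that the stability condition produced by the Gieseker chamber genuinely lies in the main component $\Stab^{\circ}(A)$ rather than some other component, which is guaranteed since the family $\sigma^{a,b}_{\alpha,\beta}$ is by construction contained in $\Stab_H^{\circ}(A)\subset\Stab^{\circ}(A)$.
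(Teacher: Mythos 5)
Your proof is correct and is exactly the paper's argument: combine Proposition~\ref{prop:DTH} (existence of a Gieseker-chamber stability condition in $\Stab^{\circ}(A)$ for each $H$) with Theorem~\ref{prop:wall1} (independence of $\sigma$ within $\Stab^{\circ}(A)$). The paper's proof is the same two-step combination, stated in one line.
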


We have now all ingredients for the proof of Theorem~\ref{intro:first}.
\begin{proof}[Proof of Theorem~\ref{intro:first}]
Suppose that $v \in \Gamma$ is not written as $\gamma_1+\gamma_2$
for some $\gamma_i \in \cC$ with $\chi(\gamma_1, \gamma_2) \neq 0$.
Let $g \in \Aut D^b(A)$ be a derived autoequivalence and let $\sigma \in \Stab^\circ(A)$
be a stability condition which lies in the Gieseker chamber with respect to $v$.
By Corollary~\ref{cor:DTH=DTsigma} and an application of $g$ we have
\[
\DTb_H(v)=\DTb_{\sigma}(v)=\DTb_{g_{\ast}\sigma}(g_{\ast}v)
\]
By Theorem~\ref{thm_full_support} the stability condition $g_{\ast} \sigma$ lies in the component $\Stab^{\circ}(A)$.
Hence again by Corollary~\ref{cor:DTH=DTsigma},
\[ \DTb_{g_{\ast}\sigma}(g_{\ast}v) = \DTb_H(g_{\ast}v). \qedhere \]
\end{proof}

\subsection{The discriminant}
From Appendix~\ref{Appendix_Spin_representations} recall the discriminant
\[ \Delta : H^{2\ast}(A, \BZ) \to \BZ. \]
By construction $\Delta$ is invariant under all derived autoequivalences of $A$.
The following lemma directly implies Proposition~\ref{intro:prop_Delta}.
\begin{lemma} \label{Lemma_Discrri} Let $v \in \Gamma$.
\begin{enumerate}
\item If $\Delta(v) > 0$, then $v$ is not of the form $\gamma_1 + \gamma_2$ with $\gamma_i \in \CC$.
\item If $\Delta(v) = 0$ and $v = \gamma_1 + \gamma_2$ with $\gamma_i \in \CC$, then $\chi(\gamma_1, \gamma_2) = 0$.
\end{enumerate}
Hence, if $\Delta(v) \geq 0$ then $v$ satisfies the assumption of Proposition \ref{prop:wall1}.
\end{lemma}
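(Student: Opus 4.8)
The plan is to reduce everything to a computation of the discriminant on the cone of semihomogeneous classes. Recall from Section~\ref{semihomogeneous} that every $\gamma \in \cC$ has the form $\gamma = \pm r\, e^{D}$ for a rank $r>0$ and a class $D \in \NS_{\BQ}(A)$ (or $\gamma$ is supported in dimension $\le 1$, which we treat as a degenerate case). Since $\Delta$ is the unique spin-invariant homogeneous quartic normalized by $\Delta(1+\pt)=-1$, and since $e^{D}$ is (up to the spin action of the twist functor $(-)\otimes \oO(D)$) equal to $1$, we get $\Delta(r e^{D}) = r^4 \Delta(1) = 0$. More precisely, I would first establish the elementary fact that $\Delta$ vanishes on $\cC$: an autoequivalence takes $r e^{D}$ to $r$ times (the Mukai vector of) a line bundle, hence to $r\cdot e^{D'}$ for some $D'$; twisting again we land on $r\cdot 1 = (r,0,0,0)$, and $\Delta(r,0,0,0)=0$ by homogeneity and $\Delta(1,0,0,0)=0$ (the latter because $1$ and $1+\pt$ are related by a spin element, or by direct inspection of the explicit formula in Appendix~\ref{Appendix_Spin_representations}). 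So $\Delta|_{\cC} = 0$.

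Next I would bring in the Euler pairing. The key identity is that the polarization of the quartic form $\Delta$ along a sum $\gamma_1 + \gamma_2$ controls $\chi(\gamma_1,\gamma_2)$ when both $\gamma_i$ lie on the zero locus of $\Delta$. Concretely, for $\gamma_i \in \cC$ one has $\Delta(\gamma_1+\gamma_2) = \Delta(\gamma_1) + \Delta(\gamma_2) + (\text{mixed terms}) = (\text{mixed terms})$, and the mixed terms are a polynomial in $\chi(\gamma_1,\gamma_2)$ and the individual invariants of $\gamma_1,\gamma_2$; since both $\gamma_i$ are semihomogeneous the only surviving invariant is the pairing $\chi(\gamma_1,\gamma_2)$ itself. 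I would make this precise by choosing an autoequivalence putting $\gamma_1 = (r_1,0,0,0)$ in standard form (possible since $\gamma_1 \in \cC$ and $\Aut D^b(A)$ acts transitively enough on semihomogeneous classes of a fixed "type"), and then writing $\gamma_2 = (r_2, c, \tfrac{c^2}{2r_2}, \tfrac{c^3}{6r_2^2})$ in the resulting coordinates; a direct substitution into the explicit formula for $\Delta$ (Appendix~\ref{Appendix_Spin_representations}) gives $\Delta(\gamma_1+\gamma_2) = -\kappa\, \chi(\gamma_1,\gamma_2)^2$ for an explicit positive rational constant $\kappa$ (independent of the $\gamma_i$). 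This is the main computational step, and the place where I expect to spend the most care: one must check that after normalizing $\gamma_1$ the only mixed invariant appearing is $\chi$, and determine the sign and that it is a perfect square up to a positive constant.

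Granting the identity $\Delta(\gamma_1+\gamma_2) = -\kappa\,\chi(\gamma_1,\gamma_2)^2$ with $\kappa>0$ for all $\gamma_1,\gamma_2 \in \cC$, both statements of Lemma~\ref{Lemma_Discrri} follow immediately. For (1): if $v = \gamma_1 + \gamma_2$ with $\gamma_i \in \cC$ then $\Delta(v) = -\kappa\,\chi(\gamma_1,\gamma_2)^2 \le 0$, so $\Delta(v)>0$ forces no such decomposition. For (2): if moreover $\Delta(v) = 0$ then $-\kappa\,\chi(\gamma_1,\gamma_2)^2 = 0$, and since $\kappa>0$ we conclude $\chi(\gamma_1,\gamma_2) = 0$. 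The final sentence of the lemma is then the disjunction of (1) and (2): if $\Delta(v)\ge 0$ and $v$ were a "bad" class, i.e. $v = \gamma_1+\gamma_2$ with $\gamma_i \in \cC$ and $\chi(\gamma_1,\gamma_2)\neq 0$, then $\Delta(v) = -\kappa\,\chi(\gamma_1,\gamma_2)^2 < 0$, a contradiction; hence $v$ satisfies the hypothesis of Proposition~\ref{prop:wall1}.

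The one subtlety I would flag is the degenerate members of $\cC$: classes $\pm\ch(E)$ where $E$ is semihomogeneous with $0$-dimensional support (so $\gamma = (0,0,0,m)$), or more generally classes that are not of the invertible form $re^D$. These must be handled by the same normalization argument — applying a Fourier–Mukai transform (as in the proof of Lemma~\ref{Lemma_24t6rgs}) to move them into the vector-bundle case $re^{D}$ — or directly by noting that $\Delta$ still vanishes on them and recomputing the mixed term; either way the structural conclusion $\Delta(\gamma_1+\gamma_2) = -\kappa\,\chi(\gamma_1,\gamma_2)^2$ persists because it only uses spin-invariance of $\Delta$ and the fact that each $\gamma_i$ lies on $\{\Delta = 0\}$ and is a "point" of the semihomogeneous cone. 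This degenerate bookkeeping, together with pinning down the constant $\kappa$, is the bulk of the work; the rest is formal.
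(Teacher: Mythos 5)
Your proposal is correct and follows essentially the paper's own route: reduce both $\gamma_i$ to exponential classes $r_i e^{\omega_i}$ by a derived autoequivalence (which preserves $\cC$, $\Delta$ and $\chi$), and then invoke the identity $\Delta(\gamma_1+\gamma_2) = -\kappa\,\chi(\gamma_1,\gamma_2)^2$ with $\kappa>0$. That identity is exactly Theorem~\ref{Thm_Discriminant}(c) (with $\kappa=1$ under the normalization $\Delta(1+\pt)=-1$), which the paper establishes by a direct computer calculation rather than by your normalization of $\gamma_1$ to $(r_1,0,0,0)$; the two derivations are interchangeable.
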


\begin{proof}
By Theorem~\ref{thm:Orlov} the set $\CC$ is preserved by derived autoequivalences.
Therefore as in the proof of Lemma \ref{Lemma_24t6rgs} we may assume $\gamma_i = r_i e^{c_1/r_i}$
for some $c_1 \in H^2(A)$ and $r_i \in \BZ$.
The claims follow now from Theorem~\ref{Thm_Discriminant}.
\end{proof}

\subsection{Reduced DT invariants for semihomogeneous sheaves II}
We calculate the Donaldson-Thommas invariants of semihomogeneous sheaves.
\begin{lemma} \label{cor:C} Let $v \in \CC$. Then
\[ \DTb_{\sigma}(v) = \left(\sum_{k\ge 1, k|v}\frac{1}{k^2} \right) \epsilon_{B} \]
for some three-dimensional $B \subset \A$ determined by $v$.
\end{lemma}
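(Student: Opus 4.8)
\textbf{Proof plan for Lemma~\ref{cor:C}.}
The plan is to reduce to an explicit computation for a single semihomogeneous sheaf and then organize the contributions according to the divisors of $v$. First I would recall that by Theorem~\ref{thm:Orlov} the set $\cC$ and the invariants $\DTb_\sigma$ are both compatible with derived autoequivalences: indeed $\DTb_{g_\ast\sigma}(g_\ast v) = \DTb_\sigma(v)$, and by Theorem~\ref{thm_full_support} such $g_\ast\sigma$ again lies in $\Stab^\circ(A)$. Hence, applying a suitable Fourier--Mukai transform $\Phi_\pP^{A\to\widehat A}\circ(\otimes\oO_A(mH))$ as in the proof of Lemma~\ref{Lemma_24t6rgs}, we may assume that the semihomogeneous classes involved are represented by vector bundles, i.e. $v = r\,e^{D}$ for some $D = c_1/r \in \NS_\BQ(A)$ with $r\in\BZ_{>0}$ and $r e^D \in \Gamma$. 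By Corollary~\ref{cor:DTH=DTsigma} (noting that a class $v\in\cC$ cannot be written as $\gamma_1 + \gamma_2$ with $\gamma_i\in\cC$ and $\chi(\gamma_1,\gamma_2)\neq 0$, since any two such classes would have the same ``direction'' $D$ and hence pairing zero — this is the content of Lemma~\ref{Lemma_Discrri} combined with $\Delta(r e^D)=0$ being excluded, or more directly the proportionality argument in Lemma~\ref{Lemma_24t6rgs}), we may replace $\sigma$ by a Gieseker stability $\sigma_{\alpha,\beta}$ in the chamber of Proposition~\ref{lem:DTchamber}, so that $\DTb_\sigma(v) = \DTb_H(v)$ and we are counting $H$-Gieseker semistable sheaves.

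Next I would analyze the moduli space $M_H(v)$ for $v = r e^D$. By Lemma~\ref{prop:semihomo-numerical}(iii) every semihomogeneous bundle with this Chern character is Gieseker semistable, and a simple one is slope stable; moreover by Mukai's structure theory (\cite{Mu4}, \cite{Mu3}) the $H$-semistable sheaves with Chern character $r e^D$ are precisely the semihomogeneous bundles of that character, and the Gieseker-stable ones among them are the \emph{simple} semihomogeneous bundles. For a simple semihomogeneous bundle $E$ the stabilizer $\Xi(E)\subset\A$ is a three-dimensional connected abelian subvariety $B$, depending only on $D$ (via $\Xi^\circ(E)$ determining $c_1/r$, cf. \cite[Theorem~4.9]{Mu4}); this is the $B$ in the statement. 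The moduli space $M_H(v)$ is then a finite union of translates/twists of a single $\A$-orbit, and the $S$-equivalence classes are parametrized combinatorially by the divisors $k\mid v$: for each divisor $k$ one gets a stratum of objects that are $S$-equivalent to $E_0^{\oplus k'}$ for an appropriate simple semihomogeneous constituent, contributing a point in $M_H(v)$ whose stabilizer is $B$ but whose automorphism-stack structure contributes a factor $1/k^2$ through the Behrend-weighted orbifold Euler characteristic of $[\mathrm{pt}/(\text{extension groupoid})]$.

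Concretely, I would compute $\overline\epsilon_H(v) = [(\mathbb{L}-1)\epsilon_H(v)]$ directly: writing $v = \sum v_i$ over semihomogeneous subclasses forces each $v_i$ proportional to $v$, i.e. $v_i = (r/k)e^D$ with $k = \sum (\text{multiplicities})$, and the logarithm $\epsilon_H(v)$ picks out exactly the ``stable'' part, which for each $k\mid v$ is supported on the $\A$-orbit of a simple semihomogeneous bundle of Chern character $(v/k)$ appearing with multiplicity giving total class $v$. Applying $\CI^\A$ via Theorem~\ref{thm:IA} and using that the stabilizer of each such object is the three-dimensional $B$, the orbit $[\text{orbit}/(\A/B)]$ is a point (Euler characteristic $1$), the Behrend function on such a smooth moduli point is $\pm1$, and the factor $1/k^2$ arises as $e([\mathrm{pt}/\GL_k])\big|_{u\to-1}$-type contribution from Lemma~\ref{lemma_independence} applied to the Hall-algebra logarithm — precisely $P([\mathrm{pt}/\GL_k])(u)\to 1/k^2$ in the limit appearing in Joyce's computation of $\epsilon$ for $S$-equivalence classes of this shape (this is the same mechanism as in \cite{Joy3}). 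Summing over all $k\mid v$ yields $\DTb_\sigma(v) = \big(\sum_{k\ge 1,\ k\mid v} k^{-2}\big)\epsilon_B$.

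The main obstacle I anticipate is the bookkeeping in the last step: showing cleanly that the Hall-algebra logarithm $\epsilon_H(v)$ restricted to the locus of semihomogeneous sheaves evaluates, stratum by stratum over the divisors $k\mid v$, to a class whose equivariant integral is exactly $k^{-2}\epsilon_B$ — in other words, matching the combinatorics of Joyce's $(-1)^{l-1}/l$ sum against the $S$-equivalence stratification of $M_H(v)$ and the $\GL_k$-automorphism factors. One must be careful that all intermediate strata have stabilizer exactly $B$ (not larger), which follows since every semihomogeneous sheaf of Chern character proportional to $v$ has the same $\Xi^\circ = B$; and that the Behrend weight is constant, which follows because $M_H(v)$ is smooth of expected dimension along these loci (the obstruction space $\Ext^2$ vanishes appropriately for semihomogeneous bundles on abelian threefolds). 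Granting those two points, the rest is the routine Joyce-type computation of $\epsilon$ for a moduli space that is a disjoint union of (stacky) points indexed by divisors of $v$.
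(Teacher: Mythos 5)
Your route is genuinely different from the paper's, and it contains a real gap precisely at the step you describe as ``routine bookkeeping''. The paper never analyses $M_H(v)$ for a semihomogeneous bundle class: by \cite[Proposition~4.11]{Mu4} there is a derived equivalence $F\colon D^b(A)\to D^b(A')$ to a (possibly different) abelian threefold with $F_\ast v=(0,0,0,r)$; since $\Delta(v)=\Delta(F_\ast v)=0$, Lemma~\ref{Lemma_Discrri} together with Corollary~\ref{cor:B}/Corollary~\ref{cor:DTH=DTsigma} gives $\DTb_\sigma(v)=\DTb_{F_\ast\sigma}(0,0,0,r)=\DTb_H(0,0,0,r)$, and this last invariant --- the reduced degree-zero DT invariant --- is quoted from \cite[Proposition~6]{OS}. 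All of the content of the divisor sum $\sum_{k\mid v}k^{-2}$ is outsourced to that known computation for zero-dimensional sheaves; your plan instead amounts to re-deriving it in a Fourier--Mukai-twisted form.

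Two concrete problems. First, you assert that the $H$-Gieseker semistable sheaves of class $re^{D}$ are \emph{exactly} the semihomogeneous bundles of that class; Lemma~\ref{prop:semihomo-numerical}(iii) gives only one inclusion, and the converse is a nontrivial boundary-Bogomolov statement that you would have to prove. Second, and more seriously, your structural picture of $M_H(v)$ is wrong: it is not a finite union of $\A$-orbits stratified by divisors of $v$. Writing $v=kv_0$ with $v_0\in\cC$ primitive and $E_0$ a simple semihomogeneous bundle of class $v_0$, the S-equivalence classes are $\bigoplus_i(T_{a_i}^\ast E_0\otimes L_i)^{\oplus m_i}$ with $\sum_i m_i=k$, so $M_H(v)\cong\mathrm{Sym}^k(\A/B)$ is $3k$-dimensional; the divisor sum is not read off from strata indexed by $k\mid v$ but is the output of the Joyce--Song multiple-cover computation for $k$ points, made $\A$-equivariant. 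Moreover the factor $1/k^2$ is \emph{not} $\lim_{u\to-1}P([\mathrm{pt}/\GL_k])(u)$: since $P(\GL_k)(u)=u^{k(k-1)}\prod_{i=1}^k(u^{2i}-1)$ vanishes at $u=-1$, that limit is infinite, which is exactly why one must integrate the logarithm $\overline{\epsilon}$ rather than $\delta$; the $1/k^2$ emerges only after the $(-1)^{l-1}/l$ combinatorics of \eqref{epsilon_guy} is taken into account. If you wish to keep your strategy you must carry out that Hall-algebra computation in full (it is the content of \cite[Proposition~6]{OS}); it is far cleaner to first transform $v$ to $(0,0,0,r)$ as the paper does and then quote it.
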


\begin{proof}
By~\cite[Proposition~4.11]{Mu4}, there exists another abelian variety $A'$ and a 
equivalence $F \colon D^b(A) \stackrel{\sim}{\to} D^b(A')$
such that 
\[ F_{\ast}v=(0, 0, 0, r) \]
for some $r\ge 1$. Hence 
$\Delta(v) = \Delta(F_{\ast}v) = 0$. 
Using Lemma~\ref{Lemma_Discrri} and Corollary~\ref{cor:B} we conclude
\[
\DTb_{\sigma}(v) = \DTb_{F_{\ast} \sigma}(0,0,0,r) = \DTb_H(0,0,0,r)
=
\left(\sum_{k\ge 1, k|r} \frac{1}{k^2} \right) \epsilon_{\{ 0 \} \times \widehat{A}}. 
\]
where the last equality is \cite[Proposition~6]{OS}.
\end{proof}

\section{Principally polarized abelian threefolds} \label{section:ppav}
\subsection{Setup}
Let $(A, H)$ be a principally polarized abelian 3-fold of Picard rank $\rho(A)=1$.
We identify $A$ with its dual $\widehat{A}$ via the isomorphism 
\begin{align*}
A \stackrel{\cong}{\to} \widehat{A}, \ 
x \mapsto T_x^{\ast}\oO_A(H) \otimes \oO_A(-H). 
\end{align*}

We also identify elements in $\Gamma$ with vectors $(v_0, v_1, v_2, v_3) \in \BZ^4$ via the isomorphism
\begin{align}\label{isom:Zgamma} 
\mathbb{Z}^4 \stackrel{\cong}{\to} \Gamma, \ \ 
(v_0, v_1, v_2, v_3) \mapsto 
(v_0, v_1[H], v_2[H^2/2], v_3[H^3/6]). 
\end{align}
Under this identification the Euler pairing $\chi$ on $\Gamma$ is
\begin{align}\label{Euler}
\chi\big((v_0, v_1, v_2, v_3), (v_0', v_1', v_2', v_3')\big)
=v_0 v_3'-3v_1 v_2'+3v_2 v_1'-v_3 v_0'. 
\end{align}
The discriminant defined in Appendix~\ref{Appendix_Spin_representations} takes the form
\begin{align}\label{Delta}
\Delta(v_0, v_1, v_2, v_3) = -4(v_0 v_2^3+v_1^3 v_3)-v_0^2 v_3^2+3v_1^2 v_2^2 +6v_0 v_1 v_2 v_3. 
\end{align}

\subsection{Action of autoequivalences on cohomology} \label{Subsection_Action_of_auto-equvalences_on_coh}
Recall that the group $\SL_2(\mathbb{Z})$ is generated by the elements
\begin{align*}
T=
\left( \begin{array}{cc}
1 & 1 \\
0 & 1 
\end{array}  \right), \ 
S=
\left( \begin{array}{cc}
0 & -1 \\
1 & 0 
\end{array}  \right)
\end{align*}
with relations 
$S^2=(TS)^3$ and $S^4=1$. 
Let $\widetilde{\SL}_2(\mathbb{Z})$ be the 
group generated by 
$\widetilde{S}$, $\widetilde{T}$ 
with the relation 
$\widetilde{S}^2=(\widetilde{T} \widetilde{S})^3$. 
There is an exact sequence of groups
\begin{align}\label{SL:ext}
1 \to \mathbb{Z} \stackrel{i}{\to} \widetilde{\SL}_2(\mathbb{Z}) 
\stackrel{j}{\to} 
\SL_2(\mathbb{Z}) \to 1
\end{align}
where the map $i$ sends $1$ to 
$\widetilde{S}^4$ 
and $j$ sends $\widetilde{S}$, $\widetilde{T}$
to $S$, $T$ respectively. 

By a result of Mukai~\cite{Mu1} there is a group homomorphism 
\begin{align}\label{hom:SL}
\widetilde{\SL}_2(\mathbb{Z}) \to \Aut(D^b(A))
\end{align}
sending $\widetilde{S}$, $\widetilde{T}$
to $\Phi_{\pP}:=\Phi_{\pP}^{A \to A}$ and $\otimes \oO_A(H)$ respectively. 
Because $\Phi_{\pP}^4=[-6]$ acts on $\Gamma$ trivially, 
(\ref{hom:SL}) descends to a homomorhism
\begin{equation} \label{dsdghaa}
\SL_2(\mathbb{Z}) \to \Aut(\Gamma). 
\end{equation}
In terms of the generators $(S, T)$ this representation is given by
\begin{align}\label{intro:ST}
T
\mapsto \left(
\begin{array}{cccc}
1 & 0 & 0 & 0 \\
1 & 1 & 0 & 0 \\
1 & 2 & 1 & 0 \\
1 & 3 & 3 & 1 \\
\end{array}
\right), \quad
S
\mapsto 
\left(
\begin{array}{cccc}
0 & 0 & 0 & 1 \\
0 & 0 & -1 & 0 \\
0 & 1 & 0 & 0 \\
-1 & 0 & 0 & 0 \\
\end{array}
\right). 
\end{align}
For $g \in \SL_2(\mathbb{Z})$, we let 
$g_{\ast} \in \Aut(\Gamma)$ denote the induced 
isomorphism.

We can interpret the action \eqref{dsdghaa}
as a $\SL_2(\mathbb{Z})$-action on 
two variable homogeneous polynomials as follows. 
Identify elements in $\Gamma$ with certain cubic homogeneous polynomials in two variables via the map
\begin{align}\label{poly}
(v_0, v_1, v_2, v_3) \mapsto 
v_0x^3+3v_1x^2 y+3v_2 xy^2+v_3 y^3. 
\end{align}
The group $\SL_2(\mathbb{Z})$ acts on the homogeneous cubic polynomials in $(x, y)$ by 
the transformation 
\begin{align}\label{trans}
g_{\ast} \colon (x, y) \mapsto (dx+by, cx+ay)
\end{align}
where $g = \binom{a\ b}{c\ d} \in \SL_2(\BZ)$.
This action coincides with $g_{\ast} \in \Aut(\Gamma)$ under the identification \eqref{poly}.\footnote{
The identification \eqref{poly} also gives motivation to call $\Delta(v)$ the discriminant, since it coincides
with the discriminant of the cubic polynomial on the right hand side of (\ref{poly}).}

\subsection{Action of autoequivalences on stability conditions}
We next describe the action of 
$\widetilde{\SL}_2(\mathbb{Z})$ on $\Stab^{\circ}(A)$.
%
Let $\BH \subset \mathbb{C}$ be the upper half plane. 
By Theorem~\ref{thm:BMS}, we have the embedding
\begin{align}\label{emb:H}
\BH \to \Stab^{\circ}(A), \ 
\tau=\beta+i\alpha \mapsto \sigma_{\tau} :=\sigma_{\alpha, \beta}
=\sigma_{\alpha H, \beta H}.
\end{align}
The group $\mathrm{SL}_2(\BZ)$ acts on the upper half plane $\BH$ by
\[ \tau \mapsto g \cdot \tau = \frac{a \tau + b}{c \tau + d} \]
for all $\binom{a\ b}{c\ d} \in \mathrm{SL}_2(\BZ)$ and $\tau \in \BH$.
The following Lemma shows that, modulo the $\widetilde{\GL}_2^{+}(\mathbb{R})$ action, these two actions coincide.
\begin{lem}\label{lem:modular}
For any $g \in \SL_2(\mathbb{Z})$ with lift $\widetilde{g} \in \widetilde{\SL}_2(\mathbb{Z})$ and for any $\tau \in \BH$,
there exists a unique $\xi \in \mathbb{C} \subset \widetilde{\GL}_2^{+}(\mathbb{R})$ such that
\[ \widetilde{g}_{\ast} \sigma_{\tau} = \sigma_{g \tau} \cdot \xi. \]
\end{lem}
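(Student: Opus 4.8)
The statement asserts that the two $\mathrm{SL}_2(\BZ)$-actions — the one on $\Stab^\circ(A)$ coming from derived autoequivalences via \eqref{hom:SL}, and the one on $\BH$ by Möbius transformations — agree after projecting to the quotient by the right $\widetilde{\GL}_2^+(\BR)$-action, for which the image of \eqref{emb:H} is a slice (Theorem~\ref{thm:BMS}). My plan is to reduce everything to the generators $S$ and $T$ of $\mathrm{SL}_2(\BZ)$ and then check the claim for each one separately; since both the assignment $g \mapsto \widetilde{g}_\ast$ (acting on the left on $\Stab^\circ(A)$) and the assignment $g \mapsto (\tau \mapsto g\tau)$ are group actions, and the $\widetilde{\GL}_2^+(\BR)$-action commutes with the $\Aut(D^b(A))$-action on $\Stab(A)$, the existence of $\xi$ for $g$ and for $g'$ yields one for $g g'$; so it suffices to treat $S$ and $T$.

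\textbf{Uniqueness.} Uniqueness of $\xi$ is immediate: the $\widetilde{\GL}_2^+(\BR)$-action on $\Stab^\circ(A)$ is free (this is standard, cf.\ \cite{Brs1}, or follows from the slice property in Theorem~\ref{thm:BMS} together with the fact that the image of \eqref{emb:H} meets each orbit exactly once), so if $\sigma_\tau \cdot \xi_1 = \sigma_\tau \cdot \xi_2$ then $\xi_1 = \xi_2$. Moreover we must check that the correcting element $\xi$ actually lies in the subgroup $\BC \subset \widetilde{\GL}_2^+(\BR)$ and not merely in $\widetilde{\GL}_2^+(\BR)$; this is where the precise normalization of the slice \eqref{emb:H} — namely $a = \alpha^2/2$, $b = 0$ — is used.

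\textbf{The case $\widetilde{T} = \otimes \oO_A(H)$.} Here $\widetilde{T}_\ast$ twists the central charge by $e^{-H}$, i.e.\ $\widetilde{T}_\ast Z(-) = Z(e^{-H} \ch(-))$, and twists the heart: $\aA_{\alpha,\beta} \otimes \oO_A(H) = \aA_{\alpha,\beta+1}$ (this is the standard behaviour of the double-tilt heart under tensoring by a line bundle, a special case of Proposition~\ref{prop:stab-under-tensoring} with $V = \oO_A(H)$). Tracking this through the explicit formula for $Z^{a,b}_{\alpha,\beta}$ in terms of the $v_i^\beta$ given before Theorem~\ref{thm:BMS}, one sees that $\widetilde{T}_\ast \sigma_{\alpha,\beta}$ is, up to a $\BC$-action (indeed a real shear, hence an element of $\BC \subset \widetilde{\GL}_2^+(\BR)$), equal to $\sigma_{\alpha,\beta+1}$. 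Since $T \cdot \tau = \tau + 1$, this is exactly $\sigma_{T\tau}$, and the matching is straightforward bookkeeping.

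\textbf{The case $\widetilde{S} = \Phi_\pP$.} This is the main obstacle. The key input is Lemma~\ref{prop:ab-equiv-abelian-3}: for $t \in \BR_{>0}$,
\[
\Phi_\pP^{A \to \widehat A}[1]\bigl(\aA_{\sqrt 3 tH/2,\, tH/2}\bigr) = \aA_{\sqrt 3 \widehat H/(2t),\, -\widehat H/(2t)},
\]
and, under our identification $A \cong \widehat A$ (so $\widehat H = H$ after rescaling, using $H^n/n! \cdot \widehat H^n/n! = 1$ from Lemma~\ref{classicalcohomoFMT}), together with Lemma~\ref{prop:antidiagonal-rep-cohom-FMT} describing $\Phi_\pP^{\mathrm H}$ on Chern characters. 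One writes a point of $\BH$ in the normalized form $\tau = \beta + i\alpha$ with the specific choice $\omega = \alpha H = \sqrt 3 t H/2$, $B = \beta H = tH/2$, i.e.\ on the arc $\beta = \alpha/\sqrt 3$ corresponding to $t$; then Lemma~\ref{prop:ab-equiv-abelian-3} combined with the cohomological transform shows $\widetilde{S}_\ast \sigma_\tau$ has heart $\aA_{\alpha', \beta'}$ with $(\alpha',\beta')$ corresponding to $-1/\tau$ on this arc, and central charge $\widetilde{S}_\ast Z_\tau \sim Z_{-1/\tau}$. The general $\tau \in \BH$ is then reached by acting with $\widetilde{\GL}_2^+(\BR)$: every $\tau$ can be moved onto the arc $\{\beta = \alpha/\sqrt 3\}$ by a suitable $\widetilde{\GL}_2^+(\BR)$-element, the claim holds there, and one transports back using that $\widetilde{S}_\ast$ commutes with the $\widetilde{\GL}_2^+(\BR)$-action and that Möbius transformations descend compatibly (this last compatibility between the $\widetilde{\GL}_2^+(\BR)$-slice structure and the $\mathrm{SL}_2(\BZ)$-action on $\BH$ being precisely what must be checked carefully). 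The hardest bookkeeping is to verify that the residual correction after all this lands in $\BC$ rather than the larger group — this follows by comparing the imaginary parts of the central charges, which are forced to match up to a positive real scalar because both are of the form $\alpha'(3 v_2^{\beta'} - \alpha'^2 v_0^{\beta'})$ in the normalized slice coordinates.
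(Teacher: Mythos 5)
Your reduction to the generators $S,T$ is legitimate in principle, and your treatment of $\widetilde T$ and of $\widetilde S$ \emph{on the ray} $\beta=\alpha/\sqrt3$ is essentially sound. But the step that extends the $\widetilde S$-case from that ray to a general $\tau\in\BH$ is a genuine gap, and it cannot be repaired in the form you propose. You claim that ``every $\tau$ can be moved onto the arc $\{\beta=\alpha/\sqrt3\}$ by a suitable $\widetilde{\GL}_2^{+}(\mathbb{R})$-element.'' This is false: by Theorem~\ref{thm:BMS} the image of $\mathfrak{B}$ (and hence of the embedding \eqref{emb:H} of $\BH$) is a \emph{slice} of the $\widetilde{\GL}_2^{+}(\mathbb{R})$-action, so each orbit meets it in exactly one point, and $\sigma_{\tau}$, $\sigma_{\tau'}$ lie in distinct orbits whenever $\tau\neq\tau'$. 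Concretely, if $\sigma_\tau\cdot\xi=\sigma_{\tau'}$ then comparing the degree-zero components of the central charges $-\chi(e^{\tau H},-)$ and $-\chi(e^{\tau' H},-)$ forces $\tau=\tau'$. The right $\widetilde{\GL}_2^{+}(\mathbb{R})$-action rotates phases and rescales $Z$ within one orbit; it does not implement Möbius transformations of the slice parameter $\tau$ — that is exactly the content of the lemma you are trying to prove, not something you may assume. Since Lemma~\ref{prop:ab-equiv-abelian-3} only identifies the image heart for $(\omega,B)=(\sqrt3 tH/2,\,tH/2)$, your argument establishes the $\widetilde S$-case only on a single ray, and the $\SL_2(\BZ)$-translates of that ray do not cover $\BH$. (A smaller issue: a real shear is \emph{not} an element of $\BC\subset\widetilde{\GL}_2^{+}(\mathbb{R})$; in the $\widetilde T$-case the correction is in fact trivial, since $T_*e^{\tau H}=e^{(\tau+1)H}$ exactly.)

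The paper's proof sidesteps the need to compute image hearts altogether, and works for all $g$ and all $\tau$ at once. Since $\Aut D^b(A)$ preserves $\Stab^\circ(A)$ and $\mathfrak{B}$ is a slice, one knows \emph{a priori} that $\widetilde g_*\sigma_\tau=\sigma'\cdot\xi$ for some $\sigma'\in\mathfrak{B}$ and $\xi\in\widetilde{\GL}_2^{+}(\mathbb{R})$; it then suffices to identify the central charge. Writing $Z_\tau(v)=-\chi(e^{\tau H},v)$ and using the binary-cubic model \eqref{poly}, one computes
\[
g_*e^{\tau H}=(c\tau+d)^3\,e^{(g\tau)H},
\qquad\text{hence}\qquad
\widetilde g_*Z_\tau=(c\tau+d)^3\,Z_{g\tau},
\]
which exhibits the central charge of $\widetilde g_*\sigma_\tau$ as a nonzero complex multiple of that of $\sigma_{g\tau}$; this both pins down $\sigma'=\sigma_{g\tau}$ and shows $\xi\in\BC$. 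I recommend you replace your generator-by-generator argument with this computation, or at minimum use it to handle $\widetilde S$ at arbitrary $\tau$.
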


\begin{proof}
By Theorem~\ref{thm:BMS} and since $\Aut D^b(A)$ preserves the main component of the stability manifold, we have 
\[ \widetilde{g}_{\ast}\sigma_{\tau} = \sigma' \cdot \xi \]
for some $\sigma' \in \mathfrak{B}$ and $\xi \in \widetilde{\GL}_2^{+}(\mathbb{R})$. 
Therefore it is enough to show that the central charge of $\widetilde{g}_{\ast}\sigma_{\tau}$ is of the desired form.

By \eqref{Z_omegaB} the central charge of $\sigma_{\tau}$ is written as  
\begin{align*}
Z_{\tau}(v)=-\chi(e^{\tau H}, v)
\end{align*}
for all $v \in \Gamma$. Hence the central charge of $\widetilde{g}_{\ast}\sigma_{\alpha, \beta}$ is
\begin{equation} \label{5ydfgdf}
Z_{\tau}(g_{\ast}^{-1}v) =-\chi(e^{\tau H}, g_{\ast}^{-1}v) \\
= -\chi(g_{\ast}e^{ \tau H}, v).
\end{equation}
Under the correspondence (\ref{poly}) we have $e^{\tau H} = (x+\tau y)^3$ which implies
\[ g_{\ast} e^{\tau H} = (c \tau + d)^3 (x + (g\tau)y)^3 = (c \tau + d)^3 e^{(g \tau)H}. \]
Inserting back into \eqref{5ydfgdf} the Lemma follows.
\end{proof}

\subsection{Wall and chamber structure}
We consider classes $v \in \Gamma$ which can be written as $\gamma_1+\gamma_2$ 
for some $\gamma_i \in \cC$ such that $\chi(\gamma_1, \gamma_2) \neq 0$. 
Since $(A,H)$ is principally polarized we have
\begin{align*}
\cC =\{r(p^3, p^2q, pq^2, q^3) : 
(p, q, r) \in \mathbb{Z}^3, r\neq 0, \mathrm{gcd}(p, q)=1\}.  
\end{align*}
Hence $v$ can be written as 
\begin{align}\label{v123}
v=\gamma_1+\gamma_2, \ \gamma_i=r_i(p_i^3, p_i^2 q_i, p_i q_i^2, q_i^3)
\in \cC, \ \Theta(\gamma_1)<\Theta(\gamma_2),
\end{align}
where $\Theta(\gamma_i) = q_i/p_i$.
\begin{lem}\label{lem:v12}
If $v$ is written as in (\ref{v123}), then $\gamma_1$, $\gamma_2$
are uniquely determined from $v$. 
\end{lem}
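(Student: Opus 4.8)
The statement to prove is Lemma~\ref{lem:v12}: if $v = \gamma_1 + \gamma_2$ with $\gamma_i \in \cC$ and $\Theta(\gamma_1) < \Theta(\gamma_2)$, then $\gamma_1, \gamma_2$ are uniquely determined by $v$. The plan is to exploit the polynomial interpretation \eqref{poly} of $\Gamma$. Under this identification, a semihomogeneous class $\gamma = r(p^3, p^2 q, p q^2, q^3)$ with $\gcd(p,q) = 1$ corresponds to the homogeneous cubic $r(p x + q y)^3$, i.e.\ a scalar multiple of a perfect cube of a \emph{primitive} linear form $\ell_\gamma = p x + q y$ (well-defined up to sign, together with the sign of $r$). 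The slope $\Theta(\gamma) = q/p$ records precisely the root $[{-q} : p] \in \mathbb{P}^1(\mathbb{Q})$ of $\ell_\gamma$. So the assertion becomes: a binary cubic form that is a sum of two (rational scalar multiples of) cubes of primitive linear forms with \emph{distinct} roots has at most one such representation.

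First I would reduce to showing that the pair of linear forms $\{\ell_{\gamma_1}, \ell_{\gamma_2}\}$, up to scalars, is determined by the cubic $f_v = v_0 x^3 + 3 v_1 x^2 y + 3 v_2 x y^2 + v_3 y^3$. Suppose $f_v = r_1 \ell_1^3 + r_2 \ell_2^3 = r_1' \ell_1'^3 + r_2' \ell_2'^3$ are two such decompositions with $\ell_1, \ell_2$ non-proportional and $\ell_1', \ell_2'$ non-proportional. After acting by a suitable $g \in \mathrm{SL}_2(\BZ)$ (which permutes decompositions of this shape, by the $\mathrm{SL}_2$-equivariance of \eqref{poly} and the fact that $\cC$ is $\mathrm{SL}_2$-stable, cf.\ Theorem~\ref{thm:Orlov} and the discussion after \eqref{trans}), I may normalize $\ell_1 = x$, $\ell_2 = y$, so $f_v = r_1 x^3 + r_2 y^3$ with $r_1, r_2 \neq 0$. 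Then the second decomposition gives $r_1 x^3 + r_2 y^3 = r_1' \ell_1'^3 + r_2' \ell_2'^3$. Now I would argue that the only way to write the "diagonal" cubic $r_1 x^3 + r_2 y^3$ (which has three distinct roots over $\overline{\BQ}$, namely $x^3/y^3 = -r_2/r_1$, a non-degenerate cubic) as a sum of two cubes of linear forms is the obvious one: this is a classical fact about the Sylvester/apolarity normal form of binary cubics — a binary cubic with distinct roots has an \emph{essentially unique} Sylvester decomposition as a sum of two cubes, unique up to scaling the two summands and swapping them. Hence $\{\ell_1', \ell_2'\} = \{x, y\}$ up to scalars, which after undoing $g$ gives $\{\ell_{\gamma_1'}, \ell_{\gamma_2'}\} = \{\ell_{\gamma_1}, \ell_{\gamma_2}\}$ up to scalars; the ordering condition $\Theta(\gamma_1) < \Theta(\gamma_2)$ then pins down which is which, and finally the primitivity of the $\ell_{\gamma_i}$ together with the rationality $\gamma_i \in \Gamma$ forces the scalars $r_i$ to be uniquely determined integers.

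The main obstacle — and the step deserving care — is the uniqueness of the Sylvester decomposition, i.e.\ that $r_1 x^3 + r_2 y^3$ has no "exotic" second representation $r_1' \ell_1'^3 + r_2' \ell_2'^3$ with $\{\ell_1', \ell_2'\}$ different from $\{x, y\}$. I expect the cleanest route is via the Hessian (or the apolar quadratic form): for a binary cubic $f$ with distinct roots, the two linear forms in any Sylvester decomposition are exactly the two roots of the Hessian covariant $\mathrm{He}(f)$ (a binary quadratic canonically attached to $f$), so they are forced. Concretely, $\mathrm{He}(r_1 x^3 + r_2 y^3)$ is proportional to $xy$, whose roots are $\{x, y\}$ — done. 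Alternatively one can argue by a direct elementary computation: expanding $r_1' (a x + b y)^3 + r_2'(c x + d y)^3$ and matching the four coefficients of $r_1 x^3 + 0 \cdot x^2 y + 0 \cdot x y^2 + r_2 y^3$ yields (after using $r_1', r_2' \neq 0$ and $(a,b) \not\parallel (c,d)$) the system $r_1' a^2 b + r_2' c^2 d = 0$ and $r_1' a b^2 + r_2' c d^2 = 0$, whence $ab(r_1' a + r_2' c \cdot \tfrac{cd}{ab}) = 0$ type relations forcing $b = d = 0$ or $a = c = 0$; a short case analysis closes it. I would present whichever of these two arguments is shorter in the final text. A minor point to check at the end: the equivariance step must not introduce denominators, but since $\mathrm{SL}_2(\BZ)$ acts on $\Gamma$ (integrally, by \eqref{intro:ST}) and preserves primitivity of linear forms, the reduction to the diagonal case is legitimate over $\BZ$.
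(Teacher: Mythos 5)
Your approach is correct and reaches the same conclusion as the paper, but it is packaged differently. The paper argues by direct elimination: writing $\gamma_i=u_i(1,\theta_i,\theta_i^2,\theta_i^3)$ (with the case $\gamma_2\propto(0,0,0,1)$ set aside), it solves the $j=0,1$ equations for $u_1,u_2$, substitutes into the $j=2,3$ equations, and finds
$\theta_1+\theta_2=\tfrac{v_1v_2-v_0v_3}{v_1^2-v_0v_2}$ and $\theta_1\theta_2=\tfrac{v_2^2-v_1v_3}{v_1^2-v_0v_2}$,
using $v_1^2-v_0v_2=-u_1u_2(\theta_1-\theta_2)^2\neq 0$. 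Your Hessian/Sylvester argument is exactly this computation in invariant-theoretic clothing: the Hessian of $f_v$ is $36\bigl((v_0v_2-v_1^2)x^2+(v_0v_3-v_1v_2)xy+(v_1v_3-v_2^2)y^2\bigr)$, which is proportional to $(x+\theta_1y)(x+\theta_2y)$, so your "roots of the Hessian" are the paper's symmetric functions of the slopes. What your packaging buys is uniformity (the slope-$\infty$ summand $\propto(0,0,0,1)$, which the paper treats as a separate case, is just the root $[0:1]$ of the Hessian) and a conceptual reason for uniqueness; what the paper's elimination buys is that the explicit formulas \eqref{theta:12} are reused later (in Lemma~\ref{conj:equiv}).

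One small inaccuracy to fix if you write this up: you cannot in general normalize two non-proportional primitive forms $\ell_1,\ell_2$ to $x,y$ by an element of $\SL_2(\BZ)$ — that requires $p_1q_2-p_2q_1=\pm1$. But the normalization is unnecessary: the covariance of the Hessian under $\GL_2(\BQ)$ (or your direct coefficient-matching over $\BQ$) already shows that for any decomposition $f=r_1\ell_1^3+r_2\ell_2^3$ with $r_1r_2\neq0$ and $\ell_1\not\parallel\ell_2$ one has $\mathrm{He}(f)\propto\ell_1\ell_2\neq0$, which pins down the unordered pair $\{[\ell_1],[\ell_2]\}\subset\BP^1(\BQ)$; the slope ordering, primitivity, and linear independence of $\ell_1^3,\ell_2^3$ then determine $\gamma_1,\gamma_2$ as you say.
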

\begin{proof}
Each $\gamma_i \in \cC$ is either written as
$u_i(1, \theta_i, \theta_i^2, \theta_i^3)$ 
for some $u_i \in \mathbb{Z}$ and $\theta_i \in \mathbb{Q}$, or 
proportional to $(0, 0, 0, 1)$. 
If $\gamma_2$ is proportional to $(0, 0, 0, 1)$, then the lemma holds. 
Therefore it is enough to show that, for fixed $v=(v_0, v_1, v_2, v_3)$, the 
equation
\begin{align}\label{eq:v}
v_j=u_1 \theta_1^j+u_2 \theta_2^j, \ \theta_1<\theta_2, \ 0\le j\le 3
\end{align}
has at most one solution of $(u_1, u_2, \theta_1, \theta_2)$. 
The equations (\ref{eq:v}) for $j=0, 1$ give
\begin{align}\label{eq:u}
u_1=\frac{v_1-v_0 \theta_2}{\theta_1-\theta_2}, \ 
u_2=\frac{v_0 \theta_1-v_1}{\theta_1-\theta_2}. 
\end{align}
By substituting this into (\ref{eq:v}) for $j=3, 4$, we obtain 
\begin{equation}
\begin{aligned}\label{eq:v2}
&v_1(\theta_1+\theta_2)-v_0 \theta_1 \theta_2=v_2, \\
&v_1(\theta_1^2+\theta_1 \theta_2+\theta_2^2)-v_0\theta_1 \theta_2(\theta_1+\theta_2)=v_3
\end{aligned}
\end{equation}
respectively. 
By substituting $v_0 \theta_1 \theta_2=v_1(\theta_1+\theta_2)-v_2$ into 
the second, we obtain 
\begin{align}\label{eq:v3}
v_2(\theta_1+\theta_2)-v_1 \theta_1 \theta_2=v_3. 
\end{align}
On the other hand if (\ref{eq:v}) has a solution, we have 
\begin{align*}
v_1^2 -v_0 v_2=-u_1 u_2(\theta_1-\theta_2)^2 \neq 0. 
\end{align*}
Therefore (\ref{eq:v2}), (\ref{eq:v3}) 
give 
\begin{align*}
\theta_1+\theta_2=\frac{v_1 v_2-v_0 v_3}{v_1^2-v_0 v_2}, \
\theta_1 \theta_2=\frac{v_2^2-v_1 v_3}{v_1^2-v_0 v_2}. 
\end{align*}
The number of $(\theta_1, \theta_2) \in \mathbb{Q}^2$ with 
$\theta_1<\theta_2$ satisfying the above equation 
is at most one, 
and $(u_1, u_2)$ is determined by 
$(\theta_1, \theta_2)$. 
\end{proof}

If $v$ is written as (\ref{v123}), by 
Lemma~\ref{lem:v12} and 
the proof of Proposition~\ref{prop:wall1}
the only possible wall in $\Stab^{\circ}(A)$ 
where $\DT_{\sigma}(v)$ can jump is
\begin{align*}
\wW_v \cneq \{(Z, \aA) \in \Stab^{\circ}(A) : 
Z(\gamma_2) \in \mathbb{R}_{>0} Z(\gamma_1)\}. 
\end{align*}

\begin{lem}\label{beta0}
For a fixed $\alpha>0$, there is 
$\beta_0 \in \mathbb{R}$ such that 
if $\beta<\beta_0$, then the 
image of the map 
\begin{align*}
\mathbb{R}_{\ge 1/2} \to \Stab^{\circ}(A), \ 
s \mapsto \sigma_{\alpha, \beta}^{a=s\alpha^2, b=0}
\end{align*}
does not intersect with $\wW_{v}$. 
\end{lem}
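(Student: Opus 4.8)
The plan is to reduce the statement to an elementary computation with central charges followed by an asymptotic estimate in $\beta$. Write $\gamma_i=r_i(p_i^3,p_i^2q_i,p_iq_i^2,q_i^3)$ with $\Theta(\gamma_1)=q_1/p_1<q_2/p_2=\Theta(\gamma_2)$, and note at once that $p_1\neq 0$ (otherwise $\Theta(\gamma_1)=\infty$ contradicts $\Theta(\gamma_1)<\Theta(\gamma_2)$) and that $\kappa:=p_1q_2-p_2q_1\neq 0$ since $\chi(\gamma_1,\gamma_2)\neq0$. Setting $w_i:=q_i-\beta p_i$, the $\beta$-twisted class becomes $\gamma_i^{\beta}=r_i(p_i^3,p_i^2w_i,p_iw_i^2,w_i^3)$, and plugging into the formula for $Z_{\alpha,\beta}^{a,b}$ with $a=s\alpha^2$, $b=0$ (and ignoring the harmless positive overall factor $H^3$) I would record
\[
\mathrm{Im}\,Z(\gamma_i)=\frac{\alpha r_ip_i}{6}\bigl(3w_i^2-\alpha^2p_i^2\bigr),\qquad
\mathrm{Re}\,Z(\gamma_i)=\frac{r_iw_i}{6}\bigl(6s\alpha^2p_i^2-w_i^2\bigr),
\]
the key point being that $\mathrm{Im}\,Z(\gamma_i)$ does not depend on $s$.

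Since $\sigma_{\alpha,\beta}^{s\alpha^2,0}\in\wW_v$ forces $Z(\gamma_1)$ and $Z(\gamma_2)$ to be collinear in $\mathbb{C}$, it suffices to show that $\mathrm{Re}\,Z(\gamma_1)\,\mathrm{Im}\,Z(\gamma_2)-\mathrm{Im}\,Z(\gamma_1)\,\mathrm{Re}\,Z(\gamma_2)\neq 0$ for all $s\ge 1/2$ once $\beta$ is sufficiently negative. Expanding this expression and dividing by the nonzero constant $\alpha r_1r_2/36$, I expect it to be affine-linear in $s$, of the form $C_1(\beta)\,s+C_0(\beta)$ with
\[
C_1=6\alpha^2p_1p_2\kappa\bigl(3w_1w_2+\alpha^2p_1p_2\bigr),\qquad
C_0=\kappa\Bigl(3w_1^2w_2^2-\alpha^2\bigl((w_1p_2)^2+p_1p_2w_1w_2+(p_1w_2)^2\bigr)\Bigr),
\]
where one uses the identity $w_2p_1-w_1p_2=p_1w_2-p_2w_1=\kappa$ (the $\beta$-terms cancel).

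I would then split into two cases. If $p_2=0$, then $\gamma_2$ is proportional to $(0,0,0,1)$, so $\mathrm{Im}\,Z(\gamma_2)=0$ while $\mathrm{Re}\,Z(\gamma_2)=-r_2q_2^3/6\neq0$, whence the collinearity condition reduces to $\mathrm{Im}\,Z(\gamma_1)=0$, i.e. $3w_1^2=\alpha^2p_1^2$; the left minus right side is a quadratic in $\beta$ with positive leading coefficient $3p_1^2$, hence has at most two real roots, and choosing $\beta_0$ below both of them makes the condition unsolvable for every $s$. If $p_2\neq0$, then $3w_1w_2+\alpha^2p_1p_2$ is a quadratic in $\beta$ with nonzero leading coefficient $3p_1p_2$, so $C_1(\beta)\neq0$ for $\beta\ll0$ and the wall equation has the unique root $s^{*}(\beta)=-C_0(\beta)/C_1(\beta)$. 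As polynomials in $\beta$, $C_0$ has degree $4$ with leading coefficient $3\kappa p_1^2p_2^2$ and $C_1$ has degree $2$ with leading coefficient $18\alpha^2\kappa p_1^2p_2^2$; therefore $s^{*}(\beta)=-\beta^2/(6\alpha^2)+O(|\beta|)\to-\infty$ as $\beta\to-\infty$, so $s^{*}(\beta)<1/2$ for all $\beta$ below some threshold $\beta_0$, and for such $\beta$ no $s\ge1/2$ gives a point of $\wW_v$. Taking the smaller of the two thresholds finishes the argument.

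I do not expect a genuine obstacle: the lemma is a computation. The only points requiring care are the degenerate subcase $p_2=0$, where $C_1\equiv0$ and a separate (easier) argument is needed, and the bookkeeping of the leading coefficients of $C_0$ and $C_1$ — it matters that $\deg C_0-\deg C_1=2$ so that $s^{*}(\beta)\to-\infty$ rather than staying bounded. Conceptually the statement merely says that letting $\beta\to-\infty$ pushes the stability condition into the large-volume/Gieseker regime, in which $\gamma_1,\gamma_2$ have distinct $\mu$-slopes $\Theta(\gamma_i)$ and hence distinct phases so that $\wW_v$ cannot be met; but Proposition~\ref{lem:DTchamber} only provides this for $s$ large once $(\alpha,\beta)$ is fixed, whereas here it is needed uniformly for all $s\ge1/2$, which is exactly what the explicit estimate above supplies.
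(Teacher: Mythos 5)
Your proof is correct and follows essentially the same route as the paper's: both compute the wall condition explicitly from the central charge, split into the same two cases according to whether $\gamma_2$ is proportional to $(0,0,0,1)$, and show that the resulting polynomial equation admits no solution with $s\ge 1/2$ once $\beta$ is sufficiently negative. The only difference is cosmetic — the paper extracts an explicit uniform lower bound $\beta\ge(7\theta_1-\theta_2)/6$ directly from positivity of terms in the wall equation, whereas you solve the affine-in-$s$ equation for $s^{*}(\beta)$ and show $s^{*}(\beta)\to-\infty$ by a degree count in $\beta$.
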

\begin{proof}
As in the proof of Lemma~\ref{lem:v12}, 
suppose either $\gamma_i = u_i(1, \theta_i, \theta_i^2, \theta_i^3)$
for some $u_i \in \mathbb{Z}$ and $\theta_i =\Theta(\gamma_i) \in \mathbb{Q}$, 
or $\gamma_2$ is proportional to $(0, 0, 0, 1)$. 
We have
\begin{align*}
&Z_{\alpha, \beta}^{a=s\alpha^2, b=0}(u_i(1, \theta_i, \theta_i^2, \theta_i^3)) \\
&=u_i \left\{-(\theta_i-\beta)^3+6s\alpha^2(\theta_i-\beta)
+\sqrt{-1}\left( 3\alpha(\theta_i-\beta)^2-\alpha^3   \right)   \right\}. 
\end{align*}

First suppose that 
$\gamma_2$ is proportional to $(0, 0, 0, 1)$. 
If
$\sigma_{\alpha, \beta}^{a=s\alpha^2, b=0}$ 
lies in $\wW_{v}$, then we have
$Z_{\alpha, \beta}^{a=s\alpha^2, b=0}(\gamma_1) \in \mathbb{R}$, hence
\begin{align*}
3(\theta_1-\beta)^2-\alpha^2=0. 
\end{align*}
Hence the lemma holds by setting 
$\beta_0=\theta_1-\alpha/\sqrt{3}$. 

Next suppose that $\gamma_2$ is not proportional to $(0, 0, 0, 1)$. 
If $\sigma_{\alpha, \beta}^{a=s\alpha^2, b=0}$ 
lies in $\wW_{v}$, we have
\begin{align*}
\frac{(\theta_1-\beta)^3-6s \alpha^2(\theta_1-\beta)}
{3\alpha(\theta_1-\beta)^2 -\alpha^3}
=
\frac{(\theta_2-\beta)^3-6s \alpha^2(\theta_2-\beta)}
{3\alpha(\theta_2-\beta)^2 -\alpha^3}. 
\end{align*}
By setting $\theta=\theta_2-\theta_1$, $\overline{\beta}=\beta-\theta_1$
and simplifying, we obtain
\begin{align*}
3\overline{\beta}^2(\overline{\beta}-\theta)^2+6s \alpha^4+
3(6s-1)\alpha^2 \overline{\beta}^2
+3\theta(1-6s)\alpha^2 \overline{\beta}
-\theta^2 \alpha^2=0. 
\end{align*}
Since $\theta>0$, the above equation gives 
\begin{align*}
3(1-6s)\overline{\beta}-\theta \le 0. 
\end{align*}
Using $s\ge 1/2$, we obtain 
$\beta \ge 7\theta_1/6-\theta_2/6$. 
Hence the lemma follows by setting 
$\beta_0=7\theta_1/6-\theta_2/6$. 
\end{proof}

\begin{cor}
For any fixed $\alpha>0$, we have
\begin{align*}
\DT_{\sigma_{\alpha, \beta}}(v)=\DT_H(v), \ \beta \ll 0. 
\end{align*}
\end{cor}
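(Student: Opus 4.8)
The plan is to connect $\sigma_{\alpha,\beta}$ to a stability condition in the Gieseker chamber for $v$ by a path that never crosses $\wW_v$, and then invoke that $\DTb_\sigma(v)$ is constant away from $\wW_v$.

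First I would fix $\alpha>0$. Recall that $\sigma_{\alpha,\beta}=\sigma^{a=\alpha^2/2,\,b=0}_{\alpha,\beta}$ is the value at $s=\tfrac12$ of the arc $\Gamma_\beta\colon\mathbb{R}_{\ge1/2}\to\Stab^\circ(A)$, $s\mapsto\sigma^{a=s\alpha^2,\,b=0}_{\alpha,\beta}$, considered in Lemma~\ref{beta0}. Using the remark preceding Proposition~\ref{lem:DTchamber} (that $(\alpha,\beta)\in\sS_v$ for $\beta\ll0$) together with Lemma~\ref{beta0}, I would choose $\beta_0$ so that for every $\beta<\beta_0$ one has $(\alpha,\beta)\in\sS_v$ and the arc $\Gamma_\beta$ is disjoint from $\wW_v$. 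Fix such a $\beta$. By Proposition~\ref{lem:DTchamber} there is an $s_1\ge\tfrac12$ with $\Gamma_\beta(s_1)$ in the Gieseker chamber for $v$; running the argument behind Proposition~\ref{prop:DTH}, the moduli stacks of $\Gamma_\beta(s_1)$-semistable objects of class $v$ — and of the sub-classes $v_i$ with $\overline{\chi}_H(v_i)=\overline{\chi}_H(v)$ entering the logarithm~\eqref{def:log}, which lie in the same phase — coincide with the corresponding stacks of $H$-Gieseker semistable sheaves, with matching Behrend functions and $\A$-actions, whence $\DTb_{\Gamma_\beta(s_1)}(v)=\DTb_H(v)$.

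Next I would establish that $\DTb_\sigma(v)$ is locally constant on $\Stab^\circ(A)\setminus\wW_v$. This is a re-run of the wall-crossing computation in the proof of Theorem~\ref{prop:wall1}: in a small neighbourhood of any $\sigma'$ the invariant $\DTb(v)$ changes only by iterated Poisson brackets of $\overline{\epsilon}$-classes attached to decompositions $\sum_i\mu_i=v$, and after applying $\CI^\A$ one sees from Lemma~\ref{lem:B2} that the only contribution that can survive is a single quadratic bracket with $\mu_1,\mu_2\in\cC$ and $\chi(\mu_1,\mu_2)\neq0$; by Lemma~\ref{lem:v12} such a decomposition of $v$ is unique, and the locus where $Z(\mu_2)\in\mathbb{R}_{>0}Z(\mu_1)$ is precisely $\wW_v$. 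Hence $\DTb_\sigma(v)$ is constant along the connected arc $\Gamma_\beta([\tfrac12,s_1])\subset\Stab^\circ(A)\setminus\wW_v$, so
\[
\DTb_{\sigma_{\alpha,\beta}}(v)=\DTb_{\Gamma_\beta(1/2)}(v)=\DTb_{\Gamma_\beta(s_1)}(v)=\DTb_H(v),
\]
and taking the $\epsilon_{(0,0)}$-coefficient gives $\DT_{\sigma_{\alpha,\beta}}(v)=\DT_H(v)$ for all $\beta<\beta_0$, i.e. for $\beta\ll0$.

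The step I expect to be the main obstacle is the Gieseker-chamber identification $\DTb_{\Gamma_\beta(s_1)}(v)=\DTb_H(v)$ at the level of the full invariant rather than of moduli of objects of class $v$ alone: one must verify that for $s_1$ large every factor $\delta_\sigma(v_i)$ appearing in~\eqref{def:log} is also carried to its $H$-Gieseker counterpart. This should follow from Proposition~\ref{lem:DTchamber} applied to each such $v_i$, since these sub-classes have the same reduced Hilbert polynomial and lie in the same phase as $v$. A secondary routine point is to check that $Z_{\alpha,\beta}^{a=s\alpha^2,\,b=0}(v)\neq0$ for all $s\ge\tfrac12$ when $\beta\ll0$, so that the phase of $v$, hence $\DTb_{\Gamma_\beta(s)}(v)$, depends continuously on $s$; this is immediate once one notes that $\Imm Z_{\alpha,\beta}^{a,b}$ does not depend on $a,b$ and is strictly positive on $\sS_v$.
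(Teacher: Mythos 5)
Your argument is correct and follows the same route as the paper: combine Proposition~\ref{lem:DTchamber} (Gieseker chamber for $s\gg 0$ when $(\alpha,\beta)\in\sS_v$) with Lemma~\ref{beta0} (the arc $s\mapsto\sigma_{\alpha,\beta}^{a=s\alpha^2,b=0}$ avoids $\wW_v$ for $\beta\ll 0$), using that $\DTb_\sigma(v)$ can only jump at $\wW_v$, which the paper has already recorded just before Lemma~\ref{beta0} via Lemma~\ref{lem:v12} and the proof of Theorem~\ref{prop:wall1}. The two "obstacles" you flag are handled in the paper by Proposition~\ref{prop:DTH} and the definition of $\sS_v$, exactly as you suggest.
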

\begin{proof}
If $(\alpha, \beta) \in \sS_v$
and $s\gg 0$, then by Proposition~\ref{lem:DTchamber} we have
\begin{align*}
\DT_{\sigma}(v)=\DT_H(v), \ 
\sigma=\sigma_{\alpha, \beta}^{a=s \alpha^2, b=0}.
\end{align*}
By Lemma~\ref{beta0}, for $\beta <\beta_0$, 
the wall $\wW_v$ does not intersect 
with a path from 
$\sigma_{\alpha, \beta}=\sigma_{\alpha, \beta}^{a=\alpha^2/2}$
to $\sigma_{\alpha, \beta}^{a=s\alpha^2, b=0}$, $s\gg 0$. 
Therefore $\DT_{\sigma_{\alpha, \beta}}(v)=\DT_H(v)$. 
\end{proof}

We next describe the wall $\wW_v$ on the 
$(\alpha, \beta)$-plane, 
i.e. $\BH \cap \wW_v$
where $\BH=\{\beta+i\alpha \in \mathbb{C} : \alpha>0\}$ is embedded into $\Stab^{\circ}(A)$
via the map (\ref{emb:H}). 
\begin{lem}\label{lem:compute:wall}
Suppose that 
$\gamma_i \in \cC$ is written as 
$\gamma_i=u_i(1, \theta_i, \theta_i^2, \theta_i^3)$, 
$0\neq u_i \in \mathbb{Z}$, $\theta_i \in \mathbb{Q}$
with $\theta_1<\theta_2$.
Then $\BH \cap \wW_{v}$ is
\begin{equation} \label{walls}
\left(\alpha \pm \frac{\sqrt{3}}{6}(\theta_1-\theta_2) \right)^2+\left(\beta-\frac{\theta_1+\theta_2}{2}\right)^2
=\frac{1}{3}(\theta_1-\theta_2)^2, \  \mp u_1/u_2 >0, \\
\end{equation}
If $\gamma_1=u_1(1, \theta_1, \theta_1^2, \theta_1^3)$ 
and $\gamma_2=(0, 0, 0, u_2)$, 
then $\BH \cap \wW_v$ is 
\begin{align*}
\beta=\pm\frac{\sqrt{3}}{3}\alpha+\theta_1, 
\ \pm u_1/u_2>0.
\end{align*}
\end{lem}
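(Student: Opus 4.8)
The plan is to compute the wall $\wW_v$ directly from its defining condition $Z(\gamma_2) \in \BR_{>0} Z(\gamma_1)$ using the explicit formula for the central charge $Z_{\alpha,\beta} = Z_{\alpha,\beta}^{a=\alpha^2/2,b=0}$ along the slice $\BH \hookrightarrow \Stab^\circ(A)$. First I would recall from the proof of Lemma~\ref{beta0} that for $\gamma_i = u_i(1,\theta_i,\theta_i^2,\theta_i^3)$ one has
\[
Z_{\alpha,\beta}(\gamma_i) = u_i\left\{ -(\theta_i-\beta)^3 + \tfrac{3}{2}\alpha^2(\theta_i-\beta) + \sqrt{-1}\,\alpha\left( 3(\theta_i-\beta)^2 - \alpha^2 \right) \right\},
\]
(the real part coefficient $3/2$ coming from $a = \alpha^2/2$, i.e. $6s\alpha^2$ with $s=1/2$). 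The condition that $Z(\gamma_2)$ is a positive real multiple of $Z(\gamma_1)$ is equivalent to the vanishing of the $2\times 2$ determinant $\Ree Z(\gamma_1)\,\Imm Z(\gamma_2) - \Ree Z(\gamma_2)\,\Imm Z(\gamma_1) = 0$ together with a sign condition ensuring positivity. Writing $x_i = \theta_i - \beta$, the determinant becomes $u_1 u_2$ times a polynomial in $x_1, x_2, \alpha^2$, which I would expand and factor.

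The key algebraic step is to show this determinant factors so that, after discarding the spurious factor corresponding to $Z(\gamma_1)$ or $Z(\gamma_2)$ being zero (which does not happen for $\sigma \in \Stab^\circ(A)$ since semistable objects exist), the genuine wall equation is the pair of circles in \eqref{walls}. Concretely, substituting $x_i = \theta_i - \beta$ and using $\theta = \theta_2 - \theta_1$, $\overline\beta = \beta - \theta_1$ as in the proof of Lemma~\ref{beta0} (but now with $s = 1/2$, so the $(1-6s)$ terms are $-2$), the determinant condition should reduce to a quadratic in $\overline\beta$ and $\alpha$ whose solution set is exactly the two circles
\[
\left(\alpha \pm \tfrac{\sqrt 3}{6}\theta\right)^2 + \left(\overline\beta - \tfrac{\theta}{2}\right)^2 = \tfrac{1}{3}\theta^2,
\]
translated back by $\theta_1$. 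Then I would determine which of the two circles is the actual wall by checking the sign condition $Z(\gamma_2) = \lambda Z(\gamma_1)$ with $\lambda > 0$: comparing imaginary parts on each circle pins down the sign of $u_1/u_2$, giving the constraint $\mp u_1/u_2 > 0$ attached to each branch. For the degenerate case $\gamma_2 = (0,0,0,u_2)$ one has $Z_{\alpha,\beta}(\gamma_2) = u_2$, a nonzero real number, so the wall condition is simply $\Imm Z_{\alpha,\beta}(\gamma_1) = 0$, i.e. $3(\theta_1-\beta)^2 = \alpha^2$, which gives the two lines $\beta = \pm \tfrac{\sqrt 3}{3}\alpha + \theta_1$; the sign condition $Z(\gamma_1) \in \BR_{>0}Z(\gamma_2) = \BR_{>0}\,\mathrm{sgn}(u_2)$ then forces $\mp u_1/u_2 > 0$ on the respective branches after evaluating the real part $u_1\{-(\theta_1-\beta)^3 + \tfrac{3}{2}\alpha^2(\theta_1-\beta)\}$ on each line.

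I expect the main obstacle to be the bookkeeping in the factorization of the cubic-degree determinant: one must correctly identify and cancel the extraneous factor (corresponding to the locus where one of the two central charges degenerates, which lies outside $\BH$ or is excluded because $\Stab^\circ(A)$ consists of genuine stability conditions), and then recognize the remaining conic as the union of the two stated circles rather than some other quadric. A useful sanity check I would build in is the consistency with Lemma~\ref{beta0}: the circles \eqref{walls} are tangent to the lines $\beta = \theta_1 \pm \alpha/\sqrt 3$ at their topmost/bottommost points — indeed the circle centered at $(\overline\beta,\alpha) = (\theta/2, \mp\sqrt3\,\theta/6)$ with radius $\theta/\sqrt3$ meets the region $\alpha > 0$ in an arc lying in $\beta > 7\theta_1/6 - \theta_2/6$, matching $\beta_0$ — and this geometric picture confirms the algebra. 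A further check is compatibility under the $\widetilde{\SL}_2(\BZ)$-action of Lemma~\ref{lem:modular}, since both $\wW_v$ and the family of circles \eqref{walls} must transform consistently under $\tau \mapsto g\tau$.
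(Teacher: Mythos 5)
Your overall route --- evaluate $Z_{\alpha,\beta}$ on the classes $\gamma_i$ and impose the condition that $Z(\gamma_2)/Z(\gamma_1)$ is a positive real --- is the same as the paper's, but two concrete errors in your setup would derail the computation, and you miss the one structural observation that makes the "bookkeeping" obstacle you flag disappear. First, the coefficient of the real part is wrong: from the formula in the proof of Lemma~\ref{beta0}, $a=\alpha^2/2$ means $s=1/2$ and hence $6s\alpha^2=3\alpha^2$, not $\tfrac32\alpha^2$. With the correct coefficient one has the identity
\begin{align*}
Z_{\alpha,\beta}\big(u_i(1,\theta_i,\theta_i^2,\theta_i^3)\big)
=u_i\Big(-(\theta_i-\beta)^3+3\alpha^2(\theta_i-\beta)+i\big(3\alpha(\theta_i-\beta)^2-\alpha^3\big)\Big)
=u_i(\beta-\theta_i+i\alpha)^3,
\end{align*}
i.e.\ the central charge is a perfect cube (this is just $Z_\tau(v)=-\chi(e^{\tau H},v)$ together with the identification \eqref{poly}, under which $e^{\theta_i H}\leftrightarrow (x+\theta_i y)^3$). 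This is what the paper uses: the wall condition becomes the statement that $\big(\tfrac{\beta-\theta_1+i\alpha}{\beta-\theta_2+i\alpha}\big)^3$ is real of a prescribed sign, so the argument of the base ratio must be a multiple of $\pi/3$; since its imaginary part $\alpha(\theta_1-\theta_2)/\lvert\,\cdot\,\rvert^2$ is strictly negative, this forces $\tan(\arg)=\pm\sqrt3$, i.e.\ $\alpha(\theta_1-\theta_2)=\pm\sqrt3\,(\alpha^2+(\beta-\theta_1)(\beta-\theta_2))$, which is exactly \eqref{walls}. Your proposed brute-force expansion of $\Ree Z_1\,\Imm Z_2-\Ree Z_2\,\Imm Z_1$ is a degree-five polynomial whose factorization you do not carry out and explicitly identify as the main difficulty; with your coefficient $\tfrac32\alpha^2$ the cube identity fails and that polynomial does \emph{not} reduce to the stated circles, so as written the plan cannot be completed. (A side benefit of the cube form: $Z(\gamma_i)=0$ forces $\alpha=0$, so there is no "spurious factor" to discard on $\BH$ at all.)

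Second, in the degenerate case you assert $Z_{\alpha,\beta}(0,0,0,u_2)=u_2$; the correct value is $-u_2$ (e.g.\ $Z_{\omega,B}(\oO_x)=-\ch_3(\oO_x)=-1$). On the locus $\Imm Z(\gamma_1)=0$, i.e.\ $\beta-\theta_1=\pm\alpha/\sqrt3$, one computes $\Ree Z(\gamma_1)=\mp\tfrac{8}{3\sqrt3}\alpha^3u_1$, and the condition that this have the same sign as $-u_2$ gives $\pm u_1/u_2>0$ on the branch $\beta=\theta_1\pm\tfrac{\sqrt3}{3}\alpha$, as the lemma states; your sign convention yields the opposite constraint $\mp u_1/u_2>0$. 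Your geometric sanity checks (tangency with the lines of Lemma~\ref{beta0}, compatibility with the $\widetilde{\SL}_2(\BZ)$-action) are sound and worth keeping, but they would not catch either of these errors, since both circles and both lines appear in the answer and only the labelling by the sign of $u_1/u_2$ is at stake.
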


\begin{proof}
If $\gamma_i=u_i(1, \theta_i, \theta_i^2, \theta_i^3)$, 
we have 
\begin{align*}
Z_{\alpha, \beta}(\gamma_i)=u_i(\beta-\theta_i+i\alpha)^3. 
\end{align*}
Then $\BH \cap \wW_{v}$ is 
\begin{align}\label{BHW}
\frac{(\beta-\theta_1+i\alpha)^3}
{(\beta-\theta_2+i \alpha)^3} \in \left\{\begin{array}{ll}
\mathbb{R}_{>0}, & u_1/u_2>0 \\
\mathbb{R}_{<0}, & u_1/u_2<0. 
\end{array}
\right.
\end{align}
Since we have
\begin{align*}
\frac{(\beta-\theta_1+i\alpha)}
{(\beta-\theta_2+i \alpha)}
=\frac{1}{\alpha^2+(\beta-\theta_2)^2}
\left\{\alpha^2+(\beta-\theta_1)(\beta-\theta_2)+i\alpha(\theta_1-\theta_2)  \right\}
\end{align*}
and its imaginary part is negative, 
the condition (\ref{BHW}) is equivalent to 
\begin{align*}
\frac{\alpha(\theta_1-\theta_2)}{\alpha^2+(\beta-\theta_1)(\beta-\theta_2)}
=\pm \sqrt{3}, \ \pm u_1/u_2>0. 
\end{align*}
By simplifying, we obtain
the desired equation (\ref{walls}). 
The latter case is similar. 
\end{proof}

The walls (\ref{walls}) are circles which intersects with the $\beta$-axis at $\beta=\theta_1, \theta_2$, see Figure~\ref{Figure_Circles}.

\begin{figure}[h]
\centering
\includegraphics[width=\textwidth]{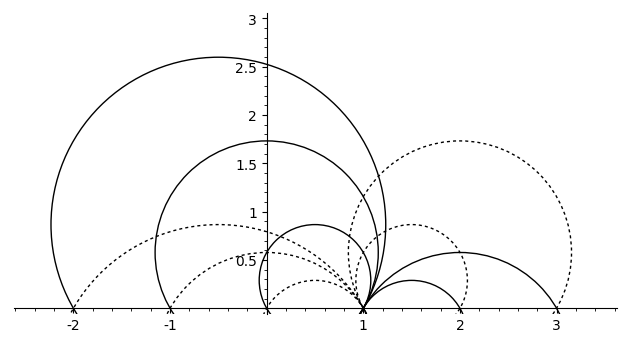}
\caption{The walls $\CW_v$ of type \eqref{walls} for $\theta_1=1$ and $\theta_2 \in \{ -2, -1, 0, 1/2, 3/2, 2, 3 \}$.
The circles are drawn dotted/solid depending on $u_1/u_2 \gtrless 0$.
}
\label{Figure_Circles}
\end{figure}

\subsection{Proof of Theorem~\ref{intro:main}} 
Suppose $v \in \Gamma$ is written as
\begin{align}\label{v12}
v=\gamma_1+\gamma_2, \ \gamma_i=r_i(p_i^3, p_i^2 q_i, p_i q_i^2, q_i^3)
\in \cC
\end{align}
with $\Theta(\gamma_1)<\Theta(\gamma_2)$ and let
\[ g = \begin{pmatrix} a&b \\ c&d \end{pmatrix} \in \mathrm{SL}_2(\BZ). \]

\noindent
\textbf{Case 1.} $-\frac{d}{c} \notin [ \Theta(\gamma_1), \Theta(\gamma_2) )$ or $c=0$.

We take $\sigma_{\alpha, \beta}$ with $\beta \ll 0$. 
Then we have 
\begin{align*}
\DT_{H}(g_{\ast}v)&=\DT_{\sigma_{\alpha, \beta}}(g_{\ast}v)
=\DT_{g_{\ast}^{-1}\sigma_{\alpha, \beta}}(v)=\DT_{\sigma_{\alpha', \beta'}}(v),
\end{align*}
where by Lemma~\ref{lem:modular}, we have
\begin{align}\label{asymptotic}
\beta'+i\alpha'=\frac{d(\beta+i\alpha)-b}{-c(\beta+i\alpha)+a}. 
\end{align}
For $\beta \to -\infty$ we get
\begin{align*}
\beta'+i\alpha' \to 
\left\{  \begin{array}{cc}
-d/c+0, & c\neq 0, \\
-\infty, & c=0. 
\end{array} \right. 
\end{align*}
Therefore
there exists a path in $\BH$ 
which connects $(\alpha, \beta)$, $\beta \ll 0$
and $(\alpha', \beta')$, 
and does not intersect with $\BH \cap \wW_v$. 
We conclude
\begin{align*}
\DT_{\sigma_{\alpha', \beta'}}(v)=
\DT_{\sigma_{\alpha, \beta}}(v)=\DT_H(v)
\end{align*}
as desired.

\noindent
\textbf{Case 2.} $-\frac{d}{c} \in [ \Theta(\gamma_1), \Theta(\gamma_2) )$.

With the notation and argument of Step 1 it is enough to compute the right hand side of 
\begin{align*}
\DT_H(v)-\DT_H(g_{\ast}v)=
\DT_{\sigma_{\alpha, \beta}}(v)-\DT_{\sigma_{\alpha', \beta'}}(v)
\end{align*}
for $\beta \ll 0$. 
By the asymptotic behavior (\ref{asymptotic}), 
$(\alpha', \beta')$ lies
inside (resp.~RHS) of the wall $\BH \cap \wW_v$
if $\Theta(\gamma_2)<\infty$ (resp.~$\Theta(\gamma_2)=\infty$). 
Let $(\alpha_0, \beta_0)$ lies on the wall $\BH \cap \wW_v$
and take 
$\sigma_0=\sigma_{\alpha_{0}, \beta_0}$.
Let $\sigma_{\pm}$ be small 
deformations of $\sigma_0$ such that 
their central charges $Z_{\pm}$
satisfy 
\begin{align*}
\arg Z_{+}(\gamma_1)>\arg Z_{+}(\gamma_2), \ \arg Z_{-}(\gamma_1)<\arg Z_{-}(\gamma_2).
\end{align*}
From the computations in Lemma~\ref{lem:compute:wall}, 
if $\Theta(\gamma_2)<\infty$ (resp.~$\Theta(\gamma_2)=\infty$) then
$\sigma_{+}$ lies in the outer (resp.~LHS) of the wall 
$\BH \cap \wW_v$
and $\sigma_-$ lies inside (resp.~RHS) of it. 
Therefore we have 
\begin{align*}
\DT_{\sigma_{\alpha, \beta}}(v)=\DT_{\sigma_+}(v), \ 
\DT_{\sigma_{\alpha', \beta'}}(v)=\DT_{\sigma_{-}}(v). 
\end{align*}
On the other hand, the equation (\ref{wc:delta}) yields
\begin{align*}
\delta_{\sigma_0}(v, \phi)&=\delta_{\sigma_+}(v, \phi_{+})+
\delta_{\sigma_+}(\gamma_1, \phi_1) \ast \delta_{\sigma_+}(\gamma_2, \phi_2)+\cdots \\
&=\delta_{\sigma_-}(v, \phi_{-})+
\delta_{\sigma_-}(\gamma_2, \phi_2') \ast \delta_{\sigma_-}(\gamma_1, \phi_1')+\cdots. 
\end{align*}
From this we obtain
\begin{align*}
\overline{\epsilon}_{\sigma_+}(v, \phi_+)-\overline{\epsilon}_{\sigma_-}(v, \phi_-)
=-\{\overline{\epsilon}_{\sigma_0}(\gamma_1, \phi), 
\overline{\epsilon}_{\sigma_0}(\gamma_2, \phi)\}
+\cdots. 
\end{align*}
By the proof of Proposition~\ref{prop:wall1}, 
after applying $\iI^{\A}$, 
only the first term on the right
contributes to the difference 
$\DT_{\sigma_+}(v)-\DT_{\sigma_{-}}(v)$. 
Since we have $\chi(\gamma_1, \gamma_2)=r_1 r_2(p_1 q_2-p_2 q_1)^3$
and using Corollary~\ref{cor:C} we find
\begin{align*}
\DT_{\sigma_+}(v)-\DT_{\sigma_-}(v) = &
(-1)^{r_1 r_2(p_1 q_2-p_2 q_1)}r_1 r_2(p_1 q_2-p_2 q_1)^3 \\
& \cdot\left( \sum_{k_1 \ge 1, k_1|r_1} \frac{1}{k_1^2} \right)
\left( \sum_{k_2 \ge 1, k_2|r_2} \frac{1}{k_2^2} \right)
\lvert B_1 \cap B_2 \rvert,
\end{align*}
where $B_i \subset \Xi(E_i)$ 
is the connected component which contains $(0, 0)$ 
for 
a semihomogeneous sheaf $E_i$ with 
Chern character 
$\pm \gamma_i \in \Gamma_{+}$. 

By~\cite[Theorem~4.9~(i)]{Mu4}, 
we have $B_i=\Xi(F_i)$ 
for a Jordan-Holder factor of $E_i$, 
whose Chern character is 
$\overline{\gamma}_i=\pm(p_i^3, p_i^2 q_i, p_i q_i^2, q_i^3) \in \Gamma_+$. 
By~\cite[Theorem~4.9~(ii)]{Mu4}, 
we hence obtain 
\begin{align*}
\lvert B_1 \cap B_2 \rvert=\chi(\overline{\gamma}_1, \overline{\gamma}_2)^2
=(p_1 q_2-p_2 q_1)^6. 
\end{align*}
Therefore the result follows. 
\qed

\subsection{Curve counting invariants} \label{Subsection_ppav_curve_counting}
For any $(\beta, n) \in \mathbb{Z}^2$ consider the rank one reduced 
Donaldson--Thomas invariant
\begin{align*}
\DT_{\beta, n}=\DT_H(1, 0, -\beta, -n). 
\end{align*}
We want to study the behaviour of $\DT_{\beta, n}$ under Fourier-Mukai transforms.

The following Lemma gives a strong constraint when two such rank $1$ classes can be related by a Fourier-Mukai transform.
\begin{lem}\label{lem:gcd}
Let $(\beta,n) \in \BZ^2$ and suppose that
\begin{align}\label{g:beta}
g(1, 0, -\beta, -n)=(1, 0, -\beta', -n')
\end{align}
for some $(\beta',n') \in \BZ^2$ and $g \in \SL_2(\mathbb{Z})$. Then there is 
$(c, d) \in \mathbb{Z}^2$
satisfying 
\begin{align*}
d^3-3\beta c^2 d-nc^3=1
\end{align*}
such that we have 
\begin{align}\label{class:beta}
(\beta', n')=(d^2\beta+ncd+\beta^2 c^2, 6\beta^2 d^2 c +6c^2 d \beta n+n+2c^3 n^2-2c^3\beta^3). 
\end{align}
\end{lem}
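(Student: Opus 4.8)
The idea is to make everything explicit via the cubic polynomial description of the $\SL_2(\BZ)$-action on $\Gamma$ from Section~\ref{Subsection_Action_of_auto-equvalences_on_coh}. Under the identification \eqref{poly}, the class $(1,0,-\beta,-n)$ corresponds to the binary cubic $f(x,y) = x^3 - 3\beta x y^2 - n y^3$, and the action of $g = \binom{a\ b}{c\ d}$ is the substitution $(x,y) \mapsto (dx+by, cx+ay)$. So the first step is to write out $g_{\ast} f = f(dx+by,\, cx+ay)$ as a cubic in $(x,y)$ and read off its four coefficients (divided by the appropriate binomial factors $1,3,3,1$ to recover a vector in $\Gamma$).

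\textbf{Normalization of the leading coefficient.} The hypothesis \eqref{g:beta} says that $g_{\ast}f$ again has the form $x^3 - 3\beta' x y^2 - n' y^3$; equivalently, in the expansion $g_{\ast}f = A_0 x^3 + 3A_1 x^2 y + 3 A_2 x y^2 + A_3 y^3$ we must have $A_0 = 1$ and $A_1 = 0$. The first step is therefore to compute
\[
A_0 = \text{coefficient of } x^3 \text{ in } f(dx+by, cx+ay) = d^3 - 3\beta c^2 d - n c^3,
\]
which must equal $1$ — this is exactly the stated Diophantine constraint, with $(c,d)$ the bottom row of $g$. The second step is to compute $A_1$; one checks that the condition $A_1 = 0$ is automatically a consequence of $A_0 = 1$ together with $g \in \SL_2(\BZ)$ (i.e.\ $ad - bc = 1$), because applying $g^{-1}$ must return $f$ to leading coefficient $1$ and no linear term, so there is no extra constraint on $(a,b)$ — alternatively, $A_1 = 0$ can be solved for $b$ in terms of $a,c,d$ and the resulting $b$ is forced to be the integer making $ad-bc=1$. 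In either case $(a,b)$ is determined (up to the ambiguity that does not affect the output class), so it suffices to extract $\beta'$ and $n'$ purely in terms of $\beta,n,c,d$.

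\textbf{Extracting $\beta'$ and $n'$.} The remaining step is the bookkeeping: compute $A_2$ and $A_3$ in the expansion of $f(dx+by,cx+ay)$, substitute $ad-bc=1$ (to eliminate $a,b$ wherever possible, using $a = (1+bc)/d$ etc.\ or directly the cofactor relations), and simplify to obtain $-3\beta' = 3A_2$ and $-n' = A_3$. Carrying this out yields
\[
\beta' = d^2\beta + n c d + \beta^2 c^2, \qquad
n' = 6\beta^2 d^2 c + 6 c^2 d \beta n + n + 2 c^3 n^2 - 2 c^3 \beta^3,
\]
which is \eqref{class:beta}. The only mild subtlety — and the place where care is needed — is that the coefficients $A_2, A_3$ a priori involve $a$ and $b$, not just $c$ and $d$; the point is that once $A_0 = 1$ is imposed, the relation $ad - bc = 1$ lets one rewrite all occurrences of $a,b$ in $A_2,A_3$ in terms of $c,d$ (and $\beta,n$) alone, after which the stated closed forms drop out. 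This is a finite, purely algebraic verification — essentially a polynomial identity in $\BZ[\beta,n,c,d]$ modulo the relation $d^3 - 3\beta c^2 d - n c^3 = 1$ — so the ``hard part'' is only organizing the expansion cleanly; there is no conceptual obstacle. One can also sanity-check the formulas on the generators: for $g = T$ (i.e.\ $(c,d)=(0,1)$) one gets $(\beta',n') = (\beta, n)$ up to the action of $T$ on the other coordinates, and for $g = S$ (i.e.\ $(c,d) = (1,0)$) the constraint forces $n = -1$ and one recovers the expected Fourier–Mukai image.
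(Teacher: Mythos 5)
Your overall strategy is the same as the paper's: identify $(1,0,-\beta,-n)$ with the binary cubic $f = x^3 - 3\beta xy^2 - ny^3$, expand $f(dx+by,\,cx+ay)$, and compare coefficients, with the $x^3$-coefficient giving the Diophantine constraint and the remaining coefficients giving $\beta'$ and $n'$ after eliminating $a$ and $b$. That much matches the paper exactly.

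The gap is in how you propose to eliminate $a$ and $b$. You assert that the vanishing of the $x^2y$-coefficient ($A_1=0$) is ``automatically a consequence of $A_0=1$ together with $ad-bc=1$,'' and later that once $A_0=1$ is imposed, the single relation $ad-bc=1$ lets one rewrite all occurrences of $a,b$ in $A_2,A_3$ in terms of $c,d,\beta,n$ alone. Both claims are false. Given $(c,d)$ with $d^3-3\beta c^2d-nc^3=1$, the solutions of $ad-bc=1$ form a one-parameter family $a=d^2-3\beta c^2+mc$, $b=nc^2+md$ with $m\in\BZ$, and $A_1,A_2,A_3$ genuinely depend on $m$. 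Concretely, take $\beta=n=0$ and $g=\binom{1\ b}{0\ 1}$: then $A_0=1$ and $ad-bc=1$, but $g_{\ast}x^3=(x+by)^3$ has $A_1=b$, $A_2=b^2$, $A_3=b^3$, which are not determined by $(c,d)=(0,1)$. (This also shows that your sanity check with $g=T$ is a non-example: $T$ does not satisfy the hypothesis \eqref{g:beta}, since $T_{\ast}(1,0,-\beta,-n)$ has second coordinate $1$.) The condition $A_1=0$ is therefore an independent constraint --- it is part of the hypothesis, because the image is required to have vanishing degree-two coordinate --- and it is exactly what is needed to fix the residual parameter. The paper's proof does precisely this: it parametrizes $a=d^2-3\beta c^2+mc$, $b=nc^2+md$, uses the $x^2y$-equation to force $m=2\beta c$, and only then substitutes into the remaining two coefficients to obtain \eqref{class:beta}. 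As written, your elimination would not close up; the repair is this one extra step.
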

\begin{proof}
Let $g = \binom{a\ b}{c\ d} \in \SL_2(\mathbb{Z})$. The condition (\ref{g:beta}) gives 
\begin{align*}
(dx+by)^3-3\beta(dx+by)(cx+ay)^2-n(cx+ay)^3=x^3-3\beta' x y^2-n' y^3. 
\end{align*}
We obtain the equations
\begin{align}\label{int:eq1}
&d^3-3\beta c^2 d-nc^3=1, \\
\notag&b d^2-\beta(2acd+bc^2)-nac^2=0, \\
\notag&\beta'=\beta(a^2 d+2abc)-b^2 d+a^2 cn, \\
\notag&n'=a^3 n +3\beta a^2 d-b^3. 
\end{align}
Since $ad-bc=1$, comparing with the first equation of (\ref{int:eq1}) gives
\begin{align}\label{ab:m}
a=d^2-3\beta c^2+mc, \ 
b=nc^2+md
\end{align}
for some $m \in \mathbb{Z}$. 
By substituting this into the second equations of (\ref{g:beta}), 
we obtain $m=2\beta c$. 
By substituting (\ref{ab:m})
into the third and fourth equation of (\ref{int:eq1}), 
and simplifying, we obtain (\ref{class:beta}). 
\end{proof}

Let $C_{\beta, n} \in \mathbb{Q}$ be the conjectural value of $\DT_{\beta,n}$ defined by the right hand side of \eqref{def:C}.
By Lemma~\ref{lem:gcd} and an elementary check we have
\[ C_{\beta, n}=C_{\beta', n'} \]
whenever $(\beta,n)$ and $(\beta',n')$ are related as in \eqref{g:beta}.
We therefore obtain the following evidence for Conjecture~\ref{intro:conj}.
\begin{cor}\label{cor:rkone}
If $4\beta^3-n^2\ge 0$ and $(\beta',n')$ is as in \eqref{g:beta}, then
\[ \DT_{\beta, n}=\DT_{\beta', n'}. \]
In particular, $\DT_{\beta, n}=C_{\beta, n}$ if and only if $\DT_{\beta', n'}=C_{\beta', n'}$.
\end{cor}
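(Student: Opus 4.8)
The plan is to deduce Corollary~\ref{cor:rkone} directly from Theorem~\ref{intro:main} together with Lemma~\ref{lem:gcd} and the compatibility of the conjectural value $C_{\beta,n}$ with the $\SL_2(\BZ)$-action. First I would record that the class $v = (1,0,-\beta,-n)$ corresponds under \eqref{poly} to the cubic $x^3 - 3\beta xy^2 - n y^3$, whose discriminant is exactly $\Delta(v) = 4\beta^3 - n^2$. So the hypothesis $4\beta^3 - n^2 \geq 0$ is precisely $\Delta(v) \geq 0$, and by Lemma~\ref{Lemma_Discrri} (equivalently Proposition~\ref{intro:prop_Delta}) the class $v$ satisfies the assumption of Theorem~\ref{intro:first} / Proposition~\ref{prop:wall1}: it cannot be written as $\gamma_1 + \gamma_2$ with $\gamma_i \in \cC$ and $\chi(\gamma_1,\gamma_2) \neq 0$. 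In the language of Theorem~\ref{intro:main} this is precisely the situation where no wall-crossing term appears, so part (i) applies.

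Next I would invoke Lemma~\ref{lem:gcd}: the hypothesis that $(\beta',n')$ is obtained from $(\beta,n)$ as in \eqref{g:beta} means there is some $g \in \SL_2(\BZ)$ with $g_{\ast}(1,0,-\beta,-n) = (1,0,-\beta',-n')$. By Theorem~\ref{intro:first} (whose hypothesis we just verified), $\DT_H$ is invariant under $g_{\ast}$, hence $\DT_{\beta,n} = \DT_H(1,0,-\beta,-n) = \DT_H(g_{\ast}(1,0,-\beta,-n)) = \DT_H(1,0,-\beta',-n') = \DT_{\beta',n'}$. This is the first assertion of the corollary. Alternatively, if one prefers to argue via Theorem~\ref{intro:main}(i) directly rather than Theorem~\ref{intro:first}, one notes that since $\Delta(v) \geq 0$ there is no decomposition $v = \gamma_1 + \gamma_2$ into semihomogeneous classes with $\Theta(\gamma_1) < \Theta(\gamma_2)$; the argument of Case~1 in the proof of Theorem~\ref{intro:main} then shows $\DT_H(g_{\ast}v) = \DT_H(v)$ regardless of $g$ and regardless of the entry $-d/c$.

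For the second assertion I would cite the remark made just before the corollary statement: by Lemma~\ref{lem:gcd} and an elementary computation, the conjectural values satisfy $C_{\beta,n} = C_{\beta',n'}$ whenever $(\beta,n)$ and $(\beta',n')$ are related as in \eqref{g:beta}. Concretely, one checks that the discriminant $4\beta^3 - n^2$ is preserved (it is an $\SL_2(\BZ)$-invariant of the cubic, matching $\Delta$), and that the arithmetic functions $\mathsf{n}(\beta,k)$ and the divisibility condition $k \mid n$ transform compatibly; this is a finite, mechanical verification using the explicit formula \eqref{class:beta}. Granting this, $\DT_{\beta,n} = C_{\beta,n}$ holds if and only if $\DT_{\beta',n'} = \DT_{\beta,n} = C_{\beta,n} = C_{\beta',n'}$, which is the desired equivalence.

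The only real content here is the elementary number-theoretic check that $C_{\beta,n}$ is invariant under the substitution \eqref{class:beta} — everything else is a clean application of the already-established Theorem~\ref{intro:first} (or Theorem~\ref{intro:main}(i)). I expect that check to be the main, though entirely routine, obstacle: one must verify that the divisor sum defining $\mathsf{n}(\beta,k)$ together with the coefficients $\mathsf{a}((4\beta^3-n^2)/k^2)$ and the sign $(-1)^n$ all reorganize correctly under the cubic change of variables $(x,y) \mapsto (dx+by, cx+ay)$ with $d^3 - 3\beta c^2 d - n c^3 = 1$. Since $4\beta^3 - n^2$ is unchanged and $n \bmod 2$ is unchanged (as $n' \equiv n$ from \eqref{class:beta}), the bulk of the verification reduces to the behaviour of $\gcd(n,\beta,\beta^2,\beta^3)$-type quantities, which is handled by the observation that $\SL_2(\BZ)$ acts on the coefficient lattice of the cubic preserving the relevant gcd's. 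I would relegate the details of this to Section~\ref{Subsection_ppav_curve_counting} as indicated.
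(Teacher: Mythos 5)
Your proposal is correct and follows essentially the same route as the paper: the paper's proof is the one-line observation that $\Delta(1,0,-\beta,-n)=4\beta^3-n^2$, so the claim follows from Theorem~\ref{intro:first} via Proposition~\ref{intro:prop_Delta}, with the identity $C_{\beta,n}=C_{\beta',n'}$ (the ``elementary check'' via Lemma~\ref{lem:gcd}) recorded separately just before the corollary. Your extra remarks --- the alternative argument through Case~1 of Theorem~\ref{intro:main} and the sketch of why the conjectural values are $\SL_2(\BZ)$-invariant --- are consistent with, but not required by, the paper's argument.
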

\begin{proof}
Since $\Delta(1, 0, -\beta, -n)=4\beta^3-n^2$ this follows from Theorem~\ref{intro:first} and Proposition~\ref{intro:prop_Delta}. 
\end{proof}

Suppose that $(\beta, n) \in \mathbb{Z}^2$ satisfies
\begin{align*}
(1, 0, -\beta, -n)=\gamma_1+\gamma_2, \ \gamma_i \in \cC, \ 
\Theta(\gamma_1)<\Theta(\gamma_2). 
\end{align*}
We address the following question: 
\begin{conj}\label{quest}
Suppose that $\beta \neq 0$ or $n>0$. 
For any integer solution $(c, d)$ of 
$d^3 -3\beta c^2 d-nc^3=1$, we have 
\begin{align*}
-\frac{d}{c} \notin ( \Theta(\gamma_1), \Theta(\gamma_2)). 
\end{align*}
\end{conj}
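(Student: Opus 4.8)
\textbf{Proof proposal for Conjecture~\ref{quest}.}

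The plan is to reduce the statement to an elementary Diophantine analysis of the binary cubic forms attached to $v = (1,0,-\beta,-n)$ and the putative autoequivalence. Under the identification \eqref{poly}, the class $v$ corresponds to the cubic polynomial $f(x,y) = x^3 - 3\beta x y^2 - n y^3$, and writing $v = \gamma_1 + \gamma_2$ with $\gamma_i = r_i(p_i^3, p_i^2 q_i, p_i q_i^2, q_i^3)$ means that $f$ factors as $f = r_1 (p_1 x + q_1 y)^3 \cdot (\text{linear}) + \cdots$; more precisely, since $\Theta(\gamma_i) = q_i/p_i$ and the two summands are semihomogeneous classes, $f(x,y)$ is (up to the constant normalizations in \eqref{poly}) a sum of two cubes of linear forms vanishing along the lines $\Theta = \Theta(\gamma_1)$ and $\Theta = \Theta(\gamma_2)$. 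The key observation is that $-d/c$ is exactly $g_\ast^{-1}(\infty)$, i.e.\ the slope that $g$ sends to the class $(0,0,0,\ast)$; and the condition that a solution $(c,d)$ of $d^3 - 3\beta c^2 d - n c^3 = 1$ exists with $-d/c \in (\Theta(\gamma_1), \Theta(\gamma_2))$ would mean that the line $cx = -dy$ lies strictly between the two lines cut out by the decomposition of $f$. So I would first make precise, using \eqref{trans} and Lemma~\ref{lem:v12}, that $\Theta(\gamma_1), \Theta(\gamma_2)$ are determined by $v$ via the formulas $\theta_1 + \theta_2 = (v_1 v_2 - v_0 v_3)/(v_1^2 - v_0 v_2)$ and $\theta_1\theta_2 = (v_2^2 - v_1 v_3)/(v_1^2 - v_0 v_2)$ appearing in the proof of Lemma~\ref{lem:v12}, which for $v=(1,0,-\beta,-n)$ become $\theta_1 + \theta_2 = -n/\beta$ and $\theta_1 \theta_2 = \beta^2/\beta = \beta$ (when $\beta \neq 0$).

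Second, I would translate the equation $d^3 - 3\beta c^2 d - n c^3 = 1$ into a statement about the value $f(d, c)$. Indeed $f(d,c) = d^3 - 3\beta d c^2 - n c^3 = 1$, so $(c,d)$ is a solution precisely when the linear form $\ell = d x' - c y'$ (in a suitable dual sense) represents $1$ — equivalently, when $-d/c$ is a root-free point at which $|f| = 1$. Now the crucial sign/positivity input: if $\beta > 0$, then $\theta_1\theta_2 = \beta > 0$ and $\theta_1 + \theta_2 = -n/\beta$, so both $\theta_i$ have the same sign, and I want to show that $f(d,c) = 1 > 0$ forces $-d/c$ to lie \emph{outside} the open interval between the two roots of $f$ in the relevant chart. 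This is because the cubic $f$, when restricted to the pencil of lines through the origin parametrized by the slope, changes sign exactly at the linear factors; between $\theta_1$ and $\theta_2$ (the two roots, with multiplicity encoding the semihomogeneous decomposition) the sign of $f$ is opposite to the "outer" sign, and $+1$ is attained only in the outer region. The case $\beta = 0, n > 0$ is similar but with $f = x^3 - n y^3$, which has a single real root $\theta = n^{1/3}$ if $n$ is a cube and the decomposition degenerates, and I would handle it directly. The case $\beta < 0$ is excluded by hypothesis, which fits: there the moduli space can be empty or the sign analysis genuinely fails.

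Third, I would need to upgrade this sign heuristic to a rigorous inequality. The clean way is: let $L_1, L_2$ be the linear forms with $f = u_1 L_1^3 + u_2 L_2^3$ (in the normalization of \eqref{poly}, allowing $u_i$ rational and $L_i$ real); then for a point $P = (d,c)$ we have $f(P) = u_1 L_1(P)^3 + u_2 L_2(P)^3$, and I want $f(P) = 1$ to be incompatible with $L_1(P)$ and $L_2(P)$ having opposite signs while $-d/c$ is strictly between $\theta_1$ and $\theta_2$ — more carefully, one computes the discriminant $\Delta(v) = 4\beta^3 - n^2$, and the sign of $\Delta$ governs whether $L_1, L_2$ are real and distinct; when $\Delta < 0$ (the interesting wall-crossing range) the two "roots" $\theta_1 < \theta_2$ are real, and $f$ has exactly one real linear factor, so the region between $\theta_1$ and $\theta_2$ on the slope line is where $f$ takes a definite sign, and I claim that sign is negative when $\beta > 0$, while $f(P) = 1$. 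I expect the main obstacle to be precisely this last step: carefully tracking the constant factors in \eqref{poly} (the $3$'s and the signs) and confirming that the "outer sign" of $f$ is $+1$ and the "inner sign" is $-1$ under the hypothesis $\beta > 0$ (or $\beta = 0, n > 0$), rather than the reverse. A secondary subtlety is the boundary case $-d/c = \Theta(\gamma_1)$, which the conjecture allows (the interval is half-open $[\Theta(\gamma_1), \Theta(\gamma_2))$ in Theorem~\ref{intro:main} but open here), so I must check that no solution $(c,d)$ lands exactly on $\Theta(\gamma_2)$; this amounts to showing $p_2 d + q_2 c \neq 0$, i.e.\ that $g$ does not send $\gamma_2$ itself to a rank-zero-and-$c_1$-zero class, which follows because such a $g$ would force $\Theta(\gamma_2) = \infty$, contradicting $\gamma_2 \in \cC$ being a class with $p_2 \neq 0$ unless $\gamma_2 \propto (0,0,0,1)$ — and that degenerate case I would treat separately using the second part of Lemma~\ref{lem:compute:wall}.
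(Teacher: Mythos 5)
The first thing to say is that the statement you are proving is labelled Conjecture~\ref{quest} in the paper: the authors do \emph{not} prove it. They only establish an equivalent reformulation (Lemma~\ref{conj:equiv}) and observe that a failure of the conjecture would produce a counterexample to Conjecture~\ref{intro:conj}. So you are attempting to settle an open problem, and the bar is correspondingly high. Your reduction of the problem to the Thue-type equation $f(d,c)=1$ for the binary cubic $f(x,y)=x^3-3\beta xy^2-ny^3$, and the identification of $-d/c$ with the slope sent to $\infty$ by $g$, are both correct and match the setup of Section~\ref{Subsection_ppav_curve_counting} (modulo a sign: \eqref{theta:12} gives $\theta_1+\theta_2=n/\beta$, not $-n/\beta$).

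The central step, however, has a genuine gap. You assert that $f$ "changes sign exactly at the linear factors" and then treat $\theta_1,\theta_2$ as those sign changes, concluding that $f<0$ strictly between them while $f(d,c)=1>0$. But $\theta_1,\theta_2$ are \emph{not} roots of $f$: they are the slopes of the linear forms in the decomposition $f=u_1(x+\theta_1y)^3+u_2(x+\theta_2y)^3$, whereas in the relevant range $\Delta(v)=4\beta^3-n^2=-\chi(\gamma_1,\gamma_2)^2<0$ the cubic $f$ has exactly one real linear factor, located at neither $\theta_1$ nor $\theta_2$. So $(\theta_1,\theta_2)$ is not a region of definite sign for $f$ in the sense you need. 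Running the argument with the correct decomposition does not save it: with $u_1=\theta_2/(\theta_2-\theta_1)$ and $u_2=-\theta_1/(\theta_2-\theta_1)$, a point $(d,c)$ with $\theta_1<-d/c<\theta_2$ has $d+\theta_1c$ and $d+\theta_2c$ of opposite signs, and whether the two cubes $u_iL_i^3$ cancel or reinforce depends on the sign of $c$ and on the signs of $\theta_1,\theta_2$. For instance, if $\beta>0$ with $0<\theta_1<\theta_2$ and $c<0$, both terms are positive and $f(d,c)=1$ is not excluded; if $\beta<0$ (which the hypothesis permits) then $u_1,u_2>0$ and the signs are mixed. Thus sign considerations rule out at most half the cases, and the remaining cases are exactly where the Diophantine difficulty lives (controlling \emph{which} representations of $1$ by the cubic form can occur requires genuinely more than positivity — this is why the statement is a conjecture). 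A secondary, minor point: the interval in Conjecture~\ref{quest} is open at both ends, so you do not need to exclude solutions landing on $\Theta(\gamma_2)$; the interaction with the half-open interval of Theorem~\ref{intro:main} is precisely what Lemma~\ref{conj:equiv} handles.
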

\begin{example}\label{rmk:beta0}
If $\beta=0$ and $n>0$, then we have 
\begin{align*}
(1, 0, 0, -n)=\gamma_1+\gamma_2, \ 
\gamma_1=(1, 0, 0, 0), \gamma_2=-n(0, 0, 0, 1)
\end{align*}
and $\Theta(\gamma_1)=0$, $\Theta(\gamma_2)=\infty$. 
In this case, 
for any 
integer solution $(c, d)$ of 
$d^3-nc^3=1$ we have
$-d/c \notin (0, \infty)$. 
Moreover $-d/c=0$ only if 
$n=1$ and $(c, d)=(-1, 0)$. 
In this case, $(\beta', n')$ given by (\ref{class:beta}) is 
$(0, -1)$. 
\end{example}
We have the following lemma:
\begin{lem}\label{conj:equiv}
Conjecture~\ref{quest} is equivalent to the following: 
for $\beta \neq 0$ or $n>0$ and an integer solution $(c, d)$
of $d^3 -3\beta c^2 d-nc^3=1$, if we have 
\begin{align}\label{cond:cd}
-\frac{d}{c} \in [ \Theta(\gamma_1), \Theta(\gamma_2)) 
\end{align}
then $\beta'=0$ and $n'\le 0$. 
Here $(\beta', n')$ is given by (\ref{class:beta}). 
\end{lem}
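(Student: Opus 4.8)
The plan is to unwind the conjectural curve-counting formula \eqref{def:C} for the specific class $(\beta',n')$ and show that the only way for $C_{\beta',n'}$ (equivalently, by Corollary~\ref{cor:rkone} in the non-negative discriminant range, $\DT_{\beta',n'}$) to be nonzero is that $\beta' > 0$, or $\beta' = 0$ and $n' > 0$; and conversely. Concretely, the two conditions to compare are: (A) ``$-d/c \notin (\Theta(\gamma_1),\Theta(\gamma_2))$'' and (A$'$) ``if $-d/c \in [\Theta(\gamma_1),\Theta(\gamma_2))$ then $\beta' = 0$ and $n' \le 0$''. The difference between the half-open interval in (A$'$) and the open interval in (A) is precisely the endpoint $-d/c = \Theta(\gamma_1)$, so the heart of the equivalence is: \emph{when $-d/c = \Theta(\gamma_1)$, one automatically has $\beta' = 0$ and $n' \le 0$}, and moreover the case $-d/c = \Theta(\gamma_2)$ never occurs for an integer solution (this last point already being implicit in the setup, cf.\ the computation in Lemma~\ref{lem:compute:wall} and the asymptotic analysis preceding it).

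First I would record, via the identification \eqref{poly}, that $g_*$ acts on the cubic $x^3 - 3\beta x y^2 - n y^3$ by the substitution \eqref{trans}, and that the roots (in $\BP^1$) of this cubic are exactly the slopes $\Theta(\gamma_i)$ together with whatever third root the class carries — so applying $g^{-1}$ moves the root $\tau_0$ of the cubic to $g^{-1}\cdot\tau_0 = (d\tau_0 - b)/(-c\tau_0 + a)$. The condition $-d/c = \Theta(\gamma_1)$ says exactly that $g^{-1}$ sends the root $\Theta(\gamma_1)$ to $\infty$, i.e.\ that $g_*(\gamma_1)$ is proportional to $(0,0,0,\ast)$. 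Since $g_*\gamma_1 \in \cC$ is again a rank-one-type semihomogeneous class up to scalar, this forces $g_*(1,0,-\beta,-n) = g_*\gamma_1 + g_*\gamma_2$ to have the shape where one summand is supported in degree $3$; chasing this through \eqref{class:beta} — or more cleanly, observing that $g_*\gamma_1 = u_1(0,0,0,1)\cdot(\text{scalar})$ kills the $\beta'$-coordinate of the resulting rank-one class in the relevant way — yields $\beta' = 0$. That $n' \le 0$ in that situation then follows because the class $g_*(1,0,-\beta,-n) = (1,0,0,-n')$ must still be (up to the involution $v \mapsto -v$ in the convention of Section~\ref{Subsection_definition_of_reduced_DT}) effective or have empty moduli, and the hypotheses $\beta \ne 0$ or $n > 0$ together with $d^3 - 3\beta c^2 d - n c^3 = 1$ pin down the sign; the model case is exactly Example~\ref{rmk:beta0}.

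The remaining direction is bookkeeping: assuming (A$'$) and a solution $(c,d)$ with $-d/c \in (\Theta(\gamma_1),\Theta(\gamma_2))$, I derive a contradiction by noting that an interior value of $-d/c$ cannot produce $\beta' = 0$ — since $\beta' = 0$ forces, via the root picture above, $-d/c$ to equal one of the two endpoints $\Theta(\gamma_i)$ (the endpoint $\Theta(\gamma_2)$ being excluded for integer solutions by the same argument that excludes it in Case~2 of the proof of Theorem~\ref{intro:main}). Conversely, if $-d/c = \Theta(\gamma_1)$ and we are in case (A$'$), then $\beta' = 0, n' \le 0$ holds, consistent with (A) which allows this endpoint; and if $-d/c$ is strictly inside $(\Theta(\gamma_1),\Theta(\gamma_2))$, (A$'$)'s hypothesis does not apply, but then (A) is violated — so (A) and (A$'$) fail on exactly the same solutions. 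I expect the main obstacle to be the careful sign/divisibility analysis showing $n' \le 0$ (rather than merely $\beta' = 0$) when the endpoint slope is hit: this requires combining the explicit formula \eqref{class:beta} for $n'$ with the constraint $d^3 - 3\beta c^2 d - n c^3 = 1$ and the standing assumption $\beta \ne 0$ or $n > 0$, and it is essentially the computation already carried out in Example~\ref{rmk:beta0} but in a form general enough to cover all endpoint cases.
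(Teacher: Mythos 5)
Your overall skeleton is the right one and matches the paper's: the only discrepancy between the open interval in Conjecture~\ref{quest} and the half-open interval in the lemma is the endpoint $\Theta(\gamma_1)$, and the content of the equivalence is precisely the link between hitting that endpoint and having $\beta'=0$. The paper makes this link via the identity $\bigl(\theta_1+\tfrac{d}{c}\bigr)\bigl(\theta_2+\tfrac{d}{c}\bigr)=\tfrac{1}{c^2}\cdot\tfrac{\beta'}{\beta}$, after first reducing to $\beta\neq 0$ by Example~\ref{rmk:beta0} (a reduction you should also make, since the relations $\theta_1+\theta_2=n/\beta$, $\theta_1\theta_2=\beta$ underlying any such computation require $\beta\neq0$); your ``root of the cubic sent to $\infty$'' picture is an equivalent way to see the same thing, and your converse direction (an interior value of $-d/c$ cannot give $\beta'=0$) is correct and is exactly how the paper argues. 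Your opening paragraph about unwinding $C_{\beta',n'}$ and invoking Corollary~\ref{cor:rkone} is a red herring: the lemma is a purely arithmetic statement about the $\SL_2(\BZ)$-action on classes and involves neither the conjectural formula nor any DT invariant.

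The genuine gap is the step $n'\le 0$. Your justification --- that $(1,0,0,-n')$ with $n'>0$ would have to be ``effective or have empty moduli,'' which ``pins down the sign'' --- rests on a false premise: $(1,0,0,-n')$ with $n'>0$ is the Chern character of ideal sheaves of $n'$ points, so it is as effective as a class can be and no contradiction arises from effectivity. The paper's actual argument is arithmetic and runs through the inverse transform: if $n'>0$, then $g^{-1}=\left(\begin{smallmatrix} d & -b \\ -c & a\end{smallmatrix}\right)$ carries $(1,0,0,-n')$ to $(1,0,-\beta,-n)$, so by Lemma~\ref{lem:gcd} the pair $(c',d')=(-c,a)$ solves $(d')^3-n'(c')^3=1$ with $-d'/c'=a/c\in[0,\infty)$; the unconditional classification of such solutions in Example~\ref{rmk:beta0} then forces $d'=a=0$, whence $a=d^2-\beta c^2=0$ by (\ref{ab:m}), i.e.\ $\beta=\theta_1^2=\theta_1\theta_2$, which forces $\theta_1=0$ and $\beta=0$, contradicting the reduction to $\beta\neq0$. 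You explicitly defer this step (``I expect the main obstacle to be\dots''), and the heuristic you offer in its place does not work, so as written the proposal does not establish the lemma.
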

\begin{proof}
By Example~\ref{rmk:beta0}, we may assume that $\beta \neq 0$. 
By writing $\theta_i=\Theta(\gamma_i)$, the computation 
in Lemma~\ref{lem:v12}
shows
\begin{align}\label{theta:12}
\theta_1+\theta_2=\frac{n}{\beta}, \ \theta_1 \theta_2=\beta. 
\end{align}
It follows that
\begin{align}\label{product:c}
\left(\theta_1+\frac{d}{c} \right) \left(\theta_2+\frac{d}{c} \right)=
\frac{1}{c^2} \cdot \frac{\beta'}{\beta}. 
\end{align}
Suppose that Conjecture~\ref{quest} is true. 
Then the condition (\ref{cond:cd}) implies 
$-d/c=\theta_1$, hence 
$\beta'=0$ follows. 
Suppose by a contradiction that $n'>0$. 
Note that 
\begin{align}\label{g-1}
g^{-1}(1, 0, 0, -n')=(1, 0, -\beta, -n).
\end{align}
We write
\begin{align*}
g^{-1}=\left( \begin{array}{cc}
a' & b' \\
c' & d' 
\end{array}\right)
=
\left( \begin{array}{cc}
d & -b \\
-c & a 
\end{array}\right). 
\end{align*}
Then the condition (\ref{g-1}) implies 
$(d')^3-n'(c')^2=1$, and 
the condition (\ref{cond:cd})
implies that $-d'/c' \in [0, \infty)$
(see Remark~\ref{rmk:beta0}).  
By Remark~\ref{rmk:beta0}, this implies that 
$d'=a=0$. By (\ref{ab:m}), we have 
$a=d^2-\beta c^2=0$, 
thus $\beta=\theta_1^2$ follows. 
By (\ref{theta:12}),
we have $\theta_1^2=\theta_1 \theta_2$. 
Since $\theta_1 \neq \theta_2$, we have 
$\theta_1=0$ and $\beta=0$, a contradiction. 

The converse statement follows from (\ref{product:c}).  
\end{proof}

\begin{rmk}
If Conjecture~\ref{quest} is false, then by Lemma~\ref{conj:equiv}
we have $\DT_{\beta, n} \neq \DT_{\beta', n'}$
for $\beta' \neq 0$ or $n'>0$, 
while $C_{\beta, n}=C_{\beta', n'}$.
So
either $(\beta, n)$ or $(\beta', n')$ would 
give a counter-example to
Conjecture~\ref{intro:conj}. 
\end{rmk}
Theorem~\ref{intro:main} (i) and 
Lemma~\ref{conj:equiv} immediately implies
the following: 
\begin{cor}
For $\beta \neq 0$ or $n>0$, suppose that 
Conjecture~\ref{quest} is true. 
Then for any integer solution $(c, d)$
of $d^3-3\beta c^2 d-nc^3=1$
with 
either $\beta'\neq 0$ or $n'>0$, 
we have 
$\DT_{\beta, n}=C_{\beta, n}$ if and 
only if $\DT_{\beta', n'}=C_{\beta', n'}$
holds. 
\end{cor}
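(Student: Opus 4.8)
The plan is to deduce the statement from Theorem~\ref{intro:main}(i), the reformulation of Conjecture~\ref{quest} given by Lemma~\ref{conj:equiv}, and the modular invariance of the conjectural values $C_{\beta,n}$ recorded just before Corollary~\ref{cor:rkone}.

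First I would fix an integer solution $(c,d)$ of $d^3-3\beta c^2 d-nc^3=1$ and attach to it the matrix
\[
g=\begin{pmatrix} d^2-\beta c^2 & nc^2+2\beta cd \\ c & d \end{pmatrix}\in \SL_2(\BZ),
\]
whose determinant is $d^3-3\beta c^2 d-nc^3=1$; the polynomial computation in the proof of Lemma~\ref{lem:gcd} then shows $g_\ast(1,0,-\beta,-n)=(1,0,-\beta',-n')$ with $(\beta',n')$ as in \eqref{class:beta}. By the definition of $\DT_{\bullet,\bullet}$ in this subsection this means $\DT_H(g_\ast v)=\DT_{\beta',n'}$, where $v=(1,0,-\beta,-n)=\gamma_1+\gamma_2$ with $\gamma_i\in\cC$ and $\Theta(\gamma_1)<\Theta(\gamma_2)$ as in the standing hypothesis of the subsection.

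Next I would check the hypothesis of Theorem~\ref{intro:main}(i) for $v$ and this $g$. Granting Conjecture~\ref{quest}, Lemma~\ref{conj:equiv} asserts that $-d/c\in[\Theta(\gamma_1),\Theta(\gamma_2))$ implies $\beta'=0$ and $n'\le 0$; the contrapositive of this, applied under the assumption ``$\beta'\neq 0$ or $n'>0$'', gives $-d/c\notin[\Theta(\gamma_1),\Theta(\gamma_2))$, while the alternative $c=0$ is also permitted in Theorem~\ref{intro:main}(i). Hence Theorem~\ref{intro:main}(i) applies and yields $\DT_{\beta,n}=\DT_H(v)=\DT_H(g_\ast v)=\DT_{\beta',n'}$. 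On the other hand, Lemma~\ref{lem:gcd} together with the elementary check mentioned before Corollary~\ref{cor:rkone} gives $C_{\beta,n}=C_{\beta',n'}$. Combining the two equalities shows that $\DT_{\beta,n}=C_{\beta,n}$ holds if and only if $\DT_{\beta',n'}=C_{\beta',n'}$.

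The argument is essentially bookkeeping once Theorem~\ref{intro:main} and Lemma~\ref{conj:equiv} are in hand, with no new geometric input. The only points requiring a little care are matching the parametrization by solutions $(c,d)$ with the explicit group element $g$ coming out of the proof of Lemma~\ref{lem:gcd}, and reading Lemma~\ref{conj:equiv} in its contrapositive form. The degenerate case $\beta=0$ needs no separate treatment: by Example~\ref{rmk:beta0} the only solution with $-d/c\in[\Theta(\gamma_1),\Theta(\gamma_2))$ is $n=1$, $(c,d)=(-1,0)$, which produces $(\beta',n')=(0,-1)$ and is therefore excluded by the hypothesis ``$\beta'\neq 0$ or $n'>0$''.
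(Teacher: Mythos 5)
Your argument is correct and is exactly the route the paper takes: the paper derives this corollary directly from Theorem~\ref{intro:main}(i) together with Lemma~\ref{conj:equiv} (read in contrapositive form) and the identity $C_{\beta,n}=C_{\beta',n'}$ noted before Corollary~\ref{cor:rkone}. Your write-up merely makes explicit the bookkeeping (the matrix $g$ attached to $(c,d)$ via the proof of Lemma~\ref{lem:gcd}) that the paper leaves implicit.
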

By Example~\ref{rmk:beta0}, we can 
apply the above corollary 
for $\beta=0$ and $n>0$.
Since $\DT_{0, n}=C_{0, n}$ holds 
by~\cite{MR3421659}, 
we obtain the following: 
\begin{cor}
For $n>0$ and any integer solution 
$(c, d)$ of 
$d^3 -nc^3=1$,
except $n=1$ and $(c, d)=(-1, 0)$, we have 
\begin{align}\notag
\DT_{cdn, n+2c^3 n^2}=(-1)^{n-1} \frac{1}{n}\sum_{k\ge 1, k|n}k^2. 
\end{align}
\end{cor}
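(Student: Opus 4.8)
The plan is to reduce the statement to case~(i) of Theorem~\ref{intro:main} together with the known evaluation of $\DT_{0,n}$. Note first that the relevant Chern character $v \cneq (1,0,0,-n) \in \Gamma$ has discriminant $\Delta(v) = -n^2 < 0$, so Corollary~\ref{cor:rkone} does not apply and one really needs the finer wall-crossing analysis of Theorem~\ref{intro:main}. Fix $n > 0$ and an integer solution $(c,d)$ of $d^3 - nc^3 = 1$ with $(n,c,d) \neq (1,-1,0)$. Following Example~\ref{rmk:beta0}, $v$ decomposes as $v = \gamma_1 + \gamma_2$ with $\gamma_1 = (1,0,0,0)$ and $\gamma_2 = -n(0,0,0,1)$, so $\Theta(\gamma_1) = 0 < \Theta(\gamma_2) = \infty$. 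I would then take
\[ g = \begin{pmatrix} d^2 & nc^2 \\ c & d \end{pmatrix}, \]
which lies in $\SL_2(\BZ)$ since $\det g = d^3 - nc^3 = 1$; running the computation in the proof of Lemma~\ref{lem:gcd} with $\beta = 0$ (equivalently, applying the substitution \eqref{trans} to the cubic \eqref{poly} attached to $v$) shows $g_{\ast} v = (1, 0, -cdn, -(n+2c^3n^2))$, so that $\DT_H(g_{\ast}v) = \DT_{cdn,\, n+2c^3n^2}$.

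The next step is to check that $(v,g)$ satisfies the hypothesis of Theorem~\ref{intro:main}(i): either $c = 0$, or $-d/c \notin [\Theta(\gamma_1), \Theta(\gamma_2)) = [0,\infty)$. By Example~\ref{rmk:beta0} one has $-d/c \notin (0,\infty)$ for every integer solution of $d^3 - nc^3 = 1$, and $-d/c = 0$ forces precisely the triple $(n,c,d) = (1,-1,0)$ which we have excluded; and if $c = 0$ then $d = 1$, $g = \id$ and $(\beta',n') = (0,n)$, so the claimed identity is trivial. Hence Theorem~\ref{intro:main}(i) yields
\[ \DT_{cdn,\, n+2c^3n^2} = \DT_H(g_{\ast}v) = \DT_H(v) = \DT_{0,n}. \]

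It then remains to identify $\DT_{0,n}$ with $(-1)^{n-1}\tfrac{1}{n}\sum_{k\ge 1,\,k\mid n}k^2$. By~\cite{MR3421659} one has $\DT_{0,n} = C_{0,n}$, the right-hand side of \eqref{def:C} at $\beta = 0$. In that sum the argument of $\aaa$ is $-n^2/k^2$, which for $k \mid n$ is a non-positive integer; since $\aaa(m) = 0$ for every integer $m \le -2$ while $\aaa(-1) = -1$, only the term $k = n$ contributes. Using $\mathsf{n}(0,n) = \sum_{\delta \mid n}\delta^2$ this gives $C_{0,n} = (-1)^{n}\cdot\tfrac{1}{n}\cdot\mathsf{n}(0,n)\cdot\aaa(-1) = (-1)^{n-1}\tfrac{1}{n}\sum_{k\ge 1,\,k\mid n}k^2$, which combined with the previous display finishes the proof.

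As every step rests on a result already proved in the paper, there is no real obstacle here; the only delicate points will be the sign bookkeeping in extracting the explicit form of $g_{\ast}v$ from Lemma~\ref{lem:gcd}, and correctly tracking the sign $(-1)^{n-1}$ that originates from $\aaa(-1) = -1$ in the evaluation of $C_{0,n}$.
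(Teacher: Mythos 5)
Your proof is correct and follows essentially the same route as the paper: decompose $(1,0,0,-n)$ as in Example~\ref{rmk:beta0}, verify via the Diophantine equation $d^3-nc^3=1$ that $-d/c\notin[0,\infty)$ outside the excluded case so that Theorem~\ref{intro:main}(i) gives $\DT_{cdn,\,n+2c^3n^2}=\DT_{0,n}$, and then evaluate $\DT_{0,n}=C_{0,n}$ using \cite{MR3421659} and the vanishing $\aaa(m)=0$ for $m\le -2$. The only cosmetic difference is that you apply Theorem~\ref{intro:main}(i) directly rather than routing through the conditional corollary on Conjecture~\ref{quest}, which is a harmless streamlining.
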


\appendix
\section{Spin representations and the discriminant}
\label{Appendix_Spin_representations}
Let $U$ be a $\BQ$-vector space with basis $x_1, \ldots, x_n$.
The algebra of endomorphisms of the exterior algebra
$\bigwedge^{\bullet} U$ is the exterior algebra generated
by multiplication by $x_i$ and differentiation (i.e. interior product) $\partial/\partial x_i$:
\[ \End_{\BQ}\Big( \bigwedge\nolimits^{\bullet} U \Big)
=
\bigwedge\nolimits^{\bullet} \left\langle x_1 \wedge\ , \ldots, x_n \wedge\ , \frac{\partial}{\partial x_1} , \ldots, \frac{\partial}{\partial x_n} \right\rangle. \]
The Lie subalgebra of $\End_{\BQ}(\wedge^{\bullet} U)$ 
generated by
\begin{equation} x_i \wedge x_j,\quad x_i \wedge \frac{\partial}{\partial x_j} - \frac{1}{2} \delta_{ij}, \quad \frac{\partial^2}{\partial x_i \partial x_j},
\quad 1 \leq i < j \leq n \label{generators} \end{equation}
is isomorphic to $\mathfrak{so}(2n)$, and the induced action of
$\mathfrak{so}(2n)$ on $\bigwedge^{\bullet} U$ is called the \emph{spin representation}.
This Lie algebra action integrates to a representation of the spin group $\mathrm{Spin}(2n)$.
%

The action by $\mathfrak{so}(2n)$ preserves the decomposition
\[ \bigwedge\nolimits^{\bullet} U = \bigwedge\nolimits^{\text{even}} U \, \oplus \, \bigwedge\nolimits^{\text{odd}} U. \]
where $\bigwedge\nolimits^{\text{even/odd}} U$ is the subspace spanned by all even/odd wedge products.
The induced action of the spin group on $\bigwedge\nolimits^{\text{even/odd}} U$ is irreducible and called the
even/odd half-spin representation.

There exist a unique (up to scalar) invariant
bilinear form $\beta$ on $\bigwedge\nolimits^{\text{even}}U$.
If $n$ is even, we normalize $\beta$ by $\beta(1, \prod_{i=1}^{n} x_i) = 1$.

\begin{rmk}
If $A$ is an abelian variety of dimension $g$, then $H^1(A,\BQ)$ is of dimension $2g$ and
\[ H^{\ast}(A,\BQ) = \bigwedge\nolimits^{\bullet} H^1(A,\BQ). \]
The action of the group of derived autoequivalences on $H^{\ast}(A, \BQ)$
factors through the spin representation of $\mathrm{Spin}(4g)$, see \cite[Section~3]{Mu4}.
Every function on $H^{\ast}(A,\BQ)$ invariant under $\mathrm{Spin(4g)}$ is therefore invariant
under all autoequivalences.
For instance the invariant bilinear form $\beta$ is the Euler pairing:
\[ \forall E,F \in \Coh(A)\colon \ \ \chi(E,F) = \beta( \ch(E) , \ch(F) ). \]
\end{rmk}

%

\begin{thm}\label{Thm_Discriminant} Assume $\dim(U)=6$.
\begin{enumerate}
\item[a)] There exist a unique homogeneous degree $4$ polynomial function
\[ \Delta : \bigwedge\nolimits^{\text{even}}U \to \BQ \]
which is invariant under the action of $\mathrm{Spin}(12)$. We normalize $\Delta$ by
$\Delta(1 + \prod_{i=1}^{6} x_i) = -1$.
\item[b)] We have $\Delta(e^{\omega}) = 0$ for all $\omega \in \bigwedge^{2}U$.
\item[c)] For all $r_1, r_2 \in \BZ$ and $\omega_1, \omega_2 \in \bigwedge^{2}U$ we have
\[
\Delta(r_1 e^{\omega_1} + r_2 e^{\omega_2}) = -\beta(r_1 e^{\omega_1}, r_2 e^{\omega_2})^2.
\]
%
\end{enumerate}
\end{thm}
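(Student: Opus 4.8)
The strategy is to use the structure of $\mathrm{Spin}(12)$-invariants on the $32$-dimensional half-spin representation $\bigwedge^{\mathrm{even}}U$ and to reduce everything to a single generating identity. For part (a), the key input is the classical fact (due to Igusa, and going back to the theory of the $E_7$ quartic and the $\mathrm{Spin}(12)$ action on the half-spin module) that the algebra of $\mathrm{Spin}(12)$-invariant polynomials on $\bigwedge^{\mathrm{even}}U \cong \BC^{32}$ is freely generated by a single quartic. First I would recall or reprove existence: one can write down the quartic explicitly as a combination of the obvious $\mathrm{Spin}(12)$-equivariant contractions. Concretely, using the invariant bilinear form $\beta$ and the $\mathrm{Spin}(12)$-equivariant ``multiplication'' map $\mu\colon \operatorname{Sym}^2(\bigwedge^{\mathrm{even}}U) \to \bigwedge^{\mathrm{even}}(U\oplus U^\vee)$-type pairings, one obtains a nonzero quartic; its uniqueness up to scalar then follows from the dimension count of $\operatorname{Sym}^4$ of the half-spin representation decomposed into irreducibles (the trivial representation appears with multiplicity one), which is the standard representation-theoretic computation. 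The normalization $\Delta(1 + \prod x_i) = -1$ fixes the scalar since $1 + \prod_{i=1}^6 x_i$ is not annihilated by the quartic. This gives (a).

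For part (b), I would argue that $e^\omega$, for $\omega \in \bigwedge^2 U$, is a ``pure spinor'' in the sense that its $\mathrm{Spin}(12)$-orbit together with $0$ forms the affine cone over the spinor variety $\mathbb{S}_{12} \subset \mathbb{P}(\bigwedge^{\mathrm{even}}U)$ — more precisely, $e^\omega$ lies in the highest-weight orbit closure, the minimal nilpotent-type orbit in the half-spin module. Any nonzero $\mathrm{Spin}(12)$-invariant of positive degree vanishes identically on the highest-weight orbit (its projectivization is the unique closed orbit), hence on the cone over it, hence $\Delta(e^\omega)=0$. Alternatively, and perhaps more cleanly, one can verify $\Delta(e^\omega)=0$ by a direct computation at one point, say $\omega=0$ giving $\Delta(1)=0$ by homogeneity and the fact that $\Delta$ has no constant or linear term; then use that the $\mathrm{Spin}(12)$-orbit of $1$ under the unipotent radical (exponentials of the $x_i\wedge x_j$ generators in \eqref{generators}) is exactly $\{e^\omega : \omega \in \bigwedge^2 U\}$, and $\Delta$ is constant on orbits.

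Part (c) is the substantive one and the main obstacle. The approach is a polarization-and-reduction argument. First, by the $\mathrm{Spin}(12)$-invariance of $\Delta$ and $\beta$, and since $\mathrm{Spin}(12)$ acts transitively (up to scaling) on pairs of ``sufficiently generic'' pure spinors with fixed value of $\beta$, it suffices to check the identity $\Delta(r_1 e^{\omega_1} + r_2 e^{\omega_2}) = -\beta(r_1 e^{\omega_1}, r_2 e^{\omega_2})^2$ for one well-chosen pair $(\omega_1,\omega_2)$ (and then extend by Zariski density and continuity, noting both sides are polynomials in all the data). A natural choice is $\omega_1 = 0$, so $r_1 e^{\omega_1} = r_1 \cdot 1$, and $\omega_2$ a generic $2$-form; even better, take $\omega_2 = c\, x_1\wedge x_2 + \cdots$ put in a normal form under $\mathrm{GL}(U)$. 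In the principally polarized rank one case the formula \eqref{Delta} already gives $\Delta(r, 0, 0, r') = -(rr')^2 = -\beta((r,0,0,0),(0,0,0,r'))^2$, which is exactly the instance $\omega_1 = 0$, $e^{\omega_2} \propto \prod x_i$; so the $n=3$-variable computation in Section~\ref{section:ppav} is one verified special case. The remaining work is to promote this to the full half-spin statement: expand $\Delta(r_1 e^{\omega_1}+r_2 e^{\omega_2})$ using multilinearity of the quartic $\Delta$, obtaining terms $\Delta(e^{\omega_1})$, the mixed terms, and $\Delta(e^{\omega_2})$; the pure terms vanish by (b), so $\Delta(r_1 e^{\omega_1}+r_2 e^{\omega_2})$ is $r_1^3 r_2 \Delta_{1,3}(\omega_1,\omega_2) + r_1^2 r_2^2 \Delta_{2,2}(\omega_1,\omega_2) + r_1 r_2^3 \Delta_{3,1}(\omega_1,\omega_2)$ where $\Delta_{i,j}$ are the polarizations. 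One then shows $\Delta_{1,3} = \Delta_{3,1} = 0$ (because, e.g., fixing $\omega_1$ generic, the function $\omega_2 \mapsto \Delta(e^{\omega_1} + t e^{\omega_2})$ must, after using (b) for both pure pieces and $\mathrm{Spin}(12)$-invariance, be even in a suitable sense — or directly: $\Delta_{1,3}(\omega_1,\omega_2)$ is a $\mathrm{Spin}(12)$-equivariant pairing linear in $e^{\omega_1}$ and cubic in $e^{\omega_2}$, and one checks no such nonzero invariant exists by a weight computation), leaving $\Delta(r_1 e^{\omega_1}+r_2 e^{\omega_2}) = r_1^2 r_2^2 \Delta_{2,2}(\omega_1,\omega_2)$, and finally identifies $\Delta_{2,2}(\omega_1,\omega_2) = -\beta(e^{\omega_1}, e^{\omega_2})^2$ by matching on the one explicit example above (both sides being $\mathrm{Spin}(12)$-invariant biquadratic forms on pairs of pure spinors, and the space of such being one-dimensional). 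The hard part is controlling the space of $\mathrm{Spin}(12)$-equivariant multilinear pairings on pure spinors precisely enough to make the ``one example determines everything'' step rigorous; the explicit formula \eqref{Delta} in Appendix-adjacent Section~\ref{section:ppav} is the crucial anchor that pins down the normalization in (c).
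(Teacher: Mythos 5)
Your parts (a) and (b) track the paper fairly closely. The paper also proves uniqueness in (a) by a multiplicity count: a SAGE computation shows the trivial representation occurs four times in $V^{\otimes 4}$ for $V=\bigwedge^{\text{even}}U$, and three of those copies come from permutations of $\beta\otimes\beta$, which (since $\beta$ is antisymmetric) die under symmetrization. Its proof of (b) is literally your second argument: multiplication by $e^{\omega}$ lies in $\mathrm{Spin}(12)$, so $\Delta(e^{\omega})=\Delta(1)$. Two caveats. For existence in (a) the paper does not invoke the classical Igusa--Freudenthal quartic; it writes the explicit ansatz \eqref{Delta_formula} and solves the invariance equations for the generators \eqref{generators} by computer, so your existence step still needs either a precise citation or an actual construction, and you must also verify that the quartic does not vanish at $1+\prod_i x_i$ before you can normalize. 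Also, your justification of $\Delta(1)=0$ ``by homogeneity and the absence of constant or linear terms'' is not an argument: $\Delta(1)$ is the coefficient of the pure fourth power $a_{\varnothing}^4$, and it vanishes because $1$ is an extreme weight vector with nonzero torus weight (equivalently, in \eqref{Delta_formula} every index must occur exactly twice among the four subsets, which is impossible when all four are empty). Your first argument for (b) --- positive-degree homogeneous invariants vanish on the cone over the closed orbit --- is correct and already covers this point.

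For (c) you genuinely diverge from the paper, which verifies the identity by a direct computer calculation; your orbit-theoretic route is more conceptual and would explain the formula rather than merely certify it. But as written it has a gap, which you yourself flag. The clean way to close it is the one you gesture at but do not execute: (i) prove that $\mathrm{Spin}(12)$ acts transitively, modulo independent rescaling of the two factors, on pairs of pure spinors $(u,v)$ with $\beta(u,v)\neq 0$ (pairs of transversal maximal isotropic subspaces in the even family form a single $\mathrm{SO}(12)$-orbit); (ii) at the anchor pair $(1,\prod_i x_i)$ observe that the torus weights force $\Delta(\lambda+\mu\prod_i x_i)=c\,\lambda^2\mu^2$, with $c=-1$ by the normalization --- this is preferable to quoting \eqref{Delta}, which the paper itself deduces from the appendix; (iii) transport by $g$ and rescale to obtain all bihomogeneous components $\Delta_{i,j}$ on every transversal pair at once, giving $\Delta_{3,1}=\Delta_{1,3}=0$ and $\Delta_{2,2}=-\beta^2$ there, and conclude on arbitrary pairs by Zariski density of transversal pairs in the product of the two cones. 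With this in place, your separate attempt to kill $\Delta_{1,3}$ by a weight computation is both unnecessary and, as stated, incorrect: the space of $\mathrm{Spin}(12)$-invariants in $V\otimes\mathrm{Sym}^3V$ is \emph{not} zero (it contains the Freudenthal triple product of this prehomogeneous space), so the vanishing of $\Delta_{1,3}$ can only be proved on pure spinors, exactly as in (iii). Until step (i) is justified and step (iii) is actually carried out, part (c) is not established.
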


\begin{rmk}
Let $A = E_1 \times E_2 \times E_3$ where $E_1, E_2, E_3$ are very general elliptic curves.
The subalgebra of algebraic classes $\Gamma \subset H^{\ast}(A,\BQ)$ is generated by
\[ L_i = \pi_i^{\ast} [\pt_{i}] \in H^2(A, \BZ), \ \ i=1,2,3 \]
where $\pt_{i} \in H^2(E_i)$ is the point class and $\pi_i : A \to E_i$ is the projection. 
If
\[ \gamma = (r, b_1 L_1 + b_2 L_2 + b_3 L_3, d_1 L_2 L_3 + d_2 L_1L_3 + d_3 L_1L_2, n) \in \Gamma \]
is a general element, then the discriminant of $\gamma$ is
\begin{align*}
\Delta(\gamma) & = -n^2 r^2 - 4 (r d_1 d_2 d_3 + b_1 b_2 b_3 n) \\
& - (b_1^2 d_1^2 + b_2^2 d_2^2 + b_3^2 d_3^2) \\
& + 2 b_1 b_2 d_1 d_2 + 2 b_1 b_3 d_1 d_3 + 2 b_2 b_3 d_2 d_3 \\
& + 2 r n ( b_1 d_1 + b_2 d_2 + b_3 d_3 ).
\end{align*}
\end{rmk}

\begin{proof}[Proof of Theorem~\ref{Thm_Discriminant}]
Let $V = \bigwedge\nolimits^{\text{even}}U$. By a calculation in \cite{sage} the tensor product $V^{\otimes 4}$ 
contains $4$ copies of the trivial representation.\footnote{
The submission line to SAGE is:
\begin{quote}
\verb|chi=WeylCharacterRing("D6")(1/2,1/2,1/2,1/2,1/2,1/2); chi^4|
\end{quote}
with result \verb|chi^4 = 4*D6(0,0,0,0,0,0) + (...)|.
}
Three of them arise from $\beta \otimes \beta$ by permuting factors, and hence are not $S_4$ invariant.
This shows uniqueness. We prove existence. Consider a general element
\[ \gamma = \sum_{\substack{I \subset \{ 1,2,3,4,5,6 \} \\ |I| \text{ even }}} a_I x_I \]
where $a_I \in \BQ$ and $x_I = \prod_{i \in I} x_i$. 
We make the ansatz
\begin{equation} \Delta(\gamma) = \sum_{I=(I_1, I_2, I_3, I_4)} c_I a_{I_1} a_{I_2} a_{I_3} a_{I_4} 
\label{Delta_formula} \end{equation}
for some $c_I \in \BQ$, where the $I_j$ run over all even subsets of $\{ 1, \ldots, 6 \}$ such that every $1 \leq i \leq 6$ appears in the subsets $I_j, j=1,2,3,4$ exactly twice.
A computer calculation\footnote{
The code for this computation is available on the first author's webpage.}
shows that there exist unique (up to scaling) $c_I$ such that $\Delta$ is invariant under the generators \eqref{generators}.
This proves (a).

Multiplication by $\omega \in \wedge^2 U$ is an element of the Lie algebra $\mathfrak{so}(12)$,
hence multiplication by $e^{\omega}$ is an element of $\mathrm{Spin}(12)$. It follows
\[ \Delta(e^{\omega}) = \Delta(1) = 0 \]
where the last equality follows from \eqref{Delta_formula}. This shows part (b).

Finally, (c) follows again by a direct computer calculation.
\end{proof}

\providecommand{\bysame}{\leavevmode\hbox to3em{\hrulefill}\thinspace}
\providecommand{\MR}{\relax\ifhmode\unskip\space\fi MR }
\providecommand{\MRhref}[2]{%
\href{http://www.ams.org/mathscinet-getitem?mr=#1}{#2}
}
\providecommand{\href}[2]{#2}

\end{document}